\newcommand{\pr}[2]{\langle {#1} , {#2} \rangle}
\newcommand{\norm}[1]{\left \| #1 \right \|}
\newtheorem*{claim}{Claim}
\def \etc {,\ldots,}
\def \a {\alpha}
\def \t {\tau}
\def \dist {{\rm dist}}
\def \Span {{\rm span}}
\def \rank {{\rm rank }}
\newcommand{\sqrtneg}{\mathrm{i}}
\renewcommand{\Im}{{\rm{Im}}\,}
\renewcommand{\Re}{{\rm{Re}}\, }
\def \d {\delta}
\def\Adj{\mathrm{Adj}}
\newcommand{\listintertext}{\@ifstar\listintertext@\listintertext@@}
\newcommand{\listintertext@}[1]{
  \hspace*{-\@totalleftmargin}#1}
\newcommand{\listintertext@@}[1]{
  \hspace{-\leftmargin}#1}
\begin{document}

\begin{frontmatter}
\title{The circular law for sparse non-Hermitian matrices}
\runtitle{The circular law for Sparse Matrices}

\begin{aug}
\author{\fnms{Anirban} \snm{Basak}\thanksref{t1}\ead[label=e1]{anirban.basak@weizmann.ac.il}}
\and
\author{\fnms{Mark} \snm{Rudelson}\thanksref{t2}\ead[label=e2]{rudelson@umich.edu}}

\thankstext{t1}{Research partially supported by grant 147/15 from the Israel Science Foundation.}
\thankstext{t2}{Research partially supported by NSF grant DMS 1464514.}
\runauthor{A.~Basak and M.~Rudelson}

\affiliation{Weizmann Institute of Science\thanksmark{t1} and University of Michiagn\thanksmark{t2}}

\address{Department of Mathematics\\ Weizmann Institute of Science\\ POB 26, Rehovot 76100, Israel\\
\printead{e1}\\
\phantom{E-mail:\ }
}

\address{Department of Mathematics\\ University of Michigan\\ East Hall, 530 Church Street\\ Ann Arbor, Michigan 48109, USA\\
\printead{e2}}
\end{aug}











\begin{abstract}
For a class of sparse random matrices of the form $A_n =(\xi_{i,j}\delta_{i,j})_{i,j=1}^n$, where $\{\xi_{i,j}\}$ are i.i.d.~centered sub-Gaussian random variables of unit variance, and $\{\delta_{i,j}\}$ are i.i.d.~Bernoulli random variables taking value $1$ with probability $p_n$, we prove that the empirical spectral distribution of $A_n/\sqrt{np_n}$ converges weakly to the circular law, in probability, for all $p_n$ such that $p_n=\omega({\log^2n}/{n})$. Additionally if $p_n$ satisfies the inequality $np_n > \exp(c\sqrt{\log n})$ for some constant $c$, then the above convergence is shown to hold almost surely. The key to this is a new bound on the smallest singular value of complex shifts of real valued sparse random matrices.
The circular law limit also extends to the adjacency matrix of a directed Erd\H{o}s-R\'{e}nyi graph with edge connectivity probability $p_n$.
\end{abstract}

\begin{keyword}[class=MSC]
\kwd{15B52, 60B10, 60B20}
\end{keyword}

\begin{keyword}
\kwd{Random matrix, sparse matrix, smallest singular value, circular law}
\end{keyword}

\end{frontmatter}

\section{Introduction}\label{sec:intro}
For a $n \times n$ matrix $B_n$  let $\lambda_1(B_n), \lambda_2(B_n),\ldots,\lambda_n(B_n)$ be its eigenvalues. The empirical spectral distribution (\abbr{ESD}) of $B_n$, denoted hereafter by $L_{B_n}$, is given by
\[
L_{B_n}:=\f{1}{n}\sum_{i=1}^n \delta_{\lambda_i},
\]
where $\delta_x$ is the dirac measure at $x$. If $B_n$ is a non-normal matrix (i.e.~$B_n B_n^* \ne B_n^* B_n$) then its eigenvalues are complex valued, resulting in $L_{B_n}$ being supported on the complex plane. Furthermore, when $B_n$ is random its \abbr{ESD} is a random probability measure. Thus, to study the asymptotics of \abbr{ESD}s of random matrices one needs to define appropriate notions of convergence. This can be done in one of the two following ways: If $\{\mu_n\}$ is a sequence of random probability measures such that for every $f\in C_b(\C)$, i.e.~$f:\C \mapsto \R$ is bounded,
\(
\int_\C f d\mu_n \to \int_\C f d\mu
\)
in probability, for some probability measure $\mu$ (possibly random), then $\mu_n$ is said to converge weakly to $\mu$, in probability. If $\int_\C f d\mu_n \to \int_\C f d \mu$ almost surely, then $\mu_n$ is said to converge to $\mu$ weakly, almost surely.

The study of the \abbr{ESD} for random matrices can be traced
back to Wigner \cite{Wigner55, wigner1958distribution} who showed that the \abbr{ESD}'s of
$n \times n$ Hermitian matrices
with i.i.d.~centered entries of variance $1/n$ (modulo symmetry) satisfying appropriate moment bounds (for example, Gaussian) converge to the {\em semicircle distribution}. The conditions on the
finiteness of the moments were relaxed in subsequent works, see
\cite{bai2010spectral,pastur1972spectrum} and the references therein.

The rigorous study of non-Hermitian matrices, in particular non-normal matrices, emerged much later. The main difficulties were the  sensitivity of the eigenvalues of non-normal matrices under small perturbations and the lack of appropriate tools. For example, Wigner's proof employed the method of moments. Noting that the moments
of the semicircle law determine it, one computes by combinatorial
means
the expectation and the variance of the normalized trace of powers of the matrix to find the asymptotics of the moments of the \abbr{ESD}s. The analogue of this for non-normal matrices is to compute the mixed moments, i.e.~compute
\beq\label{eq:mm-non-normal}
\int_\C w^k \bar{w}^\ell dL_{B_n}(w) = \f{1}{n}\sum_{i=1}^n \lambda_i(B_n)^k \bar{\lambda}_i(B_n)^\ell.
\eeq
For $B_n$ non-normal, the \abbr{RHS} of \eqref{eq:mm-non-normal} cannot be expressed as powers of traces of $B_n$ and $B_n^*$. So the method of moment approach does not work. Another technique that works well for Hermitian matrices is the method of evaluating limiting {\em Stieltjes transform} (see \cite{bai-survey}). Since the Stieltjes transform of a probability measure is well defined outside its support, and the \abbr{ESD}s of non-normal matrices are supported on $\C$, their Stieltjes transforms fail to capture the spectral measure.

In the $1950$'s, based on numerical evidences, it was conjectured that the \abbr{ESD} of $B_n/\sqrt{n}$, where $B_n$ is an $n \times n$ matrix with i.i.d.~entries of zero mean and unit variance, converges to the circular law, the uniform distribution on the unit disk in the complex plane. In random matrix literature this conjecture is commonly known as the {\em circular law conjecture}.

Using the formula for the joint density function of the eigenvalues  Ginibre \cite{ginibre} solved the case when the entries have a complex Gaussian distribution. The case of real Gaussian entries, where a similar formula is available, was settled by
Edelman \cite{edelman1988eigenvalues}. For the general case when there is no such formula,
the problem remained open for a very long time. An approach to the problem, which eventually
played an important role in the resolution of the conjecture,
was suggested by Girko in the 1980's \cite{girko1984circular}. However mathematically it contained significant gaps. The first non-Gaussian case (assuming the existence of density for the entries) was rigorously treated by Bai \cite{bai}, and the first result without the density assumption was obtained by G\"otze and Tikhomirov \cite{gotze_tikhomirov}. After a series of partial results
(see \cite{bordenave_chafai} and the references therein), the circular law conjecture was established in its full generality in the seminal work of Tao and Vu \cite{tao_vu}:
\begin{thm}[Circular law for i.i.d.~entries {\cite[Theorem 1.10]{tao_vu}}]
 Let $M_n$ be an $n \times n$ random matrix whose entries are i.i.d.~complex random variables with zero mean and unit variance. Then the \abbr{ESD} of $\f{1}{\sqrt{n}}M_n$ converges weakly to the uniform distribution on the unit disk on the complex plane,
both in probability and in the almost sure sense.
\label{thm:circ_iid}
 \end{thm}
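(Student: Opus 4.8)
The plan is to use Girko's Hermitization together with the logarithmic potential method, which reduces the problem to controlling the singular values of additively shifted i.i.d.\ matrices. For $z \in \C$ let $\nu_{n,z}$ denote the empirical distribution of the squared singular values of $\frac{1}{\sqrt n}M_n - zI$; equivalently it is the \abbr{ESD} of the $2n \times 2n$ Hermitian matrix
\[
\mathcal{H}_{n,z} = \begin{bmatrix} 0 & \frac{1}{\sqrt n}M_n - zI \\[2pt] \frac{1}{\sqrt n}M_n^* - \bar z I & 0 \end{bmatrix}.
\]
Whenever $z$ is not an eigenvalue of $\frac{1}{\sqrt n}M_n$ one has
\[
-\int_\C \log|z-w|\, dL_{M_n/\sqrt n}(w) = -\tfrac12 \int_0^\infty \log x \, d\nu_{n,z}(x),
\]
so by the logarithmic potential continuity theorem it suffices to prove, for Lebesgue-almost every $z \in \C$: (a) $\nu_{n,z}$ converges weakly, in probability and almost surely, to a deterministic measure $\nu_z$ whose logarithmic potential agrees at $z$ with that of the circular law; and (b) $\log(\cdot)$ is uniformly integrable with respect to $\{\nu_{n,z}\}_n$.

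Part (a) is a matter of Hermitian random matrix theory. I would analyze the Stieltjes transform $m_{n,z}(\eta) = \int (x-\eta)^{-1}\, d\nu_{n,z}(x)$ through the Schur complement formula for the resolvent of $\mathcal{H}_{n,z}$, showing that $m_{n,z}$ concentrates around the solution of the self-consistent (quadratic) equation characterizing $\nu_z$; concentration follows from an Azuma--Hoeffding estimate on the row-exposure martingale, and identifying the logarithmic potential of $\nu_z$ with that of the circular law is a classical computation. This step uses only the second moment assumption, after a routine truncation.

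Part (b) is the crux. On the upper end one needs $\norm{M_n} = O(\sqrt n)$ with overwhelming probability, obtained from an $\varepsilon$-net argument after a mild truncation. On the lower end one needs a polynomial lower bound $\sigma_{\min}(\frac{1}{\sqrt n}M_n - zI) \ge n^{-B}$ with probability $1-o(1)$, together with a count of small singular values, $\#\{\, i : \sigma_i(\frac{1}{\sqrt n}M_n - zI) \le t \,\} \le C n t^{c_0}$ on the relevant range of $t$. The smallest singular value estimate is of Rudelson--Vershynin type: split the unit sphere into compressible and incompressible vectors, handle compressible ones by a union bound over a net, and handle the incompressible part via small-ball (Littlewood--Offord / inverse Littlewood--Offord) probabilities together with the distance-to-a-hyperplane method. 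The intermediate-singular-value count follows from a negative second moment identity together with distances between rows and the spans of the remaining rows. I expect this least-singular-value input to be the main obstacle: it is precisely the ingredient missing from Girko's original argument.

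Finally, to pass from convergence in probability to almost sure convergence, I would make the estimates quantitative: the martingale concentration in (a) and the smallest-singular-value bound in (b) can be arranged to fail with probability at most $n^{-1-\varepsilon}$ (or smaller), so that, testing against a countable family of $f \in C_b(\C)$ and discretizing the parameter $z$, the Borel--Cantelli lemma yields the almost sure statement.
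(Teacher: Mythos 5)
You are proposing a proof of Theorem \ref{thm:circ_iid}, but note that the paper does not prove this statement at all: it is quoted verbatim from Tao and Vu \cite{tao_vu} as background, so there is no internal proof to compare against. That said, your sketch is essentially the standard route — Girko's Hermitization plus the logarithmic potential/uniform integrability reduction, with the least singular value and the count of small singular values (negative second moment identity, distance-to-subspace arguments) as the hard inputs — and this is also the skeleton the present paper follows for its sparse analogue (Lemma \ref{lem:replacement}, Theorem \ref{thm: smallest singular}, Theorem \ref{thm:intermed-sing}, Theorem \ref{thm:weak-conv}). One genuine difference in route: you propose to identify the limit $\nu_z$ and its logarithmic potential directly through the self-consistent equation for the Stieltjes transform, whereas Tao--Vu (and this paper) instead use a replacement/comparison principle against the Gaussian ensemble, which lets one avoid computing the limiting log-potential and only requires showing that the difference of log-determinants vanishes; both are viable, but the comparison route is what makes the "only second moments" hypothesis manageable.

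Two concrete gaps in your plan as written. First, the claim that $\|M_n\|=O(\sqrt n)$ with overwhelming probability is false under a second moment assumption alone, and even after truncating at level $\varepsilon_n\sqrt n$ an $\varepsilon$-net argument does not give $O(\sqrt n)$ without further moment control; moreover the spectrum of a non-normal matrix is not stable under such a truncation, so the comparison must be made at the level of singular values (Hoffman--Wielandt/Frobenius norm), not eigenvalues. In the actual arguments the upper tail of $\log$-integrability is handled not by an operator norm bound but by the law of large numbers for $\frac1n\|\frac{1}{\sqrt n}M_n - zI\|_2^2$ (this is exactly condition \eqref{eq:frob-bd} in Lemma \ref{lem:replacement}). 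Second, your passage to almost sure convergence assumes the least singular value bound can be made to fail with summable probability under only unit variance; this is precisely where the difficulty lies (and, in the sparse setting of this paper, why the extra hypothesis \eqref{p:assumption-as} appears), so it cannot be dismissed as a matter of "making the estimates quantitative" — Tao and Vu need a dedicated argument, with smoothed/ truncated entries and their inverse Littlewood--Offord theory, to obtain bounds strong enough for Borel--Cantelli.
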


A remarkable feature of Theorem \ref{thm:circ_iid} is its {\em universality}. The asymptotic behavior of the \abbr{ESD} of $M_n/\sqrt{n}$ is insensitive to the specific details of the entry distributions as long as they are i.i.d.~and have zero mean and unit variance. Since the work of Tao and Vu, there have been numerous attempts to extend the universality of Theorem \ref{thm:circ_iid} for a wider class of random matrix ensembles. A natural extension would be to prove Theorem \ref{thm:circ_iid} for matrix ensembles with dependent entries. This has been shown in \cite{adamczak2011marchenko, adamczak2016circular, bcc, nguyen, nguyen2013circular}.

Another direction to pursue is to study the asymptotics of the \abbr{ESD} of sparse matrices. Sparse matrices  are  abundant in statistics, neural network, financial modeling, electrical engineering, wireless communications,  neuroscience, and  in  many  other  fields.   We  refer  the  reader  to  \cite[Chapter  7]{bai2010spectral}  for  other  examples, and their relevant references.
One model for sparse random matrices is the adjacency matrices of random $d$-regular directed graphs with $d=o(n)$ (for $\{a_n\}$ and $\{b_n\}$, sequences of positive reals,  the notation $a_n=o(b_n)$ means $\lim_{n \to \infty} a_n/b_n=0$). Recently in \cite{bcz, cook-rrd, LLTTY} the circular law conjecture was established for two different models of random $d$-regular directed graphs.

One of the most natural models for sparse random matrices is the Hadamard product of a matrix of i.i.d.~entries with zero mean and unit variance, and a matrix of i.i.d.~$\dBer(p_n)$ entries, with $p_n=o(1)$. In this paper we focus on the limiting spectral distribution of this class of sparse matrices. When $p_n=n^{\alpha-1}$ for some $\alpha \in (0,1)$, it has been shown that, under the assumption of the existence of $(2+\delta)$-th moment of the entries, the \abbr{ESD} of these sparse matrices (properly scaled) converges weakly to the circular law, in probability and almost surely (see \cite{gotze_tikhomirov, tao2008random}). Later in \cite{wood} the assumption on the existence of $(2+\delta)$-th moment was removed but the convergence was shown to hold in probability.




 In this paper, we prove that the circular law limit continues hold when $n p_n$ grows at a sub-polynomial rate (in $n$). Namely, we show that under certain moment assumptions of the entries the circular law limit holds for sparse non-Hermitian matrices whenever $np_n$ grows at a poly-logarithmic rate. Under an additional assumption on $p_n$ (see \eqref{p:assumption-as} below), the convergence is shown to hold almost surely.


Before stating our result, let us recall the well-known definition of sub-Gaussian random variables.

\begin{dfn}
For a random variable $\xi$, the sub-Gaussian norm of $\xi$, denoted by $\|\xi\|_{\psi_2}$, is defined as
\[
\|\xi\|_{\psi_2}:=\sup_{k \ge 1} k^{-1/2} \|\xi\|_k, \quad \text{ where } \|\xi\|_k:=(\E|\xi|^k)^{1/k}.
\]
If the sub-Gaussian norm is finite, the random variable $\xi$ is called sub-Gaussian.
\end{dfn}

We use the following standard notation: for two sequences positive reals $\{a_n\}$ and $\{b_n\}$ we write $a_n=\omega(b_n)$ if $b_n=o(a_n)$ and $a_n=O(b_n)$  if $\limsup_{n \to \infty} a_n/b_n <\infty$ .

The following is the main result of this article.

\begin{thm}\label{thm:sparse_general}
 Let ${A}_n$ be an $n \times n$ matrix with i.i.d.~entries $a_{i,j}= \delta_{i,j} \xi_{i,j}$, where $\delta_{i,j}, \ i,j \in [n]$ are independent Bernoulli random variables taking value 1 with probability $p_n \in (0,1]$, and $\xi_{i,j}, \ i,j \in [n]$ are real-valued i.i.d.~centered sub-Gaussian with unit variance.

 \begin{enumerate}[(i)]

 \item If $p_n$ is such that $np_n = \omega(\log^2 n)$ the \abbr{ESD} of $A_n/\sqrt{np_n}$ converges weakly to the circular law, as $n \to \infty$, in probability.

 \item There exists a constant $c_{\ref{thm:sparse_general}}$, depending only on the sub-Gaussian norm of $\{\xi_{i,j}\}$, such that if $p_n$ satisfies the inequality
 \beq\label{p:assumption-as}
 np_n > \exp\left(c_{\ref{thm:sparse_general}} \sqrt{\log n}\right)
 \eeq
 then the conclusion of part (i) holds almost surely.
 \end{enumerate}
 \end{thm}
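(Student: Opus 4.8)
The plan is to run Girko's Hermitization scheme. Write $M_n(z):=A_n/\sqrt{np_n}-zI$ and let $\nu_{n,z}$ denote the empirical measure of the singular values $s_1(M_n(z))\ge\cdots\ge s_n(M_n(z))$. Since
\[
-\int_\C \log|w-z|\,dL_{A_n/\sqrt{np_n}}(w)=-\tfrac1n\log\bigl|\det M_n(z)\bigr|=-\int_0^\infty \log x\,d\nu_{n,z}(x),
\]
the replacement principle (Girko's lemma, in the form of Tao--Vu and Bordenave--Chafa\"i, in its ``in probability'' and ``almost sure'' versions) reduces Theorem~\ref{thm:sparse_general} to the following: for Lebesgue-almost every $z\in\C$, $\int_0^\infty \log x\,d\nu_{n,z}(x)\to\int_0^\infty \log x\,d\nu_z(x)$, in probability for part (i) and almost surely for part (ii), where $\nu_z$ is the same deterministic symmetrized singular value law that arises in the dense i.i.d.\ setting (the one whose logarithmic potential equals that of the circular law). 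I would split this into: (a) weak convergence $\nu_{n,z}\Rightarrow\nu_z$; (b) uniform integrability of $\log x$ against $(\nu_{n,z})$ near $x=\infty$; and (c) uniform integrability near $x=0$.

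For (a): truncate each $\xi_{i,j}$ at level $n^{\varepsilon}$; because $np_n\to\infty$ this perturbs $\nu_{n,z}$ negligibly (Hoffman--Wielandt, together with the fact that only $o(n)$ entries are affected), so one may assume bounded entries, for which the limit of the Stieltjes transform of the $2n\times2n$ Hermitization $\bigl[\begin{smallmatrix}0 & M_n(z)\\ M_n(z)^* & 0\end{smallmatrix}\bigr]$ is governed by the usual quadratic self-consistent equation; as all entries of $A_n/\sqrt{np_n}$ have variance $1/n$, this equation, hence $\nu_z$, coincides with the dense case. For (b): in the regime $np_n=\omega(\log^2 n)$ one has $\|A_n/\sqrt{np_n}\|=O(1)$ with overwhelming probability (after the routine excision of the $o(n)$ atypically heavy rows and columns), so all $s_i(M_n(z))$ lie in a fixed compact set and $\log x$ is trivially uniformly integrable at $x=\infty$; alternatively one bounds $\int\log(1+x^2)\,d\nu_{n,z}\le\tfrac1n\operatorname{tr}M_n(z)M_n(z)^{*}=\tfrac1n\|A_n/\sqrt{np_n}\|_{\mathrm{HS}}^{2}+|z|^{2}+(\text{mean-zero terms})$, which concentrates around $1+|z|^{2}$.

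The crux is (c), and it is here that the new smallest-singular-value bound enters. I would establish, each with overwhelming probability: (i) a polynomial lower bound $s_n(M_n(z))=s_{\min}(A_n/\sqrt{np_n}-zI)\ge n^{-B}$; and (ii) intermediate bounds $s_{n-j}(M_n(z))\ge c\,j/n$ for all $1\le j\le n$. Granting these, write $-\int_0^\delta\log x\,d\nu_{n,z}(x)=\tfrac1n\sum_{j\ge0}\log^{+}\!\bigl(1/s_{n-j}(M_n(z))\bigr)\mathbf 1\{s_{n-j}<\delta\}$; the $j=0$ term is at most $B(\log n)/n\to0$, and the remainder is at most $\tfrac1n\sum_{1\le j\le \delta n/c}\log\bigl(n/(cj)\bigr)$, which is $\le\tfrac{\delta}{c}\bigl(\log(1/\delta)+1\bigr)+o(1)\to0$ as $\delta\to0$, uniformly in $n$ --- exactly uniform integrability near $0$. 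Bound (ii) I would obtain via the Rudelson--Vershynin compressible/incompressible dichotomy applied to the rectangular matrices built from subsets of rows --- covering compressible vectors by a sparsity-adapted net and using small-ball estimates for sparse linear forms, and handling incompressible vectors through a distance-to-random-subspace estimate --- adapted to the sparse regime. Bound (i) is the genuinely new and hardest ingredient: it asks for invertibility of $A_n-zI$ when $z$ is truly complex, so the deterministic shift contributes a nonzero imaginary part that is a multiple of the identity while the random part $A_n$ is real and sparse; one must bound $\inf_{\|x\|=1}\|(A_n-zI)x\|$ over the \emph{complex} sphere, treating separately vectors close to a sparse/structured set (sparsity-aware covering plus small-ball probability) and ``spread'' vectors, for which a Littlewood--Offord / least-common-denominator argument must track the real and imaginary coordinates jointly. \textbf{Producing this complex-shift smallest-singular-value bound for real sparse sub-Gaussian matrices, with a failure probability small enough in $np_n$, is where I expect the main difficulty to lie.}

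Finally, given (a)--(c) for a.e.\ fixed $z$, the replacement principle converts them into convergence of the \abbr{ESD}s. For part (i) the high-probability versions of (a)--(c) and the operator-norm bound suffice. For part (ii) one needs the bad events to be summable over $n$: the failure probability in the smallest-singular-value estimate is a negative power of $n$ (harmless, since $B$ may be taken large) plus a term that decays in $np_n$, and \eqref{p:assumption-as} is precisely the condition making that term summable in $n$; likewise the operator-norm and bulk-convergence deviations are exponentially small under \eqref{p:assumption-as}. Borel--Cantelli then applies directly, giving the pointwise-in-$z$ statements almost surely, and the replacement principle upgrades these to almost sure weak convergence of $L_{A_n/\sqrt{np_n}}$ to the circular law.
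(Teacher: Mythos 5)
Your overall architecture is exactly the paper's: Girko hermitization via the replacement principle, reduced to three ingredients --- weak convergence of the singular-value measures of $A_n/\sqrt{np_n}-wI_n$ (the paper does this by truncation, concentration, and a Lindeberg swap against a Ginibre matrix rather than by deriving the self-consistent equation directly, but the two routes are interchangeable), an intermediate-singular-value bound $s_{n-j}\ge c\,j/n$ obtained from distance-of-row-to-subspace estimates combined with the Tao--Vu negative second moment identity, and a lower bound on $s_{\min}(A_n-w\sqrt{np_n}I_n)$ for complex $w$ --- and your uniform-integrability bookkeeping near $0$ and $\infty$ matches the paper's Section 9.

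The gap is that the one genuinely new ingredient, the complex-shift smallest-singular-value bound, is only named, not proved, and it is where essentially all of the paper's work lies (Sections 3--6): showing kernel vectors of $B^D$ have non-dominated real part, splitting incompressible vectors into genuinely complex and essentially real according to the real--imaginary de-correlation $d(z)$, introducing the two-dimensional LCD $D_2$, and building nets whose cardinality is calibrated against the $(2n)$-th-power small-ball estimate. A Berry--Ess\'{e}en argument alone yields failure probability $\varepsilon+C/\sqrt{np_n}$ (not ``overwhelming''), which is enough for part (i) but not for part (ii). Relatedly, you mischaracterize the role of \eqref{p:assumption-as}: it is not ``precisely the condition making the failure term summable''; in the paper it guarantees $\log(1/p_n)\le \bar c(\log np_n)^2$, i.e.\ that the compressibility scale $\rho=\exp(-C\log(1/p_n)/\log(np_n))$ satisfies $\rho^{5}np_n>1$, without which the LCD/net argument --- and hence the $\exp(-c\sqrt{np_n})$ probability bound itself --- is not available at all. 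Finally, the smin bound is only proved for shifts with $|\Im w|\ge r$, with constants depending on $r$; the paper therefore applies the replacement lemma on domains $\D_r$ avoiding the real axis and removes $r$ at the end by a cutoff/approximation argument, a step your ``a.e.\ $z$'' formulation silently skips.
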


It will be seen in Section \ref{sec:proof_outline} that a key to the proof of Theorem \ref{thm:sparse_general} is a uniform bound on $s_{\min}(A_n -w\sqrt{np_n}I_n)$ for Lebesgue a.e.~$w\in\C$, where $s_{\min}(\cdot)$ denotes the smallest singular value. We initiated this work in \cite{BR} and showed that the desired bound holds when $w \in \R$.
The result of \cite{BR} relied on identifying the obstacles of arithmetic nature by methods of Fourier analysis, and using geometry to show that with high probability none of these obstacles realizes.
However, even if the matrix $A_n$ is real valued, the extension to $w \in \C \setminus \R$ makes the set of potential arithmetic obstacles so rich that it cannot be handled within the framework of the previous argument.
 This required developing  new methods providing both a more precise identification of the arithmetic obstacles arising from the complex structure and entropy bounds showing that with high probability these obstacles are avoided.

The main part of this paper is devoted to find the desired bound on $s_{\min}$ with a probability bound strong enough to apply Borel-Cantelli lemma in order to deduce the almost sure convergence of Theorem \ref{thm:sparse_general}(ii). To remove the condition \eqref{p:assumption-as} one needs an improvement of \cite[Proposition 3.1]{BR}. {See Remark \ref{rmk:p-ass-explain} for more details.}

It is easy to see that if $p_n=\f{\log n}{n}$, then the number of zero columns of $A_n$ is positive (and hence $s_{\min}(A_n)=0$) with probability bounded away from zero. So $\f{\log n}{ n}$ is a natural barrier in this set-up. To extend the bound on $s_{\min}$ beyond this barrier one needs to analyze the smallest singular value of the adjacency matrix of ``core of the graph'', when $A_n$ is viewed as the adjacency matrix of directed random weighted graph. We leave this effort to future ventures.

Another key ingredient for the proof of Theorem \ref{thm:sparse_general} is the bound on the smallish singular values of $(A_n -w\sqrt{np_n}I_n)$ (see Theorem \ref{thm:intermed-sing}). This is derived by relating the inverse second moment of the singular values to that of the distance of a random vector from a random subspace. This idea originated in \cite{tao_vu} and was crucial in relaxing the assumption on the existence of the $(2+\delta)$-th moment and prove Theorem \ref{thm:circ_iid} only under the second moment assumption.  To carry out a similar scheme in the sparse set-up one needs to consider random subspaces of dimension $n-m$ with $m=o(n/\log n)$. Concentration inequalities yield a probability bound $\exp(-c mp_n)$, for some $c>0$. To accommodate a union bound we then need $np_n=\omega(\log^2 n)$ which translates to the required lower bound  on $p_n$ in Theorem \ref{thm:sparse_general}(i).

\begin{rmk}[{Sub-Gaussianity assumption}]\label{rmk:sub-G-I}
The sub-Gaussianity assumption of Theorem \ref{thm:sparse_general} is used in Theorem \ref{thm: smallest singular} to show that $\|A_n\|=O(\sqrt{np_n})$, where $\| \cdot\|$ denotes the operator norm.
From \cite[Remark 1.9]{BR} we note that if $\{\xi_{i,j}\}$ are such that
\beq\label{eq:light-tail}
\E|\xi_{i,j}|^h \le C^h h^{\beta h}, \quad \text{ for all } h \ge 1, \text{ and for some constants } C \text{ and } \beta,
\eeq
then $\|A_n\|=O(\sqrt{np_n})$, for all $p_n$ satisfying $np_n=\Omega((\log n)^{2\beta})$ (for two sequences of positive reals $\{a_n\}$ and $\{b_n\}$ we write $a_n=\Omega(b_n)$ if $b_n=O(a_n)$). The case $\beta=1/2$ corresponds to the sub-Gaussian random variables. So we conclude that if $\{\xi_{i,j}\}$ satisfies the moment assumption \eqref{eq:light-tail}, for some $\beta \ge 1/2$, then the conclusion of Theorem \ref{thm:sparse_general}(i) holds for all $p_n$ satisfying $np_n \ge \omega(\log^2n), \Omega((\log n)^{2\beta})$. It is easy to check that $p_n$ satisfies \eqref{p:assumption-as} whenever $np_n = \omega((\log n)^{2\beta})$. Hence, the conclusion of Theorem \ref{thm:sparse_general}(ii) also holds under the moment assumption \eqref{eq:light-tail}. To retain the clarity of presentation we prove Theorem \ref{thm:sparse_general} for sub-Gaussian random variables.
\end{rmk}

\begin{rmk}[Circular law limit for shifted sparse matrices]
It is well known that the spectrum of normal matrices is stable under small perturbations (see \cite[Lemma 2.1.19]{agz} and \cite[Lemma 2.2]{bai-survey}). However, for a general non-normal matrix its spectrum is highly sensitive to small perturbations, for example see \cite[Section 2.8.1]{tao}. So there are no analogues of \cite[Lemma 2.1.19]{agz} and \cite[Lemma 2.2]{bai-survey} for an arbitrary non-normal matrix. Nevertheless, in \cite{wood} it was shown that if $D_n$ is any $n \times n$ matrix with $\rank(D_n)=o(n)$ and $\Tr(D_nD_n^*)=O(n^2p_n)$ then the \abbr{ESD} of $(A_n+D_n)/\sqrt{np_n}$ admit a circular law limit. Investigating our proof one can deduce that the \abbr{ESD} of $(A_n+D_n)/\sqrt{np_n}$ have a circular law limit for any sequence real diagonal matrices $\{D_n\}$ such that $\|D_n\|=O(\sqrt{np_n})$ and $\Tr(D_n^2) = o(n^2 p_n)$. It is possible to modify the proof of Theorem \ref{thm:sparse_general} to establish the circular law limit for general shifts. We do not pursue this direction here.
\end{rmk}

We next show that the circular law limit holds for the adjacency matrix of a directed Erd\H{o}s-R\'{e}nyi random graph which may be of interest in computer science and graph theory. Let us begin with the relevant definitions.

\begin{dfn}\label{dfn:ER}
Let $\mathsf{G}_n$ be a random directed graph on $n$ vertices, with vertex set $[n]$, such that for every $i \ne j$, a directed edge from $i$ to $j$ is present with probability $p$, independently of everything else. Assume that the graph $\mathsf{G}_n$ is simple, i.e.~no self-loops or multiple edges are present. We call this graph $\mathsf{G}_n$ a directed Erd\H{o}s-R\'{e}nyi graph with edge connectivity probability $p$. For any such graph $\mathsf{G}_n$ we denote $\Adj_n:=\Adj(\mathsf{G}_n)$ to be its adjacency matrix. That is, for any $i, j \in [n]$,
\[
\Adj_n(i,j)=\left\{\begin{array}{ll}
1 & \mbox{if a directed edge from $i$ to $j$ is present in $\mathsf{G}_n$}\\
0 & \mbox{otherwise}.\end{array} \right. \]
\end{dfn}

\begin{thm}\label{thm: sparse_er}
 Let $\Adj_n$ be the adjacency matrix of a directed Erd\H{o}s-R\'{e}nyi graph, with edge connectivity probability $p_n \in (0, 1)$. Denote $\bar{p}_n:=\min\{p_n,1-p_n\}$.

 \begin{enumerate}[(i)]
  \item If $p_n$ is such that $n\bar{p}_n = \omega(\log^2 n)$ the \abbr{ESD} of $\Adj_n/\sqrt{np_n(1-p_n)}$ converges weakly to the circular law, as $n \to \infty$, in probability.

 \item There exists an absolute constant $c_{\ref{thm: sparse_er}}$ such that if $p_n$ satisfies the inequality
 \beq\label{p:assumption-as-ber}
  n \bar{p}_n > \exp\left(c_{\ref{thm: sparse_er}} \sqrt{\log n}\right)
 \eeq
 then the conclusion of part (i) holds almost surely.
 \end{enumerate}

\end{thm}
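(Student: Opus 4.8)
The plan is to deduce Theorem~\ref{thm: sparse_er} from Theorem~\ref{thm:sparse_general} and its extensions to shifted sparse matrices discussed above, through three successive reductions. \emph{Step 1: reduce to $p_n\le 1/2$.} If $p_n>1/2$, note that the complementary directed graph $\mathsf{G}_n^c$---with an edge $i\to j$ precisely when $\mathsf{G}_n$ has none---is a directed Erd\H{o}s--R\'enyi graph with edge probability $1-p_n=\bar p_n\le 1/2$, no self-loops, and $\Adj(\mathsf{G}_n)=(J_n-I_n)-\Adj(\mathsf{G}_n^c)$, where $J_n$ is the all-ones matrix. Since the normalization $\sqrt{np_n(1-p_n)}$ is symmetric under $p_n\mapsto 1-p_n$ and tends to infinity, dividing through shows that $\Adj_n/\sqrt{np_n(1-p_n)}$ equals $-\Adj(\mathsf{G}_n^c)/\sqrt{n\bar p_n(1-\bar p_n)}$ plus $(J_n-I_n)/\sqrt{np_n(1-p_n)}$; the latter is a rank-one matrix plus an $o(1)$-in-operator-norm multiple of $I_n$. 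A rank-one additive perturbation changes each Hermitized singular-value distribution $L_{(B-wI)(B-wI)^*}$ by $O(1/n)$ in L\'evy distance (rank inequality for \abbr{ESD}s of Hermitian matrices) and, via Sherman--Morrison together with the operator-norm bounds on the matrices involved, does not affect the limit of $\tfrac1n\log|\det(B-wI)|$ in the sense needed for Girko's formula; the $o(1)$ multiple of $I_n$ is a vanishing shift of~$w$. As the circular law is invariant under $w\mapsto -w$, it suffices to treat $p_n\le 1/2$, whence $\sqrt{np_n(1-p_n)}\asymp\sqrt{np_n}$ and $n\bar p_n=np_n$.

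\emph{Step 2: center the entries.} Introducing fresh i.i.d.\ $\dBer(p_n)$ variables on the diagonal, write $\Adj_n=\hat A_n+p_nJ_n-\hat D_n$, where $\hat A_n$ has i.i.d.\ centered entries $\delta_{i,j}-p_n$ of variance $p_n(1-p_n)$, $p_nJ_n=p_n\mathbf{1}\mathbf{1}^*$ has rank one, and $\hat D_n$ is a diagonal $\{0,1\}$-matrix. Then $\|\hat D_n\|\le 1=o(\sqrt{np_n})$ and $\Tr(\hat D_n^2)\le n=o(n^2p_n)$, so $\hat D_n$ falls under the diagonal-shift extension of Theorem~\ref{thm:sparse_general} recalled above, while $\rank(p_nJ_n)=1=o(n)$ and $\Tr\big((p_nJ_n)(p_nJ_n)^*\big)=p_n^2n^2=O(n^2p_n)$, so $p_nJ_n$ falls under the rank-$o(n)$ shift result recalled above. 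Hence it suffices to prove the circular-law limit for $\hat A_n/\sqrt{np_n(1-p_n)}$: in probability when $np_n=\omega(\log^2 n)$, and almost surely when $np_n>\exp(c\sqrt{\log n})$.

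\emph{Step 3: the circular law for $\hat A_n/\sqrt{np_n(1-p_n)}$.} Here the plan is to re-run the proof of Theorem~\ref{thm:sparse_general} with $\hat A_n$ in place of $A_n$. The entries of $\hat A_n$ are supported in $[-1,1]$ and satisfy $\E|\delta_{i,j}-p_n|^h\le 2p_n$ for every $h\ge1$, so $\hat A_n$ is no worse behaved than the sparse model it replaces; every ingredient of the argument---Hermitization via Girko's formula, the (universal) convergence of the singular-value distributions of $\hat A_n-w\sqrt{np_n(1-p_n)}I_n$, the lower bounds on the smallest and the smallish singular values of these complex shifts (the analogues of Theorem~\ref{thm: smallest singular} and Theorem~\ref{thm:intermed-sing}), and the Borel--Cantelli step behind the almost-sure statement---uses only that the entries are independent, centered, of variance of order $p_n$, suitably anti-concentrated, together with $\|\hat A_n\|=O(\sqrt{np_n})$. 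The last bound, which is the sole place where the precise form $\delta_{i,j}\xi_{i,j}$ entered Theorem~\ref{thm:sparse_general} (cf.\ Remark~\ref{rmk:sub-G-I}), holds for $\hat A_n$ by a classical operator-norm estimate for centered Bernoulli matrices (of Feige--Ofek type), valid once $np_n=\Omega(\log n)$, which is implied by either hypothesis; the union-bound constraint reproduced inside the argument is exactly $np_n=\omega(\log^2 n)$ for part~(i), and choosing $c_{\ref{thm: sparse_er}}$ at least as large as the constant produced by this version of the argument gives part~(ii).

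The main obstacle is Step~3: verifying that the machinery underlying Theorem~\ref{thm:sparse_general}---above all the new bound on the smallest singular value of complex shifts of real sparse matrices, with the entropy/net estimates and the probability bounds strong enough for Borel--Cantelli---genuinely transfers to the centered Bernoulli ensemble $\hat A_n$, and that the rank-one shift $p_nJ_n$ (whose normalized operator norm is as large as $\sqrt{np_n}$) does not spoil the logarithmic-integrability part of Girko's scheme. Both reduce to bookkeeping once the internal structure of the proof of Theorem~\ref{thm:sparse_general} is in hand, but that is where the real content of Theorem~\ref{thm: sparse_er} sits; the complementation and centering reductions of Steps~1 and~2 are routine.
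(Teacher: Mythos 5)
The decisive gap is in Steps 1--2, where you subtract the mean and treat it as a harmless additive perturbation. The rank-one matrix $p_nJ_n$ has $\|p_nJ_n\|/\sqrt{np_n(1-p_n)}\asymp\sqrt{np_n}\to\infty$ and is not diagonal, so it is not covered by the diagonal-shift remark in this paper (which requires a \emph{diagonal} $D_n$ with $\|D_n\|=O(\sqrt{np_n})$ before normalization, i.e.\ $O(1)$ after, and with imaginary part proportional to $I_n$ in the least-singular-value input); and Wood's rank-$o(n)$ shift theorem \cite{wood} is proved only for $p_n=n^{\alpha-1}$ with $\alpha>0$, so it is simply unavailable in the polylogarithmic regime $np_n=\omega(\log^2 n)$ --- extending it there is essentially the content of the theorem you are trying to prove, not a result you may quote. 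The same issue undermines the complementation in Step 1: the claim that a rank-one perturbation ``does not affect the limit of $\frac1n\log|\det(B-wI)|$'' is not a soft fact. By Sherman--Morrison the correction is $\frac1n\log\bigl|1+v^*(B-wI)^{-1}u\bigr|$, and excluding near-cancellation requires quantitative smallest-singular-value/anti-concentration bounds for the matrix \emph{with} the mean (equivalently, for the non-centered ensemble) --- which is exactly the hard step the centering was meant to bypass. So the mean cannot be disposed of as ``bookkeeping''; it must be carried through the least-singular-value and intermediate-singular-value machinery. A further, smaller, point: after centering, the entries $\delta_{i,j}-p_n$ are no longer of the product form $\delta_{i,j}\xi_{i,j}$ with $\xi$ of unit variance on which Theorems \ref{thm: smallest singular} and \ref{thm:intermed-sing} are literally stated, so even Step 3 is not an application of the stated results but a re-derivation whose key obstruction is again the mean.

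The paper's route (in the arXiv version \cite{br-cir}) is the opposite of yours: it does not center. For $p\le 1/2$ one writes $\dBer(p)=\dBer(2p)\cdot\dBer(1/2)$, so the entries stay in the form $\delta_{i,j}\xi_{i,j}$ with a \emph{non-centered} sub-Gaussian $\xi$, and a non-centered analogue of Theorem \ref{thm: smallest singular} is proved by pushing the mean through the kernel-structure analysis: the deterministic part $\E\,\Re(B^D)$ is absorbed by enlarging the approximating subspaces and nets by the all-ones direction, together with an extra one-dimensional net over multiples of $\mathbf{1}$ (for $p>1/2$ an additional $\vep$-net argument as in \cite{BR} is needed). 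The analogue of Theorem \ref{thm:intermed-sing} is obtained by bounding the relevant distances below via $\dist(\bm a,H)\ge \dist(\bm a-\E\bm a,\,\mathrm{span}\{H,\mathbf{1}\})$, and the analogue of Theorem \ref{thm:weak-conv} follows from the i.i.d.\ Bernoulli case by Hoffman--Wielandt and the rank inequality together with the Lindeberg argument. Your reductions would only become a proof if you supplied precisely this non-centered least-singular-value theorem (or an equivalent control of the log-determinant under the rank-one mean in the sparse regime), which is where the real content of Theorem \ref{thm: sparse_er} lies.
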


The proof of Theorem \ref{thm: sparse_er} follows from a relatively standard modification of that of Theorem \ref{thm:sparse_general}. We refer the reader to the arXiv version of this paper \cite{br-cir}.

\begin{rmk}
Theorem \ref{thm:sparse_general} and Theorem \ref{thm: sparse_er} find the asymptotics of the eigenvalues of a large class of sparse non-Hermitian random matrices at a macroscopic level. An interesting question would be to prove the universality of the eigenvalue distribution at the microscopic level. This has been shown for a wide class of Hermitian random matrices (see \cite{knowles2016lecnotes} and references therein). For dense non-Hermitian random matrices,  it was shown in \cite{bourgade_yau_yin} that the  local circular law holds. In a forthcoming article \cite{BR-loc} we establish the same for sparse non-Hermitian random matrices.
\end{rmk}


\subsection*{Outline of the paper}
Section \ref{sec:proof_outline} provides a brief outline of the proof techniques of Theorem \ref{thm:sparse_general}. We begin Section \ref{sec:proof_outline} with a replacement principle (see Lemma \ref{lem:replacement}) which is a consequence of Girko's method. The replacement principle allows us to focus only on the integrability of $\log(\cdot)$ with respect to the \abbr{ESD} of $\wt{\bm B}_n^w:=[(B_n -wI_n)^*(B_n-wI_n)]^{1/2}$ for $w\in \C$, where $B_n$ is any random matrix. To implement this scheme one requires a good control on $s_{\min}(\wt{\bm B}_n^w)$ as well as on its {\em smallish} singular values. One also needs to establish weak convergence of the \abbr{ESD}s of $\wt{\bm B}_n^w$.

The required control on $s_{\min}$ and smallish singular values are derived in Theorem \ref{thm: smallest singular} and Theorem \ref{thm:intermed-sing}, and we outline of their proofs in Section \ref{sec:smallest_sing} and Section \ref{sec:inter-sing-val}, respectively. The limit of the \abbr{ESD}s of $\wt{\bm B}_n^w$ is derived in Theorem \ref{thm:weak-conv} with the outline of the proof appearing in Section \ref{subsec:weak-conv}.

Section \ref{sec:non-dom real part} - Section \ref{sec: singular value} are devoted to the proof of Theorem \ref{thm: smallest singular}. Since $s_{\min}(M_n)$ equals the infimum of $\|M_n u\|_2$ ($\|\cdot\|_2$ denotes the Euclidean norm) over all vectors $u$ of unit $\ell_2$ norm, we split the unit sphere into three parts: {\em compressible vectors}, {\em dominated vectors} and the complement of their union. The compressible vectors and dominated vectors are treated with results from \cite{BR}. The majority of the work is to control infimum over the vectors that are neither compressible nor dominated. Using a result of \cite{RV1} (see Lemma 3.5 there) this boils down to controlling the inner product of the first column of $(A_n-w\sqrt{np_n}I_n)$ and the vector normal to $H_n^w$, the subspace spanned by the last $(n-1)$ columns of $(A_n- w \sqrt{np_n}I_n)$. In Section \ref{sec: singular value}, it is shown that the last assertion can be proved using Berry-Ess\'{e}en Theorem. However, the probability bounds obtained from Berry-Ess\'{e}en Theorem is too weak to prove the almost sure convergence of Theorem \ref{thm:sparse_general}(ii).

In Section \ref{sec:non-dom real part} - Section \ref{sec: net real},  we derive a better probability bound that is suitable for the proof of Theorem \ref{thm:sparse_general}(ii). We split the set of vectors into two categories: {\em genuinely complex} and {\em essentially real}. Roughly speaking, the set of essentially real vectors are those for which the real and the imaginary parts are almost linearly dependent, and its complement is the set of genuinely complex vectors.

In Section \ref{sec:non-dom real part}, we show that the vector normal to $H_n^w$ has a non-dominated real part, with high probability. We construct a net of small cardinality for the genuinely complex vectors in Section \ref{sec: net complex}. We then use this net in Section \ref{sec: kernel complex} and results of Section \ref{sec:non-dom real part} to show that with high probability, the normal vector cannot be a genuinely complex vector with a sub-exponential (in $\sqrt{np_n}$) \abbr{LCD}. A similar result for essentially real vectors is obtained in Section \ref{sec: net real}. Then we finish the proof of Theorem \ref{thm: smallest singular} in Section \ref{sec: singular value}.

In Section \ref{sec: intermediate singular values} we prove Theorem \ref{thm:intermed-sing}. The key idea is to show that the distance of any row of $A_n$ from any given subspace of relatively small dimension cannot be too small with large probability. This observation together with \cite[Lemma A.4]{tao_vu} finishes the proof.

Section \ref{sec:weak-conv} is devoted to the proof of Theorem \ref{thm:weak-conv}, which establishes the weak convergence of the empirical measure of the singular values of $(A_n/\sqrt{np_n} -w I_n)$. The weak convergence is established by appealing to the Lindeberg replacement lemma, which was introduced by Chatterjee in \cite{cha05}, in conjunction with the standard concentration inequalities for the spectral measure of a Hermitian matrix and a truncation argument.

Finally in Section \ref{sec:proof-main-thm}, combining the results of Section \ref{sec: singular value} - Section \ref{sec:weak-conv}, we finish the proof of Theorem \ref{thm:sparse_general}. 

\section{Preliminaries and Proof Outline}\label{sec:proof_outline}
In this section we provide an outline of the proof of Theorem \ref{thm:sparse_general} and introduce necessary definitions and notation. As mentioned in Section \ref{sec:intro}, the standard technique to find the limiting spectral distribution of a non-normal matrix is the  Girko's method. We refer the reader to \cite{bcz} for a detailed description of it. The utility of Girko's method, in the context of our set-up, can be captured by the following {\em replacement principle}.  The replacement principle, which has its origin in \cite{tao_vu}, gives a sufficient criterion for the \abbr{ESD} of two random matrix ensembles to have the same limit. To state the relevant result, first we introduce a few definitions. A sequence of random variables $\{X_n\}$ is said to be bounded in probability if
\[
\lim_{K \ra \infty} \limsup_{n\ra \infty} \P (|X_n| \le K)=1
\]
and $\{X_n\}$ is said to be almost surely bounded if
\[
\P(\limsup_n |X_n| < \infty)=1.
\]
Next, for a matrix $B_n$, we denote $\norm{B_n}_2$  its Frobenius norm, i.e. $\norm{B_n}_2:= \sqrt{\Tr(B_n^* B_n)}$.

\begin{lem}[Replacement lemma]
\begin{enumerate}[(a)] \item Let $B_n^{(1)}$ and $B_n^{(2)}$ be two sequences of $n \times n $ random matrices, such that
\begin{enumerate}[(i)]
\item The expression
\beq\label{eq:frob-bd}
\f{1}{n}\norm{B_n^{(1)}}_2^2 + \f{1}{n}\norm{B_n^{(2)}}_2^2 \quad \text{ is bounded in probability},
\eeq
\listintertext*{and}

\item For Lebesgue almost all $w \in \D \subset B_\C(0,R) \subset \C$, for some domain $\D$ and some finite $R$,
\beq\label{eq:diff-conv}
\f{1}{n} \log |\det (B_n^{(1)}-w I_n)| - \f{1}{n} \log |\det (B_n^{(2)}-w I_n)|  \ra 0, \quad \text{ in probability}.
\eeq
\end{enumerate}
Then for every $f \in C_c^2(\C)$ supported on $\D$,
\beq\label{eq:int-diff-conv}
\int f(z)dL_{B_n^{(1)}}(w) - \int f(z)dL_{B_n^{(2)}}(w) \ra 0, \quad \text{ in probability}.
\eeq

\item If \eqref{eq:frob-bd} is almost surely bounded and \eqref{eq:diff-conv} holds almost surely then \eqref{eq:int-diff-conv} holds almost surely as well.
\end{enumerate}
\label{lem:replacement}
\end{lem}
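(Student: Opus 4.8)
The plan is to run Girko's Hermitization, exploiting that $\frac{1}{2\pi}\log|\cdot|$ is the fundamental solution of the planar Laplacian. Fix $f\in C_c^2(\C)$ with $K:=\mathrm{supp}(f)\subset\D$. For any $n\times n$ matrix $B_n$ with eigenvalues $\lambda_1,\dots,\lambda_n$, a change of variables and integration by parts against the distributional identity $\Delta\big(\tfrac1{2\pi}\log|\cdot|\big)=\delta_0$ (legitimate since $\log|\cdot|\in L^1_{\mathrm{loc}}$ and $f\in C_c^2$) give
\[
\int f\,dL_{B_n}=\frac{1}{2\pi}\int_\C \Delta f(w)\,h_n(w)\,dw,\qquad h_n(w):=\frac1n\log|\det(B_n-wI_n)|=\frac1n\sum_{i=1}^n\log|w-\lambda_i|.
\]
Writing $h_n^{(1)},h_n^{(2)}$ for the two ensembles and $g_n:=h_n^{(1)}-h_n^{(2)}$, the left side of \eqref{eq:int-diff-conv} equals $\frac1{2\pi}\int_K\Delta f\,g_n\,dw$, and \eqref{eq:diff-conv} says precisely that $g_n(w)\to 0$ in probability for Lebesgue-a.e.\ $w\in K$. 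So the whole content is to upgrade this pointwise-a.e.\ convergence to convergence of the integral, which is a uniform-integrability issue.

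The key estimate, and the place where hypothesis \eqref{eq:frob-bd} enters, is that $\int_K|h_n^{(j)}|^{1+\varepsilon}\,dw$ is bounded in probability for some fixed small $\varepsilon>0$. For the positive part one uses concavity of $\log$ together with Schur's inequality $\sum_i|\lambda_i(B_n^{(j)})|^2\le\norm{B_n^{(j)}}_2^2$ to obtain $h_n^{(j)}(w)\le\frac12\log\big(2R^2+\frac2n\norm{B_n^{(j)}}_2^2\big)$ uniformly on $K$, which is bounded in probability by \eqref{eq:frob-bd}. For the negative part, $(h_n^{(j)})_-\le\frac1n\sum_i(\log|w-\lambda_i|)_-$, and convexity of $t\mapsto t^{1+\varepsilon}$ gives $\int_K(h_n^{(j)})_-^{1+\varepsilon}\,dw\le\sup_{\lambda\in\C}\int_K(\log|w-\lambda|)_-^{1+\varepsilon}\,dw$, a finite deterministic constant because $\big(\log\tfrac1{|w|}\big)^{1+\varepsilon}$ is locally integrable in the plane. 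Hence $\int_K|g_n|^{1+\varepsilon}\,dw$ is bounded in probability as well.

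With this in hand, the rest is a routine truncation. Given $\delta>0$ and $\eta>0$, choose $C_\eta$ with $\P\big(\int_K|g_n|^{1+\varepsilon}\,dw>C_\eta\big)<\eta$ for all large $n$, then $T=T_\eta$ with $\norm{\Delta f}_\infty C_\eta T^{-\varepsilon}<\delta/2$, and split $g_n=g_n\mathbf 1_{|g_n|\le T}+g_n\mathbf 1_{|g_n|>T}$. On the event $\{\int_K|g_n|^{1+\varepsilon}\,dw\le C_\eta\}$ the tail piece contributes at most $\norm{\Delta f}_\infty T^{-\varepsilon}\int_K|g_n|^{1+\varepsilon}\,dw\le\delta/2$, using $|g_n|\mathbf 1_{|g_n|>T}\le T^{-\varepsilon}|g_n|^{1+\varepsilon}$; for the truncated piece, $|g_n(w)|\mathbf 1_{|g_n(w)|\le T}\le T$ and $g_n(w)\to0$ in probability for a.e.\ $w$, so bounded convergence (in $\omega$) followed by dominated convergence (in $w\in K$) yields $\E\int_K|\Delta f|\,|g_n|\mathbf 1_{|g_n|\le T}\,dw\to0$, whence this piece is $<\delta/2$ with probability $>1-\eta$ for large $n$. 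Combining, $\P\big(|\int_K\Delta f\,g_n\,dw|>\delta\big)<2\eta$ for large $n$, which proves part (a). For part (b) one works on the probability-one event on which $\frac1n(\norm{B_n^{(1)}}_2^2+\norm{B_n^{(2)}}_2^2)$ is bounded in $n$ and, by Fubini applied to the almost-sure form of \eqref{eq:diff-conv}, $g_n(w)\to0$ for Lebesgue-a.e.\ $w\in K$; on that event the same split runs deterministically (truncated piece $\to0$ by dominated convergence, tail piece $O(T^{-\varepsilon})$), so $\limsup_n|\int_K\Delta f\,g_n\,dw|=O(T^{-\varepsilon})$ for every $T$, hence $0$.

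I expect the only genuine obstacle to be the local singularities of $\log|\det(B_n^{(j)}-wI_n)|$: eigenvalues of the two ensembles may accumulate near different points $w$, so $g_n$ can carry large $\log(1/\mathrm{dist})$-type spikes that a.e.\ convergence alone cannot control. The $L^{1+\varepsilon}(K)$ bound above — which uses nothing beyond \eqref{eq:frob-bd} and the fact that $\log(1/|w|)$ lies in every $L^p_{\mathrm{loc}}(\C)$, $p<\infty$ — is precisely the device that tames them and converts a.e.\ convergence of $g_n$ into convergence of $\int_K\Delta f\,g_n$.
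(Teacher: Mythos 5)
Your proof is correct, and it follows the same overall strategy that the paper relies on; the difference is only in how the key analytic step is packaged. The paper does not write out an argument at all: it quotes \cite{bcz} for part (a) and says part (b) follows by repeating the proof of \cite[Lemma 10.1]{bcz}, and that proof runs through Girko's identity exactly as you do, but then invokes Tao and Vu's dominated convergence lemma (\cite[Lemma 3.1]{tao_vu}) as a black box, verifying its uniform-integrability hypothesis in $L^2$: one applies Cauchy--Schwarz against $\Delta f$, the deterministic bound $\int |\Delta f(w)|^2 \bigl(\log|\lambda-w|\bigr)^2\,dw \le C(1+|\lambda|^2)$, and Weyl's inequality $\frac1n\sum_i|\lambda_i|^2\le\frac1n\norm{B_n}_2^2$ together with \eqref{eq:frob-bd}. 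You instead establish an $L^{1+\varepsilon}$ bound directly on the log-potentials --- Jensen plus Schur's inequality for the positive part, local integrability of $(\log(1/|u|))^{1+\varepsilon}$ for the negative part --- and then carry out the truncation/limit-interchange by hand, and in part (b) pass to an almost-sure full-measure set of $w$ via Fubini before running the same split pathwise. Both implementations extract the same uniform integrability from \eqref{eq:frob-bd} and convert the a.e.\ convergence \eqref{eq:diff-conv} into \eqref{eq:int-diff-conv}; yours buys self-containedness (no appeal to \cite[Lemma 3.1]{tao_vu}) at the cost of a longer write-up, and the only points you should make explicit in a final version are the joint measurability of $(w,\omega)\mapsto g_n(w)$ needed for the Fubini/expectation steps and the standard fact that convergence in probability plus a uniform bound yields $L^1$ convergence, both of which are routine.
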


The replacement principle of \cite{tao_vu} requires a uniform control on $s_{\min}(A_n - w \sqrt{np}I_n)$ for Lebesgue almost every $w\in \C$. Theorem \ref{thm: smallest singular} (see below) provides such a control only when $w$ is away from the real line. Therefore we need to use Lemma \ref{lem:replacement}, borrowed from \cite{bcz}, instead of \cite[Theorem 2.1]{tao_vu}.

Lemma \ref{lem:replacement}(a) was proved in \cite{bcz}. Repeating the proof of \cite[Lemma 10.1]{bcz} one can derive Lemma \ref{lem:replacement}(b). Details are left to the reader. 

We apply Lemma \ref{lem:replacement} with $B_n^{(1)}=\f{1}{\sqrt{np_n}}A_n$ and $B_n^{(2)}$ which is the matrix of i.i.d.~centered complex Gaussian entries with variance $1/n$. The assumption (i) is straightforward to verify: it follows from laws of large numbers.
It is well known that $\f{1}{n}\log|\det(B_n^{(2)}-wI_n)|$ admits a limit. Hence, establishing  assumption (ii) of Lemma \ref{lem:replacement} boils down to showing that $\log(\cdot)$ is integrable with respect to the empirical measure of the singular values of $B_n^{(1)}-w I_n$.  As $\log(\cdot)$ is unbounded near zero, one needs to establish the weak convergence of the empirical measure of the singular values, find bounds on $s_{\min}$, and show that there are not many singular values in an interval near zero (the unboundedness of $\log(\cdot)$ near infinity is not a problem since the maximal singular value of $B_n^{(1)}-wI_n$ is $O(1)$ with probability $1-o(1)$). These are the three ingredients of the proof of Theorem \ref{thm:sparse_general}.

\subsection{Smallest singular value} \label{sec:smallest_sing}The desired bound on $s_{\min}(A_n - \sqrt{n p_n} w I_n)$ is derived in the theorem below.

 \begin{thm}  \label{thm: smallest singular}
 Let ${A}_n$ be an $n \times n$ matrix with i.i.d.~entries $a_{i,j}= \delta_{i,j} \xi_{i,j}$, where $\{\delta_{i,j}\}$ are independent Bernoulli random variables taking value 1 with probability $p_n \in (0,1]$, and $\{\xi_{i,j}\}$ are i.i.d.~real-valued centered sub-Gaussian with unit variance. Fix $R \ge 1$, $r \in (0,1]$ and let $D_n$ be a diagonal matrix such that $ \norm{D_n} \le R \sqrt{np_n}$ and $ {\mathrm{Im}}(D_n)= r'\sqrt{np_n} I_n$ for some $r'$ with $|r'| \in [r,1]$. Then there exist constants $0< c_{\ref{thm: smallest singular}}, {\bar{c}_{\ref{thm: smallest singular}}}, c'_{\ref{thm: smallest singular}}, C_{\ref{thm: smallest singular}}, C'_{\ref{thm: smallest singular}}, \ol{C}_{\ref{thm: smallest singular}} < \infty$, depending only on $R, r$, and the sub-Gaussian norm of $\{\xi_{i,j}\}$, such that for any $\vep>0$ we have the following:

 \begin{enumerate}[(i)]

\item If
 \beq
p_n \ge  \frac{\ol{C}_{\ref{thm: smallest singular}} \log n}{n}  ,\notag
 \eeq
 then
 \[
  \P \left( s_{\min}({A}_n + D_n) \le c_{\ref{thm: smallest singular}} \vep \exp \left(-C_{\ref{thm: smallest singular}} \frac{\log (1/p_n)}{\log (np_n)} \right) \sqrt{\frac{p_n}{n}}  \right)
  \le \vep +  \f{C'_{\ref{thm: smallest singular}}}{\sqrt{np_n}}. 
 \]

 \item Additionally, if
 \beq\label{eq:p-n-condition}
 \log(1/p_n) < {\bar{c}_{\ref{thm: smallest singular}}} (\log np_n)^2,
 \eeq
 then
  \begin{multline}\label{eq:strong-s-min}
  \P \left( s_{\min}({A}_n + D_n) \le c_{\ref{thm: smallest singular}} \vep \exp \left(-C_{\ref{thm: smallest singular}} \frac{\log (1/p_n)}{\log (np_n)} \right) \sqrt{\frac{p_n}{n}}  \right)\\
  \le \vep +  \exp(-c'_{\ref{thm: smallest singular}}\sqrt{np_n}).
 \end{multline}
  \end{enumerate}
\end{thm}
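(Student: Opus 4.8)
The plan is to reduce the estimate on $s_{\min}(A_n+D_n)$ to a bound on a single inner product via the geometric decomposition of the sphere described in the outline, and then invoke the quantitative anti-concentration machinery built in Sections \ref{sec:non-dom real part}--\ref{sec: net real}. First I would write $s_{\min}(A_n+D_n)=\inf_{\|u\|_2=1}\|(A_n+D_n)u\|_2$ and split the unit sphere of $\C^n$ into compressible vectors, dominated vectors, and the remaining ``incompressible and non-dominated'' vectors. On compressible and dominated vectors the infimum is bounded below using the results imported from \cite{BR} (these only require $np_n\gtrsim\log n$, which is guaranteed by the hypothesis $p_n\ge \ol{C}_{\ref{thm: smallest singular}}\log n/n$ in part (i), and a fortiori under \eqref{eq:p-n-condition}), together with the operator-norm bound $\|A_n+D_n\|=O(\sqrt{np_n})$ coming from sub-Gaussianity and $\|D_n\|\le R\sqrt{np_n}$. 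For the incompressible and non-dominated part, I would apply the standard invertibility-via-distance bound of \cite{RV1}: up to a factor $\sqrt{n}$, $\|(A_n+D_n)u\|_2$ is controlled below by $\dist(X_1,H_n^w)$, where $X_1$ is the first column of $A_n+D_n$ and $H_n^w$ is the span of the other $n-1$ columns; and $\dist(X_1,H_n^w)\ge |\langle X_1,v\rangle|$ for $v$ a unit normal to $H_n^w$.

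The core of the argument is then to show that, with the required probability, no unit normal $v$ to $H_n^w$ can cause $|\langle X_1,v\rangle|$ to be small. Here I would use the splitting of vectors into \emph{genuinely complex} and \emph{essentially real}. For genuinely complex $v$: by Section \ref{sec:non-dom real part}, the normal vector has a non-dominated real part with high probability; by Section \ref{sec: net complex} there is a net of sub-exponential (in $\sqrt{np_n}$) cardinality for genuinely complex vectors of bounded \abbr{LCD}; and by Section \ref{sec: kernel complex}, combining this net with the anti-concentration estimates, the normal vector cannot have sub-exponential \abbr{LCD}. Once the \abbr{LCD} is at least exponential in $\sqrt{np_n}$, a standard small-ball / Esseen-type inequality shows $\P(|\langle X_1,v\rangle|\le \vep\, t_0)\lesssim \vep + (\text{exponentially small in }\sqrt{np_n})$ for the appropriate scale $t_0\asymp \exp(-C\log(1/p_n)/\log(np_n))\sqrt{p_n/n}$; the peculiar exponential prefactor is exactly the loss incurred in the \abbr{LCD}-to-small-ball step in the sparse regime (sparsity forces the ``effective dimension'' of the random vector to be $\asymp np_n$, and the $\log(1/p_n)/\log(np_n)$ exponent tracks the worst-case arithmetic structure surviving at that scale). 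For essentially real $v$ one runs the analogous argument with the net from Section \ref{sec: net real}. A union bound over the two regimes then yields the claimed probability; this is where the two parts of the theorem diverge. Under only $p_n\gtrsim\log n/n$ the net cardinalities and \abbr{LCD} thresholds are large enough that the union bound can only absorb a polynomial error $C'_{\ref{thm: smallest singular}}/\sqrt{np_n}$ (essentially the Berry--Esseen-type bound of Section \ref{sec: singular value}), giving part (i); under the extra condition \eqref{eq:p-n-condition}, which forces $\exp(c\sqrt{np_n})$ to dominate all the entropy terms, the union bound absorbs everything into $\exp(-c'_{\ref{thm: smallest singular}}\sqrt{np_n})$, giving part (ii).

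Concretely, the order of steps I would carry out is: (1) record the operator-norm bound and the compressible/dominated estimates from \cite{BR}, reducing to incompressible non-dominated $u$; (2) apply the \cite{RV1} distance bound to reduce to $|\langle X_1,v\rangle|$ for a unit normal $v$ to $H_n^w$; (3) invoke Section \ref{sec:non-dom real part} to control the real part of $v$; (4) split into genuinely complex and essentially real $v$, and in each case use the nets of Sections \ref{sec: net complex}/\ref{sec: net real} together with the kernel estimates of Section \ref{sec: kernel complex} to rule out small \abbr{LCD}; (5) for large \abbr{LCD}, run the small-ball estimate on $\langle X_1,v\rangle$ to get the scale $\vep\exp(-C\log(1/p_n)/\log(np_n))\sqrt{p_n/n}$; (6) collect the probability losses and do the union bound, getting $\vep+C'/\sqrt{np_n}$ in general and $\vep+\exp(-c'\sqrt{np_n})$ under \eqref{eq:p-n-condition}.

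The main obstacle is step (4)--(5): controlling the arithmetic structure (the \abbr{LCD}) of the normal vector $v$ when the matrix is both sparse \emph{and} complex-shifted. As emphasized in the introduction, even for a real matrix, the set of potential arithmetic obstacles for $w\in\C\setminus\R$ becomes far richer than in \cite{BR}, so one needs both a sharper notion of \abbr{LCD} adapted to the genuinely complex / essentially real dichotomy and entropy (net) bounds precise enough that the net cardinality stays sub-exponential in $\sqrt{np_n}$. Getting the net small enough to beat the $\exp(-c\, m p_n)$-type anti-concentration probabilities — so that the final union bound closes with the exponent $\sqrt{np_n}$ rather than something weaker — is the delicate quantitative heart of the proof, and it is precisely this that forces the auxiliary condition \eqref{eq:p-n-condition} in part (ii) and, ultimately, the $\exp(c\sqrt{\log n})$ barrier in Theorem \ref{thm:sparse_general}(ii).
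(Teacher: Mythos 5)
Your plan follows essentially the same route as the paper: reduce via the compressible/dominated estimates imported from \cite{BR} and the distance lemma of \cite{RV1} to the inner product of the first column with a unit normal to the span of the remaining columns, handle part (i) by Berry--Ess\'een, and handle part (ii) by the genuinely complex / essentially real dichotomy, the nets of Sections \ref{sec: net complex}--\ref{sec: net real} and the kernel-structure results (Theorem \ref{thm: kernel complex}, Proposition \ref{prop: kernel real}) forcing an exponentially large LCD, followed by the LCD-based small-ball bound. Two minor corrections to your narrative: the prefactor $\exp\left(-C\log(1/p_n)/\log(np_n)\right)$ is the compressibility scale $\rho$ inherited from \cite[Proposition 3.1]{BR}, entering through the threshold $\varepsilon\rho^2\sqrt{p_n/n}$ in the distance lemma, rather than a loss in the LCD-to-small-ball step; and part (i) uses no nets, LCD, or union bound at all --- after conditioning on the last $n-1$ columns the error $C'_{\ref{thm: smallest singular}}/\sqrt{np_n}$ is simply the Berry--Ess\'een term for the fixed normal vector (indeed the LCD machinery could not even be started there, since it needs $\rho^5np_n>1$, which is exactly what \eqref{eq:p-n-condition} guarantees).
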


\begin{rmk}
It is easy to check that if $np_n > \exp(c_\star \sqrt{\log n})$ for some constant $c_\star$ then $p_n$ satisfies \eqref{eq:p-n-condition}. Therefore, it is enough to prove Theorem \ref{thm: smallest singular}(ii) under the assumption \eqref{eq:p-n-condition} in order to apply it to the proof of Theorem \ref{thm:sparse_general}(ii).
\end{rmk}


\begin{rmk}[Optimality of Theorem \ref{thm: smallest singular}]
It is believed that for $A_n$ and $D_n$ as in Theorem \ref{thm: smallest singular} one has that $s_{\min}(A_n + D_n) \sim p_n^{1/2}n^{-1/2}$. Hence, for $p_n \sim n^{\alpha -1}$ for some $\alpha >0$, Theorem \ref{thm: smallest singular} gives an optimal lower bound on $s_{\min}( A_n+D_n)$. However, when $np_n$ grows at a rate sub-polynomial in $n$, we get an additional factor $n^{-o(1)}$. This is due to the fact that one needs $\rho=o(1)$ in \cite[Proposition 3.1]{BR}. To obtain the optimal lower bound on $s_{\min}(A_n+ D_n)$ one needs $\rho=\Omega(1)$ there.

We also add that the optimal probability bound for the event on the \abbr{LHS} of \eqref{eq:strong-s-min} is $\vep + \exp(-c'_{\ref{thm: smallest singular}}{np_n})$. The sub-optimality of the probability bound in \eqref{eq:strong-s-min} is again due to the fact that we can only allow $\rho=o(1)$ in \cite[Proposition 3.1]{BR}.
\end{rmk}

Similar to \cite{BR}, without loss of generality, we can and will assume that $p \le cR^{-2}$, for some small positive constant $c$. For  larger values of $p$ the entries $a_{i,j}$ have variance bounded below by an absolute constant.
In such case, we can ignore sparsity and regard entries $a_{i,j}$ as i.i.d. centered sub-Gaussian random variables whose variance is bounded below.

To prove Theorem \ref{thm: smallest singular} we follow the same scheme as in \cite{BR} and borrow some of its results.
Recalling the definition of the smallest singular value we have
\[
  s_{\min}({A}_n+D_n)=\inf_{z \in S_\C^{n-1}} \norm{({A}_n+D_n)z}_2,
 \]
 where $S_\C^{n-1}:= \{ z \in \C^n: \norm{z}_2=1\}$.
Thus, to bound $s_{\min}$ we need a lower bound on this infimum. To obtain such a bound we decompose the unit sphere into  {\em compressible}, {\em dominated}, and {\em incompressible} vectors, and obtain necessary bounds on the infimum on each of these parts separately. The definitions of compressible, dominated, and incompressible vectors are borrowed from \cite{BR}. However, we now need to treat complex shifts of the matrix ${A}_n$ which necessitates a straightforward modification of those definitions to accommodate vectors with complex valued entries.
We start with the definition of compressible and incompressible vectors.
\begin{dfn}\label{dfn:comp}
Fix $m<n$. The set of $m$-sparse vectors is given by
 \[
  \mathrm{Sparse}(m):=\{ z \in \C^{n} \mid |\mathrm{supp}(z)| \le m \},
 \]
 where $|S|$ denotes the cardinality of a set $S$ and $\mathrm{supp}(\cdot)$ denotes the support.
 Furthermore, for any $\delta>0$, the vectors which are $\delta$-close to $m$-sparse vectors in Euclidean norm, are called $(m, \delta)$-compressible vectors. The set of all such vectors, hereafter will be denoted by $\mathrm{Comp}(m, \delta)$. Thus,
 \[
   \mathrm{Comp}(m, \delta):= \{z \in S_\C^{n-1} \mid \exists y \in \mathrm{Sparse}(m) \text{ such that } \norm{z-y}_2 \le \delta \}.
  \]
  The vectors in $S_\C^{n-1}$ which are not compressible are defined to be incompressible, and the set of all incompressible vectors is denoted as $\mathrm{Incomp}(m,\delta)$.
  \end{dfn}

 Next we define the dominated vectors. These are close to sparse vectors but in a different sense.
 \begin{dfn}
 For any $z \in S_\C^{n-1}$, let $\pi_z: [n] \to [n]$ be a permutation which arranges the absolute values of the coordinates of $z$ in a non-increasing order. For $1 \le m \le m' \le n$, denote by $z_{[m:m']} \in \C^n$ the vector with coordinates
  \[
    z_{[m:m']}(j):=z(j) \cdot \mathbf{1}_{[m:m']}(\pi_z(j)).
  \]
  In other words, we include in $z_{[m:m']}$ the coordinates of $z$ which take places from $m$ to $m'$ in the non-increasing rearrangement of its absolute values. For $\alpha<1$ and $m \le n$ define the set of vectors with dominated tail as follows:
  \[
    \mathrm{Dom}(m, \alpha):= \{ z \in S_\C^{n-1} \mid\norm{z_{[m+1:n]}}_2 \le \alpha \sqrt{m} \norm{z_{[m+1:n]}}_{\infty} \}.
  \]
  \end{dfn}
 \noindent
 Note that by definition, $\text{Sparse}(m) \cap S_\C^{n-1} \subset \text{Dom}(m, \alpha)$, since for $m$-sparse vectors, $z_{[m+1:n]}=0$.

\vskip10pt

While studying the behavior of $s_{\min}$ of real shifts of ${A}_n$ in \cite{BR}, we noted that the control of the infimum over compressible and dominated vectors can be extended when they are viewed as subsets of $S_\C^{n-1}$ (cf.~\cite[Remark 3.10]{BR}). So we only need to control the infimum over vectors that are neither compressible nor dominated. The infimum over the incompressible vectors is tackled by associating it with the average  distance of a column of the matrix ${A}_n$ from the subspace spanned by the rest of the columns. We use the following result:
 \begin{lem}[Invertibility via distance {\cite[Lemma 3.5]{RV1}}]  \label{l: via distance}
Let $\tilde A_n$ be any $n \times n$ random matrix. For $j \in [n]$, let $\tilde{A}_{n,j} \in \C^n$ be the $j$-th column of $\tilde{A}_n$, and let $H_{n,j}$ be the subspace of $\C^n$ spanned by $\{\tilde{A}_{n,i}, i \in [n]\setminus\{j\}\}$. Then for any $\vep, \rho>0$, and $M<n$,
   \beq\label{eq:invertibility_distance}
    \P\left(  \inf_{z \in  \text{\rm Incomp}(M, \rho)} \norm{\tilde{A}_n z}_2 \le \vep \rho^2 \sqrt{\frac{p}{n}} \right)
    \le \frac{1}{M} \sum_{j=1}^n \P \left( \dist(\tilde{A}_{n,j},H_{n,j}) \le \rho\sqrt{p} \vep \right).
   \eeq
 \end{lem}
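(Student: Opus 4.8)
The plan is to derive \eqref{eq:invertibility_distance} from a union bound over the $n$ ``leave-one-out'' distance events, following the original argument of \cite{RV1} and merely keeping track of the sparsity normalisation $\sqrt{p/n}$. First I would fix an arbitrary realisation of $\tilde A_n$ and suppose there is a vector $z \in \text{\rm Incomp}(M, \rho)$ with $\norm{\tilde A_n z}_2 \le \vep \rho^2 \sqrt{p/n}$. For each $j \in [n]$ decompose $\tilde A_n z = z(j)\,\tilde A_{n,j} + \sum_{i \ne j} z(i)\,\tilde A_{n,i}$; the second term lies in $H_{n,j}$, and since $H_{n,j}$ is a linear subspace, $\dist(\tilde A_n z, H_{n,j}) = |z(j)| \cdot \dist(\tilde A_{n,j}, H_{n,j})$. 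As $0 \in H_{n,j}$, the left side is at most $\norm{\tilde A_n z}_2$, whence
\[
|z(j)| \cdot \dist(\tilde A_{n,j}, H_{n,j}) \le \vep \rho^2 \sqrt{p/n} \qquad \text{for every } j \in [n].
\]

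The second step is the standard fact that an incompressible vector has many coordinates of size $\gtrsim \rho/\sqrt n$. Let $\sigma(z) := \{\, j \in [n] : |z(j)| \ge \rho/\sqrt n \,\}$. If $|\sigma(z)| < M$, then the restriction $z_{\sigma(z)}$ of $z$ to $\sigma(z)$ belongs to $\mathrm{Sparse}(M)$, so incompressibility forces $\norm{z - z_{\sigma(z)}}_2 = \norm{z_{[n] \setminus \sigma(z)}}_2 > \rho$; but every coordinate of $z_{[n] \setminus \sigma(z)}$ is strictly less than $\rho/\sqrt n$ in modulus, so $\norm{z_{[n] \setminus \sigma(z)}}_2^2 < n \cdot (\rho/\sqrt n)^2 = \rho^2$, a contradiction. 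Hence $|\sigma(z)| \ge M$, and substituting $|z(j)| \ge \rho/\sqrt n$ into the display above gives $\dist(\tilde A_{n,j}, H_{n,j}) \le \vep \rho \sqrt p$ for every $j \in \sigma(z)$.

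It remains to package this as a union bound. Put $N := |\{\, j \in [n] : \dist(\tilde A_{n,j}, H_{n,j}) \le \vep \rho \sqrt p \,\}|$. The previous two steps show that the event $\{\, \inf_{z \in \text{\rm Incomp}(M,\rho)} \norm{\tilde A_n z}_2 \le \vep \rho^2 \sqrt{p/n} \,\}$ is contained in $\{ N \ge M \}$, so Markov's inequality gives
\[
\P\bigl( N \ge M \bigr) \le \frac{\E N}{M} = \frac1M \sum_{j=1}^n \P\bigl( \dist(\tilde A_{n,j}, H_{n,j}) \le \vep \rho \sqrt p \bigr),
\]
which is precisely \eqref{eq:invertibility_distance}. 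No step here is a genuine obstacle; the only delicate point is the bookkeeping of constants — the radius $\vep\rho^2\sqrt{p/n}$ on the left is calibrated so that, after division by the spread-set threshold $\rho/\sqrt n$, one obtains exactly the target radius $\vep\rho\sqrt p$, and the spread count must be bounded below by $M$ (and not merely by a fixed fraction of $n$) for the averaging factor $1/M$ on the right-hand side to be admissible.
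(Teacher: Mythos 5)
Your proof is correct, and it is essentially the argument of \cite[Lemma 3.5]{RV1} that the paper simply cites without reproducing: the leave-one-out identity $|z(j)|\,\dist(\tilde A_{n,j},H_{n,j})\le\norm{\tilde A_n z}_2$, the observation that an incompressible vector has at least $M$ coordinates of modulus at least $\rho/\sqrt n$, and a Markov (Chebyshev-type) bound on the number of small distances. The calibration of the radii ($\vep\rho^2\sqrt{p/n}$ on the left versus $\vep\rho\sqrt p$ on the right) works out exactly as you state, so nothing further is needed.
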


\vskip10pt
We should mention here that Lemma \ref{l: via distance} can be extended to the case when the event on the \abbr{LHS} of \eqref{eq:invertibility_distance} is intersected with an event $\Omega$, and in that case Lemma \ref{l: via distance} continues to hold if the \abbr{RHS} of \eqref{eq:invertibility_distance} is replaced by intersecting each of the event under the summation sign with the same event $\Omega$ (see also \cite[Remark 2.5]{BR}). We will actually use this generalized version of Lemma \ref{l: via distance}.


In order to apply Lemma \ref{l: via distance} in our set-up,  denote by $B^{D,n-1}$  the $(n-1) \times n$ matrix obtained by collecting the last $(n-1)$ rows of $({A}_n+D_n)^{\sf T}$. Hereafter, for brevity, we will often write $B^D$ instead of $B^{D,n-1}$. We note that any unit vector $z$ such that $B^{D} z=0$ is a vector normal to the subspace spanned by the last $(n-1)$ columns of $({A}_n+D_n)$. Thus, applying Lemma \ref{l: via distance} and the fact that the columns of $\bar{A}_n$ are i.i.d., we see that it is enough to find bounds on $\langle A_{n,1}^D, z \rangle$, such that $B^{D} z=0$, where $A_{n,1}^D$ is the first column of $({A}_n +D_n)$.

The small ball probability bounds on $\langle A_{n,1}^D, z \rangle$ depend on the additive structure of the vector $z$.
Following \cite{BR}, we  see that with high probability, we can assume that any $z \in \text{Ker}(B^D)$ is neither compressible nor dominated, where $\text{Ker}(B^D):=\{u \in \C^n: B^D u=0\}$. Therefore, it is enough to obtain estimates on the  small ball probability for incompressible and non-dominated vectors. To this end, we define the following notion of {\em L\'{e}vy concentration function}:
\begin{dfn}\label{dfn:levy}
Let $Z$ be a random variable in $\C^n$. For every $\vep >0$, the L\'{e}vy concentration function of $Z$ is defined as
\beq
\cL(Z,\vep):=\sup_{u \in \C^n}\P(\norm{ Z-u}_2 \le \vep).\notag
\eeq
\end{dfn}

\vskip10pt
The Berry-Ess\'{e}en bound of \cite[Theorem 2.2.17]{St} yields a weak control on L\'{e}vy concentration function which is enough to prove Theorem \ref{thm: smallest singular}(i). To prove Theorem \ref{thm: smallest singular}(ii) a significant amount of additional work is needed which is the key contribution of this paper.

To obtain a strong probability bound on the L\'{e}vy concentration function, the standard approach is to first quantify the additive structure present in an incompressible vector via the definition of {\em least common denominator} (\abbr{LCD}). When the \abbr{LCD} is large, one can derive a good bound on the L\'{e}vy concentration function using Ess\'{e}en's inequality \cite{esseen} (see also \cite[Theorem 6.3]{V}). However, Ess\'{e}en's inequality does not yield a strong small ball probability estimate for vectors with small values of \abbr{LCD}. Nevertheless, these vectors are shown to admit a special net of small cardinality and therefore one can still apply the union bound to complete the proof. For example, see \cite{BR, RV1, RV2}. One would hope to carry out the same program here. However, when we view the incompressible and non-dominated vectors of small \abbr{LCD} as a subset of $S_\C^{n-1}$, its real dimension is twice as large as in the proof \cite[Proposition 4.1]{BR}. On the other hand, for the real-valued  random variables in ${A}_n$,  one does not expect to obtain better control on the L\'{e}vy concentration function. Thus the proof of \cite[Proposition 4.1]{BR} breaks down as the bounds on the L\'{e}vy concentration function and the size of the net do not match (see also \cite[Remark 4.5]{BR}).

To tackle this obstacle we decompose the vectors according to the angle between their real and imaginary parts. More precisely, we define the {\em real-imaginary de-correlation} as follows:
\begin{dfn} \label{dfn: real-imaginary}
Let $z \in \C^m$ for some positive integer $m$. Denote $V:=V(z):= \left(\begin{smallmatrix} x^{\sf T} \\ y^{\sf T} \end{smallmatrix} \right)$, where $z= x+ \sqrtneg y$. Then we denote the real-imaginary de-correlation of $z$ by
 \[
  d(z):= \left( \det(V V^{\sf T}) \right)^{1/2}.
 \]
 \end{dfn}

 This notion of real-imaginary de-correlation was introduced in \cite{RV no-gaps} to study the no-gap deocalization property of the eigenvectors of a wide class of random matrices. In \cite{RV no-gaps} it is termed as ``real-complex correlation''. Here we deviate from that terminology upon noting that small values of $d(z)$ indicate that the real and the imaginary part of $z$ are close to being linearly dependent.


If a vector $z \in S_\C^{n-1}$ has a large value of $d(z)$, then we call this vector  {\em genuinely complex}, whereas vectors with small real-imaginary de-correlations are termed  {\em essentially real} vectors {(See \eqref{eq:compl-dfn} and \eqref{eq:Real-dfn} for a precise formulation)}. The real and imaginary parts of essentially real vectors being almost linearly dependent it allows us to construct a net whose cardinality is a polynomial of degree $n$ in terms of the mesh. Therefore, one can use the small ball probability estimates from \cite{BR} to show that with high probability, there does not exist any essentially real vector in the kernel of $B^D$ with a small \abbr{LCD}.

The analysis of genuinely complex vectors is more delicate.
Following the recent work of \cite{RV no-gaps} we define a notion of a {\em two-dimensional \abbr{LCD}}. Roughly speaking the two-dimensional \abbr{LCD} $D_2(\cdot)$ of $z= x+ \mathrm{i} y$ identifies a non-trivial $\theta^\star(z):=(\theta^\star_1(z),\theta^\star_2(z)) \in \R^2$ such that $\theta_1^\star(z) x + \theta_2^\star(z) y$ is close to an integer point and $\theta^\star(z)$ has the least possible Euclidean norm among all such choices.  See Definition \ref{dfn:wtz-V} for a precise formulation.
%
%
Using a result of \cite{RV no-gaps} (Theorem 7.5 there) we show that the small ball probability bound of genuinely complex vectors decays roughly as the inverse of the $(2n)$-th power of $D_2(\cdot)$ (see the bound in \eqref{eq: Levy complex}). This probability bound  balances  the cardinality of the net of genuinely complex vectors for which $\Delta(z_{\rm{small}}):=\|\theta_1^\star(z_{\rm{small}}) x_{\rm{small}} + \theta_2^\star(z_{\rm{small}}) y_{\rm{small}}\|_2$ (precise definition of $\Delta(\cdot)$ can be found in Definition \ref{dfn:auxiliary}) is large, where $z_{\rm{small}}= x_{\rm{small}} + {\mathrm i} y_{\rm{small}}$ is the part of $z$ containing the coordinates of small modulus. It allows us to take the union bound over the net of such vectors. To treat the remaining set of genuinely complex vectors, using results from \cite{BR}, we show that, with high probability, there cannot exist a vector $z \in \text{Ker}(B^D)$ with a dominated real part. This additional observation then shows that for any $z \in \text{Ker}(B^D)$  the quantity $\Delta(z_{\rm{small}})$ must also be large. This finishes the outline of the proof of Theorem \ref{thm: smallest singular}.

\subsection{Intermediate singular values}\label{sec:inter-sing-val}
We also need to show that there are not too many singular values of $(A_n- w\sqrt{np}I_n)$ in a small interval around zero. The following theorem does that job. Before stating the theorem, for $i \in [n]$, let us denote $s_i(\cdot)$ to be the $i$-th largest singular value.

\begin{thm}\label{thm:intermed-sing}
Let $A_n$ be an $n \times n$ matrix whose entries are $\{\xi_{i,j} \delta_{i,j}\}_{i,j=1}^n$ where $\{\xi_{i,j}\}_{i,j=1}^n$ are i.i.d.~real-valued random variables with zero mean and unit variance, and $\{\delta_{i,j}\}_{i,j=1}^n$ are i.i.d.~$\dBer(p_n)$ random variables. There exist constants $c_{\ref{thm:intermed-sing}}$ and $C_{\ref{thm:intermed-sing}}$\footnote{the constants $c_{\ref{thm:intermed-sing}}$ and $C_{\ref{thm:intermed-sing}}$ can potentially depend on the tail of the distribution of $\{\xi_i\}_{i=1}^n$.} such that the following holds:
Let $\psi: \N \mapsto \N$ be such that $\psi(n) < n$ and $\min\{p_n \psi(n), \psi^2(n)/n\} \ge C_{\ref{thm:intermed-sing}} \log n$. Then for any $w \in B_\C(0,1)$ we have
\[
\P\left(\bigcup_{i=3 \psi(n)}^{n-1} \left\{s_{n-i}\left(\f{A_n}{\sqrt{np_n}} - w I_n\right) \le c_{\ref{thm:intermed-sing}} \f{i}{n}\right\}\right) \le \f{2}{n^2}.
\]
\end{thm}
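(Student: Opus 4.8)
\textbf{Proof proposal for Theorem \ref{thm:intermed-sing}.}
The plan is to use the standard variational/negative-second-moment approach of \cite[Lemma A.4]{tao_vu}, which bounds intermediate singular values of a matrix by the distances of its rows from coordinate subspaces spanned by the other rows. Concretely, if $M$ is an $n\times n$ matrix with rows $R_1,\dots,R_n$, and $H_j^{(k)}$ denotes the span of a fixed $k$-subset of $\{R_i: i\ne j\}$, then for each $k$ one gets a lower bound on $\sum_{i\le k} s_{n-i+1}(M)^{-2}$ in terms of $\sum_j \dist(R_j, H_j)^{-2}$; rearranged, this says that if many of the distances $\dist(R_j,H_j)$ are of order $1$, then $s_{n-i}(M)$ cannot be much smaller than $i/n$ for $i$ in the relevant range. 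So the whole theorem reduces to a single distance estimate: for $M = A_n/\sqrt{np_n} - wI_n$, a typical row $R_j$ lies at distance $\gtrsim \sqrt{\ell/n}$ from any \emph{fixed} (deterministic, or at least independent of $R_j$) subspace of codimension $\ell$, except with probability $\le n^{-4}$ or so, once $\ell \gtrsim \psi(n)$.

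First I would set up the reduction: invoke \cite[Lemma A.4]{tao_vu} with the block sizes dictated by $\psi(n)$, so that controlling $s_{n-i}$ for $i$ ranging from $3\psi(n)$ to $n-1$ requires the distance bound for subspaces of codimension between (roughly) $\psi(n)$ and $n$. The key point is that after conditioning on all rows except $R_j$, the subspace $H_j$ is independent of $R_j$, and $R_j$ has the form $\frac{1}{\sqrt{np_n}}(\xi_{j,1}\delta_{j,1},\dots,\xi_{j,n}\delta_{j,n}) - w e_j$; subtracting the deterministic vector $w e_j$ only shifts the target subspace, so WLOG $R_j = \frac{1}{\sqrt{np_n}} X_j$ with $X_j$ the sparse random vector. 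Then $\dist(R_j, H_j)^2 = \frac{1}{np_n}\|P X_j\|_2^2$ where $P$ is the orthogonal projection onto $H_j^\perp$, a deterministic projection of rank $\ell := n - \dim H_j$. So I need: $\|P X_j\|_2^2 \ge c\, \ell\, p_n$ with probability $\ge 1 - n^{-4}$, uniformly over projections $P$ of rank $\ell \ge c' \psi(n)$.

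The central estimate is thus a Hanson–Wright-type concentration bound for $\|P X\|_2^2$, where $X = (\xi_1\delta_1,\dots,\xi_n\delta_n)$ has independent coordinates with $\E X_i = 0$, $\E X_i^2 = p_n$, and sub-exponential (in fact sub-Gaussian-times-Bernoulli) tails. We have $\E\|PX\|_2^2 = p_n\,\mathrm{tr}(P) = p_n \ell$, and $\|PX\|_2^2 = X^{\sf T} P X$ is a quadratic form in independent mean-zero variables; the Hanson–Wright inequality (e.g. \cite{V}) gives
\[
\P\big(\,|X^{\sf T}PX - p_n\ell| > t\,\big) \le 2\exp\!\left(-c\min\!\Big\{\tfrac{t^2}{K^4 \|P\|_F^2},\ \tfrac{t}{K^2 \|P\|}\Big\}\right),
\]
where $K$ controls the $\psi_1$-norm of the $X_i$; here $\|P\|_F^2 = \ell$ and $\|P\| = 1$. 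Taking $t = p_n\ell/2$, the exponent is of order $\min\{p_n^2 \ell, p_n\ell\} = p_n^2\ell$ when $p_n\le 1$; so deviation probability $\le 2\exp(-c p_n^2 \ell)$. To beat $n^{-4}$ we would need $p_n^2 \ell \gtrsim \log n$, which for small $p_n$ is weaker than the hypothesis $p_n\psi(n)\gtrsim \log n$. The fix — and I expect this to be the main technical obstacle — is that one should not use the generic Hanson–Wright bound but exploit the structure: the Bernoulli truncation means $\|PX\|_2^2$ concentrates at a better rate. One route is a two-step argument: condition on the pattern $\delta = (\delta_1,\dots,\delta_n)$; then $\|PX\|^2$ is a quadratic form in the $\xi_i$ restricted to the random support $S = \{i:\delta_i=1\}$, and $\|P_{|S}\|_F^2 \approx p_n\ell$ on the likely event $|S|\approx np_n$, giving Gaussian-type deviations at scale $\sqrt{p_n\ell}$, i.e. probability $\exp(-c p_n\ell)$ for a constant-fraction deviation — matching the hypothesis $p_n\psi(n)\gtrsim\log n$. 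But one must also handle the randomness of $S$ (a Bernoulli/binomial concentration for $\mathrm{tr}(P_{|S}) = \sum_{i\in S} P_{ii}$, again needing $\min\{p_n\ell, \ell^2/n\}\gtrsim\log n$ via Bernstein, since $\sum P_{ii}^2 \le \|P\|_F^2 \cdot \max_i P_{ii}$ is not automatically small — this is where the second branch $\psi^2(n)/n \gtrsim \log n$ of the hypothesis enters, controlling projections that are "spread out"). Once the pointwise bound $\P(\|PX\|^2 < c p_n\ell) \le n^{-4}$ is in hand for each of the $\le n$ rows and each relevant $\ell$, a union bound over $j\in[n]$ (the subspace $H_j$ is random but we condition it away) closes it, and feeding $\dist(R_j,H_j)^2 \gtrsim \ell/n$ into \cite[Lemma A.4]{tao_vu} yields $s_{n-i}(M) \gtrsim i/n$ simultaneously for all $i\in[3\psi(n),n-1]$, with total failure probability $\le 2/n^2$.
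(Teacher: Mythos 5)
Your proposal is correct and follows essentially the same route as the paper: reduce to a row–subspace distance estimate, feed it into the negative second moment identity of \cite[Lemma A.4]{tao_vu} after passing to a submatrix with $\lceil i/2\rceil$ rows removed (Cauchy interlacing), handle the shift $-w\sqrt{np_n}e_j$ by enlarging the subspace to include $e_j$, and union bound. The only difference is that the paper does not prove the distance bound from scratch but quotes it as Lemma \ref{lem:dist-conc-bd} from \cite[Lemma 3.5]{bcc-generator}, whose proof is the two-step conditioning argument (condition on the Bernoulli pattern, then Bernstein for $\mathrm{tr}(P|_S)$ and sub-Gaussian concentration on the support) that you sketch, with the two branches $p_n\psi(n)$ and $\psi^2(n)/n$ entering exactly as you describe.
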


\vskip10pt
To prove Theorem \ref{thm:intermed-sing} we follow the approach of \cite{tao_vu}, which was adapted to the sparse case in \cite{bcc-generator, wood}. We first show that the distance of any row of $A_n$ from any given subspace of not very large dimension cannot be too small with large probability. This observation together with a result from \cite{tao_vu} finishes the proof.

\subsection{Weak convergence}\label{subsec:weak-conv}
Recall that to show the integrability of $\log(\cdot)$  we further need to establish the weak convergence of the empirical measure of the singular values of $\f{1}{\sqrt{np}}A_n - w I_n$. Define
\beq\label{eq:bmA_n}
{\bm A}_n^w:= \begin{bmatrix} 0 & \f{1}{\sqrt{np}}A_n - wI_n\\  \f{1}{\sqrt{np}}A_n^* - \bar{w}I_n & 0 \end{bmatrix}
\eeq
and denote by $\nu_n^w$   the \abbr{ESD} of ${\bm A}_n^w$. It can be easily checked that $\nu_n^w$ is the symmetrized version of the empirical measure of the singular values $\f{1}{\sqrt{np}}A_n - w I_n$. Thus, it is enough to prove the weak convergence of $\nu_n^w$.
\begin{thm}\label{thm:weak-conv}
(i) Let ${A}_n$ be an $n \times n$ matrix with entries $a_{i,j}=\delta_{i,j} \cdot \xi_{i,j}$, where $\delta_{i,j}$ are i.i.d. Bernoulli random variables with $\P(\delta_{i,j}=1)=p_n$, and $\xi_{i,j}$ are centered i.i.d.~real-valued random variables with unit variance. Assume $p_n=\omega(\f{\log n}{n})$. Fix any  $w \in B_\C(0,1)$. Then there exists a probability measure $\nu_\infty^w$ such that $\nu_n^w$ converges weakly to $\nu_\infty^w$, in probability.

(ii) If additionally $\{\xi_{i,j}\}_{i,j=1}^n$ have finite fourth moment and $\sum_{n=1}^\infty (n^2 p_n)^{-1} < \infty$ then the above convergence holds almost surely.
\end{thm}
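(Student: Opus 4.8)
The plan is to prove convergence of the Stieltjes transform of $\nu_n^w$ at every point of the upper half plane, following the classical Hermitization/linearization scheme together with the Lindeberg replacement method of Chatterjee \cite{cha05}. First I would reduce to the Gaussian case: let $\bm G_n^w$ be the analogue of $\bm A_n^w$ with $\xi_{i,j}$ replaced by i.i.d.\ standard real Gaussians (keeping the same $\delta_{i,j}$, or even replacing $\delta_{i,j}\xi_{i,j}$ by a centered Gaussian of variance $p_n$), and let $\mu_n^w$, $\widetilde\mu_n^w$ denote the ESDs of $\bm A_n^w$ and $\bm G_n^w$. The goal of the replacement step is to show that for any fixed $z = E + \sqrtneg\eta$ with $\eta>0$,
\[
\E\, m_{\mu_n^w}(z) - \E\, m_{\widetilde\mu_n^w}(z) \to 0,
\]
where $m_\mu(z) = \int (x-z)^{-1}\, d\mu(x)$. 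Swapping one entry pair $(i,j)$, $(j,i)$ at a time and using a third-order Taylor expansion of the resolvent entries of $\bm A_n^w$ in that coordinate, the first- and second-order terms match (means are zero, variances agree after normalization by $np_n$), and the remainder is controlled by $\|(\,\cdot\,- z)^{-1}\|^3 \le \eta^{-3}$ times the third absolute moment of the swapped entry, which is $O((np_n)^{-3/2}\cdot np_n \cdot \|\xi\|_{\psi_2}^3)$ per entry after a truncation at level, say, $(np_n)^{1/4}$; summing over the $O(n^2)$ swaps and using $np_n = \omega(\log n)$ gives a bound that vanishes. The truncation error is handled by the standard rank/Hoffman--Wielandt argument: replacing $\xi_{i,j}$ by its truncation changes $\bm A_n^w$ by a matrix whose Frobenius norm squared is $o(n)$ in expectation, so by the bound $L(\mu_n^w,\widetilde\mu_n^w)^3 \le n^{-1}\|\,\cdot\,\|_2^2$ on the Lévy distance (Hoffman--Wielandt) the ESDs are asymptotically unchanged in probability. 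This identifies the limit: since $\bm G_n^w$ is (after conjugation) a shifted Gaussian matrix, $\widetilde\mu_n^w$ converges to a deterministic symmetric measure $\nu_\infty^w$ — concretely the measure whose Stieltjes transform solves the quadratic self-consistent equation for the singular value distribution of a shift of a Ginibre-type matrix — so $\E m_{\mu_n^w}(z) \to m_{\nu_\infty^w}(z)$.

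Next I would upgrade convergence in expectation to convergence in probability via concentration. For $\bm A_n^w$ Hermitian of size $2n$, the map $\bm A_n^w \mapsto m_{\mu_n^w}(z)$ is, for fixed $z$ with $\Im z = \eta$, Lipschitz in the entries with constant $O(\eta^{-2} n^{-1/2})$ in Frobenius norm (resolvent derivative bound). Because the entries $\delta_{i,j}\xi_{i,j}$ are sub-Gaussian — this is exactly where the sub-Gaussian hypothesis of Theorem \ref{thm:sparse_general} would normally enter — one could invoke a Gaussian-type concentration inequality; but since Theorem \ref{thm:weak-conv} is stated only under a second-moment assumption, I would instead condition on $\{\delta_{i,j}\}$, truncate the $\xi_{i,j}$ at level $n^{1/2-\epsilon}$ (absorbing the truncation error as above), and apply Azuma--Hoeffding / McDiarmid to the martingale obtained by exposing the entries one at a time; each bounded difference is $O(\eta^{-2} n^{-1/2}\cdot n^{1/2-\epsilon})$ so the total variance proxy over $O(n^2)$ steps is $o(1)$, giving $m_{\mu_n^w}(z) - \E m_{\mu_n^w}(z) \to 0$ in probability, hence $m_{\mu_n^w}(z) \to m_{\nu_\infty^w}(z)$. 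Convergence of Stieltjes transforms at all $z$ in the upper half-plane is equivalent to weak convergence $\nu_n^w \Rightarrow \nu_\infty^w$ in probability, which is part (i).

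For part (ii), under the finite fourth moment assumption and $\sum_n (n^2 p_n)^{-1} < \infty$, I would sharpen both halves. In the replacement step, with a finite fourth moment one can truncate at a constant-order level (or carry the fourth moment through directly), so the per-swap remainder, after matching two orders, is $O(\eta^{-3}(np_n)^{-3/2}\cdot np_n \cdot \E|\xi|^3)$ and the sum over $O(n^2)$ swaps is $O(n^2 (np_n)^{-1/2})$ — wait, that is not summable, so here one instead keeps the quantitative rate $\E m_{\mu_n^w}(z) - m_{\nu_\infty^w}(z) = O((np_n)^{-1/2})$ or similar and does not need Borel--Cantelli on this deterministic piece. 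The Borel--Cantelli argument is applied to the concentration step: with a finite fourth moment and a truncation at level $(n p_n)^{1/4}$ or so, McDiarmid gives $\P(|m_{\mu_n^w}(z) - \E m_{\mu_n^w}(z)| > \delta) \le 2\exp(-c\delta^2 n^2 p_n / n) = 2\exp(-c\delta^2 n p_n)$ — summable once $n p_n \ge 2\log n$, which holds under the stated hypothesis — while the truncation discrepancy contributes, via Hoffman--Wielandt, a term bounded in probability by a constant times $(n^2 p_n)^{-1}\sum_{i,j}\E[\xi_{i,j}^2 \mathbf 1_{|\xi_{i,j}|>(np_n)^{1/4}}]$-type quantities whose tail probabilities sum by the fourth-moment bound and $\sum_n(n^2p_n)^{-1}<\infty$; so along a fixed countable dense set of $z$'s the convergence holds almost surely, and a diagonal/continuity argument promotes it to all $z$ in the upper half-plane almost surely, i.e.\ weak convergence almost surely. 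The main obstacle I anticipate is not any single estimate but the bookkeeping of the two truncation levels (one for identifying the limit via Lindeberg, one for concentration) so that \emph{both} the replacement error and the concentration-plus-truncation error are summable in $n$ under exactly the hypotheses $\sum_n (n^2 p_n)^{-1} < \infty$ and a finite fourth moment — balancing these thresholds is the delicate point, and it is presumably why the theorem is stated with a fourth moment rather than the second moment that suffices for part (i).
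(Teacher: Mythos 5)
Your overall route is the same as the paper's: truncate to bounded entries via Hoffman--Wielandt, compare the expected Stieltjes transform with a Gaussian model by the Chatterjee--Lindeberg swapping argument, use concentration to remove the expectation, and invoke the known a.s.\ convergence of the shifted Ginibre model to $\nu_\infty^w$, with the fourth moment and $\sum_n (n^2p_n)^{-1}<\infty$ entering only through the truncation/almost-sure bookkeeping. The only structural difference is cosmetic: you concentrate the Stieltjes transform itself by a martingale (McDiarmid/Azuma) argument, whereas the paper concentrates linear statistics of compactly supported Lipschitz test functions (a bounded-entries concentration lemma for the sparse matrix and Guionnet--Zeitouni log-Sobolev concentration for the Gaussian one) and then converts closeness of expected Stieltjes transforms into closeness of the expected linear statistics; both variants do the same job.

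However, two of your quantitative steps fail as written. First, in the Lindeberg step your per-swap remainder omits the factor $1/n$ coming from the normalized trace: with $m_n(z)=\frac{1}{2n}\operatorname{Tr}(\bm A_n^w-z)^{-1}$ the third-order term of one swap is $O\bigl(\eta^{-4}n^{-1}\,\E|a_{ij}/\sqrt{np_n}|^{3}\bigr)=O\bigl(\eta^{-4}n^{-1}p_n(np_n)^{-3/2}\bigr)$, so the sum over the $O(n^2)$ swaps is $O\bigl(\eta^{-4}(np_n)^{-1/2}\bigr)$, which vanishes; this is exactly the paper's comparison bound, and it also dissolves the "not summable" worry you raise in part (ii), since this piece is deterministic and only needs to tend to zero. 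Second, your concentration step does not close with the truncation level $n^{1/2-\epsilon}$: after truncation at level $T_n$ the entries of $\bm A_n^w$ are bounded by $T_n/\sqrt{np_n}$, a single entry swap moves $m_n(z)$ by $O\bigl(\eta^{-2}T_n/(n\sqrt{np_n})\bigr)$, and summing the squares over the $\sim n^2$ independent entries gives a variance proxy of order $\eta^{-4}T_n^2/(np_n)$. This is $o(1)$ only if $T_n=o(\sqrt{np_n})$, which your choice $T_n=n^{1/2-\epsilon}$ violates throughout the sparse regime of interest (e.g.\ $np_n$ polylogarithmic); your claimed bounded difference $O(\eta^{-2}n^{-1/2}\cdot n^{1/2-\epsilon})$ with total variance proxy $o(1)$ is also numerically incorrect, since $n^2\cdot(\eta^{-2}n^{-\epsilon})^2$ diverges. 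The repair is routine and is what the paper's scheme amounts to: truncate at any level $T_n\to\infty$ with $T_n=o(\sqrt{np_n})$ (the second moment suffices for the in-probability truncation error in part (i)), and for part (ii) control the truncation discrepancy almost surely using the fourth moment together with $\sum_n(n^2p_n)^{-1}<\infty$ and Borel--Cantelli, while the concentration probabilities are summable since $np_n=\omega(\log n)$.
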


To prove Theorem \ref{thm:weak-conv} we first apply a standard truncation technique which shows that it is enough to prove the weak convergence of $\nu_n^w$ to $\nu_\infty^w$ for bounded $\{\xi_{i,j}\}_{i,j=1}^n$ (see Lemma \ref{lem:truncate_lsd}). This truncation argument requires the additional assumptions of part (ii) of Theorem \ref{thm:weak-conv} to establish the almost sure convergence. 

It is well known that $\nu_{G_n}^w$, the symmetrized version of the empirical law of the singular values of $\f{1}{\sqrt{n}} G_n - w I_n$, where $G_n$ is a complex Ginibre matrix, converges weakly, almost surely to $\nu_\infty^w$. Therefore, to obtain Theorem \ref{thm:weak-conv} it is enough to show that for bounded $\{\xi_{i,j}\}_{i,j=1}^n$, the signed measure $\nu_n^w - \nu^w_{G_n}$ converges weakly to the point mass to zero, as $n \to \infty$, in probability or almost surely, depending on the sparsity parameter $p$.

This is done in Section \ref{sec:weak-conv} using the following two-fold argument. First we establish that both the random probability measures $\nu_n^w$ and $\nu_{G_n}^w$ are close to their expectations, denoted hereafter by $\E \nu_n^w$ and $\E \nu_{G_n}^w$, respectively. This step uses standard concentration inequalities for the spectral measure of Hermitian random matrices.

To complete the proof of Theorem \ref{thm:weak-conv} we then need to show that $\E \nu_n^w$ and $\E \nu_{G_n}^w$ are themselves close to each other. Here we appeal to the Lindeberg replacement principle which was introduced to the random matrix theory in \cite{cha05, cha06} to prove the semicircle law for random symmetric matrices with exchangeable entries on  and above the diagonal. Subsequently, this technique has been used on numerous occasions in the random matrix theory literature.

\section{The structure of the kernel: Vectors with non-dominated real part}\label{sec:non-dom real part}
Recall from Section \ref{sec:smallest_sing} that the main challenge in proving Theorem \ref{thm: smallest singular} is to show that there does not exist a genuinely complex vector $z \in \text{Ker}(B^D)$ with a small two-dimensional \abbr{LCD}. As a first step we show that for any $z \in \text{Ker}(B^D)$, its real part must have a {non-dominated} component with high probability. This is shown in the following result, which is the main result of this section. Before stating the result let us introduce some notation, which is borrowed from \cite{RV no-gaps}: For a number $M<n/2$, to be determined during the course of the proof, we denote by $\text{small}(z)$ the set of the $(n-M)$ coordinates of $z$ having the smallest absolute values. The ties are broken arbitrarily. We also write $z_{\text{small}}=x_{\text{small}}+ \sqrtneg y_{\text{small}}:=z_{\text{small}(z)}$. Hereafter, we drop the subscript in $p_n$ and for ease we write $p$ instead. 

\begin{prop}  \label{thm: large LCD for real part}
Let ${A}_n$ be a matrix with i.i.d.~entries $a_{i,j}=\xi_{ij} \delta_{ij}$, where $\{\xi_{ij}\}$ are i.i.d.~centered real-valued random variables with unit variance and finite fourth moment, and $\{\delta_{ij}\}$ are i.i.d.~$\dBer(p)$ random variables. Set
 \[
   \ell_0:= \left \lceil \frac{\log 1/(8 p)}{\log \sqrt{pn}} \right \rceil.
  \]
Fix $r \in (0,1]$ and $R \ge 1$ such that $\Im(D_n)=r'\sqrt{np}I_n$ with $|r'|  \in [r,1]$ and $\|D_n\| \le R \sqrt{np}$. Fix another positive real $K \ge 1$. Then there exist constants ${C}_{\ref{thm: large LCD for real part}}, \wt{C}_{\ref{thm: large LCD for real part}}, c_{\ref{thm: large LCD for real part}}$, and $\bar{c}_{\ref{thm: large LCD for real part}}$, depending only on $r, R, K$, and the fourth moment of $\{\xi_{ij}\}$, such that the following holds.
Denote $\rho:= (\wt{C}_{\ref{thm: large LCD for real part}}(K+R))^{-\ell_0-6}$ and assume that
  \begin{equation}  \label{eq: min p-1}
  c_{\ref{thm: large LCD for real part}}{\rho^5 pn} >1.
  \end{equation}
 Set
\(
 M= {{C}_{\ref{thm: large LCD for real part}}  \rho^{-4} p^{-1}}.
\)
Then
  \begin{multline*}
    \P \bigg(\exists z \in \mathrm{Ker}(B^D) \cap S_{\C}^{n-1} :
    \norm{\frac{x_{\mathrm{small}}}
    {\norm{x_{\mathrm{small}}}_2}}_{\infty} \ge  \rho p^{1/2} \text{ and } \\
    \norm{{A}_n} \le K  \sqrt{np} \bigg) \le \exp(-\bar{c}_{\ref{thm: large LCD for real part}}np).
  \end{multline*}
\end{prop}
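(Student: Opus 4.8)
The plan is to show that a vector $z\in\mathrm{Ker}(B^D)$ whose small-coordinate real part $x_{\mathrm{small}}$ is ``spread out'' (i.e.\ $\|x_{\mathrm{small}}/\|x_{\mathrm{small}}\|_2\|_\infty \ge \rho p^{1/2}$) is too rigid to lie in the kernel of a sparse random matrix, unless we are on the exceptional event of probability $\exp(-\bar c np)$. The key point is that if $B^D z=0$ then each row of $B^D$ is orthogonal to $z$, hence also (taking real and imaginary parts, using $\Im D_n = r'\sqrt{np}\,I_n$) satisfies a pair of linear constraints that couple $x=\Re z$ and $y=\Im z$. I would first reduce to controlling a single row: condition on all but one of the $n-1$ rows of $B^D$, so that $z$ becomes (morally) a fixed vector, and then estimate the probability that a fresh sparse row is nearly orthogonal to it via the L\'evy concentration function $\mathcal L(\langle A_{n,1}^D, z\rangle,\cdot)$. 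A net argument over the (small, compressible/dominated-excluded) set of candidate $z$'s converts this per-row bound into the stated uniform bound.

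The core estimate I would establish is a small-ball bound of the shape
\[
\mathcal L\big(\langle \mathrm{row}, z\rangle,\, c\rho p^{1/2}\big)\ \le\ 1-c'\,\rho^2 p
\]
whenever $\|x_{\mathrm{small}}/\|x_{\mathrm{small}}\|_2\|_\infty \ge \rho p^{1/2}$. The mechanism is the standard one: the spread-out hypothesis on $x_{\mathrm{small}}$ guarantees a linear number of coordinates $j$ with $|x(j)|\gtrsim \rho p^{1/2}\|x_{\mathrm{small}}\|_2$, and on each such coordinate the Bernoulli--sub-Gaussian entry contributes an independent summand to $\Re\langle\mathrm{row},z\rangle$ that is nondegenerate with probability $\asymp p$ and of size $\asymp\rho p^{1/2}$; tensorizing these (via the Paley--Zygmund/Esseen-type argument of \cite{BR}) yields the deficit $c'\rho^2 p$. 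Because we also have $\|A_n\|\le K\sqrt{np}$ built into the event, $\|z_{\mathrm{small}}\|_2$ is bounded below by an absolute constant after discarding compressible and dominated vectors, exactly as in \cite[Remark 3.10]{BR}; this is what makes ``spread out $x_{\mathrm{small}}$'' usable rather than vacuous. Raising $1-c'\rho^2 p$ to the power $\#\{\text{rows}\}\asymp n$ gives $\exp(-c' \rho^2 pn)$, and the hypothesis \eqref{eq: min p-1} ($c\rho^5 pn>1$) ensures this exponent dominates the metric-entropy cost of the net.

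The net construction is where the parameters $M = C\rho^{-4}p^{-1}$ and $\rho = (\wt C(K+R))^{-\ell_0-6}$ enter, mirroring \cite[Section 4]{BR}: one builds an $\varepsilon$-net of the relevant sphere of vectors with the prescribed spreading property whose cardinality is at most $(C/\varepsilon)^{cM}$ roughly, using that such $z$ have effective dimension controlled by $M$; one then pays $\|A_n\|\cdot\varepsilon\le K\sqrt{np}\,\varepsilon$ in the approximation step and chooses $\varepsilon\asymp \rho p^{1/2}/(K\sqrt{np})$ so that net points inherit a comparable spreading bound. The final union bound reads
\[
(C/\varepsilon)^{cM}\cdot\exp(-c'\rho^2 pn)\ \le\ \exp\big(cM\log(1/\varepsilon)-c'\rho^2 pn\big),
\]
and since $M\asymp\rho^{-4}p^{-1}$ while $\log(1/\varepsilon)\lesssim \ell_0\log\sqrt{pn}$ (the factor $\ell_0$ is absorbed into the definition of $\rho$), condition \eqref{eq: min p-1} makes the bracket $\le -\bar c np$. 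I expect the \textbf{main obstacle} to be the interaction between the complex shift and the net: because $D_n$ is diagonal with a genuinely imaginary part, the orthogonality relation $B^D z=0$ mixes $x$ and $y$, so one cannot treat $z$ as a real vector, and the net must be built on (a slice of) $S_\C^{n-1}$ while still keeping cardinality polynomial in the mesh to the power $M$ — this requires carefully exploiting that we only need to control the \emph{real} part's spread (not the full two-dimensional structure), together with the compressible/dominated exclusions from \cite{BR}, to keep the effective dimension at $O(M)$ rather than $O(n)$. A secondary technical point is verifying the lower bound $\|x_{\mathrm{small}}\|_2\gtrsim 1$ off the compressible/dominated sets uniformly in the shift, which I would handle by the same argument as \cite[Remark 3.10]{BR}.
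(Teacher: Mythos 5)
There is a genuine gap, and it starts with a misreading of the event to be excluded. The hypothesis $\norm{x_{\mathrm{small}}/\norm{x_{\mathrm{small}}}_2}_{\infty} \ge \rho p^{1/2}$ is a \emph{lower} bound on the $\ell_\infty/\ell_2$ ratio: it says the real part of the small coordinates has a relatively large peak, i.e.\ is ``dominated-like'', not ``spread out''. In particular it guarantees only \emph{one} large coordinate, so your core claim that it ``guarantees a linear number of coordinates $j$ with $|x(j)|\gtrsim \rho p^{1/2}\norm{x_{\mathrm{small}}}_2$'' is false (a vector with a single big entry and tiny tail satisfies the hypothesis), and with it the proposed per-row small-ball deficit $1-c'\rho^2 p$ has no justification — anti-concentration of $\langle \mathrm{row}, z\rangle$ needs exactly the opposite (small $\ell_\infty/\ell_2$) condition, as in Lemma \ref{c: spread vector}. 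Relatedly, the claimed net of cardinality $(C/\vep)^{cM}$ for the set of such $z$ is unsupported: unit vectors whose $x_{\mathrm{small}}$ merely has one large coordinate form an essentially full-dimensional subset of $S_\C^{n-1}$, so there is no ``effective dimension $O(M)$'' to exploit, and even granting both ingredients your exponent bookkeeping ($c'\rho^2 pn$ versus $cM\log(1/\vep)$ with $M\asymp\rho^{-4}p^{-1}$) does not close under the stated assumption $c\,\rho^5 pn>1$.

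The paper's argument is structured quite differently and is essentially deterministic at the last step. First, by Proposition \ref{p: dominated and compressible}, with probability $1-\exp(-cnp)$ every kernel vector is neither compressible nor dominated, which gives the pointwise bound $\norm{x_{\mathrm{small}}}_\infty \le \norm{z_{\mathrm{small}}}_\infty \le (c_0\sqrt{M})^{-1}$. Second — and this is the real probabilistic content, Lemma \ref{l: real comp} — one shows that $\norm{x_{\mathrm{small}}}_2$ cannot be \emph{small} for a kernel vector: if $\norm{B^Dz}_2$ is tiny and $\norm{x_{\mathrm{small}}}_2\le c''\rho$, then because $\Im(B^D)$ is $r'\sqrt{np}$ times the identity on the last $n-1$ columns, the imaginary part $y$ is forced into a $(2M+1)$-dimensional neighbourhood determined by $\Re(B^D)|_J x_{[1:M]}$; this collapse in dimension is what yields a net of size $\exp(O(M\log(1/\rho)))$, which is then beaten by the $\exp(-\gamma n)$ Lévy bound for non-dominated net points. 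Combining the two facts gives $\norm{x_{\mathrm{small}}}_\infty/\norm{x_{\mathrm{small}}}_2 \le (c''\rho c_0\sqrt{M})^{-1} < \rho p^{1/2}$ after enlarging $C_{\ref{thm: large LCD for real part}}$ in $M=C_{\ref{thm: large LCD for real part}}\rho^{-4}p^{-1}$. So the structural use of the purely imaginary shift to pin down $y$, and the reduction to lower-bounding $\norm{x_{\mathrm{small}}}_2$, are the missing ideas; without them your route does not produce either the anti-concentration input or the entropy bound it needs.
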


\begin{rmk}
For clarity we only prove Proposition \ref{thm: large LCD for real part} for $r'\in[r,1]$. It will be evident that the proof of the case $r' \in [-1,-r]$ is exactly the same. We spare the details.
\end{rmk}


The key to the proof of Proposition \ref{thm: large LCD for real part} is in showing that if the real part of a vector $z$ is compressible then $\norm{B^D z}_2$ cannot be too small.  This is derived in the following lemma:


\begin{lem}  \label{l: real comp}
Let $B^D, {A}_n, \rho, K,R,r$, and $r'$ be as in Proposition \ref{thm: large LCD for real part}. Then there exist constants $0<c_{\ref{l: real comp}} , c'_{\ref{l: real comp}} ,c'' _{\ref{l: real comp}}, \ol{c}_{\ref{l: real comp}}<\infty$, depending only on $K,R,r$, and the fourth moment of $\{\xi_{i,j}\}$, such that for any $p^{-1} \le M \le c'_{\ref{l: real comp}} n/\log (1/\rho)$,
 \begin{multline*}
   \P ( \exists z=x+\sqrtneg y \in S_{\C}^{n-1}:     \norm{B^{D}z}_2 \le c_{\ref{l: real comp}} \rho \sqrt{np} , \, \\ \norm{x_{\mathrm{small}}}_2 \le c''_{\ref{l: real comp}} \rho, \ \text{ and } \ \norm{{A}_n} \le K \sqrt{np} )    \le \exp(-\ol{c}_{\ref{l: real comp}} np).
 \end{multline*}
\end{lem}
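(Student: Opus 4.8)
The plan is to follow the standard compressible-vector strategy: a union bound over an $\varepsilon$-net of the relevant set of vectors, combined with a small-ball estimate for a single fixed vector and the operator-norm bound $\|A_n\|\le K\sqrt{np}$ to pass from the net back to the whole set. The key observation that makes this work is that the hypothesis $\|x_{\mathrm{small}}\|_2 \le c''_{\ref{l: real comp}}\rho$ forces $x$ itself to be compressible: since $x_{\mathrm{small}}$ collects the $n-M$ smallest coordinates of $z$ (not of $x$), one has $\|x - x_{\mathrm{large}(z)}\|_2 \le \|x_{\mathrm{small}}\|_2 \le c''_{\ref{l: real comp}}\rho$, where $x_{\mathrm{large}(z)}$ is supported on a set of size at most $M$. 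Hence $x/\|x\|_2$ (when $\|x\|_2$ is not too small) lies in $\mathrm{Comp}(M, \delta)$ for an appropriate $\delta \sim \rho$. Writing $z = x + \sqrtneg\, y$ with $\|x\|_2^2 + \|y\|_2^2 = 1$, we split into the case $\|x\|_2$ small (where $z$ is essentially purely imaginary and we use invertibility of the imaginary shift) and the case $\|x\|_2 \gtrsim \rho$ (where $x$ is genuinely compressible).

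First I would reduce to a net. Since $B^D$ has real entries, $B^D z = B^D x + \sqrtneg\, B^D y$, so $\|B^D z\|_2^2 = \|B^D x\|_2^2 + \|B^D y\|_2^2 \ge \|B^D x\|_2^2$; thus it suffices to bound $\|B^D x\|_2$ from below, where $x$ is a real vector that is $c''_{\ref{l: real comp}}\rho$-close to an $M$-sparse real vector. Normalizing, we must control $\inf \|B^D u\|_2$ over real $u \in \mathrm{Comp}(M,\delta) \cap S^{n-1}_{\R}$ with $\delta \asymp \rho$. This is exactly the kind of statement proved in \cite{BR} for compressible vectors (cf.\ \cite[Remark 3.10]{BR} and the surrounding results there), so I would invoke the compressible-vector estimate from \cite{BR} — adapted to the shifted matrix $B^D$ whose rows have a diagonal shift with imaginary part $r'\sqrt{np}$ and $\|D_n\| \le R\sqrt{np}$ — to get that with probability at least $1 - \exp(-\bar c\, np)$, no such real compressible $u$ has $\|B^D u\|_2 \le c_{\ref{l: real comp}}\rho\sqrt{np}$. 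The ranges $p^{-1} \le M \le c'_{\ref{l: real comp}} n/\log(1/\rho)$ and the constraint on $M,\rho$ from the parent proposition are precisely what is needed for the net cardinality $\binom{n}{M}(C/\delta)^M \le \exp(C M \log(1/\rho))$ to be beaten by the single-vector probability bound $\exp(-c\, np)$, after accounting for the operator-norm approximation step.

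The remaining piece is the case $\|x\|_2 \le c''_{\ref{l: real comp}}\rho/\text{(something)}$, i.e.\ when normalization of $x$ is illegitimate. Here $\|y\|_2 \ge 1/2$, so up to a small error $z$ is a real multiple of a purely imaginary unit vector, and $\|B^D z\|_2 \ge \|B^D y\|_2 - \|B^D x\|_2 \ge \|B^D y\|_2 - K\sqrt{np}\cdot c''_{\ref{l: real comp}}\rho$. Since $B^D = (\bar A_n + D_n)^{\sf T}$ restricted to $n-1$ rows and $\mathrm{Im}(D_n) = r'\sqrt{np}\,I$ with $|r'| \ge r$, every row of $B^D$ has a coordinate that is deterministically of size $\gtrsim r\sqrt{np}$ on the diagonal; for the real-valued vector $y$ this gives $\|B^D y\|_2 \gtrsim r\sqrt{np}\,\|y_{[1:n-1]}\|_2 \gtrsim r\sqrt{np}$ deterministically (the imaginary shift cannot be cancelled by the real matrix $\bar A_n$ acting on the real vector $y$). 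Choosing $c''_{\ref{l: real comp}}$ small relative to $r/K$ then makes $\|B^D z\|_2 \gtrsim r\sqrt{np} \gg c_{\ref{l: real comp}}\rho\sqrt{np}$, so this case contributes no bad event at all. Combining the two cases and taking a union bound finishes the proof.

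The main obstacle I anticipate is bookkeeping of constants in the net argument: one must verify that with $M = C_{\ref{thm: large LCD for real part}}\rho^{-4}p^{-1}$ and $\rho = (\wt C(K+R))^{-\ell_0 - 6}$, the entropy term $M\log(1/\rho) \sim \rho^{-4}p^{-1}\cdot \ell_0 \log(\wt C(K+R))$ is genuinely smaller than $c\, np$ — this is where condition \eqref{eq: min p-1}, $c_{\ref{thm: large LCD for real part}}\rho^5 pn > 1$, enters, since it gives $\rho^{-4}p^{-1} \ll \rho\, n \ll n$, and $\ell_0 = O(\log(1/p)/\log(pn))$ is small enough to be absorbed. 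Getting the single-vector small-ball bound for $B^D$ acting on a fixed compressible real vector with the correct dependence on the sparse rows (so that the per-row small-ball probability is $1 - cp$ rather than something weaker) is the technical heart, but this is available from \cite{BR}; the rest is assembly.
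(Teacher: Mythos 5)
Your central reduction fails at the first step: you assert that $B^D$ has real entries, so that $\norm{B^Dz}_2^2=\norm{B^Dx}_2^2+\norm{B^Dy}_2^2$ and it suffices to lower-bound $\norm{B^Dx}_2$ for a (nearly sparse) real vector $x$. But $B^D$ is \emph{not} real: by hypothesis $\Im(D_n)=r'\sqrt{np}\,I_n$ with $|r'|\ge r>0$, and this complex shift is precisely the point of the lemma. For a complex $B^D$ one only has $\norm{B^Dz}_2\ge\norm{\Re(B^D)x-\Im(B^D)y}_2$, so smallness of $\norm{B^Dz}_2$ does not make $\norm{B^Dx}_2$ (or $\norm{\Re(B^D)x}_2$) small; it forces the coupling $\Re(B^D)x\approx\Im(B^D)y=r'\sqrt{np}\,y|_{[2:n]}$. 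Since $y$ is an arbitrary vector of norm at most one, $\Im(B^D)y$ ranges over a ball of radius $\sqrt{np}$ and cannot be treated as a perturbation, so no statement about real compressible vectors and $B^D$ applies, and the union bound you envisage has nothing to bound. The paper's proof turns this coupling the other way: after fixing $J=\supp(x_{[1:M]})$ and conditioning on $\Re(B^D)|_J$, the relation forces $y|_{[2:n]}$ into an $O(\rho)$-neighborhood of the random subspace $(r'\sqrt{np})^{-1}\Re(B^D)|_J\R^J$, hence $z$ into a neighborhood of a set of real dimension at most $2M+1$; one then takes a net of cardinality $\exp(O(M\log(1/\rho)))$, uses Proposition \ref{p: dominated and compressible} only to guarantee the net points are incompressible and non-dominated, and applies the L\'evy-concentration bound of Lemma \ref{c: spread vector} to the independent columns $\Re(B^D)|_{J^c}$, conditionally on $\Re(B^D)|_J$. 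This conditioning/decoupling structure is the actual content of the lemma and is absent from your proposal.

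Two further steps would break even if the reduction were available. First, the normalization of $x$: the hypothesis only gives $\norm{x_{\mathrm{small}}}_2\le c''_{\ref{l: real comp}}\rho$, so $x/\norm{x}_2$ lies in $\mathrm{Comp}(M,\rho)$ only when $\norm{x}_2$ is at least of constant order; for intermediate $\norm{x}_2$ (say between $\rho$ and a small constant) the compressible estimate yields only $\norm{B^Dx}_2\gtrsim\rho\sqrt{np}\,\norm{x}_2$, which can be far below the threshold $c_{\ref{l: real comp}}\rho\sqrt{np}$, and your ``essentially imaginary'' case does not cover this range. Second, the deterministic bound in that case is false: the first column of $\Im(B^D)$ is zero, so $\norm{\Im(B^D)y}_2=|r'|\sqrt{np}\,\norm{y|_{[2:n]}}_2$ vanishes when $y$ is concentrated on the first coordinate, while $B^De_1$ is the real, sparse off-diagonal part of the first row of $A_n$, which is zero with probability $(1-p)^{n-1}\approx e^{-pn}$. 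Vectors $z\approx \sqrtneg\, e_1$ therefore produce bad events of exactly the order of the probability bound being proved; they cannot be dismissed deterministically and must be absorbed, as in the paper, through the compressible/dominated exclusion event of Proposition \ref{p: dominated and compressible}.
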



To prove Lemma \ref{l: real comp} we borrow results from \cite{BR}. In \cite[Proposition 3.1]{BR} we showed that, with high probability, there does not exist any real-valued compressible or dominated vector $z$ such that $\norm{\Re(B^D) z}_2$ is small, where $\Re(B^D)$ denotes the real part of the matrix $B^D$.  In \cite[Remark 3.10]{BR} it was also argued that the same conclusion holds for $\norm{B^D z}_2$ when $z$ is now allowed to be complex valued. We will need this result to prove Lemma \ref{l: real comp}. For completeness we state it below.

\begin{prop}[{\cite[Proposition 3.1, Remark 3.10]{BR}}]   \label{p: dominated and compressible}
Let ${A}_n$ be as in Proposition \ref{thm: large LCD for real part}. Fix $K, R \ge 1$, and assume that $D_n$ is a non-random diagonal matrices with complex entries such that $\|D_n\|\le R \sqrt{pn}$. Then there exist constants $0< c_{\ref{p: dominated and compressible}}, \ol{c}_{\ref{p: dominated and compressible}}, c'_{\ref{p: dominated and compressible}},{C}_{\ref{p: dominated and compressible}}, \wt{C}_{\ref{p: dominated and compressible}}, \ol{C}_{\ref{p: dominated and compressible}}< \infty$, depending only on $K, R$, and the fourth moment of $\{\xi_{ij}\}$, such that
for
 \beq
 \frac{\ol{C}_{\ref{p: dominated and compressible}} \log n}{n} \le p \le \frac{1}{10},\label{eq:p_assumption}
 \eeq
and any $p^{-1} \le M \le  c_{\ref{p: dominated and compressible}} n$, we have
   \begin{align*}
   &\P\Big(\exists z \in \text{\rm Dom}(M, (C_{\ref{p: dominated and compressible}}(K+R))^{-4})
    \cup \text{\rm Comp}(M, \rho)\\
    & \  \norm{({A}_n+D_n)z}_2 \le {c}'_{\ref{p: dominated and compressible}}(K+R) \rho \sqrt{np}
   \text{ and } \norm{{A}_n} \le K  \sqrt{pn}\Big)\le \exp(- \ol{c}_{\ref{p: dominated and compressible}} pn),
  \end{align*}
where $ \rho=(\wt{C}_{\ref{p: dominated and compressible}}(K+R))^{-\ell_0-6}$ and $\ell_0$ are as in Proposition \ref{thm: large LCD for real part}.
 \end{prop}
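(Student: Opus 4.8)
\emph{Proof plan.} The plan is to reduce the statement to a lower bound on $\|(A_n+D_n)\hat z\|_2$ over genuinely $M$-sparse unit vectors $\hat z$, and then to run a net-plus-small-ball argument, the sparsity being dealt with inside the small-ball step. Work throughout on the event $\Omega_K:=\{\|A_n\|\le K\sqrt{pn}\}$, so that $\|A_n+D_n\|\le(K+R)\sqrt{pn}$, and put $\alpha:=(C_{\ref{p: dominated and compressible}}(K+R))^{-4}$; note $\rho=(\wt C_{\ref{p: dominated and compressible}}(K+R))^{-\ell_0-6}\le\alpha$ once $\wt C_{\ref{p: dominated and compressible}}$ is large. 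First I would observe that $\mathrm{Dom}(M,\alpha)\cup\mathrm{Comp}(M,\rho)\subseteq\mathrm{Comp}(M,\alpha)$: for $z\in\mathrm{Dom}(M,\alpha)\cap S_\C^{n-1}$ the sortedness of the coordinates gives $\|z_{[M+1:n]}\|_\infty\le M^{-1/2}\|z_{[1:M]}\|_2\le M^{-1/2}$, hence $\|z-z_{[1:M]}\|_2=\|z_{[M+1:n]}\|_2\le\alpha\sqrt M\,\|z_{[M+1:n]}\|_\infty\le\alpha$. For $z\in\mathrm{Comp}(M,\alpha)$, set $\hat z:=z_{[1:M]}/\|z_{[1:M]}\|_2\in\mathrm{Sparse}(M)\cap S_\C^{n-1}$; then $\|z_{[1:M]}\|_2\ge\tfrac12$ and $\|z-z_{[1:M]}\|_2\le\alpha$, so $\|(A_n+D_n)z\|_2\ge\tfrac12\|(A_n+D_n)\hat z\|_2-(K+R)\alpha\sqrt{pn}$. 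Since $(K+R)\alpha\sqrt{pn}=C_{\ref{p: dominated and compressible}}^{-4}(K+R)^{-3}\sqrt{pn}$, and likewise $c'_{\ref{p: dominated and compressible}}(K+R)\rho\le c'_{\ref{p: dominated and compressible}}(K+R)\alpha$, can be made an arbitrarily small multiple of $\sqrt{pn}$ by enlarging $C_{\ref{p: dominated and compressible}}$ and shrinking $c'_{\ref{p: dominated and compressible}}$, it remains to show: on $\Omega_K$, with probability at least $1-\exp(-\ol c_{\ref{p: dominated and compressible}}pn)$, one has $\|(A_n+D_n)\hat z\|_2>c_0\sqrt{pn}$ for every $\hat z\in\mathrm{Sparse}(M)\cap S_\C^{n-1}$, where $c_0>0$ depends only on the fourth moment of $\{\xi_{ij}\}$.

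\emph{Single-vector bound.} Next I would fix $v\in\mathrm{Sparse}(M)\cap S_\C^{n-1}$ and establish $\P(\|(A_n+D_n)v\|_2\le c_0\sqrt{pn})\le\exp(-c_1 pn)$. As $D_n$ is diagonal, the entries $((A_n+D_n)v)_i=\langle R_i,v\rangle+(D_n)_{ii}v_i$ ($R_i$ the $i$-th row of $A_n$) are independent in $i$, and $\E|\langle R_i,v\rangle|^2=p\|v\|_2^2=p$. Conditioning on the Bernoulli pattern of row $i$, write $\sigma_i^2:=\sum_j\delta_{ij}|v_j|^2$ for the conditional variance of $\langle R_i,v\rangle$. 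A Chernoff estimate over the columns of $A_n$ shows that, off an event of probability at most $\exp(-c\,pn)$, the rows collectively see most of the $\ell_2$-mass of $v$, $\sum_i\sigma_i^2\ge\tfrac12 pn$, and in fact there is a set $G$ of $N_v$ rows each with $\sigma_i^2\ge t_v$, where $N_v t_v\ge c_2 pn$ and $N_v\ge c_3 pn$ (one may take $t_v$ of order $p$ when $|\mathrm{supp}(v)|\ge p^{-1}$ and of order $\|v\|_\infty^2$ otherwise; a dyadic decomposition of the coordinates of $v$ by magnitude reduces to the case that $v$ is flat on its effective support, at the cost of an extra $O(\log n)$ factor). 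For each $i\in G$ the short sum $\langle R_i,v\rangle=\sum_{j:\delta_{ij}=1}\xi_{ij}v_j$ has conditional variance $\sigma_i^2$ and, by the finiteness of the fourth moment of $\xi$, bounded normalized fourth moment, so a Paley--Zygmund type small-ball estimate gives $|((A_n+D_n)v)_i|\ge c\sigma_i\ge c\sqrt{t_v}$ with conditional probability at least $\kappa$ ($c,\kappa>0$ absolute); a final Chernoff bound over the conditionally independent rows in $G$ yields $\|(A_n+D_n)v\|_2^2\ge\tfrac{\kappa c^2}{2}N_v t_v\ge c_0^2 pn$ off a further event of probability at most $\exp(-c_4 N_v)\le\exp(-c_1 pn)$.

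\emph{Net and union bound.} Then I would cover $\mathrm{Sparse}(M)\cap S_\C^{n-1}$ by an $\eta$-net $\mathcal N$, with $\eta$ a positive constant of order $c_0/(K+R)$, built by first choosing the $M$-element support and then an $\eta$-net of the unit sphere of $\C^M$, so that $|\mathcal N|\le\binom nM(C/\eta)^{2M}=\exp(C'_{K,R}\,M\log(n/M))$. Applying the single-vector bound and a union bound over $\mathcal N$, and then passing from $\mathcal N$ to all of $\mathrm{Sparse}(M)\cap S_\C^{n-1}$ via $\|(A_n+D_n)v\|_2\ge\|(A_n+D_n)v'\|_2-(K+R)\eta\sqrt{pn}$, gives $\|(A_n+D_n)\hat z\|_2>\tfrac{c_0}{2}\sqrt{pn}$ for all $\hat z\in\mathrm{Sparse}(M)\cap S_\C^{n-1}$ except on an event of probability at most $|\mathcal N|\exp(-c_1 pn)=\exp(C'_{K,R}\,M\log(n/M)-c_1 pn)$. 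Under \eqref{eq:p_assumption} and $M\le c_{\ref{p: dominated and compressible}}n$, taking $\ol C_{\ref{p: dominated and compressible}}$ large and $c_{\ref{p: dominated and compressible}}$ small forces $C'_{K,R}M\log(n/M)\le\tfrac{c_1}{2}pn$, so this probability is at most $\exp(-\tfrac{c_1}{2}pn)$; together with the first paragraph this proves the proposition, with $\ol c_{\ref{p: dominated and compressible}}=c_1/2$. The $\ell_0$-indexed choices of Proposition \ref{thm: large LCD for real part}, namely $\rho=(\wt C_{\ref{p: dominated and compressible}}(K+R))^{-\ell_0-6}$ and $M=C_{\ref{p: dominated and compressible}}\rho^{-4}p^{-1}$ under the constraint \eqref{eq: min p-1}, are precisely what keeps $M\log(n/M)$ below $\tfrac{c_1}{2}pn$; the compressible step itself only uses the coarse mesh $\eta$, never $\rho$.

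\emph{Where the difficulty lies.} The main obstacle is the single-vector bound with failure probability genuinely exponential in $pn$, uniformly over all support sizes and coordinate-magnitude profiles of $v$: since $A_n$ is sparse, a typical row is blind to a fixed $v$, so one cannot anti-concentrate $\langle R_i,v\rangle$ coordinate by coordinate. One must instead control, with exponentially small failure probability, how much $\ell_2$-mass of $v$ the rows collectively see (the bound $\sum_i\sigma_i^2\ge\tfrac12 pn$ and its refinement to a well-separated set of rows), and then exploit the non-degeneracy of the resulting short random sums; this is where the fourth-moment hypothesis on $\xi$ is used, and where the lower bound $pn\ge\ol C_{\ref{p: dominated and compressible}}\log n$ is needed, so that the $\binom nM$ and $(C/\eta)^M$ factors of the net are absorbed. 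Keeping $c_0$ uniform in the support size — so that vectors with one dominant coordinate, or with support much smaller than $p^{-1}$, are covered too — is what forces the magnitude-indexed, dyadic (hence $\ell_0$-many-scale) bookkeeping that surfaces in the statement of the proposition.
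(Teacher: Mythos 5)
You should note at the outset that this paper does not prove Proposition \ref{p: dominated and compressible} at all: it is imported verbatim from \cite[Proposition 3.1 and Remark 3.10]{BR}, where it is established by an induction over $\ell_0\approx\log(1/p)/\log\sqrt{np}$ scales of support size, the admissible distortion shrinking by a constant factor at each scale — that is precisely why $\rho=(\wt{C}_{\ref{p: dominated and compressible}}(K+R))^{-\ell_0-6}$ appears in the statement. Your proposal replaces this multi-scale scheme by a single-scale net argument, and that is where it has a genuine gap. Your per-vector bound $\exp(-c_1pn)$ is the best one can hope for uniformly over sparse vectors (already for $v=e_j$ with $D_{jj}$ small, the event that the $j$-th column of $A_n$ vanishes has probability $(1-p)^n$), while the mesh-$\eta$ net of $\mathrm{Sparse}(M)\cap S_\C^{n-1}$ has cardinality $\exp\left(C' M\log(n/M)\right)$ with $C'$ a constant. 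The proposition must hold for every $p^{-1}\le M\le c_{\ref{p: dominated and compressible}}n$ with constants independent of $p$, and for $M\asymp n$, $p=o(1)$ one has $M\log(n/M)\asymp n\gg pn$, so no choice of $\ol{C}_{\ref{p: dominated and compressible}}$ and $c_{\ref{p: dominated and compressible}}$ forces $C'M\log(n/M)\le \tfrac{c_1}{2}pn$; the same failure occurs even for the specific $M=C_{\ref{thm: large LCD for real part}}\rho^{-4}p^{-1}$ used later (e.g.\ $p=n^{-1/2}$ gives $\ell_0=O(1)$, $\rho\asymp 1$, $M\asymp n^{1/2}$, and $M\log(n/M)\asymp n^{1/2}\log n\gg pn=n^{1/2}$). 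So the union bound in your third step does not close.

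The fix cannot be the dyadic reduction you allude to, at least not cheaply: to beat entropy of order $n$ you would need the per-vector probability to improve to $\exp(-cn)$ for spread vectors at the \emph{constant} threshold $c_0\sqrt{pn}$, but for unbalanced profiles (one coordinate of $v$ of constant modulus) the conditional fourth moment satisfies $\E|\langle R_i,v\rangle|^4\asymp p\gg p^2$, so Paley--Zygmund gives a per-row small-ball probability of order $p$ rather than a constant, and tensorization returns you to $\exp(-cpn)$; handling the interpolating profiles is exactly what forces the level-by-level argument of \cite{BR}, with the threshold degrading by a constant factor per level and hence ending at $\rho\sqrt{pn}$ with $\rho\approx C^{-\ell_0}$ rather than at a constant multiple of $\sqrt{pn}$. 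Indeed, your intermediate claim — a constant $c_0$ with $\|(A_n+D_n)\hat z\|_2>c_0\sqrt{pn}$ for all $M$-sparse unit vectors, $M\le c_{\ref{p: dominated and compressible}}n$, off an event of probability $\exp(-\ol{c}pn)$ — would yield the proposition with $\rho=\Omega(1)$, which is exactly the strengthening of \cite[Proposition 3.1]{BR} that the authors explicitly flag (optimality remark after Theorem \ref{thm: smallest singular} and Remark \ref{rmk:p-ass-explain}) as unavailable and as the obstruction to removing \eqref{p:assumption-as}. So the proposed route is not a proof of the stated result, and as stated it asserts something stronger than what is currently known.
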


Observe that Proposition \ref{p: dominated and compressible} is stated for the square matrix ${A}_n$. To prove Lemma \ref{l: real comp} we need a version of Proposition \ref{p: dominated and compressible} for $(n-1) \times n$ matrices. As noted in \cite[Remark 3.9]{BR} this follows from an easy adaptation. So, without loss of generality we will use Proposition \ref{p: dominated and compressible} also for $(n-1) \times n$ matrices. The final ingredient for the proof of Lemma \ref{l: real comp} is an estimate on the L\'{e}vy concentration function for incompressible and non-dominated vectors. Such an estimate was derived in \cite[Corollary 3.7]{BR} for real valued vectors and matrices with zero diagonal and i.i.d.~off-diagonal entries. One can investigate its proof to convince oneself that the same proof works for complex valued vectors and matrices with i.i.d.~entries. We state this modified version below.


 \begin{lem}[{\cite[Corollary 3.7]{BR}}]     \label{c: spread vector}
 Let ${A}_n$ be as in Proposition \ref{thm: large LCD for real part}. 
 Then  for any $\alpha >1$, there exist $\be, \gamma >0$, depending on $\alpha$ and the fourth moment of $\{\xi_{ij}\}$, such that for $z \in \C^n$, satisfying
  $\norm{z}_\infty/\norm{z}_2 \le \alpha \sqrt{p}$, we have
 \[
   \cL \left({A}_nz, \beta \cdot \sqrt{pn}\norm{z}_2\right)
   \le \exp (-\gamma n ).
 \]
 \end{lem}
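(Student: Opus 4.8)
The statement to prove is Lemma \ref{l: real comp}, an invertibility-type estimate: with overwhelming probability $\exp(-\ol c_{\ref{l: real comp}}np)$, there is no vector $z=x+\sqrtneg y\in S_\C^{n-1}$ that simultaneously has $\|B^Dz\|_2\le c_{\ref{l: real comp}}\rho\sqrt{np}$, has a small ``$M$-largest-coordinate'' real part ($\|x_{\mathrm{small}}\|_2\le c''_{\ref{l: real comp}}\rho$), and lives under the operator-norm event $\|A_n\|\le K\sqrt{np}$. The plan is to run a dichotomy on $z$ itself, exactly as in the compressible/incompressible split underlying Proposition \ref{p: dominated and compressible}, and feed the ``spread'' branch into the L\'evy-concentration bound of Lemma \ref{c: spread vector}.

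\emph{Step 1: the compressible/dominated branch is already handled.} If $z\in\mathrm{Comp}(M,\rho)\cup\mathrm{Dom}(M,(C_{\ref{p: dominated and compressible}}(K+R))^{-4})$, then by Proposition \ref{p: dominated and compressible} (in its $(n-1)\times n$ form, as noted after that proposition) applied with $D_n$ replaced by the last $(n-1)$ rows of $D_n$ — so that $B^D$ plays the role of $A_n+D_n$ — we get $\|B^Dz\|_2> c'_{\ref{p: dominated and compressible}}(K+R)\rho\sqrt{np}$ off an event of probability $\exp(-\ol c_{\ref{p: dominated and compressible}}np)$, provided we choose $c_{\ref{l: real comp}}<c'_{\ref{p: dominated and compressible}}(K+R)$ and $M\le c_{\ref{p: dominated and compressible}}n$ (this is where the constraint $M\le c'_{\ref{l: real comp}}n/\log(1/\rho)$ comes from once we also respect the constraint needed in Step 3). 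So on this branch the event in the lemma is empty.

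\emph{Step 2: on the complementary branch, $z$ is spread.} Suppose now $z\in\mathrm{Incomp}(M,\rho)\setminus\mathrm{Dom}(M,\alpha_0)$ with $\alpha_0:=(C_{\ref{p: dominated and compressible}}(K+R))^{-4}$. The point is that incompressible, non-dominated vectors are automatically ``spread'' in the sense required by Lemma \ref{c: spread vector}: there is a standard deterministic fact (used repeatedly in \cite{RV1,BR}) that for $z\in\mathrm{Incomp}(m,\delta)\setminus\mathrm{Dom}(m,\alpha)$ one has a lower bound on $\|z_{[m+1:n]}\|_2$ of order $\delta$ and hence $\|z_{[m+1:n]}\|_\infty \gtrsim \delta/(\alpha\sqrt m)$; combining this with the trivial bound $\|z\|_\infty\le 1$ on the top $m$ coordinates, one extracts a subset $\sigma\subseteq[n]$ of size comparable to $m$ on which $z$ is genuinely spread, i.e.\ $\|z|_\sigma\|_\infty/\|z|_\sigma\|_2\le \alpha'\sqrt p$ for a suitable $\alpha'=\alpha'(K,R)$, once $M= C_{\ref{thm: large LCD for real part}}\rho^{-4}p^{-1}$ as in Proposition \ref{thm: large LCD for real part} (so $Mp\sim\rho^{-4}\gg1$). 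Restricting the columns of $B^D$ to $\sigma$ gives an $(n-1)\times|\sigma|$ random matrix with i.i.d.\ entries, and $\|B^Dz\|_2\ge \|B^D|_\sigma\,(z|_\sigma)\|_2$ — wait, this inequality goes the wrong way, so instead I will not restrict columns but keep $z$ and use that $z/\|z\|$ on $\sigma$-coordinates is spread: the correct move, exactly as in the proof of Lemma \ref{c: spread vector}'s ancestor \cite[Corollary 3.7]{BR}, is to condition on all coordinates of $z$ outside $\sigma$ and on all but one row of $B^D$, reducing $\langle (B^D)_i, z\rangle$ to a sum $\sum_{j\in\sigma}\xi_{ij}\delta_{ij}z_j + (\text{const})$ of independent terms whose normalized vector of coefficients is spread; Lemma \ref{c: spread vector} then yields $\cL(\langle (B^D)_i,z\rangle,\beta\sqrt{np})\le e^{-\gamma n}$ for each fixed $i$, and tensorizing over a block of $\Theta(n)$ independent rows gives, for each fixed spread unit $z$, $\P(\|B^Dz\|_2\le c_{\ref{l: real comp}}\rho\sqrt{np})\le e^{-\gamma' n}$ with $\gamma'$ depending on $K,R$ and the fourth moment.

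\emph{Step 3: a net over spread incompressible vectors, and union bound.} A single spread vector is controlled with probability $e^{-\gamma'n}$; to pass to the supremum over the (uncountable) set of such $z$ I build a $\delta$-net $\cN$ of $\mathrm{Incomp}(M,\rho)\cap S_\C^{n-1}$ of cardinality $\le (C/\delta)^{2n}$ (real dimension $2n$), with $\delta=c\rho\sqrt{p/n}\cdot(\text{small constant})/(K\sqrt{np})$ chosen so that the operator-norm bound $\|B^D\|\le\|A_n+D_n\|\le (K+R)\sqrt{np}$ lets us transfer the estimate from a net point to a general $z$ at the cost of only a constant factor in the threshold $c_{\ref{l: real comp}}\rho\sqrt{np}$. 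Then $\P(\exists z)\le |\cN|\,e^{-\gamma'n}\le (C'K\sqrt{n})^{2n}e^{-\gamma'n}$, which is $\le e^{-(\gamma'/2)n}\le e^{-\ol c_{\ref{l: real comp}}np}$ (using $p\le 1$) as long as $\gamma'n$ beats the $2n\log(CK\sqrt n)$ entropy cost — this is fine because $\gamma'$ is a fixed constant times $n$ while the entropy is only $O(n\log n)$; to be safe one restricts the block of rows used in Step 2 so that the concentration exponent is linear in $n$ with a constant large enough, or absorbs the $\log n$ by noting $np\ge \ol C_{\ref{p: dominated and compressible}}\log n$ from the standing assumption \eqref{eq:p_assumption}. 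Finally combine Steps 1 and 3: off an event of probability $\exp(-\ol c_{\ref{p: dominated and compressible}}np)+\exp(-\ol c_{\ref{l: real comp}}np)\le \exp(-\ol c np)$, every $z\in S_\C^{n-1}$ is either compressible/dominated — hence $\|B^Dz\|_2>c_{\ref{l: real comp}}\rho\sqrt{np}$ — or spread — hence also $\|B^Dz\|_2>c_{\ref{l: real comp}}\rho\sqrt{np}$; in particular the event described in the lemma, which additionally demands $\|x_{\mathrm{small}}\|_2\le c''_{\ref{l: real comp}}\rho$, is contained in this exceptional event. (The hypothesis $\|x_{\mathrm{small}}\|_2\le c''_{\ref{l: real comp}}\rho$ is not actually needed to kill the event — it only makes the statement weaker — so we may simply discard it; alternatively it is used in Proposition \ref{thm: large LCD for real part} downstream, not here.)

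\textbf{Main obstacle.} The delicate point is Step 2: verifying that an incompressible, non-dominated $z$ really contains a coordinate block $\sigma$ of size $\Theta(Mp)\gg1$ on which $\|z|_\sigma\|_\infty/\|z|_\sigma\|_2\le\alpha\sqrt p$, with $\alpha$ controlled purely by $K,R$ (so that Lemma \ref{c: spread vector} applies with fixed parameters), and then correctly organizing the conditioning so that the small-ball estimate per row is genuinely $e^{-\gamma n}$ rather than $e^{-\gamma|\sigma|}$ — this is exactly the mechanism of \cite[Corollary 3.7]{BR} and must be replayed for the $(n-1)\times n$, complex-shift setting. The matching of the net entropy $(C/\delta)^{2n}$ against the concentration exponent is the other place to be careful, but it is routine once one remembers that here the L\'evy bound is $e^{-\gamma n}$ (not a polynomial in an LCD), so there is plenty of room.
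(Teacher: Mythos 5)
You have proved the wrong statement. The statement in question is Lemma \ref{c: spread vector} itself: the L\'evy-concentration estimate $\cL(A_n z,\beta\sqrt{pn}\,\norm{z}_2)\le\exp(-\gamma n)$ for a single fixed vector $z$ with $\norm{z}_\infty/\norm{z}_2\le\alpha\sqrt{p}$. Your proposal instead sets out to prove Lemma \ref{l: real comp} (the invertibility statement over vectors with small $\norm{x_{\mathrm{small}}}_2$), and in Step 2 it invokes Lemma \ref{c: spread vector} as a black box ("Lemma \ref{c: spread vector} then yields\dots"). So nothing in the proposal establishes the required small-ball bound; the argument is circular with respect to the actual task. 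For the record, the paper does not reprove this bound either: it is imported from \cite[Corollary 3.7]{BR} with the remark that the same proof (an anti-concentration estimate for the scalar sums $\sum_j\delta_{ij}\xi_{ij}z_j$ with spread coefficient vector, tensorized over the $n$ independent rows) goes through verbatim for complex $z$ and for matrices with i.i.d.\ entries. A correct submission would have had to replay that argument, not cite it.

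Two secondary points, in case you rework this as a proof of Lemma \ref{l: real comp}: first, your Step 3 is quantitatively broken. A mesh of order $\rho/(Kn)$ over the incompressible part of $S_\C^{n-1}$ gives a net of cardinality $\exp(Cn\log n)$, while the per-vector gain from the L\'evy bound is only $\exp(-\gamma' n)$; the entropy wins and the union bound fails (your remark that "the entropy is only $O(n\log n)$" while the exponent is "a fixed constant times $n$" has the comparison backwards, and the assumption $np\gtrsim\log n$ does not repair an exponent that is linear in $n$). This is precisely why the paper's proof of Lemma \ref{l: real comp} does not union-bound over all incompressible vectors: it conditions on $\Re(B^D)|_J$ and uses that $\Im(B^D)$ is (up to the first column) a multiple of the identity, so that $y|_{[2:n]}$ must lie near the $M$-dimensional subspace $\Re(B^D)|_J\R^J$; together with $\norm{x_{\mathrm{small}}}_2\le c''\rho$ this yields a net of cardinality $\exp(3M\log(12/(c\rho)))\ll e^{n}$, which the bound $e^{-\gamma n}$ can absorb. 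Second, and relatedly, your parenthetical claim that the hypothesis $\norm{x_{\mathrm{small}}}_2\le c''_{\ref{l: real comp}}\rho$ "is not actually needed" is false: without it the low-dimensional approximation of $z$ collapses and no net of subexponential size is available, which is exactly the obstruction your Step 3 runs into.
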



We now proceed to the proof of Lemma \ref{l: real comp}.


\begin{proof}[Proof of Lemma \ref{l: real comp}]
The proof is based on ideas from \cite{Ge}. For ease of writing let us write $c_0:= (C_{\ref{p: dominated and compressible}} (K+R))^{-4}$. We also denote
 \begin{align*}
  \Omega_{D,C}: &= \{\Re(B^D): \ \exists z \in \text{Dom}(M, c_0) \cup \text{Comp}(M, \rho)  \\
  & \qquad \qquad \qquad \norm{B^{D}z}_2 \le {c}'_{\ref{p: dominated and compressible}}(K+R)\rho \sqrt{np} \text{ and } \norm{{A}_n} \le K\sqrt{np} \},
  \end{align*}
 Using Proposition \ref{p: dominated and compressible} we see that $\P(\Omega_{D,C}) \le \exp(-\ol{c}_{\ref{p: dominated and compressible}}np)$.
 We now make the following claim.
 \begin{claim}
 Fix any $J \subset [n]$ of cardinality $M$ and let
 \[
 \cZ_J':= \{  z=x+\sqrtneg y: \ \|x_{\mathrm{small}}\|_2 \le c''\rho \text{ and }  \supp ( x_{[1:M]}) \subset J \},
 \]
 for some small constant $c''$ to be determined later.
 Then
 \[
   \P \left( \left\{\exists z \in \cZ_J'   \text{ such that }  \norm{B^{D}z}_2 \le c \rho \sqrt{np}\right\} \cap \Om_{D,C}^c\right) \le \exp(-\ol{c}n),
 \]
 for some small constants $c$ and $\ol{c}$.
 \end{claim}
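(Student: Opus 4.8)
The claim fixes a coordinate set $J$ of size $M$ and restricts to vectors $z = x + \mathrm{i} y \in S_\C^{n-1}$ whose real part $x$ has $\|x_{\mathrm{small}}\|_2 \le c''\rho$ and whose ``large'' real coordinates $x_{[1:M]}$ are supported on $J$; we want that, off the exceptional event $\Omega_{D,C}^c$, no such vector can have $\|B^D z\|_2 \le c\rho\sqrt{np}$, up to probability $\exp(-\bar c n)$. The strategy, following \cite{Ge}, is a net argument on the small part of $z$ combined with the fact that on $\Omega_{D,C}^c$ the ``bulk'' contribution of $B^D$ to any incompressible/non-dominated test vector is bounded below. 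First I would reduce to a single vector via an $\varepsilon$-net: since $x_{[1:M]}$ lives in the $2M$-real-dimensional subspace $\C^J$ and the remaining $n-M$ coordinates of $x$ (the small ones) have norm at most $c''\rho$, while the imaginary part $y$ ranges over the full unit sphere, the relevant parameter space has metric entropy governed essentially by the $2n$-dimensional sphere restricted in the directions dictated by the constraints; a standard volumetric bound gives a net of cardinality at most $(C/\varepsilon)^{O(n)}$, and with $\varepsilon \asymp \rho / \sqrt{np}$ (say) combined with the operator norm bound $\|A_n\| \le K\sqrt{np}$ this is $\exp(O(n\log(1/\rho)))$, which is where the hypothesis $M \le c'_{\ref{l: real comp}} n / \log(1/\rho)$ and the sparsity are used to keep the exponent comparable to $n$.

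Next, for a fixed net point $z_0$, I would show $\P(\|B^D z_0\|_2 \le 2c\rho\sqrt{np}) \le \exp(-\gamma n)$ using Lemma \ref{c: spread vector}. The issue is that $z_0$ itself need not be spread, so I split: write $z_0 = z_0^{\mathrm{big}} + z_0^{\mathrm{small}}$ where $z_0^{\mathrm{big}}$ collects the $M$ large coordinates. On the complement of $\Omega_{D,C}$, the vector $z_0$ — once we know its real part is not dominated (which we are essentially assuming via the support/norm structure, and which is the hypothesis feeding into Proposition \ref{thm: large LCD for real part}) — behaves like an incompressible non-dominated vector, so $\|B^D z_0\|_2 \ge c'_{\ref{p: dominated and compressible}}(K+R)\rho\sqrt{np}$ unless $z_0 \in \mathrm{Dom}(M,c_0)\cup\mathrm{Comp}(M,\rho)$; and that latter membership puts us inside $\Omega_{D,C}$. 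Alternatively, and more robustly, one applies Lemma \ref{c: spread vector} directly to the small part: since $\|z_0^{\mathrm{small}}\|_\infty / \|z_0^{\mathrm{small}}\|_2 \le \alpha\sqrt p$ for an appropriate $\alpha$ (the small coordinates being by construction comparable in modulus — this is exactly what ``$\mathrm{small}(z)$'' buys us, the smallest $n-M$ coordinates of a unit vector have $\ell_\infty$ norm $\le 1/\sqrt{M} \asymp \sqrt p$ relative to their $\ell_2$ norm once $M \gtrsim p^{-1}$), Lemma \ref{c: spread vector} gives $\cL(A_n z_0^{\mathrm{small}}, \beta\sqrt{np}\|z_0^{\mathrm{small}}\|_2) \le \exp(-\gamma n)$. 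One then conditions on the columns of $A_n$ indexed by $J$ to handle the $z_0^{\mathrm{big}}$ part: conditionally, $B^D z_0 = B^D z_0^{\mathrm{big}} + (\text{the }[n]\setminus J\text{ part of }B^D) z_0^{\mathrm{small}}$, and the second summand is a shift of the fixed first summand, so the Lévy concentration bound survives conditioning and yields $\P(\|B^D z_0\|_2 \le 2c\rho\sqrt{np} \mid \text{cols in }J) \le \exp(-\gamma n)$, provided $c$ is chosen small relative to $\beta$ and $\|z_0^{\mathrm{small}}\|_2$ is bounded below — which fails only if $z_0$ is essentially $M$-sparse in its full ($x$ and $y$) coordinates, but then $z_0$ is compressible and lands in $\Omega_{D,C}$.

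Finally I would take the union bound over the net: $\exp(O(n\log(1/\rho))) \cdot \exp(-\gamma n) \le \exp(-\bar c n)$ once $\gamma$ dominates the entropy constant times $\log(1/\rho)$ — this forces the upper bound $M \le c'_{\ref{l: real comp}} n/\log(1/\rho)$ on $M$ and dictates how small $c, c''$ must be chosen (they must be small enough that the $\varepsilon$-net approximation error, $\|A_n\|\cdot\varepsilon \lesssim K\sqrt{np}\,\varepsilon$, does not swallow the $c\rho\sqrt{np}$ threshold, i.e.\ we need $\varepsilon \ll c\rho/K$, consistent with $\varepsilon \asymp \rho/\sqrt{np}$ when $np \gg K^2$, which holds). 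The main obstacle I anticipate is the bookkeeping in the net construction: one must net $y$ over (almost) the full unit sphere while simultaneously respecting the constraints on $x$ (support of $x_{[1:M]}$ in $J$ and smallness of $x_{\mathrm{small}}$), and verify that the approximation is compatible with the conditioning-on-$J$-columns step — i.e.\ the net point's small part must still be spread enough and bounded below in $\ell_2$ norm for Lemma \ref{c: spread vector} to apply with the same parameters. Getting the constants to line up so that the single-vector exponent $-\gamma n$ beats the net cardinality $\exp(c_{\mathrm{ent}} M \log(1/\rho))$ is the quantitative crux, and it is precisely what pins down the admissible range of $M$.
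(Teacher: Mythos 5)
There is a genuine gap, and it is exactly at the quantitative crux you flag at the end. Your net is built by treating the imaginary part $y$ as ranging over (essentially) the full unit sphere, which gives cardinality $\exp(O(n\log(1/\rho)))$ (and with your mesh $\varepsilon\asymp\rho/\sqrt{np}$ even $\exp(O(n\log(np/\rho)))$). The only single-vector bound available, Lemma \ref{c: spread vector}, gives $\exp(-\gamma n)$ with $\gamma$ a fixed constant depending on the fourth moment; it does not improve as $\rho$ shrinks. In the sparse regime that this lemma is for, $\rho=(\wt{C}(K+R))^{-\ell_0-6}$ with $\ell_0\asymp\log(1/p)/\log\sqrt{np}\to\infty$, so $\log(1/\rho)\to\infty$ and the union bound $\exp(O(n\log(1/\rho)))\cdot\exp(-\gamma n)$ is useless. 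Your closing accounting silently replaces the entropy $n\log(1/\rho)$ by $M\log(1/\rho)$ — but nothing in your construction produces a net whose size is exponential in $M$ rather than in $n$; the hypothesis $M\le c' n/\log(1/\rho)$ only helps if the entropy scales with $M$, and establishing that is precisely the missing content.

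The idea you are missing is the structural dimension reduction coming from the shift: $\Im(B^D)$ is (apart from its zero first column) $r'\sqrt{np}$ times the identity. Hence if $z=x+\mathrm{i}y\in\cZ_J'$ and $\norm{B^Dz}_2\le c\rho\sqrt{np}$, then from $\norm{\Re(B^D)x-\Im(B^D)y}_2\le c\rho\sqrt{np}$ and $\norm{x_{[M+1:n]}}_2\le c''\rho$ one gets that $y|_{[2:n]}$ lies within $O(r^{-1}c\rho)$ of the subspace $\text{span}\bigl(\Re(B^D)|_J\,\R^J\bigr)$, whose dimension is at most $M$. So the imaginary part is \emph{not} free: the whole admissible set sits in an $O(c\rho)$-neighborhood of a set of real dimension $\le 2M+1$, and (conditioning on $\Re(B^D)|_J$, which fixes that subspace) one obtains a net of cardinality $\exp(O(M\log(1/\rho)))$. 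Only then does Lemma \ref{c: spread vector}, applied to $\Re(B^D)|_{J^c}w_{[M+1:n]}$ for net points $w$ that on $\Omega_{D,C}^c$ are neither compressible nor dominated (your remark that $\norm{z_{\mathrm{small}}}_\infty\le 1/\sqrt{M}$ alone does not give spreadness, since $\norm{z_{\mathrm{small}}}_2$ could be tiny — non-domination is what supplies it), beat the entropy under the condition $M\log(1/\rho)\le c'n$. Without this step your approach fails, not merely in constants but in the order of the exponent.
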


 The conclusion of the lemma immediately follows from the claim by taking a union bound over $J \subset [n]$, such that $|J|=M$. Thus we now only need to prove the claim.

 To prove this claim we will first show that if $z \in \cZ_J'$ such that $\|B^D z\|_2$ is small, then $y$, the imaginary part of $z$, belongs to a small neighborhood of a linear image of the subspace spanned by the largest $M$ coordinates of $x$, the real part of $z$. This together with the fact that $\norm{x_{{\rm{small}}}}_2$ is small enables us to obtain a net of $\cZ_J'$ with small cardinality. Finally using the estimate on L\'{e}vy concentration function of Lemma \ref{c: spread vector} and the union bound we finish the proof of the claim. Below we carry out the details.

Fix any $J \subset [n]$ and let $\Re(B^D)|_J$ denote the sub-matrix induced by the columns of $\Re(B^D)$ indexed by $J$. We first condition on a realization of $\Re(B^D)|_J$ and show that for every such realization the conditional probability of the event in the claim is less than $e^{-\bar{c}n}$. Then taking an average over the realizations of $\Re(B^D)|_J$ the proof will be completed.

So let us assume that $z \in \cZ_J'$ be such that $ \norm{B^{D}z}_2 \le c \rho \sqrt{np} $. Then we see that
 \begin{equation} \label{eq: first coordinates}
  \norm{\Re(B^D) x - \Im(B^D)y}_2 \le \norm{B^D z}_2 \le c \rho\sqrt{np}.
 \end{equation}

Notice that $\norm{x_{[M+1:n]}}_2 \le \norm{x_{\text{small}}}_2$ as $x_{[M+1:n]}$  consists of the smallest in the absolute value coordinates of $x$.
Since
\[
 \norm{\Re(B^D)|_{J^c}} \le \norm{\Re(B^D)} \le \norm{B^D} \le \norm{{A}_n}+\norm{D_n} \le (K+R)\sqrt{np},
\]
 applying the triangle inequality we further deduce that
 \begin{align}
 \norm{\Im(B^D) y - \Re(B^D)|_J x_{[1:M]}}_2 & \le c\rho \sqrt{np} + \norm{\Re(B^D)|_{J^c} x_{[M+1:n]}}_2  \notag\\
 & \le c\rho \sqrt{np} + \norm{\Re(B^D)|_{J^c}} \cdot \norm{ x_{\text{small}}}_2 \notag\\
 & \le 2c \rho \sqrt{np}, \notag
 \end{align}
 where in the last step we choose $c''$ so that $ c''(K+R)\le c$. 

Hereafter, we write $\Im(B^D)$ to denote the imaginary part of the matrix $B^D$. Hence $\Im (B^D)$ is a $(n-1)\times n$ matrix whose first column is zero and the last $(n-1)$ columns form a diagonal matrix whose entries are all equal to $r'\sqrt{np}$. Therefore denoting $y|_{[2:n]}$ to be the $(n-1)$ dimensional vector consisting of the last $(n-1)$ coordinates of $y$ we further have that
 \begin{align}\label{eq:y close to span}
  \norm{y|_{[2:n]}- \f{1}{r'\sqrt{np}} \Re(B^D)|_J x_{[1:M]}}_2\le 2 r'^{-1} c \rho \le 2r^{-1}c \rho.
 \end{align}
 Thus \eqref{eq:y close to span} implies that the vector $y|_{[2:n]}$ belongs to a $(2 r^{-1}c \rho)$-neighborhood of the linear subspace $\sE_J':=\text{span}(\Re(B^D)|_J \R^J) \subset \R^{n-1}$. Since $\|x_{[M+1:n]}\|_2 \le c''\rho \le r^{-1} c \rho$ we have that for any $z \in \cZ'_J$, such that $\|B^D z\|_2 \le c \rho \sqrt{np}$, belongs to a $(3 r^{-1}c \rho)$-neighborhood of the set
 \beq
  \sE_J:=\{x+\mathrm{i}y: \ \text{supp}(x) \subset J, \ y|_{[2:n]} \in \sE_J' , y_1 \in [-1,1]\}, \notag
 \eeq
 with $\text{dim}(\sE_J) \le 2M+1$. Since $\cZ_J' \subset S_\C^{n-1}$, applying the triangle inequality and choosing $c \le r/3$ we further see that every vector in $z \in \cZ_J'$,  such that $\|B^D z\|_2\le c\rho\sqrt{np}$, belongs to a $(3 r^{-1}c \rho)$-neighborhood of $(2B_\C^n) \cap \sE_J$.  Therefore we can choose a $(r^{-1} c\rho)$-net $\cN \subset (2B_{\C}^{n}) \cap \sE_J$ of cardinality
 \beq\label{eq:net bound}
  |\cN| \le \left( \frac{12}{c\rho} \right)^{2M+1}  \le \exp (3M \log (12/(c\rho))).
 \eeq
Note that, using the triangle inequality we see that $\cN$ is $(4r^{-1}c\rho)$-net of the set of all vectors $z\in \cZ'_J$ such that $\|B^D z\|_2\le c\rho\sqrt{np}$. Thus, for a $z \in \cZ_J'$ with $\|B^D z\|_2 \le c \rho \sqrt{np}$, there must exist at least one $w \in \cN$ such that $\norm{B^D w}_2 \le 5r^{-1}(K+R)c \rho \sqrt{np}$. Now shrink $c$ such that $10r^{-1}c \le {c}'_{\ref{p: dominated and compressible}}$. With this choice of the constant $c$ we see that $w \notin \text{Dom}(M, c_0) \cup \text{Comp}(M, \rho)$ on the event $\Om_{D,C}^c$.

However, for any $w \notin \text{Dom}(M, c_0)$ we have
\[
\f{\norm{w_{[M+1:n]}}_\infty}{\norm{w_{[M+1:n]}}_2} \le   (c_0\sqrt{M})^{-1} \le  c_0^{-1} \sqrt{p},
\]
where in the last step we used the fact that $M \ge p^{-1}$.
%
Thus applying Lemma \ref{c: spread vector} there exists constants ${c}_\star$ and $\ol{c}$ such that
 \begin{align*}
&  \P\left(\norm{B^D w}_2 \le  {c}_\star \rho \sqrt{np}\Big| \Re(B^D)|_J\right)\\
 \le &\,   \cL\left(\Re(B^D)|_{J^c} w_{[M+1:n]}, {c}_\star \norm{w_{[M+1:n]}}_2\sqrt{np} \right) \le \exp(-2\ol{c}n),
 \end{align*}
Hence, by the union bound,
 \[
   \P\left(\exists w \in \cN: \ \norm{B^{D}w}_2 \le {c}_\star \rho \sqrt{np} \Big| \Re(B^D)|_J\right)
   \le |\cN| \cdot \exp(-2\ol{c}n) \le \exp (-\ol{c}n),
 \]
 where the last step follows from the bound \eqref{eq:net bound} and the fact that $M \log(1/\rho)<{c}'n$ for a sufficiently chosen small constant ${c}'$. Thus shrinking $c$ again such that $5r^{-1} (K+R)c \le c_\star$ we obtain that
\[
\P\left( \left\{\exists z \in \cZ_J'   \text{ such that }  \norm{B^{D}z}_2 \le c \rho \sqrt{np}\right\} \cap \Om_{D,C}^c \Big| \Re(B^D)|_J\right) \le \exp(-\ol{c}n).
\]
Finally taking an average over all the realizations of $\Re(B^D)|_J$ completes the proof.
\end{proof}




\vskip10pt

We are now ready to prove Proposition \ref{thm: large LCD for real part}.

\begin{proof}[Proof of Proposition \ref{thm: large LCD for real part}]
Let $z \in \text{Ker}(B^D) \cap S^{n-1}_{\C}$. Assume that the event $\Omega_{D,C}^c$ defined above occurs, so $z \notin \text{Dom}(M, c_0) \cup \text{Comp}(M, \rho)$.
We note that if $c_{\ref{thm: large LCD for real part}}$ is chosen sufficiently small then the assumption \eqref{eq: min p-1} implies that
\[
 \frac{M \log(1/\rho)}{c'_{\ref{l: real comp}}n}
  <1,
\]
 whenever $n$ is large enough. So Lemma \ref{l: real comp} can be applied, which implies that, with high probability, $\|x_{\mathrm{small}}\|_2 > c''_{\ref{l: real comp}}\rho$.
On the other hand,
\[
  \norm{x_{\mathrm{small}}}_{\infty} \le \norm{z_{\text{small}}}_{\infty} \le \frac{1}{c_0 \sqrt{M}}.
\]
Combining the last two inequalities, we show that on the event $\Omega_{D,C}^c$,
\beq\label{eq:enlarge-C}
 \frac{\norm{x_{\mathrm{small}}}_{\infty}}{\norm{x_{\mathrm{small}}}_2} \le \frac{1}{c''_{\ref{l: real comp}} \rho c_0 \sqrt{M}},
\eeq
and {the result follows upon choosing $C_{\ref{thm: large LCD for real part}}$ sufficiently large.}
\end{proof}

\vskip10pt

\begin{rmk}
Note that the inequality  \eqref{eq:enlarge-C} continues to hold even if the constant $C_{\ref{thm: large LCD for real part}}$ is increased without changing other constants $\wt{C}_{\ref{thm: large LCD for real part}}, c_{\ref{thm: large LCD for real part}}$, and $\bar{c}_{\ref{thm: large LCD for real part}}$, appearing in Proposition \ref{thm: large LCD for real part}. This implies that, if needed, we can arbitrarily increase the constant $C_{\ref{thm: large LCD for real part}}$. This observation will be used later in the paper.
\end{rmk}

 \section{Net construction:~Genuinely complex case}  \label{sec: net complex}
In this section we show that the set of genuinely complex vectors admits a net of small cardinality. We begin with the relevant definitions.
\begin{dfn}\label{dfn:two-lcd-dfn}
For $y>0$, denote $\log_1(y):=\log y \cdot \bI(y \ge  e)$. Fixing $L \ge 1$, for a non-zero vector $x \in \R^m$, we set
 \beq\label{eq:D_1 define}
    D_1(x):= \inf \left\{ \theta>0: \  \mathrm{dist}(\theta x, \Z^{m}) < 2^5L \sqrt{\log_1 \frac{\norm{\theta x}_2}{2^6 L}} \right\}.
 \eeq
 If $V$ is a $2 \times m$ matrix, define
 \beq\label{eq:D_2_define}
   D_2(V):= \inf \left\{ \norm{\theta}_2: \ \theta \in \R^2, \  \mathrm{dist}( V^{\sf T} \theta, \Z^m) < L \sqrt{\log_1 \frac{\norm{V^{\sf T} \theta}_2}{{2^8} L}} \right\}.
 \eeq
We will call the first version of the \abbr{LCD} one-dimensional, and the second one two-dimensional.
Note that $D_1(\cdot)$ matches with the definition of the \abbr{LCD} used in \cite{BR} up to constants.
\end{dfn}

\begin{rmk}\label{rmk:power-of-2}
The different powers of $2$ appearing in the definitions \eqref{dfn:two-lcd-dfn} and \eqref{eq:D_2_define} play only a technical role. They do not affect most of the proof, and they will be needed in Section \ref{sec: net real} to compare the one and the two-dimensional \abbr{LCD} for almost real vectors (see Lemma \ref{l: D_2 to D_1}).
\end{rmk}

Observe that $D_1(\cdot)$ and $D_2(\cdot)$ are defined for real-valued vectors and matrices, respectively. However, both these notions can be extended for complex valued vectors by the following simple adaptation.

\begin{dfn}\label{dfn:wtz-V}
Consider a complex vector $z=x+\sqrtneg y\in \C^m$. Denote $\tilde{z}:=\tilde{z}(z):= \left(\begin{smallmatrix} x \\ y \end{smallmatrix} \right) \in \R^{2m}$, and define a $2 \times m$ matrix $V :=V(z):= \left(\begin{smallmatrix} x^{\sf T} \\ y^{\sf T} \end{smallmatrix} \right) $.
 Using these two different representations of $z \in \C^m$, we now define:
 \[D_2(z):=D_2(V)   \quad \text{and} \quad D_1(z):= D_1(\tilde{z}).\]
 \end{dfn}

 Let us assume that the infimum in \eqref{eq:D_2_define} is attained at $\theta^\star$. Then from Definition \ref{dfn:two-lcd-dfn}  we have that $D_2(V)$ equals $\|\theta^\star\|_2$. We will see below that the cardinality of the desired net for the genuinely complex vectors also depends on $\|V^{\sf T} \theta^\star\|_2$. However, the infimum in \eqref{eq:D_2_define} is not always achieved. Hence, we have the following definition.


   \begin{dfn}\label{dfn:auxiliary}
   For a  real-valued $2 \times m$ matrix $V$, define
   \begin{multline*}
    \Delta(V):= \liminf_{\t \to 1+}
      \bigg\{ \norm{V^{\sf T} \theta}_2: \ \mathrm{dist} (V^{\sf T} \theta, \Z^m) < L \sqrt{{\log_{1}} \frac{\norm{V^{\sf T} \theta}_2}{{2^8}L}}, \\   \norm{\theta}_2 \le \t D_2(V)\bigg\}.
   \end{multline*}
   As before, for a $z \in \C^m$, we define $\Delta(z):=\Delta(V)$ where  $V = V(z)$. For later use let us note that for any $z \in S_\C^{m-1}$
\beq\label{eq:d-delta-D}
d(z) D_2(z) \le \Delta(z) \le D_2(z),
\eeq
\end{dfn}
where $d(z)$ denotes the real-imaginary de-correlation of $z$ appearing in Definition \ref{dfn: real-imaginary}.
Indeed, the inequalities \eqref{eq:d-delta-D} are immediate from the fact that the singular values of $V^{\sf T}$ are bounded by one and $d(z)$ is the product of the singular values of $V^{\sf T}$.

\begin{rmk}\label{dfn:delta-0}
We take $L=(\delta_0p)^{-1/2}$, where $\delta_0 \in (0,1)$ is a universal constant as in 
\cite[Remark 2.7]{BR}.
\end{rmk}

Equipped with the above definitions, we consider the following simple reduction.
Fix $M< n/2$, $z \in S^{n-1}_{\C}$, and let $J= \text{small}(z)$. 
It can be easily verified that for any $z \in \C^n$  there exists a $\t \in [0, 2\pi)$ such that $z_J=e^{\mathrm{i} \t}(w_1+\mathrm{i} w_2)$, where $w_1 \perp w_2$ and $\|w_2\|_2\le \|w_1\|_2$ . As $z_J \in \text{Ker}(B)$ if and only if $e^{-\mathrm{i} \t} z_J \in \text{Ker}(B)$, without loss of generality, we can only consider the following set
   \begin{multline}\label{eq:cZ-dfn}
     \cZ  :=\{ z \in S_{\C}^{n-1} \setminus( \text{\rm Dom}(M, (C_{\ref{p: dominated and compressible}}(K+R))^{-4}) \cup \text{\rm Comp}(M, \rho)): \\
     z_{\text{small}}=w_1+\mathrm{i} w_2,
      w_1 \perp w_2, \ \norm{w_1}_2 \ge \norm{w_2}_2 \}
   \end{multline}
instead of $S_{\C}^{n-1} \setminus (\text{\rm Dom}(M, (C_{\ref{p: dominated and compressible}}(K+R))^{-4})\cup \text{\rm Comp}(M, \rho))$. Therefore our revised goal is to show that the set of genuinely complex vectors, when specialized to $\cZ$, admits a net of small cardinality.

To this end, fixing a set $J \subset [n]$, we start with constructing a small net for the set of pairs $(\phi,\psi)$ with $\phi \perp \psi$ in the unit sphere of $\R^J \times \R^J$ for which the value of the two-dimensional \abbr{LCD}, the auxiliary parameter $\Delta(\cdot)$, and the de-correlation $d(\phi,\psi)=\norm{\phi}_2\norm{\psi}_2$ are approximately constant. The condition on the two-dimensional \abbr{LCD} means that there exists a linear combination of the vectors $\phi$ and $\psi$ which is close to an integer point. Our aim is to use this linear combination  to construct separate approximations of $\phi$ and $\psi$.




For any $\gamma>0$, let us denote  $\Z_{\gamma}^{J}:= \Z^{J} \cap \gamma B_2^{|J|}$.
   Using a simple volumetric comparison argument we have following estimate on $|\Z_\gamma^{J}|$ :
   \begin{equation} \label{eq: integer net}
    |\Z_\gamma^{J}| \le \left( C_0 \left( \frac{\gamma}{\sqrt{|J|}}+1 \right) \right)^{|J|},
   \end{equation}
   for some absolute constant $C_0$. The main technical result of this section is the following lemma.
   \begin{lem} \label{l: net in S_J}
   Let $d \in (0,1)$, and $0< \alpha \le d\gD \le \Delta \le \gD$.
         Define the set
   \begin{align*}
     S_J(\gD,\Delta,d)
     &:=\{ (\phi,\psi) \in \R^J \times \R^J: \  \phi \perp \psi, \ \norm{\phi}_2 \in [{1}/{2}, 1], \norm{\psi}_2 \in [d,3d] \\
      &\quad   \exists \zeta \in \R^2 \ \norm{\zeta}_2 \in [\gD, 2\gD],  \ \norm{\zeta_1 \phi+ \zeta_2 \psi}_2 \in [\Delta, 2\Delta], \\
       & \quad \text{and } \mathrm{dist}(\zeta_1 \phi + \zeta_2 \psi, \Z^{J})< \alpha \}
   \end{align*}
Then, there exists a $\left( \frac{C_{\ref{eq: integer net}} \alpha}{\gD} \right)$-net $\cM_J(\gD, \Delta,d) \subset S_J(\gD,\Delta,d)$ such that
     \[
      |\cM_J(\gD,\Delta,d)|
      \le  \left( \bar{C}_{\ref{eq: integer net}} \frac{d \gD^2}{\alpha} \cdot \left( \frac{1}{\sqrt{|J|}}+\frac{1}{\Delta} \right)  \right)^{|J|}
     \cdot \left(\frac{ \gD}{\alpha} \right)^2,
     \]
for some {absolute} constants $C_{\ref{eq: integer net}}$, and $\bar{C}_{\ref{eq: integer net}}$.
   \end{lem}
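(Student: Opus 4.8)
The plan is to parametrize each pair $(\phi,\psi)\in S_J(\gD,\Delta,d)$ by the near-integer point $q:=\zeta_1\phi+\zeta_2\psi\in\Z^J$ together with the coefficients $\zeta\in\R^2$ and one extra linear functional that pins down the two-dimensional plane $\mathrm{span}(\phi,\psi)$. First I would fix $q$ ranging over $\Z^J\cap (2\Delta+1)B_2^{|J|}$; by the volumetric bound \eqref{eq: integer net} there are at most $\bigl(C_0(\tfrac{2\Delta+1}{\sqrt{|J|}}+1)\bigr)^{|J|}\le \bigl(C(\tfrac{\Delta}{\sqrt{|J|}}+1)\bigr)^{|J|}$ choices, which (using $\Delta\le\gD$ and $\alpha\le d\gD$) can be rewritten in the form appearing in the statement with a factor $\bigl(\bar C \tfrac{d\gD^2}{\alpha}(\tfrac{1}{\sqrt{|J|}}+\tfrac1\Delta)\bigr)^{|J|}$ after accounting for how finely we must discretize. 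Then, having fixed $q$, the vector $v:=\zeta_1\phi+\zeta_2\psi$ is determined up to an error $\alpha$ (it lies in $q+\alpha B_2^{|J|}$), so it suffices to put an $\alpha$-net on that ball only in the at most $2$-dimensional directions not already controlled — but in fact $v$ is within $\alpha$ of $q$, so $q$ itself serves as the approximation of $v$, contributing no extra cardinality.

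The second step is to recover $\phi$ and $\psi$ separately from $v$. Since $\phi\perp\psi$, $\|\phi\|_2\in[1/2,1]$, $\|\psi\|_2\in[d,3d]$, and $v=\zeta_1\phi+\zeta_2\psi$ with $\|\zeta\|_2\in[\gD,2\gD]$ and $\|v\|_2\in[\Delta,2\Delta]$, the orthogonal decomposition gives $\phi = \frac{\zeta_1}{\|\phi\|_2^2}\,P_{\phi}v$-type formulas; more concretely, $\langle v,\phi\rangle=\zeta_1\|\phi\|_2^2$ and $\langle v,\psi\rangle=\zeta_2\|\psi\|_2^2$, so the plane $\mathrm{span}(\phi,\psi)$ is spanned by $v$ and \emph{one} further vector $w$, and $\phi,\psi$ are linear combinations of $v$ and $w$ with coefficients of controlled size (here $|\zeta_1|\gtrsim \gD$ since $\|\psi\|_2\le 3d$ and $\|v\|_2\ge\Delta\ge d\gD$ forces the $\phi$-component of $v$ to be non-negligible, using $\alpha\le d\gD$). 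Thus I would additionally net the choice of the unit vector $w$ completing the plane: this is a $1$-parameter family (once $v$ is fixed, $w$ ranges over a circle), giving a multiplicative factor of order $\gD/\alpha$ — and the two angular parameters $\zeta_1/\|\zeta\|_2$ plus the normalization, which is where the $(\gD/\alpha)^2$ prefactor comes from. Propagating an $\alpha$-perturbation of $q$ and an $O(\alpha/\gD)$-perturbation of the angular data through these linear formulas, and using that all the coefficients are bounded by $O(1/\gD)$ times $\|\zeta\|_2=O(\gD)$, i.e.\ by $O(1)$, yields that $\phi,\psi$ are approximated to within $O(\alpha/\gD)$, which matches the claimed mesh $C_{\ref{eq: integer net}}\alpha/\gD$.

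The main obstacle I anticipate is bookkeeping the interplay of the three scale parameters $\gD,\Delta,d$ so that the final cardinality lands exactly in the stated form $\bigl(\bar C_{\ref{eq: integer net}}\tfrac{d\gD^2}{\alpha}(\tfrac{1}{\sqrt{|J|}}+\tfrac1\Delta)\bigr)^{|J|}(\gD/\alpha)^2$ rather than something merely comparable. Concretely: the integer net for $q$ naturally has size $\bigl(C(\tfrac{\Delta}{\sqrt{|J|}}+1)\bigr)^{|J|}$, and one must check that multiplying by the discretization-refinement factors (from netting the $\alpha$-ball around $q$ inside the $2$-plane, roughly $(\Delta/\alpha)^{?}$ — but in fact this is absorbed) and by the de-correlation factor $d$ does reproduce $\tfrac{d\gD^2}{\alpha}$ per coordinate; this forces a careful choice of \emph{which} of the $|J|$ coordinates get the refined mesh versus only the $q$-level mesh, and uses $\Delta\ge d\gD$ and $\gD\ge\Delta$ crucially. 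A secondary subtlety is ensuring the constructed net element still lies \emph{in} $S_J(\gD,\Delta,d)$ (the lemma asks $\cM_J\subset S_J$), which requires that the approximating $(\phi',\psi')$ can be taken to genuinely satisfy $\phi'\perp\psi'$ with the same norm windows and the same near-integer condition — handled by first netting abstractly, then projecting each net point back onto $S_J(\gD,\Delta,d)$ and noting this at most doubles the mesh, exactly as in the standard net-projection argument used in \cite{BR}.
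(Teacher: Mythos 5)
There is a genuine gap, and it sits at the heart of the counting. After fixing $q\in\Z^J\cap 3\Delta B_2^{|J|}$ (and hence $v=\zeta_1\phi+\zeta_2\psi$ up to error $\alpha$), you claim that the plane $\mathrm{span}(\phi,\psi)$ is determined by one additional unit vector $w$ which ``ranges over a circle,'' costing only a factor of order $\gD/\alpha$. That is false in $\R^J$: once $v$ is fixed, the complementary direction $w$ ranges over (essentially) the unit sphere of the orthogonal complement of $v$, a set of dimension $|J|-2$, so an $(\alpha/\gD)$-net of it has cardinality of order $(\gD/\alpha)^{|J|-2}$, not $\gD/\alpha$. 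Knowing the single linear combination $v$ pins down only about $|J|$ of the roughly $2|J|$ real degrees of freedom of the pair $(\phi,\psi)$; the remaining $\approx|J|$ degrees of freedom must be netted somewhere, and your count simply omits them. This is exactly why the paper's proof keeps a full volumetric net for \emph{one} of the two vectors ($\phi$ when $|\zeta_1|\le\tfrac12 d\gD$, with the relation then recovering $\psi$; $\psi$ when $|\zeta_1|>\tfrac12 d\gD$, with the relation recovering $\phi$ up to a coarser residual net of size $(C\gD/\Delta)^{|J|}$), and only the two coefficients are netted at cost $(\gD/\alpha)^2$. Your proposal, as written, would produce a net of cardinality far smaller than is achievable, and correcting it forces you back to precisely that structure.

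Two secondary problems compound this. First, your assertion that $|\zeta_1|\gtrsim\gD$ because $\norm{\psi}_2\le 3d$ and $\norm{v}_2\ge\Delta\ge d\gD$ does not follow: $\norm{\zeta_2\psi}_2$ can be as large as $6d\gD$, so $\norm{\zeta_1\phi}_2\ge\Delta-6d\gD$ can be vacuous when $\Delta$ is comparable to $d\gD$; the case $|\zeta_1|$ small genuinely occurs and must be handled separately (in that regime one instead derives $\Delta\le 7d\gD$, which is what converts the $(\gD/\alpha)^{|J|}$ net for $\phi$ into the stated $\bigl(\tfrac{d\gD^2}{\alpha}(\tfrac{1}{\sqrt{|J|}}+\tfrac1\Delta)\bigr)^{|J|}$ bound). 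Second, you never actually produce the mechanism that yields the factor $d$ per coordinate; saying that the bookkeeping ``forces a careful choice of which coordinates get the refined mesh'' is not an argument. In the paper this factor comes either from netting $\psi$ inside the ball of radius $3d$ (large $|\zeta_1|$) or from the inequality $\Delta\le 7d\gD$ (small $|\zeta_1|$), and without one of these inputs the stated cardinality cannot be reached. The final projection step (replacing net points by nearby points of $S_J(\gD,\Delta,d)$ at the cost of doubling the mesh) is fine and matches the paper.
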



  This lemma provides a significant improvement over the standard volumetric estimate yielding $\left(c\gD^2 / \a^2 \right)^{|J|} $. This improved bound precisely  balances the term appearing in the small ball probability estimate. Note that the bounds on $\norm{\phi}_2$ and $\norm{\psi}_2$ imply that the de-correlation $d(\phi,\psi)$ is approximately constant in the set $S_J(\gD,\Delta,d)$, whereas the bounds on $\norm{\zeta}_2$ and $\dist(\zeta_1 \phi+ \zeta_2 \psi, \Z^J)$ ensure that the two-dimensional \abbr{LCD} and the auxiliary parameter $\Delta(\cdot)$ are approximately constant. {Lemma \ref{l: net in S_J} deals with the case when the de-correlation between $\phi$ and $\psi$ is relatively large, represented by the assumption $d \ge \alpha/\gD$, which in turn implies that the angle between the real and the imaginary part of the vectors is non-negligible. In Section \ref{sec: kernel complex} we use this criterion to formally define the notion of genuinely complex vectors.}
 
 We also point out to the reader that a net for the genuinely complex vectors was constructed in \cite{RV no-gaps} (see Lemma 11.2 there). Using that net and the bound on the L\'{e}vy concentration function (see \cite[Theorem 10.3]{RV no-gaps}) it was then showed that there are no vectors with \abbr{LCD} less than $O(n)$ in the kernel of the matrix in context, with high probability. Repeating the same argument here one can at best hope to show that there does not exist any vector in ${\rm Ker}(B^D)$ with \abbr{LCD} $O(np)$, with high probability. To treat the remaining vectors again one needs to apply bounds on L\'{e}vy concentration function (for example, see the bound derived in Proposition \ref{prop: Levy scalar}). However, for such vectors the bound is too weak to deduce almost sure convergence of the \abbr{ESD} of $\f{1}{\sqrt{np}}A_n$. 
 
 Hence, we need to proceed differently. In particular, we use all the parameters $D, \Delta,$ and $d$ to find a net of appropriate size such that its cardinality balances with the small ball probability derived in Proposition \ref{prop: Levy vector} so that we are able to show that there are no vectors in ${\rm Ker}(B^D)$ with \abbr{LCD} less than $\exp(O(np \rho^4))$ with high probability. 
 
 

   \begin{proof}[Proof of Lemma \ref{l: net in S_J}]

  Assume that there exists $\zeta:= (\zeta_1, \zeta_2) \in \R^2$ and $q \in \Z^J$ satisfying
   \begin{equation}\label{eq: small lin comb}
    \norm{\zeta_1 \phi + \zeta_2 \psi}_2 \in [\Delta, 2 \Delta] \quad \text{and} \quad \norm{\zeta_1 \phi + \zeta_2 \psi -q}_2 <\a.
   \end{equation}
   We consider two cases depending on the size of $\zeta_1$.
   Let us start with the case when this value is small.
   Consider the set
   \begin{multline*}
     S_J^0(\gD, \Delta, d):=\{ (\phi,\psi) \in S_J(\gD,\Delta, d): \ \exists (\zeta_1,\zeta_2) \in \R^2, \ \norm{(\zeta_1,\zeta_2)}_2 \in [\gD,2\gD], \\
       |\zeta_1| \le \frac{1}{2} d\gD, \norm{\zeta_1 \phi + \zeta_2 \psi}_2 \in [\Delta, 2 \Delta],  \text{and }\exists q \in \Z^J \text{ such that }\\
        \norm{\zeta_1 \phi + \zeta_2 \psi -q}_2 <\a \}.
    \end{multline*}
Since $d  <1 $, note that the condition on $\zeta_1$ implies that $\gD/2 \le |\zeta_2| \le 2\gD$. Hence
    \begin{equation}  \label{eq: Delta is dD}
     \Delta \le \norm{\zeta_1 \phi + \zeta_2 \psi}_2 \le \frac{1}{2} d\gD \norm{\phi}_2+ 2\gD \norm{\psi}_2 \le 7 d \gD.
    \end{equation}
    We will approximate $\phi$ using the standard volumetric net and use \eqref{eq: small lin comb} to construct a small net for $\psi$.
   To this end, consider $(\phi,\psi) \in S_J^0(\gD,\Delta, d)$ and let $(\zeta_1,\zeta_2) \in \R^2$ be the corresponding vector (i.e.~for which \eqref{eq: small lin comb} holds). Then, by the triangle inequality,
 \[
     \norm{q}_2 < \a + 2 \Delta
     \le 3 \Delta,
    \]
i.e.~$q \in \Z_{3 \Delta}^J$.
Denote by $\cN_\phi$ an $(\a/\gD)$-net in $B_2^J$ with
\[
     |\cN_\phi| \le \left( \frac{3\gD}{\a} \right)^{|J|}.
\]
Choose $\phi' \in \cN_\phi$ such that $\norm{\phi-\phi'}_2 <\a/\gD$.
Then
\[
     \norm{\zeta_1 \phi' +\zeta_2 \psi -q}_2 < \a+ |\zeta_1| \cdot \norm{\phi-\phi'}_2 < 2 \a,
\]
as $|\zeta_1| \le \f{1}{2}d\gD \le \gD$. Therefore
\[
\norm{ \psi + \frac{\zeta_1}{\zeta_2} \phi' - \frac{\gD/2}{\zeta_2}\cdot \frac{q}{\gD/2}}_2 < \frac{2 \a}{|\zeta_2|}< \frac{4\a}{\gD}.
\]
We observe that
\beq\label{eq:coeff_less_1_1}
\left| \frac{\zeta_1}{\zeta_2} \right| \bigvee  \left| \frac{\gD/2}{\zeta_2}  \right| \le 1,
\quad \norm{\phi'}_2 \le 1, \quad \text{and} \quad \norm{\frac{q}{\gD/2}}_2 \le 6 \frac{\Delta}{\gD} \le 6,
\eeq
where the last inequality follows from our assumption $\Delta \le \gD$.
Next let $\cN_\Box$ be an $(\a/\gD)$-net in the unit square in $\R^2$ with $|\cN_\Box| \le (6\gD/\a)^2$. Using \eqref{eq:coeff_less_1_1}, and applying the triangle inequality, we now see that
 there exists $(x_1,x_2) \in \cN_\Box$ such that
 \[
     \norm{ \psi - x_1 \phi' - x_2 \frac{q}{\gD/2}}_2  < \frac{11\a}{\gD}.
\]
Hence,
    \begin{align*}
     \cM_J^0(\gD,\Delta, d)
     := \Bigg\{ \left(\phi', x_1 \phi'+x_2 \frac{q}{\gD/2} \right):
      \phi' \in \cN_\phi,  q \in \Z_{3 \Delta}^J,  (x_1,x_2) \in \cN_\Box \Bigg \},
    \end{align*}
    is a $\frac{12\alpha}{\gD}$-net of  $S_J^0(\gD,\Delta, d)$, with
    \begin{multline*}
     | \cM_J^0(\gD,\Delta, d)|
     \le |\cN_\phi| \cdot |\Z_{3 \Delta}^J| \cdot |\cN_\Box|
      \le \left( \frac{3C_0 \gD}{\alpha} \cdot \left( \frac{3 \Delta}{\sqrt{|J|}}+1 \right) \right)^{|J|}
         \cdot \left( \frac{6 \gD}{\alpha} \right)^2 \\
     \le \left( 63 C_0 \frac{d \gD^2}{\alpha} \cdot \left( \frac{1}{\sqrt{|J|}}+\frac{1}{\Delta} \right) \right)^{|J|}
        \cdot \left( \frac{6 \gD}{\alpha} \right)^2,
    \end{multline*}
   where \eqref{eq: integer net} has been used to bound $|\Z_{3 \Delta}^J|$ and \eqref{eq: Delta is dD} has been used to replace $\Delta$ by $d\gD$ in the last inequality. 
       \vskip 0.1in

 Turning to prove the case of $|\zeta_1|>\f{1}{2}d\gD$ we denote \[S_J^1(\gD,\Delta,d) := S_J(\gD,\Delta,d) \setminus S_J^0(\gD,\Delta,d).\] That is,
    \begin{multline*}
     S_J^1(\gD,\Delta,d):=\{ (\phi,\psi) \in S_J(\gD,\Delta,d): \ \exists (\zeta_1,\zeta_2) \in \R^2, \ \norm{(\zeta_1,\zeta_2)}_2 \in [\gD,2\gD],\\ |\zeta_1| \in \left[ \frac{1}{2} d \gD, 2 \gD \right],
     \norm{\zeta_1 \phi + \zeta_2 \psi}_2 \in [\Delta, 2\Delta]  \text{and } \exists q \in \Z^J \text{ such that } \\ \norm{\zeta_1 \phi + \zeta_2 \psi -q}_2 <\a \}.
    \end{multline*}
Now let us construct a net in $S_J^1(\gD,\Delta,d)$. Our strategy here is opposite to what we used in the previous case. Namely, we use the volumetric approximation for $\psi$ and then use \eqref{eq: small lin comb} to approximate $\phi$.  To this end, consider any  $(\phi, \psi) \in  S_J^1(\gD,\Delta, d)$ and let $(\zeta_1,\zeta_2) \in \R^2$ be the corresponding vector. As in the previous case we see $\norm{q}_2 < 3 \Delta$, i.e.~$q \in \Z_{3 \Delta}^J$. Since $|\zeta_1| \ge \f{1}{2}d\gD$ and $|\zeta_2| \le 2\gD$ we also see that $24 \norm{\zeta_1 \phi}_2 \ge 6 d \gD \ge \norm{\zeta_2 \psi}_2$. Therefore
    \beq\label{eq:delta-zeta1}
     \Delta \le \norm{\zeta_1 \phi}_2+ \norm{\zeta_2 \psi}_2 \le 25 \norm{\zeta_1 \phi}_2 \le 25 |\zeta_1|.
    \eeq
Recall that by assumption, $\a/\gD \le d$. Hence, we see that
    \[
      |\cN_\psi| \le \left( \frac{9 d \gD}{\a} \right)^{|J|},
    \]
where $\cN_\psi$ is an $(\a/\gD)$-net in $3d B_2^J$.  Since $\norm{\psi}_2 \le 3d$, there exists $\psi' \in \cN_\psi$ such that $\norm{\psi-\psi'}_2 < \a/\gD$.
    As in the previous case, this yields
    \[
      \norm{\zeta_1 \phi + \zeta_2 \psi' -q}_2 <\a + |\zeta_2| \cdot \norm{\psi-\psi'}_2
      \le 3 \a,
    \]
    and so
    \[
     \norm{\phi+\frac{\Delta \zeta_2}{50 \gD \zeta_1} \cdot \frac{50 \gD \psi'}{\Delta} - \frac{\Delta }{25 \zeta_1} \cdot \frac{25 q}{\Delta}}_2
     < \frac{3 \a}{|\zeta_1|}
     \le \frac{75 \a}{\Delta},
    \]
    where we have used \eqref{eq:delta-zeta1} in the last step. Note that
    \[
     \left| \frac{\Delta \zeta_2}{50 \gD \zeta_1} \right| \bigvee \left|  \frac{\Delta }{25 \zeta_1} \right| \le 1,
     \quad \norm{ \frac{50 \gD \psi'}{\Delta}}_2 \le \frac{150 d\gD}{\Delta} \le 150, \quad \text{and} \quad
     \norm{ \frac{25 q}{\Delta}}_2 \le 75.
    \]
   Let $\cN_\Box$ be the same $(\a/\gD)$-net in the unit square as in the previous case. Since $\Delta \le \gD$, combining the previous estimates with the triangle inequality, we have that there exists a $(x_1,x_2) \in \cN_\Box$ such that
    \[
     \norm{\phi-x_1 \cdot \frac{50 \gD \psi'}{\Delta} - x_2 \cdot \frac{25q}{\Delta}}_2
     < \frac{300\a}{\Delta}.
    \]
Using the fact $\Delta \le \gD$ again, we now obtain an $(\a/\gD)$-net $\cM_\phi$ in $\left(\frac{300 \a}{\Delta} \right) \cdot B_2^J$ with
    \[
      |\cM_\phi| \le \left( \frac{900 \gD}{\Delta} \right)^{|J|}.
    \]
    Thus there exists $\nu \in \cM_\phi$ such that
    \[
     \norm{\phi-x_1 \cdot \frac{50 \gD \psi'}{\Delta} - x_2 \cdot \frac{25 q}{\Delta} -\nu}_2
     < \frac{\a}{\gD}.
    \]
    This implies that the set
    \begin{multline*}
     \cM_J^1(\gD,\Delta, d)
     := \Bigg \{ \left(x_1 \cdot \frac{50 \gD \psi'}{\Delta} +x_2 \cdot \frac{ 25 q}{\Delta}+\nu, \ \psi' \right):  \\
      \psi' \in \cN_\psi, \ q \in \Z_{3 \Delta}^\gamma, \ \nu \in \cM_\phi, \ (x_1,x_2) \in \cN_\Box \Bigg \}
    \end{multline*}
    is a $(2\a/\gD)$-net for $S_J^1(\gD,\Delta,d)$. We observe that 
    \begin{multline*}
     |\cM_J^1(\gD,\Delta,d)|
     \le |\cN_\psi| \cdot |\Z_{3 \Delta}| \cdot |\cM_\phi| \cdot |\cN_\Box|
    \\ \le \left( \bar{C}  \frac{d \gD}{\a} \cdot  \left( \frac{ \Delta}{\sqrt{|J|}}+1 \right) \cdot \frac{\gD}{\Delta} \right)^{|J|}
     \cdot \left(\frac{\gD}{\a} \right)^2\\
     \le \left( \bar{C} \frac{d \gD^2}{\a} \cdot \left( \frac{1}{\sqrt{|J|}}+\frac{1}{\Delta} \right)  \right)^{|J|}
     \cdot \left(\frac{\gD}{\a} \right)^2,
    \end{multline*}
where $\bar{C}$ is some absolute constant.

   \vskip 0.1in

Since $S_J(\gD,\Delta, d) = S_J^0(\gD,\Delta, d) \cup S_J^1(\gD,\Delta, d)$, it therefore means that
   \[
     \cM_J(\gD,\Delta, d) := \cM_J^0(\gD,\Delta, d) \cup \cM_J^1(\gD,\Delta, d)
   \]
   is a $(C\a/\gD)$-net for the set  $S_J(\gD,\Delta, d)$, where $C$ is an absolute constant.

 The net $\cM_{J}(\gD,\Delta, d)$  constructed above is not necessarily contained in $S_J(\gD,\Delta, d)$.
 However, we can construct a new net by replacing each point of this net by a point of the set $S_J(\gD,\Delta,d)$ which is within distance $C\alpha/\gD$ from this point.
 If a $(C\alpha/\gD)$-close  point does not exist, we skip the original point. Such process creates a $(2C\alpha/\gD)$-net contained in $S_J(\gD,\Delta, d)$ without increasing the cardinality.
  Thus the lemma is proved.
   \end{proof}


Building on Lemma \ref{l: net in S_J} we now obtain a net with small cardinality for the collection of vectors $z$ for which $D_2(z_{\rm{small}}/\|z_{\rm{small}}\|_2) \approx \gD, \Delta(z_{\rm{small}}/\|z_{\rm{small}}\|_2) \approx \Delta,$  and $d(z_{\rm{small}}/\|z_{\rm{small}}\|_2) \approx d$, where we recall that the vector $z_{\text{small}}$ contains  $n-M>n/2$  coordinates of $z$ having the smallest magnitude. To this end, let us define the following set: 
  \begin{multline}  \label{eq: def Z(D,d)}
  \cZ(\gD,\Delta, d):=\{  z \in \cZ :    \ D_2(z_{\text{small}}/ \norm{z_{\text{small}}}_2) \in [\gD, (3/2)\gD], \\
   \Delta(z_{\text{small}}/ \norm{z_{\text{small}}}_2) \in [\Delta, (3/2)\Delta],
      d(z_{\text{small}}/ \norm{z_{\text{small}}}_2) \in [d, (3/2)d] \}.
 \end{multline}

As will be seen in Section \ref{sec: kernel complex}, the small ball probability for the images of such vectors is controlled by the values of the two-dimensional \abbr{LCD} and the real-imaginary de-correlation. So we partition this set according to $D_2(\cdot)$, $\Delta(\cdot)$, and $d(\cdot)$. The net $\cM_J(\gD,\Delta,d)$ provides a net for the vectors which have $D_2(\cdot) \approx \gD, \Delta(\cdot) \approx \Delta,$  and $d(\cdot) \approx d$.  This is shown in the proposition below.
 \begin{prop} \label{prop: net in Z(D,d)}
  Let $d \in (0,1)$,  $\gD, \Delta>1$, and denote
  \beq\label{eq:dfn-a-0}
\alpha:= L \sqrt{{\log_1} \f{\Delta}{2^7 L}}.
  \eeq
Assume that
  \(
   \alpha \le d\gD \le \Delta \le \gD.
  \)
Then there exist absolute constants $C_{\ref{prop: net in Z(D,d)}}$, $\bar{C}_{\ref{prop: net in Z(D,d)}}$, and a set $\cN(\gD,\Delta, d) \subset \cZ(\gD,\Delta,d)$ with
  \[\displaybreak[3]
   |\cN(\gD,\Delta, d)|
   \le \bar{C}_{\ref{prop: net in Z(D,d)}}^n \left(\frac{n}{\rho M}  \cdot  \frac{\gD}{\alpha} \right)^{5 M} \cdot \left(  \frac{d \gD^2}{\alpha} \cdot \left( \frac{1}{\sqrt{n}}+\frac{1}{\Delta} \right)  \right)^{n-M}
  \]
  having the following approximation properties: Let $z \in \cZ(\gD,\Delta, d)$ be any vector and denote $J={\rm{small}}(z)$. Then there exists $w \in \cN(\gD,\Delta,d)$ such that
 \begin{multline*}
 \norm{\frac{z_J}{\norm{z_J}_2}-\f{w_J}{\norm{w_J}_2}}_2 < C_{\ref{prop: net in Z(D,d)}} \frac{\alpha}{\gD}, \quad \norm{z_{J^c} - w_{J^c}}_2 \le C_{\ref{prop: net in Z(D,d)}} \frac{\rho \alpha}{\gD}, \\
  |\|z_{J}\|_2 - \|w_J\|_2| \le C_{\ref{prop: net in Z(D,d)}} \frac{\rho \alpha}{\gD}.
 \end{multline*}
 \end{prop}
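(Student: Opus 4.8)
The plan is to combine the localized net of Lemma~\ref{l: net in S_J} (applied on the small coordinates) with a standard volumetric net on the large coordinates, and to glue the two together with a discretization of the scalar $\norm{z_{\text{small}}}_2$. First I would fix $z \in \cZ(\gD,\Delta,d)$ and set $J=\mathrm{small}(z)$, so that $|J|=n-M$. By the reduction preceding \eqref{eq:cZ-dfn}, we may write $z_J = w_1 + \sqrtneg\, w_2$ with $w_1 \perp w_2$ and $\norm{w_2}_2 \le \norm{w_1}_2$; rescaling, $\hat z_J := z_J/\norm{z_J}_2$ corresponds to a pair $(\phi,\psi)$ with $\phi\perp\psi$, $\norm{\phi}_2 \in [1/2,1]$ (since $\norm{w_1}_2^2 + \norm{w_2}_2^2 = 1$ forces $\norm{\phi}_2 \ge 1/\sqrt2$), and $\norm{\psi}_2 = d(\hat z_J)$, which by the defining constraint of $\cZ(\gD,\Delta,d)$ lies in $[d,(3/2)d] \subset [d,3d]$. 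The constraint $D_2(\hat z_J) \approx \gD$ produces a $\zeta = \theta^\star \in \R^2$ with $\norm{\zeta}_2 \in [\gD,2\gD]$ (using the factor $3/2$ to absorb the $\liminf$/non-attainment issue, as in the discussion after Definition~\ref{dfn:wtz-V}), and $\Delta(\hat z_J)\approx\Delta$ gives $\norm{\zeta_1\phi+\zeta_2\psi}_2 \in [\Delta,2\Delta]$ with $\mathrm{dist}(\zeta_1\phi+\zeta_2\psi,\Z^J) < \alpha$, where $\alpha = L\sqrt{\log_1(\Delta/2^7 L)}$ is exactly the threshold in \eqref{eq:D_2_define} up to the constant the factor $2^7$ versus $2^8$ accounts for. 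Hence $(\phi,\psi) \in S_J(\gD,\Delta,d)$, and Lemma~\ref{l: net in S_J} supplies a $(C_{\ref{eq: integer net}}\alpha/\gD)$-net $\cM_J(\gD,\Delta,d)$ for $\hat z_J$ of the stated cardinality, which for $|J|=n-M$ reads $\bigl(\bar C_{\ref{eq: integer net}}\, d\gD^2\alpha^{-1}(|J|^{-1/2}+\Delta^{-1})\bigr)^{n-M}(\gD/\alpha)^2$; since $|J| > n/2$ we may replace $|J|^{-1/2}$ by $\sqrt2\, n^{-1/2}$, absorbing the constant into $\bar C_{\ref{prop: net in Z(D,d)}}^n$.

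Next I would handle the large coordinates $z_{J^c} \in \C^M$. Since $\norm{z_{J^c}}_2 \le 1$, a standard volumetric argument gives a $(\rho\alpha/\gD)$-net $\cN_{J^c}$ in the complex ball $B_\C^M$ of cardinality at most $(C \gD/(\rho\alpha))^{2M}$; I would also discretize the scalar $\norm{z_J}_2 \in [0,1]$ on a $(\rho\alpha/\gD)$-grid of size $O(\gD/(\rho\alpha))$. One then forms, for each choice of the subset $J$ (there are $\binom{n}{M} \le (en/M)^M$ of them), each $w^{(J)} \in \cM_J(\gD,\Delta,d)$, each $v \in \cN_{J^c}$, and each grid value $t$, the candidate vector whose $J$-block is $t\, w^{(J)}$ and whose $J^c$-block is $v$. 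Restricting the resulting global net to those vectors that actually lie in $\cZ(\gD,\Delta,d)$ (discarding the rest, which does not increase cardinality) yields $\cN(\gD,\Delta,d)$. Multiplying the four factors, $\binom{n}{M} \cdot (\gD/(\rho\alpha))^{2M}\cdot (\gD/(\rho\alpha)) \cdot |\cM_J|$, and bounding $\binom{n}{M}(\gD/(\rho\alpha))^{2M}(\gD/(\rho\alpha))$ by $(C n/(\rho M) \cdot \gD/\alpha)^{5M}$ after absorbing constants into $\bar C_{\ref{prop: net in Z(D,d)}}^n$, gives the claimed bound; the leftover $(\gD/\alpha)^2$ from Lemma~\ref{l: net in S_J} is likewise swept into $\bar C_{\ref{prop: net in Z(D,d)}}^n$ since $\gD/\alpha \le \gD \le$ (some fixed power controlled by $\rho, p$, and $n$).

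The approximation claims follow by the triangle inequality. Given $z \in \cZ(\gD,\Delta,d)$, pick $w^{(J)} \in \cM_J(\gD,\Delta,d)$ with $\norm{\hat z_J - w^{(J)}}_2 < C_{\ref{eq: integer net}}\alpha/\gD$, pick $v \in \cN_{J^c}$ with $\norm{z_{J^c} - v}_2 \le \rho\alpha/\gD$, and pick the grid value $t$ with $|\norm{z_J}_2 - t| \le \rho\alpha/\gD$; the corresponding net point $w$ (relabelled so its small/large blocks are $t\,w^{(J)}$ and $v$) satisfies $\norm{w_J/\norm{w_J}_2 - \hat z_J}_2 \le C_{\ref{eq: integer net}}\alpha/\gD$ directly, $\norm{z_{J^c}-w_{J^c}}_2 \le \rho\alpha/\gD$ directly, and $|\norm{z_J}_2 - \norm{w_J}_2| = |\norm{z_J}_2 - t\norm{w^{(J)}}_2| \le |\norm{z_J}_2 - t| + t\bigl|\,1 - \norm{w^{(J)}}_2\bigr| \le \rho\alpha/\gD + C_{\ref{eq: integer net}}\alpha/\gD$; here the last bound on $\norm{z_{J^c}-w_{J^c}}_2$ needs the extra remark that $z_{J^c}$ has small coordinates, so $\norm{z_{J^c}}_2 = \sqrt{1-\norm{z_J}_2^2}$ together with incompressibility of $z$ forces $\norm{z_{J^c}}_2 \lesssim \sqrt{M}\norm{z}_\infty \lesssim \rho$ up to constants, and similarly $\norm{v}_2 \lesssim \rho$ after the restriction step, which is why the $J^c$-error carries the extra $\rho$ factor. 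Choosing $C_{\ref{prop: net in Z(D,d)}}$ larger than all the accumulated absolute constants finishes the proof.

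\textbf{Main obstacle.} The delicate point is not the volumetric bookkeeping but ensuring that the parameters $(\gD,\Delta,d)$ passed to Lemma~\ref{l: net in S_J} genuinely satisfy its hypotheses $0 < \alpha \le d\gD \le \Delta \le \gD$ with the \emph{exact} $\alpha$ of \eqref{eq:dfn-a-0}, despite $D_2$ and $\Delta(\cdot)$ being defined via an infimum and a $\liminf$ that may not be attained and using the thresholds $2^8 L$ versus the $2^7 L$ appearing in \eqref{eq:dfn-a-0}. I would handle this by working throughout with the dilated windows $[\gD,(3/2)\gD]$, $[\Delta,(3/2)\Delta]$, $[d,(3/2)d]$ built into the definition \eqref{eq: def Z(D,d)} of $\cZ(\gD,\Delta,d)$: the factor $3/2$ gives enough slack to (a) select an approximate minimizer $\zeta$ realizing the $\liminf$ in $\Delta(\cdot)$, (b) absorb the discrepancy between the constants $2^7$ and $2^8$ in the $\log_1$ terms, and (c) keep the net from Lemma~\ref{l: net in S_J} — which is built for the \emph{window} $S_J(\gD,\Delta,d)$, not for a single $(\gD,\Delta,d)$ — valid for every $z$ in $\cZ(\gD,\Delta,d)$. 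Verifying inequality \eqref{eq:d-delta-D}, $d(\hat z_J)\gD \lesssim \Delta(\hat z_J) \le D_2(\hat z_J)$, is what licenses the chain $\alpha \le d\gD \le \Delta \le \gD$ once the constants are lined up, and this is the one place where the powers-of-two remark (Remark~\ref{rmk:power-of-2}) is actually used.
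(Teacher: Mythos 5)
Your construction follows the paper's route almost verbatim (the net of Lemma \ref{l: net in S_J} on the $n-M$ small coordinates, a volumetric net of mesh $\rho\alpha/\gD$ on the $M$ large coordinates, a grid for $\norm{z_J}_2$, and a union over the $\binom{n}{M}$ choices of $J$), and the cardinality bookkeeping is fine. However, the third approximation property is not actually established. Your $J$-block approximant is $t\,w^{(J)}$ with $w^{(J)}$ taken from the net of Lemma \ref{l: net in S_J}; all that the mesh gives you is $\bigl|1-\norm{w^{(J)}}_2\bigr|\le C\alpha/\gD$, and indeed your own estimate ends with $|\norm{z_J}_2-\norm{w_J}_2|\le \rho\alpha/\gD+C\alpha/\gD$. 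This is weaker than the asserted bound $C_{\ref{prop: net in Z(D,d)}}\,\rho\alpha/\gD$ by a factor $1/\rho$, and no choice of the absolute constant repairs it, since $\rho$ is not bounded below by an absolute constant. The missing device is exact normalization of the $J$-block approximant: the paper draws the net from the subset $\cS$ of pairs $(\phi(z),\psi(z))$ arising from genuine $z\in\cZ(\gD,\Delta,d)$ (Remark \ref{rmk:net-subset-cM}), so every $w'=\phi'+\sqrtneg\,\psi'$ in it satisfies $\norm{w'}_2=1$ exactly and $\norm{w_J}_2=\rho'$ on the nose; equivalently, you could replace $w^{(J)}$ by $w^{(J)}/\norm{w^{(J)}}_2$ before scaling by $t$. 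This is not cosmetic: in the proof of Proposition \ref{prop: kernel complex} the total approximation error has to be at most a constant times $(\alpha/\gD)\norm{w_J}_2\sqrt{np}$ with $\norm{w_J}_2$ possibly of order $\rho$, so an error term of size $\alpha/\gD$ without the extra $\rho$ is too large by a factor $1/\rho$.

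Two further slips. First, "restricting the global net to those vectors that actually lie in $\cZ(\gD,\Delta,d)$" destroys the covering property: the net point nearest to a given $z$ may itself fall outside $\cZ(\gD,\Delta,d)$ and be discarded. The correct step, used in the paper and at the end of Lemma \ref{l: net in S_J}, is to replace each net point by a point of $\cZ(\gD,\Delta,d)$ within the mesh distance when one exists (and drop it otherwise); this at most doubles the mesh and does not increase the cardinality. Second, your justification of the $\rho$ in the $J^c$ bound is backwards: $J^c$ indexes the $M$ largest coordinates of $z$, so $\norm{z_{J^c}}_2$ need not be of order $\rho$ (incompressibility gives the lower bound $\norm{z_J}_2\ge\rho$, not an upper bound on the large part). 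Fortunately that remark is unnecessary: the factor $\rho$ there comes solely from choosing the volumetric net with mesh $\rho\alpha/\gD$, which you already did, and its cardinality $(C\gD/(\rho\alpha))^{2M}$ is exactly what produces the $(n/(\rho M)\cdot\gD/\alpha)^{5M}$ term. Similarly, $d(z_J/\norm{z_J}_2)=\norm{\phi}_2\norm{\psi}_2$ rather than $\norm{\psi}_2$, although your conclusion $\norm{\psi}_2\in[d,3d]$ remains correct because $\norm{\phi}_2\in[1/2,1]$.
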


 \begin{rmk}\label{rmk:net-subset-cM}
Note that Lemma \ref{l: net in S_J} holds also for any subset of $S_J(\gD,\Delta,d)$. That is, given any $\cS \subset S_J(\gD,\Delta,d)$ there exists a net $\cM_J^\cS(\gD,\Delta,d) \subset \cS$ with the same properties as in Lemma \ref{l: net in S_J}. We use this version of Lemma \ref{l: net in S_J} to prove Proposition \ref{prop: net in Z(D,d)}. 
 Similarly,  we will see that given any $\cS \subset \cZ(\gD,\Delta,d)$ there exists a set $\cN^\cS(\gD,\Delta,d) \subset \cS$ with the same approximation properties and the cardinality bound as in Proposition \ref{prop: net in Z(D,d)}. This version of Proposition \ref{prop: net in Z(D,d)} will be used in Section \ref{sec: kernel complex}.
 \end{rmk}

 In proving Proposition \ref{prop: net in Z(D,d)} our strategy will be to use the net $\cM_J^\cS(\gD,\Delta,d)$, for some suitable choice of $\cS$, obtained from Lemma \ref{l: net in S_J}, to approximate the small coordinates. The cardinality of the net to approximate the large ones will be obtained by a simple volumetric estimate.


 \begin{proof}[Proof of Proposition \ref{prop: net in Z(D,d)}]
  Fix a set $J \subset [n], \ |J|=n-M$, and denote
  \[
   \cZ_J(\gD,\Delta, d):= \{ z \in \cZ(\gD,\Delta, d): \ \text{small}(z)=J \}.
  \]
  Let us now construct an approximating set for this subset.
Denote $\phi +\sqrtneg v=\phi(z)+\sqrtneg \psi(z):=z_J/\norm{z_J}_2 \in \C^J$. Recalling the definition of $\Delta (\phi+\sqrtneg \psi)$ we see that there exists $\zeta \in \R^2$ such that
  \begin{multline*}
   \norm{\zeta}_2 \le(4/3) D_2(\phi+\mathrm{i} \psi) \le 2\gD, \qquad \norm{\zeta_1 \phi+\zeta_2 \psi}_2 \le (4/3) \Delta(\phi+\mathrm{i} \psi)\le 2 \Delta, \\
   \text{and }   \text{dist} (\zeta_1 \phi+\zeta_2 \psi, \Z^n) < L \sqrt{{\log_{1}} \frac{\norm{\zeta_1 \phi+\zeta_2 \psi}_2}{{2^8}L}}
   \le  L \sqrt{{\log_{1}} \frac{ \Delta}{2^7 L}}= \alpha.
  \end{multline*}
Further recall that $d(z_{\text{small}}/\|z_{\text{small}}\|_2) = \norm{\phi}_2\norm{\psi}_2$ and note that by our convention we have $\norm{\phi}_2 \in [1/2,1]$. Thus  we deduce that $\norm{\psi}_2 \in [d,3d]$. Hence
  $(\phi,\psi) \in S_J(\gD,\Delta,d)$, and in particular $(\phi,\psi) \in \cS$ where $\cS:=\{(\phi(z),\psi(z)): z \in \cZ(\gD,\Delta,d)\}$. So it can be approximated by an element of $\cM_J^\cS(\gD,\Delta, d)$. Set
  \[
   \cM_J:= \{ \phi+\sqrtneg \psi: (\phi,\psi) \in \cM_J^\cS(\gD,\Delta,d) \}.
  \]
  Then for any  $z \in \cZ_J(\gD,\Delta,d)$, there exists $w' \in \cM_J$ such that
   \[
     \norm{\frac{z_{J}}{\norm{z_J}_2}-w'}_2 < C_{\ref{eq: integer net}} \frac{\alpha}{\gD}.
   \]
For the set $J^c$, we will use a net satisfying the volumetric estimate.
Since $z \in S_\C^{n-1}$, there exists a set $\cN_{J^c}$ with
  \[
   |\cN_{J^c}| \le \left( \frac{3}{ C_{\ref{eq: integer net}} \rho} \cdot \frac{\gD}{\alpha}\right)^{2M}.
  \]
such that for every $z \in \cZ_J(D,\Delta,d)$ there exists a $w_{J^c} \in \cN_{J^c}$ for which
\[
 \norm{{z_{J^c}}-w_{J^c}}_2 \le  C_{\ref{eq: integer net}}\f{ \rho \alpha}{\gD}.
 \]
 Finally consider a net $\cN_{[0,1]}$ with $|\cN_{[0,1]}| \le 3\gD/(C_{\ref{eq: integer net}} \rho \alpha)$ such that for every $z \in \cZ_J(\gD,\Delta,d)$ there exists a $\rho' \in [0,1]$ for which
\[
\left| \norm{z_J}_2 - \rho'\right| \le C_{\ref{eq: integer net}}\f{ \rho \alpha}{\gD}.
\]
Now let us define
    \[
   \cN(\gD,\Delta, d)
   := \bigcup_{|J|=n-M} \left \{ \rho' w' + w'': \ \rho' \in \cN_{[0,1]}, \ w' \in \cM_J, \ w'' \in \cN_{J^c} \right \}.
  \]
Then  for any $z \in \cZ(\gD,\Delta,d)$ there exists a $w=w_J+w_{J^c} \in \cN(\gD,\Delta, d)$ such that
 \begin{multline*}
  \norm{\frac{z_J}{\norm{z_J}_2}-\f{w_J}{\norm{w_J}_2}}_2 < C_{\ref{eq: integer net}} \frac{\alpha}{\gD}, \quad \norm{z_{J^c} - w_{J^c}}_2 \le C_{\ref{eq: integer net}} \frac{\rho \alpha}{\gD}, \\
  |\|z_{J}\|_2 - \|w_J\|_2| \le C_{\ref{eq: integer net}} \frac{\rho \alpha}{\gD}.
 \end{multline*}

The set $\cN(\gD,\Delta, d)$ thus constructed may not be contained in $\cZ(\gD,\Delta,d)$. However, as in the proof of Lemma \ref{eq: integer net} this can be rectified easily. It thus remains to bound the cardinality of $\cN(\gD,\Delta, d)$.
  By Lemma \ref{l: net in S_J}, we have
  \begin{align*}
   &|\cN(\gD,\Delta, d)|
   \le \sum_{|J|=n-M} |\cN_{J^c}| \cdot |\cN_{[0,1]}|\cdot |\cM_J| \\
   &\le \binom{n}{M} \cdot \left( \frac{3}{{C}_{\ref{l: net in S_J}} \rho} \cdot \frac{\gD}{\alpha} \right)^{2M+1} \cdot \left( \bar{C}_{\ref{l: net in S_J}} \frac{d \gD^2}{\a} \cdot \left( \frac{1}{\sqrt{n-M}}+\frac{1}{\Delta} \right)  \right)^{n-M}
     \cdot \left(\frac{ \gD}{\a} \right)^2.
  \end{align*}
Since $1 \le M<n/2$ the required estimate follows from a straightforward calculation.
This completes the proof.
 \end{proof}


\section{The structure of the kernel in the genuinely complex case} \label{sec: kernel complex}
In this section, our goal is to show that with high probability, any genuinely complex vector in $\text{Ker}(B^D)$ has a large two-dimensional \abbr{LCD}.
 Before proceeding any further let us formally define the notion of genuinely complex vectors: 
  \begin{multline}\label{eq:compl-dfn}
  \text{Compl}(\cZ)
    := \Bigg \{  z \in \cZ: \\
    d(z_{\text{small}}/\norm{z_{\text{small}}}_2)
   \ge  \frac{4 L}{D_2(z_{\text{small}}/\norm{z_{\text{small}}}_2)} \sqrt{\log_{1} \frac{\Delta(z_{\text{small}}/\norm{z_{\text{small}}}_2)}{{2^7}L}} \Bigg \},
  \end{multline}
  where we recall the definition of $\cZ$ from \eqref{eq:cZ-dfn}.
Equipped with the notion of genuinely complex vectors we state the main result of this section.
 \begin{thm} \label{thm: kernel complex}
Let $B^D, {A}_n, \rho, K,R,r$, and $r'$ be as in Proposition \ref{thm: large LCD for real part}. Then there exist constants ${c}_{\ref{thm: kernel complex}}, c'_{\ref{thm: kernel complex}}$, 
depending only on $K,R, r,$ and the fourth moment of $\{\xi_{ij}\}$, such that if $p$ satisfies the inequality
\begin{equation}
  {c}_{\ref{thm: kernel complex}}{\rho^5 pn} >1, \label{eq:p-rho-gc}
\end{equation}
then we have
   \begin{multline*}
     \P \Big(\exists z \in \mathrm{Compl}(\cZ) \cap \mathrm{Ker}(B^D):
     D_2(z_{\rm{small}}/\norm{z_{\rm{small}}}_2) \le \exp(c'_{\ref{thm: kernel complex}} \frac{n}{M}),   \\
       \norm{{A}_n} \le K \sqrt{pn} \Big) \le \exp(-\bar{c}_{\ref{thm: kernel complex}} np),
   \end{multline*}
where \(
 M= {{C}_{\ref{thm: large LCD for real part}}  \rho^{-4} p^{-1}}
\) and $z_{\rm{small}}$ is the smallest $(n-M)$ coordinates of $z$ in modulus.

 \end{thm}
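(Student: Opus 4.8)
The plan is to follow the standard net-plus-small-ball-probability scheme, but now using the refined net of Proposition~\ref{prop: net in Z(D,d)} together with a two-dimensional small ball probability estimate. First I would recall the anti-concentration bound available for genuinely complex vectors: for a vector $z$ whose small part $z_{\rm small}/\|z_{\rm small}\|_2$ has two-dimensional \abbr{LCD} comparable to $\gD$, the L\'evy concentration function of $\langle \Re(B^D)|_{J^c}\, z_{J^c}, \cdot\rangle$ (or of the full $B^D z$) at scale roughly $\sqrt{np}\,\|z_{\rm small}\|_2$ should decay like $\big(L/(d\gD)\big)^{?}$ per row, and raised to the number of rows this produces a bound of the shape $\big(C L / (\Delta(z_{\rm small}))\big)^{c\,np}$ or $\exp(-c np)\cdot(\text{something involving }D_2)^{-2n}$, as flagged in the text (``decays roughly as the inverse of the $(2n)$-th power of $D_2(\cdot)$''). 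I would invoke the relevant statement from \cite{RV no-gaps} (Theorem~7.5 there, referenced in the outline) after conditioning on $\|A_n\|\le K\sqrt{pn}$ and on the outcome of Proposition~\ref{thm: large LCD for real part}, which guarantees that $\|x_{\rm small}/\|x_{\rm small}\|_2\|_\infty \ge \rho\sqrt{p}$ with probability $1-\exp(-\bar c\,np)$, and hence (combined with the no-dominated-real-part reasoning and $d(z)D_2(z)\le \Delta(z)$ from \eqref{eq:d-delta-D}) that $\Delta(z_{\rm small})$ is bounded below by a polynomial in $\gD$ — this is what makes $\Delta$ usable as the net-controlling parameter.

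Second, I would dyadically decompose the range of $D_2$, $\Delta$, and $d$: set $\gD = 2^j$, $\Delta = 2^k$, $d = 2^{-\ell}$ ranging over $O(\log^3(\text{stuff}))$ many triples with $1< \gD \le \exp(c'_{\ref{thm: kernel complex}} n/M)$, $\alpha \le d\gD \le \Delta \le \gD$, where $\alpha = L\sqrt{\log_1(\Delta/2^7L)}$ as in \eqref{eq:dfn-a-0}. For each triple, Proposition~\ref{prop: net in Z(D,d)} (in the subset form of Remark~\ref{rmk:net-subset-cM}, applied to $\cS = \mathrm{Compl}(\cZ)\cap\cZ(\gD,\Delta,d)$) supplies a net $\cN(\gD,\Delta,d)$ of cardinality at most $\bar C^{\,n}(n/(\rho M)\cdot \gD/\alpha)^{5M}\big(d\gD^2/\alpha\cdot(1/\sqrt n + 1/\Delta)\big)^{n-M}$ that approximates every such $z$ to within $C\alpha/\gD$ on the normalized small part, $C\rho\alpha/\gD$ on the large part, and $C\rho\alpha/\gD$ on $\|z_J\|_2$. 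A standard perturbation computation then shows that if $z\in\mathrm{Ker}(B^D)$ lies in $\cZ(\gD,\Delta,d)$, its approximant $w$ satisfies $\|B^D w\|_2 \lesssim (K+R)\sqrt{np}\cdot (\alpha/\gD)$ — here I need the $\rho$-factors in the approximation of the large coordinates and of $\|z_J\|_2$ precisely so that the error from the poorly-controlled block $J^c$ is damped by $\rho$, matching the scale at which the small-ball bound is strong.

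Third, I would union-bound: the probability that some $w\in\cN(\gD,\Delta,d)$ has $\|B^D w\|_2 \lesssim (K+R)\sqrt{np}\,\alpha/\gD$ is at most $|\cN(\gD,\Delta,d)|$ times the per-vector small-ball bound, which is of the form $\big(C' \alpha /\Delta(w_{\rm small})\big)^{c_1 np}$ (using that $D_2\approx\gD$, $d\approx d$, so $\Delta\approx\Delta$, and that anti-concentration at scale $\alpha/\gD$ for a vector of \abbr{LCD} $\approx\gD$ gives roughly $\alpha/\gD$ times a geometric factor — I will need to be careful to account for the extra $\sqrt{p}$ normalizations and the restriction to $|J^c|=M$ coordinates, but after summing over the $\binom nM$ choices of $J$ the entropy is still $O(np)$ since $M\log(n/M) \lesssim \rho^{-4}\log(\rho^4 n)$, which \eqref{eq:p-rho-gc} forces to be $o(np)$). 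The product $|\cN|\cdot(\text{small ball})$ has the shape $\bar C^n (n/(\rho M)\cdot\gD/\alpha)^{5M}(d\gD^2/\alpha)^{n-M}\cdot(C'\alpha/\Delta)^{c_1 np}$; using $d\gD^2/\alpha \le d\gD^2/(d\gD)\cdot(\text{from }\alpha\le d\gD) = \gD$ and $\gD \le \exp(c' n/M) = \exp(c'\rho^4 p n / C_{\ref{thm: large LCD for real part}})$, the factor $(d\gD^2/\alpha)^n \le \gD^n \le \exp(c'\rho^4 pn \cdot n/M \cdot \text{(bounded)})$; choosing $c'_{\ref{thm: kernel complex}}$ small enough relative to $c_1$ and the sub-Gaussian constants makes the total at most $\exp(-c_2 np \rho^4)$, and summing over the $\mathrm{polylog}$ many triples and adding the $\exp(-\bar c\,np)$ failure probabilities of Proposition~\ref{thm: large LCD for real part} and $\|A_n\|\le K\sqrt{pn}$ yields the claimed $\exp(-\bar c_{\ref{thm: kernel complex}} np)$. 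I expect the main obstacle to be the bookkeeping in this last balancing: verifying that the $5M$-fold and the $(n-M)$-fold factors in $|\cN(\gD,\Delta,d)|$ are each dominated by an appropriately small constant power of the per-vector small-ball bound \emph{simultaneously} across all admissible $(\gD,\Delta,d)$, in particular at the extreme $\gD\approx\exp(c' n/M)$, which is exactly where the constant $c'_{\ref{thm: kernel complex}}$ in the statement must be calibrated, and ensuring the $\rho$-weighting of the $J^c$-block in Proposition~\ref{prop: net in Z(D,d)} interacts correctly with the $\sqrt p$-scale anti-concentration from \cite{RV no-gaps}.
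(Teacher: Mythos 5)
Your dyadic decomposition in $(\gD,\Delta,d)$, the net from Proposition \ref{prop: net in Z(D,d)} (in its subset form), the two-dimensional small-ball estimate and the entropy/probability balance reproduce, in outline, the paper's Proposition \ref{prop: kernel complex}, i.e.\ the part of the argument that handles genuinely complex kernel vectors with a \emph{large} auxiliary parameter $\Delta(z_{\mathrm{small}}/\|z_{\mathrm{small}}\|_2)$. Two bookkeeping points there: the admissibility condition $\alpha\le d\gD$ needed for the net comes directly from the definition \eqref{eq:compl-dfn} of $\mathrm{Compl}(\cZ)$ together with \eqref{eq:d-delta-D} (not from Proposition \ref{thm: large LCD for real part}), and the per-vector small-ball bound \eqref{eq: Levy complex} has exponent $n-1$ (the number of rows), not $c_1np$; also the net part must come out at $e^{-n}$, since a bound of the form $\exp(-c_2np\rho^4)$ would be weaker than the claimed $\exp(-\bar c_{\ref{thm: kernel complex}}np)$ because $\rho=\rho(n,p)\to0$.

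The genuine gap is the regime of \emph{small} $\Delta(z_{\mathrm{small}}/\|z_{\mathrm{small}}\|_2)$. The balancing in the union bound requires $C'(K+R)^2\alpha/\Delta$ to be a small constant, i.e.\ $\Delta\ge s_{\ref{prop: kernel complex}}L$ for a large constant $s_{\ref{prop: kernel complex}}$; membership in $\mathrm{Compl}(\cZ)$ alone does not force this (combining \eqref{eq:compl-dfn} with $dD_2\le\Delta$ from \eqref{eq:d-delta-D} only gives $\Delta\gtrsim L\sqrt{\log_1(\Delta/2^7L)}$, which is vacuous when $\Delta\lesssim L$), so your union bound cannot be closed there. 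Your proposed remedy misreads Proposition \ref{thm: large LCD for real part}: it asserts that with probability $1-\exp(-\bar c_{\ref{thm: large LCD for real part}}np)$ \emph{no} kernel vector has $\|x_{\mathrm{small}}/\|x_{\mathrm{small}}\|_2\|_\infty\ge\rho\sqrt p$ (the real part is spread out), not the reverse; and, more importantly, that proposition constrains the real part of $z$ itself, whereas $\Delta$ is about the best linear combination $\theta_1\phi+\theta_2\psi$ of the real and imaginary parts, so no bound on $\Delta$ ``polynomial in $\gD$'' follows from it together with \eqref{eq:d-delta-D}. The missing idea is a rotation: if $\Delta(z_{\mathrm{small}}/\|z_{\mathrm{small}}\|_2)\le\rho^{-1}/(4\sqrt p)$, pick a near-optimal $\theta=(\theta_1,\theta_2)$ and replace $z$ by $w=\frac{\theta_1-\mathrm{i}\theta_2}{|\theta_1-\mathrm{i}\theta_2|}\,z$, which still lies in $\mathrm{Ker}(B^D)$ and satisfies $w_{\mathrm{small}}=z_{\mathrm{small}}$; then $D_1\bigl(\Re(w_{\mathrm{small}})/\|\Re(w_{\mathrm{small}})\|_2\bigr)\le2\Delta$, so Lemma \ref{lem:D_1-lbd} forces $\|\Re(w_{\mathrm{small}})/\|\Re(w_{\mathrm{small}})\|_2\|_\infty\ge\rho\sqrt p$, exactly the event that Proposition \ref{thm: large LCD for real part} rules out up to probability $\exp(-\bar c_{\ref{thm: large LCD for real part}}np)$. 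After shrinking $\rho$ so that $s_{\ref{prop: kernel complex}}L<\rho^{-1}/(4\sqrt p)$, this covers precisely the range of $\Delta$ your net argument leaves open, and it is also the sole source of the final $\exp(-\bar c_{\ref{thm: kernel complex}}np)$ error term.
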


The proof of Theorem \ref{thm: kernel complex} is carried out by the following two-fold argument. Using Proposition \ref{prop: net in Z(D,d)} we show that the subset of vectors  in $\text{Compl}(\cZ)$ that have a large value of $\Delta(z_{\text{small}}/\norm{z_{\text{small}}}_2)$ admits a net of small cardinality. This observation together with an estimate on the small ball probability, obtained  from \cite[Theorem 7.5]{RV no-gaps}, yields the desired conclusion for vectors $z \in \text{Compl}(\cZ)$ which possess a large value of $\Delta(z_{\text{small}}/\norm{z_{\text{small}}}_2)$ (see Proposition \ref{prop: kernel complex}). For the other case, we first show that such vectors, upon rotation, must have a dominated real part. Applying Proposition \ref{thm: large LCD for real part} we show that this is impossible with high probability, which finishes the proof of Theorem \ref{thm: kernel complex}. The rest of this section is devoted to implementing this idea.

First let us consider the case of large $\Delta(z_{\text{small}}/\norm{z_{\text{small}}}_2)$. For such vectors we prove that the following holds:

 \begin{prop} \label{prop: kernel complex}
   Let $s \ge 1$.
        Define the set $\cZ(s)$  by
  \[
    \cZ(s)
    := \left \{  z \in \mathrm{Compl}(\cZ): \ \Delta(z_{\rm{small}}/\norm{z_{\rm{small}}}_2) \ge s L\right\}.
  \]
Let ${A}_n, B^D, K,R,$ and $\rho$ be as in Theorem \ref{thm: kernel complex}.   Then there exist $s_{\ref{prop: kernel complex}}, r_\star > 1$, and $c'_{\ref{prop: kernel complex}}>0$, depending only on $K, R$, and the fourth moment of $\{\xi_{ij}\}$ such that for any $ r^2_\star p^{-1} \le M \le \rho n$ we have,
   \begin{multline*}
     \P \Big(\exists z \in \cZ(s_{\ref{prop: kernel complex}}) \cap {\rm{Ker}}(B^D): \,
     D_2(z_{\rm{small}}/\norm{z_{\rm{small}}}_2) \le \exp(c'_{\ref{prop: kernel complex}} n/M) \\
      \text{ and } \norm{{A}_n} \le K \sqrt{pn} \Big) \le e^{- n}.
   \end{multline*}
 \end{prop}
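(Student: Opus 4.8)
The plan is to execute a net-plus-union-bound argument over the stratified sets $\cZ(\gD,\Delta,d)$, exploiting that on $\cZ(s_{\ref{prop: kernel complex}})$ the auxiliary parameter $\Delta$ is large, which both controls the mesh size $\alpha = L\sqrt{\log_1(\Delta/2^7 L)}$ and makes the small ball probability bound of \cite[Theorem 7.5]{RV no-gaps} strong enough to beat the net cardinality of Proposition \ref{prop: net in Z(D,d)}. First I would discretize the three parameters: since $D_2(\cdot)$ ranges over $(1, \exp(c'n/M)]$, $d(\cdot)$ over $(0,1]$, and $\Delta(\cdot)$ over $[s_{\ref{prop: kernel complex}} L, \gD]$, a dyadic (factor $3/2$) mesh in each produces only polynomially (indeed $\mathrm{poly}(n/M) \cdot \exp(O(n/M))$, or even just $n^{O(1)}$ after absorbing) many triples $(\gD,\Delta,d)$, so it suffices to bound the probability for a single fixed triple and then union over triples, losing only a harmless factor. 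The genuine-complex condition \eqref{eq:compl-dfn} guarantees $d \ge \alpha/\gD$ (up to the constant $4$ versus $C_{\ref{eq: integer net}}$, which is why one needs the net to be taken inside $\cZ(\gD,\Delta,d)$ and why the constants in Definition \ref{dfn:two-lcd-dfn} were chosen), so the hypothesis $\alpha \le d\gD \le \Delta \le \gD$ of Proposition \ref{prop: net in Z(D,d)} is met and the net $\cN(\gD,\Delta,d) \subset \cZ(\gD,\Delta,d)$ exists with the stated cardinality.

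Next, for a fixed $w \in \cN(\gD,\Delta,d)$, I would estimate $\P(\|B^D w\|_2 \le t\sqrt{np})$ for an appropriate small threshold $t$. Writing $B^D w$ row by row: each row of $\Re(B^D)$ is an independent copy of a sparse sub-Gaussian vector, and each row contributes $\langle \text{row}, \tilde w\rangle$ where $\tilde w$ encodes both real and imaginary parts. The key input is the two-dimensional tensorization/small-ball estimate: because $w \in \mathrm{Compl}(\cZ)$ has $D_2(w_{\mathrm{small}}/\|w_{\mathrm{small}}\|_2) \le \exp(c'_{\ref{prop: kernel complex}} n/M)$ but also $\Delta$ is large, \cite[Theorem 7.5]{RV no-gaps} gives a per-row L\'evy concentration bound roughly of the form $\mathcal{L} \lesssim \frac{1}{d \cdot D_2} \cdot \big(\text{something} + \tfrac{1}{\Delta}\big) + e^{-cnp}$, and tensorizing over the $\approx n$ independent rows (using only the $n-M > n/2$ rows indexed outside a fixed heavy set, which are i.i.d. in the relevant coordinates) yields a small ball probability decaying like the $(2n)$-th, or at least $(n-M)$-th, power of $(d\gD/\alpha)\cdot(\tfrac1{\sqrt n}+\tfrac1\Delta)^{-1}$, together with an $\exp(-cnp)$ term. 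One must be careful to first intersect with the event $\{\|A_n\| \le K\sqrt{pn}\}$, which is needed both to bound $\|\Im(B^D)\|$-type contributions and to invoke the concentration estimate, and to handle the deterministic diagonal shift $D_n$: its imaginary part is $r'\sqrt{np}I_n$, which only translates the target vector $u$ in the L\'evy function and is thus absorbed.

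Then I would run the union bound: $\P(\exists w \in \cN : \|B^D w\|_2 \le t\sqrt{np}) \le |\cN(\gD,\Delta,d)| \cdot (\text{per-}w \text{ bound})$. The cardinality from Proposition \ref{prop: net in Z(D,d)} is $\bar C^n (\tfrac{n}{\rho M}\cdot\tfrac{\gD}{\alpha})^{5M}\cdot(\tfrac{d\gD^2}{\alpha}(\tfrac1{\sqrt n}+\tfrac1\Delta))^{n-M}$, and the per-$w$ small ball bound is designed to contribute $(\tfrac{d\gD^2}{\alpha}(\tfrac1{\sqrt n}+\tfrac1\Delta))^{-(n-M)}$ times lower order terms — so these two exponential-in-$n$ factors cancel exactly, which is the whole point of tracking $\Delta$ separately from $D_2$. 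What survives is $\bar C^n \cdot (\tfrac{n}{\rho M}\cdot\tfrac{\gD}{\alpha})^{5M} \cdot e^{-cnp} \cdot (\text{poly})$. Using $M = C_{\ref{thm: large LCD for real part}}\rho^{-4}p^{-1}$ one has $5M \log(n\gD/(\rho M \alpha)) \lesssim \rho^{-4}p^{-1}\log(\exp(c'n/M)) \approx c' n/M \cdot \rho^{-4}p^{-1} = c'' n$, and more importantly the $e^{-cnp}$ from tensorization, combined with the constraint \eqref{eq:p-rho-gc} (i.e. $c_{\ref{thm: kernel complex}}\rho^5 pn > 1$, which since $5M\log(\cdots) \lesssim n$ and we can take $c'_{\ref{prop: kernel complex}}$ small makes $5M\log(n\gD/(\rho M\alpha)) < cnp/2$), dominates the $\bar C^n$ and net-overhead factors, leaving a bound $\le e^{-cnp/4} \le e^{-n}$ after choosing $c'_{\ref{prop: kernel complex}}$ and $r_\star$ appropriately; note $M \le \rho n$ forces $\rho^{-5} \le np$ which is exactly \eqref{eq:p-rho-gc}. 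Finally, a vector in $\mathrm{Ker}(B^D)$ has $\|B^D z\|_2 = 0 \le t\sqrt{np}$, and the net approximates it well enough (using the approximation properties of Proposition \ref{prop: net in Z(D,d)} together with $\|B^D\| \le (K+R)\sqrt{np}$) that some $w$ in the net satisfies $\|B^D w\|_2 \le t\sqrt{np}$, so the kernel event is contained in the net event.

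The main obstacle I anticipate is the bookkeeping in the tensorization step: correctly extracting from \cite[Theorem 7.5]{RV no-gaps} a per-row small-ball bound whose $n$-th power precisely matches the $(n-M)$ exponent in the net cardinality — in particular making sure the factor $(\tfrac1{\sqrt n}+\tfrac1\Delta)$ appears with the right sign and that the "essentially-real-part is non-dominated" guarantee (which forces $\Delta(z_{\mathrm{small}})$ to be comparable to $d\cdot D_2$, via \eqref{eq:d-delta-D} and the genuine-complexity condition) is invoked so that the additive error term $e^{-cnp}$ rather than a weaker polynomial term governs. A secondary technical point is ensuring the threshold $t$ (of order $\rho\sqrt{p}$ times a power, consistent with the constants in Lemma \ref{l: real comp} and Proposition \ref{thm: large LCD for real part}) is simultaneously small enough for the small-ball estimate and large enough that kernel vectors, after net approximation, still fall inside the event — this is routine but requires carrying the $\|A_n\| \le K\sqrt{np}$ bound through every triangle inequality.
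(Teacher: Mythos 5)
Your overall architecture --- stratify $\cZ(s)$ by dyadic $(\gD,\Delta,d)$, observe that genuine complexity gives $d\gD\ge\alpha$ so that Proposition \ref{prop: net in Z(D,d)} applies, bound the per-vector small ball probability through the two-dimensional \abbr{LCD} estimate coming from \cite[Theorem 7.5]{RV no-gaps} (i.e.\ Proposition \ref{prop: Levy vector}), union bound over the net, and transfer from the net to ${\rm Ker}(B^D)$ --- is exactly the paper's. The genuine gap is in your final balancing. The tensorized estimate of Proposition \ref{prop: Levy vector} is $\cL\bigl(B^D w,\vep\sqrt{p(n-1)}\norm{w_{\rm small}}_2\bigr)\le\bigl[\f{C}{d}\bigl(\vep+\f{1}{\sqrt{p}\,\gD}\bigr)^2\bigr]^{n-1}$: it contains no additive $\exp(-cnp)$ term and no factor $\bigl(\f{1}{\sqrt n}+\f{1}{\Delta}\bigr)$. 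Hence, with $\vep_0\asymp(K+R)\alpha/\gD$, the product of $|\cN(\gD,\Delta,d)|$ with the per-vector bound does not ``cancel exactly''; besides an $M$-th power leftover $\bigl[\f{C}{d}((K+R)\alpha/\gD)^2\bigr]^{M}$ (disposed of using $d\ge\alpha/\gD$ and the lower bound $\gD\ge C_\star L$ obtained by taking $r_\star$ large), what survives is $\bigl(C'(K+R)^2\alpha\bigl(\f{1}{\sqrt n}+\f{1}{\Delta}\bigr)\bigr)^{n-M}$ times $\bar{C}^n\bigl(\f{n}{\rho M}\cdot\f{\gD}{\alpha}\bigr)^{5M}$. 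Making this survivor at most $e^{-2n}$ is precisely where the hypotheses $\Delta\ge s_{\ref{prop: kernel complex}}L$ (so that $\alpha/\Delta$, i.e.\ $f(\Delta/L)$ with $f(x)=x^{-1}\sqrt{\log_1 x}$, is below an absolute constant like $e^{-50}$) and $Mp\ge r_\star^2$ (so that $\alpha/\sqrt n\le\sqrt{c'/(\delta_0 Mp)}$ is small) enter, after absorbing the overhead $\f{5M}{n}\log\bigl(\f{n}{\rho M}\cdot\f{\gD}{\alpha}\bigr)\le 20$ by choosing $c'_{\ref{prop: kernel complex}}$ small. You never carry out this computation, and your substitute mechanism cannot work: the $\exp(-cnp)$ term you invoke is not present in the estimate, and even if it were, for $p=o(1)$ it could neither dominate $\bar{C}^n$ nor yield the target $e^{-n}$ (your ``$e^{-cnp/4}\le e^{-n}$'' forces $p\ge 4/c$).

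Two secondary points you gloss over but which the argument needs: first, the lower bound $\gD\ge\f{\alpha_\star r_\star}{2}p^{-1/2}$ for vectors in $\cZ$ comes from Lemma \ref{lem:D_2-lbd} (non-domination), and enlarging $r_\star$ is what legitimizes both dropping the $M$-power leftover and taking $C_{\ref{prop: net in Z(D,d)}}\alpha/\gD\le 1/8$; second, in the kernel-to-net transfer the index set $J={\rm small}(z)$ need not coincide with ${\rm small}(w)$, so one must separately prove $\norm{w_J}_2\le 4\norm{w_{\rm small}}_2$ (the paper does this by contradiction, using $\norm{z_J}_2\ge\rho$) before concluding that some net point satisfies $\norm{B^D w}_2\le\f{\vep_0}{2}\norm{w_{\rm small}}_2\sqrt{pn}$.
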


To prove Proposition \ref{prop: kernel complex} we use bounds on L\'{e}vy concentration function.
%
%
%
Using such bounds we show that for any vector $z \in \cZ(s)$ the $\ell_2$ norm of $B^Dz$ cannot be too small with large probability. From the net constructed in Section \ref{sec: net complex} it follows that $\cZ(s)$ admits a net of small cardinality which enables us to take the union bound and complete the proof of Proposition \ref{prop: kernel complex}.

As mentioned above to prove Proposition \ref{prop: kernel complex} we need to derive bounds on the small ball probability, in particular on L\'{e}vy concentration function. 
Before deriving the bounds such bounds 
we need to fix some notation. Let $z \in \C^m$ and $J \subset [m]$. Denote $z_J:=(z_i)_{i \in J}\in \C^{J}$ and $V_J:=V(z_J)$, where we recall that for any $z'= x+ \mathrm{i}y \in \C^{m'}$ we define $V(z'):=  \left(\begin{smallmatrix} x^{\sf T} \\ y^{\sf T} \end{smallmatrix} \right)$. Further denote the real-imaginary de-correlation of $V_J$ by
 \[
  d(V_J) := d(z_J)=\left( \text{det}(V_J V_J^{\sf T}) \right)^{1/2}.
 \]
This parameter, along with the \abbr{LCD} of $z_J/\|z_J\|_2$ controls the L\'{e}vy concentration function of $\sum_{j=1}^m \Xi_j z_j$, for a sequence of independent random variables $\{\Xi_j\}_{j \in[m]}$. 
%
%
Below is the desired estimate on the L\'{e}vy concentration function, which is a direct corollary of \cite[Theorem 7.5]{RV no-gaps}. 
 \begin{prop}  \label{prop: Levy scalar}
  Fix any positive integer $m$ and let $\Xi:=(\Xi_1 \etc \Xi_m) \in \R^m, \ \Xi_j:=\d_j \xi_j, \ j=1 \etc m$, where $\d_1 \etc \d_m$ are i.i.d.~$\dBer(p)$, and $\xi_j$ are i.i.d.~random variables satisfying
  \beq\label{eq:assumption_xi}
    \cL(\xi_j,1) \le 1-c_1 \quad \text{and} \quad \P(|\xi_j| >C_1) \le c_1/2
  \eeq
  for some absolute constants $C_1$ and $c_1\in (0,1)$. Then for any $z \in \C^m$,  $J \in [m]$ such that $z_J \neq 0$,
, and $\vep >0$, we have
  \begin{align}
    \cL(V \Xi,  \vep \sqrt{p} \norm{V_J} ) & \le \frac{C_{\ref{prop: Levy scalar}}}{{d}(V_J)/  \norm{V_J}} \left(\vep + \frac{1}{\sqrt{p} {D}_2(V_J / \norm{V_J})} \right)^2 \label{eq:ineq_D2}
    \intertext{and if $\norm{\Re(z_J)}_2 \ge \norm{\Im(z_J)}_2$,  then}
    \cL(V \Xi,  \vep \sqrt{p} \norm{V_J} ) & \le \ol{C}_{\ref{prop: Levy scalar}} \left(\vep + \frac{1}{\sqrt{p} {D}_{1}(\Re(z_J)) /  \norm{V_J}} \right), \label{eq:ineq_D1}
  \end{align}
  where $V:=V(z)$, $C_{\ref{prop: Levy scalar}}$ and $\ol{C}_{\ref{prop: Levy scalar}}$ are some constants, depending only on $c_1$ and $C_1$.
 \end{prop}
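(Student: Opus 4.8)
The goal is to bound the Lévy concentration function $\cL(V\Xi, \vep\sqrt p\,\norm{V_J})$, where $V\Xi\in\R^2$ has coordinates $\pr{x}{\Xi}$ and $\pr{y}{\Xi}$ with $z=x+\sqrtneg y$. The plan is to deduce both \eqref{eq:ineq_D2} and \eqref{eq:ineq_D1} directly from \cite[Theorem 7.5]{RV no-gaps}, which provides a small-ball bound for random sums $\sum_j \Xi_j v_j$ with $v_j\in\R^2$ (here $v_j$ is the $j$-th column of $V$) in terms of the two-dimensional \abbr{LCD} of the matrix $V$ and its real-imaginary de-correlation $d(V)$. The first point is that $\Xi_j=\delta_j\xi_j$ satisfies the hypotheses required by that theorem: the anti-concentration condition $\cL(\xi_j,1)\le 1-c_1$ together with $\P(|\xi_j|>C_1)\le c_1/2$ from \eqref{eq:assumption_xi} yield, after incorporating the Bernoulli thinning, a uniform lower bound of order $p$ on the relevant ``spread'' parameter of $\Xi_j$ (this is exactly the type of input used throughout \cite{BR} and is what produces the $\sqrt p$ scaling inside the concentration function). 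So the main work is bookkeeping: translating the statement of \cite[Theorem 7.5]{RV no-gaps} into the notation of this paper.

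For \eqref{eq:ineq_D2}: restrict attention to the coordinates in $J$, i.e.\ replace $V$ by $V_J$ and $\Xi$ by $\Xi_J=(\Xi_j)_{j\in J}$. Since the coordinates outside $J$ only increase the concentration function or can be dropped by conditioning (a standard argument: $\cL(V\Xi,t)\le \cL(V_J\Xi_J,t)$ by conditioning on $(\Xi_j)_{j\notin J}$ and using translation-invariance of the supremum in the definition of $\cL$), it suffices to bound $\cL(V_J\Xi_J,\vep\sqrt p\,\norm{V_J})$. Now apply \cite[Theorem 7.5]{RV no-gaps} to the matrix $\widehat V_J := V_J/\norm{V_J}$, which has operator norm $1$; its two-dimensional \abbr{LCD} is $D_2(V_J/\norm{V_J})$ and its de-correlation is $d(V_J)/\norm{V_J}^2$ (product of its two singular values). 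That theorem gives a bound of the shape $\dfrac{C}{d(\widehat V_J)}\Bigl(\vep+\dfrac{1}{\sqrt p\, D_2(\widehat V_J)}\Bigr)^2$ on the concentration function at scale $\vep\sqrt p$; undoing the normalization (the scale $\vep\sqrt p\,\norm{V_J}$ corresponds to scale $\vep\sqrt p$ for $\widehat V_J$) and noting $d(\widehat V_J)=d(V_J)/\norm{V_J}^2$ while the claimed bound in \eqref{eq:ineq_D2} has $d(V_J)/\norm{V_J}$ in the denominator, one absorbs the extra factor of $\norm{V_J}$ into the constant using $\norm{V_J}\le\norm{z_J}_2\le$ (a harmless bound, since $\norm{V_J}$ is comparable to $\norm{z_J}_2$ up to a factor $2$). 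This yields \eqref{eq:ineq_D2}.

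For \eqref{eq:ineq_D1}: under the extra hypothesis $\norm{\Re(z_J)}_2\ge\norm{\Im(z_J)}_2$ one does not use the two-dimensional structure but projects onto the first coordinate of $V\Xi$, namely $\pr{x}{\Xi}$ with $x=\Re(z)$. Since $\cL(V\Xi,t)\le \cL(\pr{x}{\Xi},t)$ (projection onto a line can only increase concentration), one applies the one-dimensional Lévy concentration bound — the scalar case of the same circle of results, essentially \cite[Corollary 3.7]{BR} / Esséen's inequality with the one-dimensional \abbr{LCD} $D_1$ — to the real vector $x_J$. The norm comparison $\norm{\Re(z_J)}_2\ge\norm{\Im(z_J)}_2$ guarantees $\norm{\Re(z_J)}_2\ge \norm{V_J}/\sqrt2$, so writing the one-dimensional bound in terms of $\norm{V_J}$ rather than $\norm{\Re(z_J)}_2$ only costs an absolute constant. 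This produces \eqref{eq:ineq_D1}.

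The main obstacle is purely one of careful translation of constants and normalizations between the formulation of \cite[Theorem 7.5]{RV no-gaps} (stated for general sub-Gaussian-type coordinates and a general $2\times m$ matrix) and the present sparse setting with $\Xi_j=\delta_j\xi_j$: one must verify that the Bernoulli factor $\delta_j$ enters the small-ball estimate exactly through the multiplicative $p$ inside the \abbr{LCD} terms and the $\sqrt p$ in the scale, with the comparison assumptions \eqref{eq:assumption_xi} playing the role of the nondegeneracy input required there. Once that correspondence is pinned down, both inequalities follow immediately, and the various powers of $2$ flagged in Remark~\ref{rmk:power-of-2} are precisely what makes the $D_1$/$D_2$ thresholds line up so that the reduction to the scalar case in \eqref{eq:ineq_D1} is consistent with \eqref{eq:ineq_D2}. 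I would not grind through the constant-chasing; I would cite \cite[Theorem 7.5]{RV no-gaps} for the two-dimensional bound and \cite[Corollary 3.7]{BR} (or Esséen) for the one-dimensional one, and spend the words on justifying the two reductions (conditioning out $J^c$, and projecting to the dominant real part).
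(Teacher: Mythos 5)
Your proposal is correct and follows essentially the same route as the paper: reduce to the coordinates in $J$ via $\cL(V\Xi,t)\le\cL(V_J\Xi_J,t)$, check that \eqref{eq:assumption_xi} transfers to $\Xi_j=\delta_j\xi_j$ with the Bernoulli thinning supplying the factor $p$ (so that, with $L=(\delta_0 p)^{-1/2}$, the hypotheses of \cite[Theorem 7.5]{RV no-gaps} hold), and then apply that theorem in dimension two for \eqref{eq:ineq_D2} and, after projecting onto the real part via $\cL(V_J\Xi_J,t)\le\cL(x_J^{\sf T}\Xi_J,t)$, in dimension one for \eqref{eq:ineq_D1} (the paper invokes the same \cite[Theorem 7.5]{RV no-gaps} with $m=1$ there rather than Ess\'een or \cite{BR}, but that is only a difference in citation, not in substance).
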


 \begin{rmk}
 We point out to the reader that the definition of \abbr{LCD} in \cite{RV no-gaps} and that of ours are slightly different from each other. For example, to define \abbr{LCD} in \cite{RV no-gaps} the function $\log_+(x):=\max\{\log x,0\}$ was used instead of $\log_1(\cdot)$. Moreover the constants appearing in front of $L$ are different (compare Definition \ref{dfn:two-lcd-dfn} with \cite[Definition 7.1]{RV no-gaps}). However, upon investigating the proof of \cite[Theorem 7.5]{RV no-gaps} it becomes evident that the same proof can be carried out for the \abbr{LCD}s ${D}_2(\cdot)$ and ${D}_1(\cdot)$ to obtain the same estimates on the L\'evy concentration function. It only changes the constant that appears in \cite[Eqn.~(7.3)]{RV no-gaps}. Below we apply this version of \cite[Theorem 7.5]{RV no-gaps}.
\end{rmk}


 \begin{proof}[Proof of Proposition \ref{prop: Levy scalar}]
 As mentioned above the proof is a straightforward application of \cite[Theorem 7.5]{RV no-gaps}. Indeed, we note that $\cL(V\Xi, t) \le \cL(V_J \Xi_J, t)$, for any $t >0$, where $\Xi_J:=(\Xi_j)_{j \in J}$. The assertion \eqref{eq:assumption_xi} implies that
 \beq
  \cL(\Xi_j,1) \le 1-pc_1 \quad \text{and} \quad \P(|\Xi_j| >C_1) \le pc_1/2.
 \eeq
Since $L=(\delta_0p)^{-1/2}$ (see Remark \ref{dfn:delta-0}), shrinking $\delta_0$ if necessary, the inequality \eqref{eq:ineq_D2} follows directly from \cite[Theorem 7.5]{RV no-gaps}, applied with $m=2$. To prove \eqref{eq:ineq_D1}, using the triangle inequality we further note that
$\cL(V_J \Xi_J, t) \le \cL(x_J^{\sf T} \Xi_J, t)$. Thus applying \cite[Theorem 7.5]{RV no-gaps}, with $m=1$ we obtain \eqref{eq:ineq_D1}.
\end{proof}

Applying Proposition \ref{prop: Levy scalar} and standard tensorization techniques we obtain the following result, the proof of which is omitted.

 \begin{prop}  \label{prop: Levy vector}
  Let $B^D$ be as in Proposition \ref{thm: large LCD for real part}.  Fix any $z \in \C^{n}$ and   $J:={\rm{small}}(z)$ such that $z_J \neq 0$. Then for any $\vep>0$, we have
  \begin{equation}
    \cL(B^D z,  \vep \sqrt{p (n-1)}\norm{z_{J}}_2 )  \le \left[ \frac{C_{\ref{prop: Levy vector}}}{d(z_{J}/\norm{z_{J}}_2)} \left(\vep + \frac{1}{\sqrt{p} D_2(z_J/\norm{z_J}_2)} \right)^2 \right]^{n-1},
    \label{eq: Levy complex}
\end{equation}
and  if $\norm{\Re(z_J)}_2 \ge \norm{\Im(z_J)}_2$,  then
\begin{align}
   {\cL(B^D z,  \vep \sqrt{p (n-1)} \norm{z_{J}}_2 )} & {\le \left[ \ol{C}_{\ref{prop: Levy vector}}\left(\vep + \frac{1}{\sqrt{p} D_{1}(\Re(z_J) / \norm{z_{J}}_2)} \right) \right]^{n-1},}
    \label{eq: Levy real}
  \end{align}
  for some constants $C_{\ref{prop: Levy vector}}$, and $\ol{C}_{\ref{prop: Levy vector}}$, depending only on $\E|\xi_{ij}|$ and $\E(\xi_{ij}^4)$.
 \end{prop}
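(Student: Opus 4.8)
The plan is to reduce Proposition \ref{prop: Levy vector} to the scalar estimate of Proposition \ref{prop: Levy scalar}, applied to each coordinate of $B^D z$, followed by a tensorization over these coordinates. First I would record that the $(n-1)$ coordinates of $B^D z$ are \emph{independent} $\C$-valued random variables: for $i\in\{2,\dots,n\}$ the $i$-th row of $B^D$ is the $i$-th column of $A_n+D_n$, so $(B^Dz)_i = \sum_{j=1}^n (A_n)_{j,i}\,z_j + (D_n)_{i,i}\,z_i$, and the random parts $\{(A_n)_{j,i}\}_j$ for distinct $i$ involve disjoint collections of the i.i.d.\ entries $\{\xi_{j,i}\delta_{j,i}\}$. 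The term $(D_n)_{i,i}z_i$ is deterministic, so by translation invariance of the L\'evy concentration function $\cL\big((B^Dz)_i,\cdot\big)=\cL\big(\sum_j \delta_{j,i}\xi_{j,i}\,z_j,\cdot\big)$; identifying $\C\cong\R^2$, this sum is exactly $V\Xi^{(i)}$, where $V=V(z)$ and $\Xi^{(i)}=(\delta_{j,i}\xi_{j,i})_{j=1}^n$ has i.i.d.\ coordinates. Since each $\xi_{i,j}$ is centred with unit variance and a finite fourth moment, it satisfies \eqref{eq:assumption_xi} with $c_1,C_1$ depending only on $\E|\xi_{ij}|$ and $\E\xi_{ij}^4$, and Proposition \ref{prop: Levy scalar} (used with $m=n$ and $J=\mathrm{small}(z)$) bounds $\cL\big((B^Dz)_i,\ \vep\sqrt{p}\,\|V_J\|\big)$ by the right-hand side of \eqref{eq:ineq_D2}, and by that of \eqref{eq:ineq_D1} when $\|\Re(z_J)\|_2\ge\|\Im(z_J)\|_2$. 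Because $V_J=V(z_J)$ depends linearly on $z_J$, the normalization is transparent: $\|V_J\|=\|z_J\|_2$, the de-correlation of the normalized matrix satisfies $d(V_J/\|V_J\|)=d(z_J/\|z_J\|_2)$, and likewise $D_2(V_J/\|V_J\|)=D_2(z_J/\|z_J\|_2)$ and $D_1(\Re(z_J))/\|V_J\|=D_1(\Re(z_J)/\|z_J\|_2)$; hence the scalar bound equals
\[
  q\ :=\ \frac{C_{\ref{prop: Levy scalar}}}{d(z_J/\|z_J\|_2)}\left(\vep+\frac{1}{\sqrt{p}\,D_2(z_J/\|z_J\|_2)}\right)^{2}
\]
in the first case (and the analogous one-dimensional expression built from $D_1(\Re(z_J)/\|z_J\|_2)$ in the second).

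The second step is the standard tensorization lemma: if $\zeta_1,\dots,\zeta_N$ are independent $\R^2$-valued random variables with $\sup_i\cL(\zeta_i,t)\le q$, then $\cL\big((\zeta_1,\dots,\zeta_N),\,t\sqrt{N}\big)\le (Cq)^N$ for an absolute constant $C$. Applying it with $\zeta_i=(B^Dz)_i$, $N=n-1$, and $t=\vep\sqrt{p}\,\|z_J\|_2$, and absorbing $C$ together with a harmless constant rescaling of $\vep$ into $C_{\ref{prop: Levy vector}}$ and $\ol C_{\ref{prop: Levy vector}}$, produces exactly \eqref{eq: Levy complex} and \eqref{eq: Levy real}.

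The proof is essentially bookkeeping, and the only points that need a little care are: (i) verifying \eqref{eq:assumption_xi} with constants controlled by $\E|\xi_{ij}|$ and $\E\xi_{ij}^4$ --- here unit variance rules out degeneracy of $\xi_{ij}$, while the fourth moment controls its tail via Chebyshev's inequality and keeps the relevant translate bounded; and (ii) being consistent about which norm of the rank-at-most-two matrix $V_J$ is used, so that $d(\cdot)$ and the one- and two-dimensional \abbr{LCD}'s of Proposition \ref{prop: Levy scalar} translate into the normalized quantities $d(z_J/\|z_J\|_2)$, $D_1(\Re(z_J)/\|z_J\|_2)$, $D_2(z_J/\|z_J\|_2)$ appearing in the statement --- if one uses the operator norm instead of the Hilbert--Schmidt norm the above identities become comparabilities with absolute constants, which is equally harmless.
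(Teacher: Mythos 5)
Your proof is correct and is essentially the paper's own (omitted) argument: the paper derives Proposition \ref{prop: Levy vector} from Proposition \ref{prop: Levy scalar} by exactly the coordinate-wise application plus standard tensorization that you carry out, using the independence of the rows of $B^D$ (disjoint columns of $A_n$) and translation invariance of the L\'evy concentration function to dispose of the deterministic $D_n$ contribution. Your normalization bookkeeping — reading the scalar bound in its scale-invariant form, with $d(\cdot)$, $D_1(\cdot)$, $D_2(\cdot)$ evaluated at the normalized vector $z_J/\norm{z_J}_2$ — is the intended interpretation, so no changes are needed.
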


\begin{rmk}
The inequality \eqref{eq: Levy real} provides bounds on L\'{e}vy concentration function based on one-dimensional \abbr{LCD}. It will be used later in Section \ref{sec: net real} to treat essentially real vectors.
\end{rmk}

To prove Proposition \ref{prop: kernel complex} we also need the following elementary lower bound on the \abbr{LCD} of non-dominated vectors. Its proof follows from \cite[Proposition 7.4]{RV no-gaps} and the definition of dominated vectors.
\begin{lem}\label{lem:D_2-lbd}
If $z \notin \rm{Dom}(M,\alpha_\star)$ for some $M <n$ and $\alpha_\star>0$, then we have
\[
D_2(z_{\rm{small}}/\|z_{\rm{small}}\|_2) \ge \f{\alpha_\star \sqrt{M}}{2}.
\]
\end{lem}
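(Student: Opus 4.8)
\emph{Proof plan.}
The statement is a translation between the definition of a dominated vector and the definition of the two-dimensional \abbr{LCD}, and the plan is simply to unwind both. First, observe that $z_{\mathrm{small}}$ is the restriction of $z$ to its $(n-M)$ coordinates of smallest modulus, which is exactly the index set appearing in $z_{[M+1:n]}$ in the definition of $\mathrm{Dom}(\cdot,\cdot)$; the tie-breaking is irrelevant for the $\ell_2$ and $\ell_\infty$ norms, so $\norm{z_{\mathrm{small}}}_2=\norm{z_{[M+1:n]}}_2$ and $\norm{z_{\mathrm{small}}}_\infty=\norm{z_{[M+1:n]}}_\infty$. Hence $z\notin\mathrm{Dom}(M,\alpha_\star)$ is equivalent to $\norm{z_{[M+1:n]}}_2>\alpha_\star\sqrt{M}\,\norm{z_{[M+1:n]}}_\infty$, i.e.\ the unit vector $\hat z:=z_{\mathrm{small}}/\norm{z_{\mathrm{small}}}_2$ satisfies
\[
\norm{\hat z}_\infty<\frac{1}{\alpha_\star\sqrt{M}}.
\]

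The second step is to feed this bound into Definition~\ref{dfn:two-lcd-dfn}. Write $\hat z=x+\sqrtneg y$ and $V=V(\hat z)=\left(\begin{smallmatrix} x^{\sf T}\\ y^{\sf T}\end{smallmatrix}\right)$, so that for $\theta=(\theta_1,\theta_2)\in\R^2$ the vector $V^{\sf T}\theta=\theta_1 x+\theta_2 y$ has $j$-th entry bounded in absolute value by $\norm{\theta}_2\sqrt{x_j^2+y_j^2}=\norm{\theta}_2|\hat z_j|$, whence $\norm{V^{\sf T}\theta}_\infty\le\norm{\theta}_2\norm{\hat z}_\infty$. Consequently, if $\norm{\theta}_2<\alpha_\star\sqrt{M}/2$ then $\norm{V^{\sf T}\theta}_\infty<1/2$, so the nearest lattice point to $V^{\sf T}\theta$ is the origin and $\mathrm{dist}(V^{\sf T}\theta,\Z^m)=\norm{V^{\sf T}\theta}_2$. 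It remains to verify that $t:=\norm{V^{\sf T}\theta}_2$ cannot satisfy $t<L\sqrt{\log_1(t/(2^8 L))}$: if $t<2^8 e L$ the right-hand side is zero, while if $t\ge 2^8 e L$ then, setting $s:=t/(2^8 L)\ge e$, the inequality would force $2^{16}s^2<\log s$, which is absurd since $\log s<s<2^{16}s^2$. Thus no $\theta$ with $\norm{\theta}_2<\alpha_\star\sqrt{M}/2$ can realize the infimum defining $D_2(V)$ in \eqref{eq:D_2_define}, and therefore $D_2(z_{\mathrm{small}}/\norm{z_{\mathrm{small}}}_2)=D_2(V)\ge\alpha_\star\sqrt{M}/2$, as claimed. (Equivalently, this second step is the content of \cite[Proposition 7.4]{RV no-gaps} applied to $\hat z$, modulo the normalization of constants between Definition~\ref{dfn:two-lcd-dfn} and \cite[Definition 7.1]{RV no-gaps}; either route works.)

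There is no genuine obstacle here. The only points needing a little care are (i) confirming that the restricted vector $z_{\mathrm{small}}$ is indeed the tail $z_{[M+1:n]}$ of the non-increasing rearrangement, so that the non-domination hypothesis translates into the stated $\ell_\infty/\ell_2$ bound on $\hat z$, and (ii) reconciling the various powers of $2$ and the prefactor of $L$ in \eqref{eq:D_2_define} with those in \cite{RV no-gaps} if one prefers to cite Proposition~7.4 rather than argue directly; since the asserted constant is merely $1/2$, there is ample slack in either case. The argument is also insensitive to the value of $L=(\delta_0 p)^{-1/2}$ fixed in Remark~\ref{dfn:delta-0}.
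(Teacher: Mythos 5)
Your argument is correct and matches the paper's route: the paper simply cites \cite[Proposition 7.4]{RV no-gaps} together with the definition of dominated vectors, and your direct computation (coordinatewise bound $|(V^{\sf T}\theta)_j|\le\norm{\theta}_2|\hat z_j|<1/2$, hence $\mathrm{dist}(V^{\sf T}\theta,\Z^m)=\norm{V^{\sf T}\theta}_2$, which can never beat $L\sqrt{\log_1(\cdot)}$) is exactly the content of that cited proposition specialized to $\hat z$. Nothing further is needed.
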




 \vskip10pt
 We are now ready to prove Proposition \ref{prop: kernel complex}.

 \begin{proof}[Proof of Proposition \ref{prop: kernel complex}]
  The set in question can be partitioned into the subsets of $\cZ(\gD,\Delta,d)$ appearing in Section \ref{sec: net complex}.
  Indeed, using Lemma \ref{lem:D_2-lbd} we note that for any $z \in \cZ(s)$ we have $D_2(z_{\rm{small}}/\|z_{\rm{small}}\|_2) \ge \f{1}{2}\alpha_\star \sqrt{M} \ge \f{\alpha_\star r_\star}{2}p^{-1/2}$ for some $\alpha_\star >0$. Since $L=(\delta_0 p)^{-1/2}$, choosing $r_\star$ sufficiently large, we therefore obtain that
  \begin{align*}
   \{z \in \cZ(s) : \ D_2(z_{\text{small}}/\norm{z_{\text{small}}}_2) \le  \exp(c' n/M) \}
   \subset \bigcup_{\gD,\Delta, d} \cZ(\gD,\Delta,d) \cap \cZ(s),
  \end{align*}
  where the union is taken over all $\gD=2^k, \ C_\star L \le \gD \le \exp(c' n/M)$, for some large constant $C_\star$, and over all $\Delta=2^m$, $d=2^{-\ell}$ satisfying $d\gD \le \Delta \le \gD$.
Also note that for any $z \in \rm{Compl}(\cZ)$ we have
  \beq\label{eq:complex-dfn}
  d(z_{\text{small}}/\norm{z_{\text{small}}}_2)
   \ge  \frac{4 L}{D_2(z_{\text{small}}/\norm{z_{\text{small}}}_2)} \sqrt{\log_{1} \frac{\Delta(z_{\text{small}}/\norm{z_{\text{small}}}_2)}{{2^7}L}}.
  \eeq
If $z \in \cZ(\gD,\Delta,d)$ we further have that $D_2(z_{\rm{small}}/\|z_{\rm{small}}\|_2) \le 2\gD$,    $d(z_{\rm{small}}/\|z_{\rm{small}}\|_2) \le 2d$, and $\Delta(z_{\rm{small}}/\|z_{\rm{small}}\|_2) \ge \Delta$. Therefore, it follows from \eqref{eq:complex-dfn} that
    \[
   \alpha:=L \sqrt{\log_{1} \frac{\Delta}{2^7 L}} \le {d\gD}.
  \]
  So it allows us to use Proposition \ref{prop: net in Z(D,d)}.
  Recalling that $M \ge r^2_\star p^{-1} \ge p^{-1}$, we see that the number of different values of $\gD$ appearing in the partitions is bounded by $c' pn$. Using the fact that $\a \ge L$, we see that the number of different values of $d$ is bounded by the same number, and so is the number of different values of $\Delta$. Therefore, using the union bound, we deduce that it is enough to show that
   \begin{align}\label{eq:prob-bound-D-Delta-d}
     \P \left(\exists z \in \cZ(D,\Delta,d) \cap \cZ(s):
     \ B^Dz=0, \ \text{and } \norm{B^D} \le (K+R) \sqrt{pn} \right)      \le e^{-2 n},
   \end{align}
  for each such tripple $(\gD,\Delta, d)$. 

 To this end, we note that $\cZ(\gD,\Delta,d) \cap \cZ(s)$ admits a net $\cN(\gD,\Delta,d) \subset \cZ(\gD,\Delta,d) \cap \cZ(s)$. Therefore, 
from Proposition \ref{prop: Levy vector} it follows that
  \[
   \cL(B^D w,  \vep \sqrt{p (n-1)} \norm{w_{\rm{small}}}_2 )  \le \left[ \frac{C_{\ref{prop: Levy vector}}}{d} \left(\vep + \frac{1}{\sqrt{p} \gD} \right)^2 \right]^{n-1},
  \]
 for any $w \in \cN(\gD,\Delta,d)$ and $\vep >0$. Set
  \beq\label{eq:vep_0-dfn}
   \vep_0:=40 C_{\ref{prop: net in Z(D,d)}}(K+R) \frac{\alpha}{\gD}.
  \eeq
  Since $\alpha \ge L=(\delta_0 p)^{-1/2} \ge p^{-1/2}$ and $K, R \ge 1$ we note that $\vep_0 \ge \frac{1}{\sqrt{p}\gD}$. 
  Therefore
  \[
   \P \left( \norm{B^Dw}_2 \le \f{\vep_0}{2}\|w_{\rm{small}}\|_2 \cdot \sqrt{p n} \right)
   \le \left[ \frac{C}{d} \left((K+R) \frac{\alpha}{\gD} \right)^2 \right]^{n-1},
  \]
  for some positive constant $C$. Hence, by the union bound and applying Proposition \ref{prop: net in Z(D,d)}
  \begin{align}\label{eq:union-cN}
   &\P \left(\exists w \in \cN(\gD,\Delta, d): \ \norm{B^Dw}_2 \le \f{\vep_0}{2}\|w_{\rm{small}}\|_2 \cdot \sqrt{p n} \right)\notag\\
    \le\, & |\cN(\gD,\Delta,d)| \cdot  \left[ \frac{C}{d} \left((K+R) \frac{\alpha}{\gD} \right)^2 \right]^{n-1} \notag\\
  \le \, & \left[ \frac{C}{d} \left((K+R) \frac{\alpha}{\gD} \right)^2 \right]^{n-1} \cdot \bar{C}_{\ref{prop: net in Z(D,d)}}^n
    \left(\frac{n}{\rho M}  \cdot  \frac{\gD}{\alpha} \right)^{5 M} \cdot
     \left(  \frac{d \gD^2}{\alpha} \cdot \left( \frac{1}{\sqrt{n}}+\frac{1}{\Delta} \right)  \right)^{n-M}.
  \end{align}
  Recalling the definition of $\alpha$ and using the inequalities $L \le \alpha \le d \gD$ and $\Delta \le \gD$ we note that
  \[
  \f{1}{d} \cdot \left(\f{\alpha}{\gD}\right)^2 \le \f{\alpha}{\gD} = \f{L}{\gD}\sqrt{\log_1 \f{\Delta}{2^7 L}} \le \f{L}{\gD}\sqrt{\log_1 \f{\gD}{2^7 L}} \le \f{1}{2^7}\sqrt{\f{2^7 L}{\gD}},
  \]
  where the last step follows from the fact that $\log x \le x$ for any $x \ge e$. As we have already noted that $\gD \ge C_\star L$, for some large $C_\star$, we can enlarge $C_\star$ further (i.e.~we increase $r_\star$) so that $\f{C}{d} ((K+R) \f{\alpha}{\gD})^2 \bar{C}_{\ref{prop: net in Z(D,d)}} <1$.
  This means that we can drop the term
 \[
  \left[ \frac{C}{d} \left((K+R) \frac{\alpha}{\gD} \right)^2 \right]^{M-1}
  \cdot \bar{C}_{\ref{prop: net in Z(D,d)}}^M <1
 \]
  in  \eqref{eq:union-cN}.
  Therefore, from \eqref{eq:union-cN} we obtain
\begin{align}
   &\P \left(\exists w \in \cN(\gD,\Delta, d): \ \norm{B^Dw}_2 \le \f{\vep_0}{2}\|w_{\rm{small}}\|_2 \cdot \sqrt{p n} \right)  \le \exp(-\Gamma n),
 \end{align}
  where
  \begin{align*}
    \Gamma
    :=
     - \left(1-\frac{M}{n} \right) \cdot \log  \left( \frac{C' (K+R)^2 \alpha}{\sqrt{n}}+\frac{C' (K+R)^2 \alpha}{\Delta} \right) - \frac{5 M}{n} \log   \left(\frac{n}{\rho M}  \cdot \frac{\gD}{\alpha} \right),
  \end{align*}
  and $C':=C \cdot \bar{C}_{\ref{prop: net in Z(D,d)}}$. To finish the proof we need to show that $\Gamma \ge 2$. 

Turning to proof of this, we recall that $\frac{n}{M} \ge \frac{1}{\rho}$, $\a \ge L \ge 1$ and $\gD \le \exp(c' \frac{n}{M})$. So, choosing $c'$ sufficiently small we obtain
  \beq
   \frac{5M}{n} \log   \left(\frac{n}{\rho M}  \cdot \frac{\gD}{\a} \right)
   \le \frac{10 M}{n} \log  \left(\frac{n}{M}  \right) + \frac{5 M}{n} \log   \gD
   \le 20. \notag
  \eeq
Next recall that  $L=(\d_0 p)^{-1/2}$ and $\Delta \le \gD \le \exp(c' n/M)$. So
 \[
  \a = L \sqrt{\log_{1} \frac{\Delta}{2^7 L}} \le \sqrt{\frac{c' n}{\d_0 Mp}}.
 \]
  Therefore, using the fact that $M<n/2$ we obtain
  \begin{align*}
   \Gamma
  \ge -\frac{1}{2} \cdot \log  \left( \frac{C'(K+R)^2 \sqrt{c'}}{\sqrt{\d_0 M p}}+\frac{C' (K+R)^2 \a}{\Delta} \right) -20.
  \end{align*}
  We claim that by choosing $s$ to be a sufficiently large constant, we can guarantee that
  \beq\label{eq:e-50}
    \frac{C' (K+R)^2 \a}{\Delta} <e^{-50}.
  \eeq
  Thus choosing $c'$ small enough and recalling that $Mp \ge r^2_\star$, from claim \eqref{eq:e-50} we see that $\Gamma \ge 2$, providing the required bound for the probability.

Now let us check our claim \eqref{eq:e-50}. Using the definition of $\a$, choosing $s_{\ref{prop: kernel complex}}$ sufficiently large, and using the fact that the function $f(x):=x^{-1} \sqrt{\log_{1}x}$ tends to $0$ as $x \to \infty$, we note that
  \[
e^{50} \frac{C' (K+R)^2 \a}{\Delta}    = e^{50} C' (K+R)^2 \frac{L}{\Delta} \sqrt{\log_{1} \frac{\Delta}{2^7 L}} \le 1,
  \]
for any $\Delta$ such that $\Delta \ge s_{\ref{prop: kernel complex}} L$. This proves the claim \eqref{eq:e-50}.

Thus we have shown that for a sufficiently large value of $s_{\ref{prop: kernel complex}}$,
\begin{align}\label{eq:prob-bound-D-Delta-d-net}
   &\P \left(\exists w \in \cN(\gD,\Delta, d): \ \norm{B^Dw}_2 \le \f{\vep_0}{2}\|w_{\rm{small}}\|_2 \cdot \sqrt{p n} \right)  \le \exp(-2 n).
 \end{align}
 To deduce \eqref{eq:prob-bound-D-Delta-d} from \eqref{eq:prob-bound-D-Delta-d-net} we simply use the property of the net $\cN(\gD,\Delta,d)$. Indeed, let us assume that there exists a $z \in \cZ(s_{\ref{prop: kernel complex}}) \cap \cZ(\gD,\Delta,d)$ so that $B^D z=0$. Denoting $J={\rm{small}}(z)$, using Proposition \ref{prop: net in Z(D,d)}, and the triangle inequality we see that there exists a $w \in \cN(\gD,\Delta,d)$ such that
 \begin{multline*}
 \|B^D w\|_2   = \|B^D(w-z)\|_2 \\
   \le \|B^D\|_2 \cdot\left(\|z_{J^c}-w_{J^c}\|_2 +\norm{w_J}_2\norm{\f{z_J}{\|z_J\|_2}-\f{w_J}{\|w_J\|_2}}_2+ |\norm{w_J}-\norm{z_J}_2| \right)\\
  \le (K+R) \sqrt{np} \cdot \left( 2 C_{\ref{prop: net in Z(D,d)}}\f{\rho \alpha}{\gD} +C_{\ref{prop: net in Z(D,d)}}\f{ \alpha}{\gD} \|w_J\|_2\right)
 \le 3C_{\ref{prop: net in Z(D,d)}}(K+R)\f{\alpha}{\gD}\|w_J\|_2 \sqrt{np},
 \end{multline*}
 where the last inequality follows from the fact that  $w \in \cN(\gD,\Delta,d) \subset \cZ(s_{\ref{prop: kernel complex}}) \subset {\rm{Comp}}(M,\rho)^c$.
 To complete the proof, let us show that $\norm{w_J}_2 \le 4 \norm{w_{\rm{small}}}_2$.
 Assume for a moment that the opposite inequality holds.
  Since $\gD \ge\f{\alpha_\star r_\star}{2}p^{-1/2}$, and $L=(\delta_0 p)^{-1/2}$, choosing a sufficiently large $r_\star$, we may assume that
  \[
   C_{\ref{prop: net in Z(D,d)}} \frac{\alpha}{\gD}
   \le  C_{\ref{prop: net in Z(D,d)}}  \frac{L}{\gD} \sqrt{\log_{1} \frac{\gD}{2^7 L}}
   \le \frac{1}{8}.
  \]
 Denote $\text{small}(w)=I$.
 Combining the estimate above and Proposition \ref{prop: net in Z(D,d)}, we see that
 \begin{align*}
  \norm{z_I}_2
&  \le \norm{w_I}_2+ \norm{z_{I \cap J} -w_{I \cap J} }_2 + \norm{z_{I \setminus J} -w_{I \setminus J} }_2 \\
& \le \frac{1}{4} \norm{w_J}_2 + \norm{z_{J} -w_{J} }_2 + \norm{z_{J^c} -w_{J^c} }_2 \\
  &\le \frac{1}{2} \norm{z_J}_2 + \frac{1}{4} \norm{z_J}_2   + \frac{1}{8} \rho
  < \norm{z_J}_2,
 \end{align*}
 where we used $\norm{z_J}_2 \ge \rho$ in the last inequality. This contradicts the definition of $J$ as $J=\text{small}(z)$ and proves the desired inequality $\norm{w_J}_2 \le 4 \norm{w_{\rm{small}}}_2$.

 Recalling the definition of $\vep_0$ in \eqref{eq:vep_0-dfn}, we now deduce \eqref{eq:prob-bound-D-Delta-d} from \eqref{eq:prob-bound-D-Delta-d-net}. This finishes the proof.
 \end{proof}


Using Proposition \ref{prop: kernel complex} we now complete the proof of Theorem \ref{thm: kernel complex}. The final ingredient for the proof of Theorem \ref{thm: kernel complex} is a lower bound on the one-dimensional \abbr{LCD}, the proof of which follows from  \cite[Lemma 6.2]{V}.

\begin{lem}\label{lem:D_1-lbd}
For any $x \in \R^m$,
$
D_1(x) \ge \f{1}{2\|x\|_\infty}.
$
\end{lem}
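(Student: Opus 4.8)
The plan is to argue directly from the definition \eqref{eq:D_1 define} by contraposition. Fix $x \in \R^m$; the case $x = 0$ is trivial since then $D_1(x) = +\infty = \tfrac{1}{2\|x\|_\infty}$, so assume $x \neq 0$. Suppose $\theta > 0$ lies in the set defining $D_1(x)$, i.e.
\[
\mathrm{dist}(\theta x, \Z^m) < 2^5 L \sqrt{\log_1 \frac{\norm{\theta x}_2}{2^6 L}}.
\]
I will show that necessarily $\theta > \tfrac{1}{2\|x\|_\infty}$; taking the infimum over all such $\theta$ then yields the lemma.

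Suppose toward a contradiction that $\theta \le \tfrac{1}{2\|x\|_\infty}$. Then $|\theta x_i| \le \theta \|x\|_\infty \le \tfrac12$ for every coordinate $i$, so the nearest integer to each $\theta x_i$ is $0$, hence $\mathrm{dist}(\theta x_i, \Z) = |\theta x_i|$ and therefore $\mathrm{dist}(\theta x, \Z^m) = \norm{\theta x}_2$. Writing $t := \norm{\theta x}_2$, it suffices to check that $t \ge 2^5 L \sqrt{\log_1 \frac{t}{2^6 L}}$, which contradicts the displayed strict inequality. For this last step I split into two cases: if $t < e\cdot 2^6 L$ then $\log_1 \frac{t}{2^6 L} = 0$ by the definition of $\log_1$, so the right-hand side is $0 \le t$; if $t \ge e\cdot 2^6 L$, set $s := t/(2^6 L) \ge e$, and the desired inequality reduces to $2s \ge \sqrt{\log s}$, which holds because $\log s \le s \le 4 s^2$ for $s \ge e$. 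In either case we reach the contradiction, so $\theta > \tfrac{1}{2\|x\|_\infty}$, and the bound $D_1(x) \ge \tfrac{1}{2\|x\|_\infty}$ follows.

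There is no serious obstacle here; the argument is elementary. The only points requiring a little care are the convention that $\log_1$ vanishes below $e$ — which is what makes the right-hand side of the defining inequality equal to $0$ (rather than negative) for small $\theta$ — and the coordinatewise identity $\mathrm{dist}(\theta x,\Z^m) = \norm{\theta x}_2$, which is valid precisely when $\theta\|x\|_\infty \le \tfrac12$. (Alternatively, one may simply cite \cite[Lemma 6.2]{V}, of which this is a direct restatement.)
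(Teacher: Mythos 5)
Your proof is correct, and it is essentially the paper's argument: the paper simply cites \cite[Lemma 6.2]{V}, whose proof is exactly the elementary contrapositive you write out — if $\theta\le \tfrac{1}{2\|x\|_\infty}$ then $\mathrm{dist}(\theta x,\Z^m)=\|\theta x\|_2$, which can never beat the threshold $2^5L\sqrt{\log_1(\|\theta x\|_2/2^6L)}$. You have also correctly adapted the constants and the $\log_1$ convention of Definition \ref{dfn:two-lcd-dfn}, so nothing further is needed.
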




 \begin{proof}[Proof of Theorem \ref{thm: kernel complex}]
We first claim that
\beq\label{eq:small-Delta}
 \P \left(\exists z\in {\rm{Ker}}(B^D) \cap S_\C^{n-1} :
     \Delta \left({z_{\rm{small}}}/{\norm{z_{\rm{small}}}_2} \right) \le \frac{\rho^{-1}}{ 4\sqrt{p} }  \right)
    \le \exp(-\bar{c}_{\ref{thm: large LCD for real part}}np).
 \eeq
 The probability bound above would follow from Proposition \ref{thm: large LCD for real part} if for any such $z$, we find a number $\nu \in \C, \  |\nu|=1$ such that the vector $\nu z_{\text{small}}$ has a dominated real part.
 To implement this idea and show \eqref{eq:small-Delta}, we fix $z \in {\rm{Ker}}(B^D) \cap S_\C^{n-1}$ and denote $z_{\text{small}}/\norm{z_{\text{small}}}_2=\phi+{\rm{i}} \psi$, where $\phi, \psi \in \R^J, \ J =\text{small}(z)$.
 Let $\theta=(\theta_1,\theta_2) \in \R^2$ be such that
 \beq\label{eq:w-z-relation}
   \text{dist}( \theta_1 \phi+ \theta_2 \psi, \Z^J) < L \sqrt{\log_1 \frac{\norm{ \theta_1 \phi+ \theta_2 \psi}_2}{2^8 L}}
 \eeq
  and  $\norm{\theta_1 \phi+ \theta_2 \psi}_2 \le 2 \Delta(z_{\text{small}}/\norm{z_{\text{small}}}_2)$.
  Denote
  \[
   w:=\frac{\theta_1 - {\rm{i}} \theta_2}{|\theta_1 - {\rm{i}} \theta_2|} z.
  \]
Then $w \in {\rm{Ker}}(B^D) \cap S_\C^{n-1}$ and $w_{\text{small}}=z_{\text{small}}$.
  Therefore \eqref{eq:w-z-relation} implies that
  \[
   D_1 \left( {\text{Re}(w_{\text{small}})}/{\norm{\text{Re}(w_{\text{small}})}_2} \right)
   \le 2 \Delta \left({z_{\text{small}}}/{\norm{z_{\text{small}}}_2} \right).
  \]
  Upon applying Lemma \ref{lem:D_1-lbd} we find that
  \[
    \norm{ \frac{\text{Re}(w_{\text{small}})}{\norm{\text{Re}(w_{\text{small}})}_2}}_{\infty}
    \ge \left[4 \Delta \left(\frac{z_{\text{small}}}{\norm{z_{\text{small}}}_2} \right) \right]^{-1}
    \ge \rho p^{1/2}.
  \]
  Since $w \in {\rm{Ker}}(B^D) \cap S_\C^{n-1}$ the claim \eqref{eq:small-Delta} follows from  Proposition \ref{thm: large LCD for real part}.

Next, recalling the fact that $L=(\delta_0 p)^{-1/2}$, and shrinking $\rho$, if necessary, we obtain that 
\[
s_{\ref{prop: kernel complex}} L < \f{\rho^{-1}}{4 \sqrt{p}}.
\]
The desired result then follows upon combining Proposition \ref{prop: kernel complex} and \eqref{eq:small-Delta}.
 \end{proof}

 \section{Construction of the net  and the structure of the kernel \\ in the essentially real case}  \label{sec: net real}

 In this section, we consider the class of vectors whose real and imaginary parts are almost linearly dependent.
 Namely, we introduce the set of \emph{essentially real} vectors $\text{Real}(\cZ)$ defined by
  \beq\label{eq:Real-dfn}
   \text{Real}(\cZ)
   :=\cZ \setminus  \text{Compl}(\cZ),
  \eeq
  where we recall the definitions of $\cZ$ and $\text{Compl}(\cZ)$ from \eqref{eq:cZ-dfn} and \eqref{eq:compl-dfn} respectively. Having shown that there does not exist any vector in $\text{Compl}(\cZ) \cap \text{Ker}(B^D)$ such that its two-dimensional \abbr{LCD} is small, it remains to show the same for $\text{Real}(\cZ) \cap \text{Ker}(B^D)$. For essentially real vectors, the real-imaginary de-correlation $d(\cdot)$ is very small which precludes using \eqref{eq: Levy complex}. Instead we have to rely on the probability bound obtained in \eqref{eq: Levy real}, which depends on the one-dimensional \abbr{LCD}.
 As the bound on $D_1(u)$ implies a much more rigid arithmetic structure than a bound on  $D_2(u)$, construction of a net of $\text{real}(\cZ)$ would be easier. To construct such a net we will follow the method of \cite{RV no-gaps}. Before finding a net let us remind the reader that the definition of $\text{Compl}(\cZ)$ and hence that of $\text{Real}(\cZ)$, depends on the two-dimensional \abbr{LCD} (see \eqref{eq:compl-dfn}). Since the bound on L\'{e}vy concentration function, for vectors in $\text{Real}(\cZ)$, depends on the one-dimensional \abbr{LCD}, we need a result that connects $D_1(\cdot)$ with $D_2(\cdot)$. The lemma below does that job and this is the sole reason of introducing different powers of $2$ in the definitions \eqref{dfn:two-lcd-dfn} and \eqref{eq:D_2_define} (recall Remark \ref{rmk:power-of-2}).

   \begin{lem} \label{l: D_2 to D_1}
   Fix $z \in \rm{Real}(\cZ)$ and let $z_{\rm{small}}/\|z_{\rm{small}}\|_2=:\phi+\rm{i} \psi$. Then $D_1(\phi) \le  2 D_2(z_{\rm{small}}/\|z_{\rm{small}}\|_2)$. In particular, if $D_2(z_{\rm{small}}/\|z_{\rm{small}}\|_2)\le \gD$ then  \(
    D_1(\phi) \le 2\gD.
   \)
  \end{lem}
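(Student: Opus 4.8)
The plan is to run the natural first-moment argument: take a near-optimal $\theta=(\theta_1,\theta_2)\in\R^2$ realizing the two-dimensional \abbr{LCD} $D_2:=D_2(z_{\mathrm{small}}/\norm{z_{\mathrm{small}}}_2)$, show that the contribution of the (necessarily small) imaginary part $\theta_2\psi$ is a negligible fraction of $\norm{V^{\sf T}\theta}_2$, and conclude that $\theta_1$ already witnesses the claimed bound on $D_1(\phi)$.

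The setup: write $\phi+\mathrm{i}\psi:=z_{\mathrm{small}}/\norm{z_{\mathrm{small}}}_2$ and $V:=\left(\begin{smallmatrix}\phi^{\sf T}\\ \psi^{\sf T}\end{smallmatrix}\right)$, so that $V^{\sf T}\theta=\theta_1\phi+\theta_2\psi$. By \eqref{eq:cZ-dfn} one has $\phi\perp\psi$, $\norm{\phi}_2\ge\norm{\psi}_2$ and $\norm{\phi}_2^2+\norm{\psi}_2^2=1$, hence $\norm{\phi}_2\ge1/\sqrt2$; since the singular values of $V^{\sf T}$ are $\norm{\phi}_2,\norm{\psi}_2$, we get $d:=d(z_{\mathrm{small}}/\norm{z_{\mathrm{small}}}_2)=\norm{\phi}_2\norm{\psi}_2$, so $\norm{\psi}_2\le\sqrt2\,d$. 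Abbreviate also $\Delta:=\Delta(z_{\mathrm{small}}/\norm{z_{\mathrm{small}}}_2)$. The first thing I would establish is the a priori bound $\Delta\ge e\cdot2^8L$: any $\theta$ admissible in \eqref{eq:D_2_define} satisfies $\dist(V^{\sf T}\theta,\Z^J)<L\sqrt{\log_{1}(\norm{V^{\sf T}\theta}_2/2^8L)}$, so the right-hand side must be strictly positive, and since $\log_{1}$ vanishes on $(0,e)$ this forces $\norm{V^{\sf T}\theta}_2\ge e\cdot2^8L$; hence so is the relevant liminf. Because $x\ge2\sqrt{\log x}$ for every $x\ge2e$ (check at $x=2e$ and differentiate), applying this to $x=\Delta/(2^7L)\ge2e$ yields the self-improving bound $\Delta\ge2^8L\sqrt{\log_{1}(\Delta/2^7L)}$. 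The several powers of $2$ in Definitions \ref{dfn:two-lcd-dfn} and \ref{dfn:auxiliary} are exactly what makes this step, and the distance comparison below, work (cf.\ Remark \ref{rmk:power-of-2}).

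Then, for small $\varepsilon>0$, I would pick (using that $\Delta$ is the relevant liminf) an admissible $\theta$ with $\norm{\theta}_2\le(1+\varepsilon)D_2$ and $\norm{V^{\sf T}\theta}_2\in[(1-\varepsilon)\Delta,(1+\varepsilon)\Delta]$, and assume $\theta_1\ge0$ after possibly replacing $\theta$ by $-\theta$. Combining $\norm{\psi}_2\le\sqrt2\,d$ with the defining inequality of $\mathrm{Real}(\cZ)$, namely $d<\tfrac{4L}{D_2}\sqrt{\log_{1}(\Delta/2^7L)}$ (from \eqref{eq:compl-dfn}–\eqref{eq:Real-dfn}), and then the self-improving bound, gives $\norm{\theta_2\psi}_2\le\norm{\theta}_2\norm{\psi}_2<4\sqrt2(1+\varepsilon)L\sqrt{\log_{1}(\Delta/2^7L)}<\Delta/2^5$ for $\varepsilon$ small. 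Since $\phi\perp\psi$ forces $\norm{\theta_1\phi}_2^2=\norm{V^{\sf T}\theta}_2^2-\norm{\theta_2\psi}_2^2$, this gives $\norm{\theta_1\phi}_2\ge\Delta/2$, in particular $\theta_1>0$. Finally, the triangle inequality together with $\norm{\theta_2\psi}_2<2^3L\sqrt{\log_{1}(\Delta/2^7L)}$, the bound $\norm{V^{\sf T}\theta}_2<2\Delta$ (so $\log_{1}(\norm{V^{\sf T}\theta}_2/2^8L)\le\log_{1}(\Delta/2^7L)$), and $\norm{\theta_1\phi}_2\ge\Delta/2$ (so $\log_{1}(\Delta/2^7L)\le\log_{1}(\norm{\theta_1\phi}_2/2^6L)$) yield
\[
 \dist(\theta_1\phi,\Z^J)\le\dist(V^{\sf T}\theta,\Z^J)+\norm{\theta_2\psi}_2<(1+2^3)L\sqrt{\log_{1}\tfrac{\Delta}{2^7L}}<2^5L\sqrt{\log_{1}\tfrac{\norm{\theta_1\phi}_2}{2^6L}}.
\]
So $\theta_1$ is admissible in \eqref{eq:D_1 define}, whence $D_1(\phi)\le\theta_1\le\norm{\theta}_2\le(1+\varepsilon)D_2$; letting $\varepsilon\to0$ gives $D_1(\phi)\le D_2\le 2D_2$, and the ``in particular'' clause is immediate.

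The main obstacle I foresee is the dyadic bookkeeping, above all deducing the self-improving inequality for $\Delta$ from the a priori bound $\Delta\ge e\cdot2^8L$ — this is precisely what guarantees that $\norm{\theta_2\psi}_2$ is a small fraction of $\norm{V^{\sf T}\theta}_2$, and hence that $\theta_1\phi$ inherits the arithmetic structure of $V^{\sf T}\theta$. Once that is in place the rest is a routine chain of triangle inequalities, up to the harmless $\varepsilon$'s arising because the infimum defining $D_2$ and the liminf defining $\Delta$ need not be attained.
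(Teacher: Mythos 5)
Your proof is correct and follows essentially the same route as the paper: take a near-optimal $\theta$ for $D_2$, use the $\mathrm{Real}(\cZ)$ condition to show $\norm{\theta_2\psi}_2$ is negligible, and transfer the arithmetic structure of $\theta_1\phi+\theta_2\psi$ to $\theta_1\phi$, exploiting the slack between the constants $2^5,2^6$ in $D_1$ and $2^8$ in $D_2$. The only difference is bookkeeping: where the paper absorbs the error via the implication $s\le\sqrt{\log_1(t+s/4\sqrt2)}\Rightarrow s\le\sqrt{\log_1(\sqrt2 t)}$, you instead use the a priori bound $\Delta\ge e\cdot 2^8L$ together with orthogonality of $\phi$ and $\psi$ to get $\norm{\theta_1\phi}_2\ge\Delta/2$, which is equally valid (and even yields $D_1(\phi)\le D_2$).
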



  \begin{proof}
  Let us denote $J={\rm{small}}(z)$. Denoting $\gD=D_2(\phi+{\rm{i}}\psi)$, we see that there exists $\theta=(\theta_1,\theta_2) \in \R^2$ with $\norm{\theta}_2 \le 2\gD$ and $\|\phi \theta_1+ \psi \theta_2\|_2 \ge \Delta(z_J/\|z_J\|_2)/\sqrt{2}$, and $q \in \Z^J$ such that
   \begin{align}\label{eq:D-2-to-D-1}
     \norm{\theta_1 \phi + \theta_2 \psi -q}_2
     < L \sqrt{ \log_{1} \frac{\norm{\theta_1 \phi+\theta_2  \psi}_2}{2^8 L}}.
   \end{align}
Using the triangle inequality, and the facts that $|\theta_2 | \le \norm{\theta}_2$, $\norm{\phi}_2 \cdot \norm{\psi}_2 = d(z_J/\|z_J\|_2)$, and $\norm{\phi}_2 \ge 1/2$, we also obtain
\beq\label{eq:D-2-to-D-1-two}
{\norm{\theta_1 \phi+\theta_2  \psi}_2}
     \le {\norm{\theta_1 \phi}_2+ 4d(z_J/\|z_J\|_2)D_2(z_J/\|z_J\|_2)}.
\eeq
Since $\phi+\mathrm{i} \psi \in \text{Real}(\cZ)$ we further note that
\[
d(z_{J}/\norm{z_{J}}_2) D_2(z_{J}/\norm{z_{J}}_2)
   \le  {4 L} \sqrt{\log_{1} \frac{\Delta(z_{J}/\norm{z_{J}}_2)}{{2^7}L}}
\]
(see \eqref{eq:compl-dfn} and \eqref{eq:Real-dfn}). Therefore denoting
\[
\alpha_0:= L \sqrt{ \log_{1} \frac{\norm{\theta_1 \phi+\theta_2  \psi}_2}{2^{6}\sqrt{2} L}},
\]
from \eqref{eq:D-2-to-D-1}-\eqref{eq:D-2-to-D-1-two} we note that
\begin{equation}\label{eq:D-2-to-D-1-three}
\norm{\theta_1 \phi + \theta_2 \psi -q}_2 < \alpha_0 \le L \sqrt{ \log_{1} \frac{\norm{\theta_1 \phi}_2+16 \a_0}{2^6 \sqrt{2} L}}.
\end{equation}
It is easy to check that
$$s \le \sqrt{\log_{1}(t+s/4\sqrt{2})}, \ s>0 \text{ and } t\ge 0 \Rightarrow s \le \sqrt{\log_{1}(\sqrt{2}t)}.$$
Hence we deduce that
   \[
    \norm{\theta_1 \phi + \theta_2 \psi -q}_2 <   L \sqrt{ \log_{1} \frac{ \norm{\theta_1 \phi}_2}{2^6 L}}.
   \]
As we have already noted $\|\theta_2 \psi\|_2 \le 4 d(z_{J}/\|z_{J}\|_2) D_2(z_J/\|z_J\|_2)$, using the fact $z \in \rm{Real}(\cZ)$, the triangle inequality, and \eqref{eq:D-2-to-D-1-three}, we conclude
   \[
     \norm{\theta_1 \phi  -q}_2
    \le \norm{\theta_1 \phi + \theta_2 \psi -q}_2 +\norm{\theta_2 \psi}_2  \le 17 \alpha_0 < 2^5  L \sqrt{ \log_{1} \frac{\norm{\theta_1 \phi}_2}{2^6 L}}.
   \]
Since $|\theta_1| \le \|\theta\|_2 \le 2\gD$, the proof of the lemma is now complete.
  \end{proof}

\vskip10pt

Next  we find a small net for $\text{Real}(\cZ)$. As in the genuinely complex case, we start with constructing a small net for the set of the small coordinates.
 \begin{lem} \label{l: net in S_J real}
  Fix $J \subset[n]$ and $0< \wt{\a} \le \gD$.
  Define
   \begin{align*}
     S_J(\gD)
     &:=\{ (u,v) \in \R^J \times \R^J: \  \norm{u}_2^2 + \norm{v}_2^2 =1, \ \norm{u}_2 \ge \norm{v}_2,  \\
      &\ d(u,v) \le  \wt{\a}/\gD,   \text{ and } \exists \theta \in [\gD, 3\gD],  \ \text{ such that } \
        \text{dist} \left(\theta u, \Z^{J} \right)< \wt{\a} \}.
   \end{align*}
     Then, there exists a $\left( \frac{C_{\ref{l: net in S_J real}} \wt{\a}}{\gD} \right)$-net $\cM_J(\gD) \subset S_J(\gD)$ with
     \[
      |\cM_J(\gD)|
      \le   \frac{\gD}{\wt{\a}}  \cdot  \left( \bar{C}_{\ref{l: net in S_J real}} \left( \frac{\gD}{\sqrt{|J|}}+1 \right) \right)^{|J|},
     \]
where $C_{\ref{l: net in S_J real}}$ and  $\bar{C}_{\ref{l: net in S_J real}}$ are some absolute constants.
  \end{lem}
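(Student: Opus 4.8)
The plan is to mimic the structure of the genuinely complex construction (Lemma~\ref{l: net in S_J}), but to exploit the fact that for essentially real vectors the imaginary part $v$ is essentially determined by the real part $u$ through the de-correlation bound $d(u,v) \le \wt\a/\gD$. Concretely, for $(u,v) \in S_J(\gD)$ I would first use the one-dimensional \abbr{LCD}-type condition $\text{dist}(\theta u, \Z^J) < \wt\a$ with $\theta \in [\gD,3\gD]$ to approximate $u$. Let $q \in \Z^J$ be the nearest integer point to $\theta u$; by the triangle inequality $\norm{q}_2 < \wt\a + 3\gD \le 4\gD$, so $q \in \Z^J_{4\gD}$. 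Using the volumetric bound \eqref{eq: integer net} this gives a collection of at most $\left(C_0(4\gD/\sqrt{|J|}+1)\right)^{|J|}$ candidate integer vectors. Pairing this with a one-dimensional net $\cN_\theta$ of $[\gD,3\gD]$ of mesh $\wt\a$, i.e.\ $|\cN_\theta| \le 2\gD/\wt\a$, we produce for each $(u,v)$ a pair $(\theta', q)$ with $\theta' \in \cN_\theta$, $q \in \Z^J_{4\gD}$, and $\norm{\theta' u - q}_2 < \wt\a + |\theta - \theta'|\,\norm{u}_2 \le 2\wt\a$, hence $\norm{u - q/\theta'}_2 < 2\wt\a/\gD$. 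So $u$ is approximated to within $O(\wt\a/\gD)$ by $q/\theta'$, taken from a set of cardinality at most $(2\gD/\wt\a)\cdot\left(C_0(4\gD/\sqrt{|J|}+1)\right)^{|J|}$.

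The next step is to approximate the imaginary part $v$ given $u$. This is where the essentially-real assumption enters: since $d(u,v) = \norm{P_{u^\perp}v}_2\,\norm{u}_2 \le \wt\a/\gD$ and $\norm{u}_2 \ge 1/\sqrt 2$, the component of $v$ orthogonal to $u$ has norm at most $\sqrt 2\,\wt\a/\gD$. Thus $v = \mu u/\norm{u}_2 + v^\perp$ with $|v^\perp\text{-part}| \le \sqrt 2 \wt\a/\gD$ and $\mu = \langle v, u/\norm u_2\rangle \in [-1,1]$. Approximating $u$ by $\hat u := q/\theta'$ (already in our net) and $\mu$ by an element $\mu'$ of a one-dimensional $\wt\a/\gD$-net of $[-1,1]$ (of cardinality $\le 2\gD/\wt\a$, absorbed into constants), we get $\norm{v - \mu' \hat u/\norm{\hat u}_2}_2 \le O(\wt\a/\gD)$. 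This shows that $(u,v)$ is approximated to within $O(\wt\a/\gD)$ by the point $(\hat u,\ \mu' \hat u/\norm{\hat u}_2)$, and the number of such points is at most $\frac{C\gD}{\wt\a}\cdot\left(\bar C(\gD/\sqrt{|J|}+1)\right)^{|J|}$ after collecting the volumetric factor and the two one-dimensional net factors (the latter giving only an extra polynomial-in-$\gD/\wt\a$ factor of degree $2$, which is dominated by the stated $\gD/\wt\a$ up to changing constants — or one can simply keep a $(\gD/\wt\a)^{O(1)}$ prefactor and note it is subsumed). Finally, as in the proof of Lemma~\ref{l: net in S_J}, one replaces each net point by a genuine element of $S_J(\gD)$ within the approximation radius, discarding points with no such neighbor; this keeps the net inside $S_J(\gD)$ without increasing its cardinality, at the cost of doubling the mesh constant.

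The main technical point to be careful about is matching the exact form of the cardinality bound: the statement asserts a prefactor of exactly $\gD/\wt\a$ (to the first power) times $\left(\bar C_{\ref{l: net in S_J real}}(\gD/\sqrt{|J|}+1)\right)^{|J|}$, whereas the naive argument above produces $(\gD/\wt\a)^2$ or $(\gD/\wt\a)^3$ from the product of one-dimensional nets for $\theta$ and $\mu$. The resolution is that one of these factors can be folded into the volumetric count: rather than separately netting $\theta$ and then rounding $\theta u$, one directly enumerates integer vectors $q$ and recovers $\theta$ (and hence $u \approx q/\theta$) approximately from the single constraint, so that only one $\gD/\wt\a$ prefactor survives — the one coming from netting $\mu \in [-1,1]$. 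Alternatively, since $\wt\a \le \gD$ forces $\gD/\wt\a \ge 1$, any fixed power of $\gD/\wt\a$ is at most $(\gD/\wt\a)\cdot$(something absorbable only if $|J|$ is large); to be safe one would either absorb the extra powers into $\bar C_{\ref{l: net in S_J real}}^{|J|}$ using $|J| \ge 1$ — wait, that does not work for bounded $|J|$ — so the honest route is the first one: be economical and generate $u$'s approximation and $\theta$'s approximation from a single integer-point enumeration. Verifying this bookkeeping is the only real obstacle; the geometry (that $v$ lives in an $O(\wt\a/\gD)$-tube around the line $\R u$) is immediate from the definition of $d(u,v)$ and the normalization $\norm{u}_2 \ge \norm{v}_2$, $\norm u_2^2 + \norm v_2^2 = 1$.
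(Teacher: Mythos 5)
Your construction of the net for the real part $u$ (an integer point $q\in\Z^J\cap 4\gD B_2^{|J|}$ together with one scalar parameter for the magnitude/the value of $\theta$) is essentially the paper's, but your treatment of $v$ opens a genuine gap in the cardinality count. You net the parallel component $\mu=\langle v,u/\norm{u}_2\rangle\in[-1,1]$ at mesh $\wt\a/\gD$, which costs an extra factor $\gD/\wt\a$, so your net has size of order $(\gD/\wt\a)^2$ times the volumetric term, not the single power $\gD/\wt\a$ asserted in the lemma; since $\wt\a\le\gD$ is the only constraint, this surplus cannot be absorbed into $\bar{C}_{\ref{l: net in S_J real}}^{|J|}$. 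Your proposed repair --- enumerate only $q$ and ``recover $\theta$ (hence $u\approx q/\theta$) from the single constraint'' --- does not work: for a fixed $q$, the admissible $u$ fill a tube of radius $O(\wt\a/\gD)$ around the radial segment $\{q/\theta:\ \theta\in[\gD,3\gD]\}$, whose length is of order $\norm{q}_2/\gD$, i.e.\ of order one, because $\norm{u}_2$ is a free parameter in an interval of length comparable to $1$ and is not determined by $q$. Covering that segment at scale $\wt\a/\gD$ still requires about $\gD/\wt\a$ points, so no factor is saved, and neither of your two bookkeeping routes establishes the stated bound.

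The paper closes the count exactly at the place where you introduce $\mu$. For the pairs to which the lemma is applied, $u\perp v$: this orthogonality is built into the definition of $\cZ$ in \eqref{eq:cZ-dfn} through the rotation $e^{-\mathrm{i}\t}z$, so $d(u,v)=\norm{u}_2\norm{v}_2\ge\norm{v}_2/2$, and the hypothesis $d(u,v)\le\wt\a/\gD$ forces $\norm{v}_2\le 2\wt\a/\gD$. Hence the second coordinate is approximated simply by $0$ --- no net for $\mu$ at all --- and the only scalar net is the $(\wt\a/\gD)$-net for $\gD/\theta\in[-1,1]$, which is what produces exactly one factor $\gD/\wt\a$ alongside the integer-point count. (Your interpretation $d(u,v)=\norm{u}_2\norm{P_{u^\perp}v}_2$ is the general one, and under it $v$ may indeed have a large component along $u$; but then the stated one-power bound must be recovered differently, e.g.\ by using the normalization $\norm{u}_2^2+\norm{v}_2^2=1$ to determine $\norm{u}_2$ from $\mu$ up to $O(\wt\a/\gD)$, so that $u\approx\sqrt{1-\mu'^2}\,q/\norm{q}_2$ and only one scalar net survives. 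Your write-up does neither, so the cardinality claim is not proved as it stands.)
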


 \begin{proof}
 Let $(u,v) \in S_J(\gD)$, and let $\theta \in [\gD, 3\gD]$, $q \in \Z^J$ be such that
 \[
  \norm{\theta u-q}_2< \wt{\a}.
 \]
 Then, using the triangle inequality,
 $
   \norm{q}_2 < \wt{\a}+ |\theta| \le 4\gD,
 $
 and so $q \in \Z_{4\gD}$.
 This implies that
 \beq\label{eq:D-1-net-eq-1}
   \norm{u- \frac{ \gD}{\theta} \cdot \frac{ q}{\gD}}_2 < \frac{\a}{\gD}
   \quad \text{where } \left| \frac{ \gD}{\theta} \right| \le 1, \ \norm{ \frac{ q}{\gD}}_2 \le 4.
 \eeq
 From the definition of real-imaginary de-correlation it also follows that
 \beq\label{eq:D-1-net-eq-2}
   \norm{v}_2 \le  2d(u,v)  \le \frac{2\wt{\a}}{\gD}.
 \eeq
 Let $\cN_1$ be an $(\wt\a/\gD)$-net in $[-1,1]$ with
 $
  |\cN_1| \le 2 \gD / \wt\a.
 $
 Define $\cM_J^1(\gD)$ by
 \[
  \cM_J^1(\gD):= \left \{ \left( x \frac{q}{\gD}, 0 \right): \ q \in \Z_{4\gD}, \ x \in \cN_1 \right \}.
 \]
 Then from \eqref{eq:D-1-net-eq-1}-\eqref{eq:D-1-net-eq-2} we deduce that $\cM_J^1(\gD)$ is a $(7 \wt{\a}/\gD)$-net for $S_J(\gD)$ and $|\cM_J^1(\gD)| = |\Z_{4\gD}| \cdot |\cN_1|$.
This in combination with the bound in \eqref{eq: integer net} yields the required estimate for the cardinality of the net.
To complete the proof, we have to replace the constructed set of vectors by a subset of $S_J(\gD)$.
 This is done in the same way as in Lemma \ref{l: net in S_J}.
  We skip the details.
  \end{proof}

  \vskip10pt

Now we use Lemma \ref{l: net in S_J real} to construct a small net in the set of essentially real vectors with an approximately constant value of the one-dimensional \abbr{LCD}.
 Define  the set $\wt{\cZ}(\gD)$ by
 \begin{multline*}  
  \wt{\cZ}(\gD)
  :=\bigg\{  z \in \text{Real}(\cZ) : \frac{z_{\text{small}}}{\norm{ z_{\text{small}} }_2} = \phi+\mathrm{i} \psi,  \ \norm{\phi}_2 \ge \norm{\psi}_2, D_1(\phi) \in [\gD, 2\gD],\\
   \ d(\phi,\psi) \le \wt{\a}/\gD \bigg\},
 \end{multline*}
 where
  \beq\label{eq:wta}
\wt{\alpha}:= 2^5 L \sqrt{{\log_1} \f{\gD}{2^5 L}},
  \eeq
The set $\wt{\cZ}(\gD)$ is the collection of vectors in $\text{Real}(\cZ)$ for which $D_1(z_{\rm{small}}/\|z_{\rm{small}}\|_2) \approx \gD$. The condition $d(\phi,\psi) \le \wt\a/\gD$ ensures that the real-imaginary de-correlation is small.

 \begin{prop} \label{prop: net in Z(D)}
Fix $\gD>1$. Let $\wt\a$ be as in \eqref{eq:wta} and assume $0< \wt{\a}\le \gD$. Then there exist absolute constants $C_{\ref{prop: net in Z(D)}}$, $\bar{C}_{\ref{prop: net in Z(D)}}$, and a set $\wt{\cN}(\gD) \subset \wt{\cZ}(\gD)$ with
  \[
   |\wt{\cN}(\gD)|
   \le \bar{C}_{\ref{prop: net in Z(D)}}^n \left(\frac{n}{\rho M}  \cdot  \frac{\gD}{\wt{\alpha}} \right)^{4 M} \cdot\left( \frac{\gD}{\sqrt{n}}+1 \right)^{n-M}
  \]
  having the following approximation property: Let $z \in \wt{\cZ}(D)$ be any vector and denote $J=\mathrm{small}(z)$. Then there exists $w \in \wt{\cN}(D)$ such that
     \begin{multline*}
      \norm{\frac{z_J}{\norm{z_J}_2}-\f{w_J}{\norm{w_J}_2}}_2 < C_{\ref{prop: net in Z(D)}}\frac{\wt{\alpha}}{\gD}, \quad \norm{z_{J^c} - w_{J^c}}_2 \le C_{\ref{prop: net in Z(D)}} \frac{\rho \wt{\alpha}}{\gD}, \\
       |\|z_{J}\|_2 - \|w_J\|_2| \le C_{\ref{prop: net in Z(D)}} \frac{\rho \wt{\alpha}}{\gD}.
      \end{multline*}
 \end{prop}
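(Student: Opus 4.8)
\emph{Proof proposal.} The plan is to run the argument from the proof of Proposition~\ref{prop: net in Z(D,d)} essentially verbatim, with the genuinely complex net $\cM_J(\gD,\Delta,d)$ of Lemma~\ref{l: net in S_J} replaced by the simpler essentially real net $\cM_J(\gD)$ of Lemma~\ref{l: net in S_J real} (used in its subset form, cf.\ Remark~\ref{rmk:net-subset-cM}). First I would fix $J\subset[n]$ with $|J|=n-M$ and put $\wt{\cZ}_J(\gD):=\{z\in\wt{\cZ}(\gD):\mathrm{small}(z)=J\}$. For $z\in\wt{\cZ}_J(\gD)$ write $\phi+\sqrtneg\psi:=z_J/\|z_J\|_2\in\C^J$; then $\|\phi\|_2^2+\|\psi\|_2^2=1$, and $\|\phi\|_2\ge\|\psi\|_2$ because $z\in\cZ$, and $d(\phi,\psi)\le\wt\a/\gD$ by the very definition of $\wt{\cZ}(\gD)$. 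Since $D_1(\phi)\in[\gD,2\gD]$, there is $\theta\in[\gD,3\gD]$ with $\dist(\theta\phi,\Z^J)<2^5L\sqrt{\log_1(\|\theta\phi\|_2/2^6L)}$, and using $\|\theta\phi\|_2\le\theta\|\phi\|_2\le 2\gD$ together with the specific powers of $2$ chosen in Definitions~\ref{dfn:two-lcd-dfn} and \eqref{eq:wta} --- this is precisely the point anticipated in Remark~\ref{rmk:power-of-2} --- the right-hand side is at most $\wt\a$. Hence $(\phi,\psi)\in S_J(\gD)$, and in fact $(\phi,\psi)\in\cS:=\{(\phi(z),\psi(z)):z\in\wt{\cZ}(\gD)\}$.

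Having checked $(\phi,\psi)\in S_J(\gD)$, I would apply Lemma~\ref{l: net in S_J real} to $\cS$ to obtain a net $\cM_J^\cS(\gD)\subset\cS$ of cardinality at most $(\gD/\wt\a)\bigl(\bar{C}_{\ref{l: net in S_J real}}(\gD/\sqrt{|J|}+1)\bigr)^{|J|}$ approximating each $(\phi,\psi)$ within $C_{\ref{l: net in S_J real}}\wt\a/\gD$; set $\cM_J:=\{\phi+\sqrtneg\psi:(\phi,\psi)\in\cM_J^\cS(\gD)\}$, which then approximates $z_J/\|z_J\|_2$ to the same accuracy. For the $M$ large coordinates I would take a plain volumetric $(C_{\ref{l: net in S_J real}}\rho\wt\a/\gD)$-net $\cN_{J^c}$ of the unit ball of $\C^{J^c}$ of cardinality at most $(3\gD/(C_{\ref{l: net in S_J real}}\rho\wt\a))^{2M}$, and a net $\cN_{[0,1]}$ of $[0,1]$ of cardinality at most $3\gD/(C_{\ref{l: net in S_J real}}\rho\wt\a)$ for the scalar $\|z_J\|_2$ --- exactly as in the proof of Proposition~\ref{prop: net in Z(D,d)}. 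Then $\wt{\cN}(\gD):=\bigcup_{|J|=n-M}\{\rho'w'+w'':\rho'\in\cN_{[0,1]},\,w'\in\cM_J,\,w''\in\cN_{J^c}\}$ has the three displayed approximation properties by the triangle inequality, and after replacing each net point by a nearby element of $\wt{\cZ}(\gD)$ (discarding any point with no companion within $C_{\ref{l: net in S_J real}}\wt\a/\gD$, as at the end of Lemma~\ref{l: net in S_J}) it becomes a subset of $\wt{\cZ}(\gD)$ without increasing its cardinality; the constant $C_{\ref{prop: net in Z(D)}}$ emerges as a fixed multiple of $C_{\ref{l: net in S_J real}}$.

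It then remains to estimate $|\wt{\cN}(\gD)|\le\binom{n}{M}\,|\cN_{[0,1]}|\cdot|\cN_{J^c}|\cdot|\cM_J|$. Using $\binom{n}{M}\le(\bar{C}n/M)^M$, collapsing $|\cN_{[0,1]}|\cdot|\cN_{J^c}|\cdot(\gD/\wt\a)$ into $(C'\gD/(\rho\wt\a))^{2M+2}$ (valid since $\rho\le1$ and $\gD\ge\wt\a$), replacing $\sqrt{|J|}=\sqrt{n-M}$ by a constant multiple of $\sqrt n$ (valid since $M<n/2$), and absorbing the leftover powers of $\bar{C}$ together with the bound $(\gD/(\rho\wt\a))^{2M+2}(n/M)^M\le(\tfrac{n}{\rho M}\cdot\tfrac{\gD}{\wt\a})^{4M}$ (which holds because $M\ge1$, $\gD/(\rho\wt\a)\ge1$ and $n/M>2$) into a single factor $\bar{C}_{\ref{prop: net in Z(D)}}^{\,n}$, one arrives at the claimed estimate; observe that the exponent on $\gD/\wt\a$ is now $4M$, one unit below the $5M$ of Proposition~\ref{prop: net in Z(D,d)}, reflecting that the rigid one-dimensional arithmetic structure costs one fewer power. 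The whole argument is routine once Lemma~\ref{l: net in S_J real} is in hand; the only place that genuinely requires attention is the verification that the $\log_1$-term controlling $D_1(\phi)$ is dominated by $\wt\a$ --- the careful matching of the powers of $2$ in the definitions of the one- and two-dimensional \abbr{LCD} and of $\wt\a$ --- together with the slightly lengthy but entirely mechanical cardinality bookkeeping.
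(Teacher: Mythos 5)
Your proposal is correct and is essentially the argument the paper has in mind: the paper omits the proof precisely because it is obtained from Lemma \ref{l: net in S_J real} in the same way Proposition \ref{prop: net in Z(D,d)} is obtained from Lemma \ref{l: net in S_J}, which is exactly what you carry out (including the key verification that the matching powers of $2$ in Definition \ref{dfn:two-lcd-dfn} and \eqref{eq:wta} force $(\phi,\psi)\in S_J(\gD)$, and the bookkeeping yielding the exponent $4M$). The only cosmetic caveat is the usual epsilon-slack in choosing $\theta$ near the infimum defining $D_1(\phi)$ so that $\|\theta\phi\|_2\le 2\gD$, which is handled exactly as in the paper's treatment of the complex case and does not affect the argument.
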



 Proposition \ref{prop: net in Z(D)} is derived from Lemma \ref{l: net in S_J real} in the same way as Proposition \ref{prop: net in Z(D,d)} was derived from Lemma \ref{l: net in S_J}.
 We omit the details.


 Now, we are ready to prove the main result of this section which shows that with high probability, there are no essentially real vectors with a subexponential \abbr{LCD} in the kernel of $B^D$.

 \begin{prop} \label{prop: kernel real}
Let $B^D, {A}_n, \rho, K,R,r$, and $r'$ be as in Proposition \ref{thm: large LCD for real part}. Then there exists a positive constant $c'_{\ref{prop: kernel real}}$, depending only on $K,R$, and the fourth moment of $\{\xi_{ij}\}$, such that
     \begin{multline*}
     \P \Big(\exists z \in \mathrm{Real}(\cZ) \cap {\rm{Ker}}(B^D):
     D_2(z_{\rm{small}}/\norm{z_{\rm{small}}}_2) \le \exp(c'_{\ref{prop: kernel real}} n/M)
   \\
    \text{and } \norm{{A}_n} \le K \sqrt{pn} \Big)
     \le e^{- n},
   \end{multline*}
where   \(
 M= {{C}_{\ref{thm: large LCD for real part}} 
 \rho^{-4} p^{-1}}.
\)
 \end{prop}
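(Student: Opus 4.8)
The plan is to mirror the genuinely complex case (Proposition~\ref{prop: kernel complex} and Theorem~\ref{thm: kernel complex}), replacing the two-dimensional LCD machinery by its one-dimensional counterpart. First I would reduce to a dyadic decomposition: for $z \in \mathrm{Real}(\cZ) \cap \mathrm{Ker}(B^D)$ with $D_2(z_{\mathrm{small}}/\|z_{\mathrm{small}}\|_2) \le \exp(c' n/M)$, Lemma~\ref{l: D_2 to D_1} gives $D_1(\phi) \le 2 D_2(z_{\mathrm{small}}/\|z_{\mathrm{small}}\|_2) \le 2\exp(c' n/M)$, where $\phi = \Re(z_{\mathrm{small}})/\|z_{\mathrm{small}}\|_2$. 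On the other hand, since $z \notin \mathrm{Dom}(M, (C_{\ref{p: dominated and compressible}}(K+R))^{-4})$, Lemma~\ref{lem:D_2-lbd} (or rather its one-dimensional analogue, Lemma~\ref{lem:D_1-lbd} combined with the non-dominated real part furnished by Proposition~\ref{thm: large LCD for real part}) forces $D_1(\phi) \ge c\sqrt{M} \gtrsim p^{-1/2} \gtrsim L$, with high probability $\ge 1 - \exp(-\bar c_{\ref{thm: large LCD for real part}} np)$. So it suffices to take a union bound over dyadic values $\gD = 2^k$ with $C_\star L \le \gD \le 2\exp(c' n/M)$ — there are $O(n/M) = O(pn)$ such values — and bound, for each $\gD$,
\[
\P\left(\exists z \in \wt{\cZ}(\gD) \cap \mathrm{Ker}(B^D) : \|A_n\| \le K\sqrt{pn}\right) \le e^{-2n}.
\]

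For a fixed $\gD$ I would use the net $\wt{\cN}(\gD) \subset \wt{\cZ}(\gD)$ from Proposition~\ref{prop: net in Z(D)} together with the one-dimensional small-ball bound \eqref{eq: Levy real} of Proposition~\ref{prop: Levy vector}: for $w \in \wt{\cN}(\gD)$, since $\|\Re(w_J)\|_2 \ge \|\Im(w_J)\|_2$ and $D_1(\Re(w_J)/\|w_J\|_2) \asymp \gD$,
\[
\cL\!\left(B^D w,\ \vep\sqrt{p(n-1)}\,\|w_{\mathrm{small}}\|_2\right) \le \left[\ol C_{\ref{prop: Levy vector}}\Bigl(\vep + \tfrac{1}{\sqrt{p}\,\gD}\Bigr)\right]^{n-1}.
\]
Choosing $\vep_0 \asymp (K+R)\wt\alpha/\gD$ (which dominates $1/(\sqrt p\,\gD)$ since $\wt\alpha \ge L \ge p^{-1/2}$), the per-point probability is at most $[C(K+R)\wt\alpha/\gD]^{n-1}$. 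Multiplying by $|\wt{\cN}(\gD)| \le \bar C_{\ref{prop: net in Z(D)}}^n (n/(\rho M) \cdot \gD/\wt\alpha)^{4M}(\gD/\sqrt n + 1)^{n-M}$ and taking logs, the exponent is
\[
\Gamma := -\Bigl(1-\tfrac{M}{n}\Bigr)\log\!\Bigl(\tfrac{C'(K+R)\wt\alpha}{\sqrt n} + C'(K+R)\wt\alpha\Bigr) - \tfrac{4M}{n}\log\!\Bigl(\tfrac{n}{\rho M}\cdot\tfrac{\gD}{\wt\alpha}\Bigr) + O(1).
\]
Here the key arithmetic fact is that $\wt\alpha = 2^5 L\sqrt{\log_1(\gD/2^5L)}$ with $L = (\delta_0 p)^{-1/2}$ and $\gD \le \exp(c'n/M)$, so $\wt\alpha \le \sqrt{c' n/(\delta_0 M p)}$ and $\wt\alpha/\sqrt n \le \sqrt{c'/(\delta_0 Mp)}$ is small because $Mp \ge r_\star^2$ is a large constant; likewise $(M/n)\log(\gD/\wt\alpha) \le (M/n)\log\gD \le c'$ is small. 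Choosing $c'$ small enough and $\rho$ such that $\rho M p \ge 1$ (which is exactly \eqref{eq: min p-1}/the running hypothesis via $M = C_{\ref{thm: large LCD for real part}}\rho^{-4}p^{-1}$), we get $\Gamma \ge 2$, giving the desired $e^{-2n}$ bound.

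Finally, to transfer from the net back to $\wt{\cZ}(\gD)$ I would argue exactly as at the end of the proof of Proposition~\ref{prop: kernel complex}: if $z \in \wt{\cZ}(\gD) \cap \mathrm{Ker}(B^D)$ then the approximating $w \in \wt{\cN}(\gD)$ satisfies
\[
\|B^D w\|_2 = \|B^D(w-z)\|_2 \le \|B^D\|\Bigl(\|z_{J^c}-w_{J^c}\|_2 + \|w_J\|_2\bigl\|\tfrac{z_J}{\|z_J\|_2}-\tfrac{w_J}{\|w_J\|_2}\bigr\|_2 + \bigl|\|w_J\|_2-\|z_J\|_2\bigr|\Bigr),
\]
which by Proposition~\ref{prop: net in Z(D)} and $\|B^D\| \le (K+R)\sqrt{np}$ is at most $3C_{\ref{prop: net in Z(D)}}(K+R)(\wt\alpha/\gD)\|w_J\|_2\sqrt{np}$, and the bound $\|w_J\|_2 \le 4\|w_{\mathrm{small}}\|_2$ follows from the same $\mathrm{small}(z) = J$ versus $\mathrm{small}(w)$ comparison (using $\|z_J\|_2 \ge \rho$ since $z \notin \mathrm{Comp}(M,\rho)$ and the smallness of $C_{\ref{prop: net in Z(D)}}\wt\alpha/\gD$, which holds once $r_\star$ is large). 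Combining this with the high-probability lower bound $D_1(\phi) \gtrsim L$ coming from Proposition~\ref{thm: large LCD for real part} completes the proof, after shrinking $\rho$ so that $C_\star L < \rho^{-1}/(4\sqrt p)$ as at the end of Theorem~\ref{thm: kernel complex}.

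The main obstacle I expect is purely bookkeeping: making sure the \emph{one-dimensional} net cardinality $|\wt{\cN}(\gD)|$ — which has a $(\gD/\sqrt n + 1)^{n-M}$ factor rather than the sharper $((d\gD^2/\alpha)(1/\sqrt n + 1/\Delta))^{n-M}$ from the complex case — still balances against the \emph{weaker} one-dimensional small-ball bound $[\ol C(\vep + 1/(\sqrt p\,\gD))]^{n-1}$ (linear in $\vep$, not quadratic). The reason it works is precisely that in the essentially real regime $d(\phi,\psi) \le \wt\alpha/\gD$ is tiny, so one loses a full factor of $\gD$ in the net but also loses only the square-root in the probability — the two losses match. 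Verifying that this matching survives the dyadic union bound over $\gD$ up to $\exp(c'n/M)$, and that all the constants ($r_\star$, $c'$, $\rho$, the various $C$'s) can be fixed consistently with the constraint $Mp = C_{\ref{thm: large LCD for real part}}\rho^{-4}$, is where the care is needed; there is no new conceptual ingredient beyond Lemma~\ref{l: D_2 to D_1} and Proposition~\ref{thm: large LCD for real part}.
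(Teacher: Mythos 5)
Your skeleton is the same as the paper's (reduce via Lemma \ref{l: D_2 to D_1}, dyadically decompose into the sets $\wt{\cZ}(\gD)$, combine the net of Proposition \ref{prop: net in Z(D)} with the one-dimensional bound \eqref{eq: Levy real}, check $\wt\Gamma\ge 2$, transfer from the net as in Proposition \ref{prop: kernel complex}), but two steps need repair. First, you never verify that a vector $z\in\mathrm{Real}(\cZ)$ with $D_1(\phi)\in[\gD,2\gD]$ actually lies in $\wt{\cZ}(\gD)$: membership requires the de-correlation bound $d(\phi,\psi)\le\wt\alpha/\gD$, and without it Lemma \ref{l: net in S_J real} (hence Proposition \ref{prop: net in Z(D)}) does not apply. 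You treat this as part of the ``essentially real regime'', but it is a consequence that must be derived from the defining inequality of $\mathrm{Real}(\cZ)$, which is stated in terms of $d\cdot D_2$ versus $\Delta$; the paper gets it by combining $\Delta(\cdot)\le D_2(\cdot)$, Lemma \ref{l: D_2 to D_1}, and the monotonicity of $x\mapsto x\sqrt{\log_1(1/x)}$ (this is exactly where the different powers of $2$ in the two \abbr{LCD} definitions are consumed), and this short computation should appear in your proof.

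Second, your lower bound on $D_1(\phi)$ via Proposition \ref{thm: large LCD for real part} is both unnecessary and harmful. It adds an event of probability $\exp(-\bar{c}_{\ref{thm: large LCD for real part}}np)$ to the failure probability, and since $np=o(n)$ in the sparse regime this term dominates $e^{-n}$, so your argument proves only a bound of order $\exp(-cnp)$ rather than the stated $e^{-n}$. Moreover the bound it yields, $D_1(\phi)\ge \rho^{-1}p^{-1/2}/2$, has a fixed constant, whereas the $\wt\Gamma\ge 2$ computation needs $\wt{C}(K+R)\wt\alpha/\gD$ to be smaller than an absolute threshold (of order $e^{-30}$), and the paper secures this by taking the lower bound deterministically: $\mathrm{Real}(\cZ)\subset\mathrm{Incomp}(M,\rho)$ gives $\norm{\phi}_\infty\le 2/(\rho\sqrt{M})$, so Lemma \ref{lem:D_1-lbd} yields $D_1(\phi)\ge \gD_0=(\sqrt{{C}_{\ref{thm: large LCD for real part}}}/2)\,\rho^{-1}p^{-1/2}$ with no probability cost, and the constant can then be pushed up simply by enlarging ${C}_{\ref{thm: large LCD for real part}}$ (which the remark after Proposition \ref{thm: large LCD for real part} explicitly permits). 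Replacing your probabilistic step by this deterministic one removes the extra error term, restores the claimed $e^{-n}$ bound, and gives you the free parameter needed to close the exponent estimate; the rest of your argument then goes through as written.
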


 \vskip10pt

 \begin{proof}
 The proof of this proposition is very similar to that of Proposition \ref{prop: kernel complex}. First we note that using Lemma \ref{l: D_2 to D_1} it follows that it is enough to show that, with high probability, there does not exist $z \in \text{Ker}(B^D) \cap \text{Real}(\cZ)$ such that $D_1(\phi(z)) \le \exp(c'n/M)$ for some small constant $c'$, where $z_{\text{small}}/\|z_{\text{small}}\|_2=:\phi(z) +{\rm{i}} \psi(z)$ with $\|\phi(z)\|_2 \ge \|\psi(z)\|_2$. We then claim that the subset of $\text{Real}(\cZ)$ in context can be partitioned into the sets $\wt{\cZ}(\gD)$ as follows:
  \beq\label{eq:real-partition}
   \{ z \in \text{Real}(\cZ): \
     D_1(\phi(z)) \le \exp(c' n/M) \}
   \subset \bigcup_{\gD} \wt{\cZ}(\gD),
  \eeq
  where the union is taken over all $\gD=2^k, \ \gD \le \exp(c' n/M)$. Note that the claim in \eqref{eq:real-partition} is obvious if we drop the requirement $d(z_{\rm{small}}/\|z_{\rm{small}}\|_2)=d(\phi(z),\psi(z)) \le \wt{\a}/\gD$ from the definition of $\wt{\cZ}(\gD)$. We show that the required condition on the real-imaginary de-correlation is automatically satisfied for all $z \in {\rm{Real}}(\cZ)$. Indeed, recalling the definition of $\text{Real}(\cZ)$, and the fact that
$$ \Delta(z_{\rm{small}}/\|z_{\rm{small}}\|_2) \le D_2(z_{\rm{small}}/\|z_{\rm{small}}\|_2)$$
we see that for any $z \in \text{Real}(\cZ)$,
 \begin{multline}\label{eq:d-D-1-bd}
 d(z_{\rm{small}}/\|z_{\rm{small}}\|_2) 
   \le \f{4L}{D_2(z_{\rm{small}}/\|z_{\rm{small}}\|_2)} \sqrt{\log_1 \f{ D_2(z_{\rm{small}}/\|z_{\rm{small}}\|_2)}{2^7 L}}\\ \le \f{8L}{D_1(\phi(z))} \sqrt{\log_1 \f{ D_1(\phi(z))}{2^8 L}},
 \end{multline}
 where the last inequality is obtained upon noting that $x \sqrt{\log_1(1/x)}$ is an increasing function for $x \in (0,e^{-1}]$ together with an application of Lemma \ref{l: D_2 to D_1}. If $z \in \text{Real}(\cZ)$ such that $D_1(\phi(z)) \in [\gD,2\gD]$ then recalling the definition of $\wt{\a}$, from \eqref{eq:d-D-1-bd} we see that
\[
d(\phi(z),\psi(z))=d(z_{\rm{small}}/\|z_{\rm{small}}\|_2) \le \wt{\a}/\gD,
\]
which in turn proves the claim \eqref{eq:real-partition}. We further claim that the lower bound on $\gD$ in \eqref{eq:real-partition} can be improved to
\[
\gD_0:= C_0 \rho^{-1} p^{-1/2},
\]
where $C_0:= \sqrt{{C}_{\ref{thm: large LCD for real part}}}/2$.
To see this we note that $\text{Real}(\cZ) \subset \text{Incomp}(M, \rho)$. Therefore for any $z \in \text{Real}(\cZ)$ we have
\[
\norm{\phi(z)}_\infty \le 2\f{\norm{z_{\rm{small}}}_\infty}{\norm{z_{\rm{small}}}_2} \le \frac{2}{\rho \sqrt{M}} = \f{2 \sqrt{p}}{\rho^{-1}\sqrt{{C}_{\ref{thm: large LCD for real part}}}},
\]
where the last step follows from our choice of $M$. Hence, using Lemma \ref{lem:D_1-lbd} we see that for any $z \in \text{Real}(\cZ)$ we must have $D_1(\phi(z)) \ge \gD_0$. This establishes that  the union in the \abbr{RHS} of \eqref{eq:real-partition} can be taken over all $\gD=2^k, \ \gD_0 \le \gD \le \exp(c' n/M)$. So using the union bound, we deduce that it is enough to show that
   \begin{align*}
     &\P \left(\exists z \in \wt{\cZ}(\gD):
     \ B^Dz=0, \ \text{and } \norm{B^D} \le (K+R) \sqrt{pn} \right) \le e^{-2 n}.
   \end{align*}
  for each such $\gD$.

To this end, using Proposition \ref{prop: Levy vector} we see that for any $w \in \wt{\cN}(\gD)$ we have
  \[
   \cL(B^D w,  \vep \sqrt{p (n-1)}\|w_{{\rm{small}}}\|_2 )  \le \left[ \ol{C}_{\ref{prop: Levy vector}} \left(\vep + \frac{1}{\sqrt{p} \gD} \right) \right]^{n-1}.
  \]
Now set
  \[
   \wt{\vep}_0:= 40 C_{\ref{prop: net in Z(D)}}(K+R) \frac{\wt{\alpha}}{\gD}.
  \]
Since the fact $\wt{\alpha} \ge L = (\delta_0p)^{-1/2}$ implies that $\wt{\vep}_0 \ge \frac{1}{\sqrt{p}\gD}$,   we obtain that for any $w \in \wt{\cN}(\gD)$,
  \begin{multline*}
  \P \left( \norm{B^Dw}_2 \le \f{\vep_0}{2}\|w_{\rm{small}}\|_2 \cdot \sqrt{p n} \right)
   \le \left[ \ol{C}_{\ref{prop: Levy vector}} \left(\wt{\vep}_0 + \frac{1}{\sqrt{p} \gD} \right) \right]^{n-1} \\
   \le \left( \wt{C} (K+R) \frac{\wt{\alpha}}{\gD} \right)^{n-1},
  \end{multline*}
for some constant $\wt{C}$. Hence, by the union bound and applying Proposition \ref{prop: net in Z(D)} we obtain
  \begin{multline*}
   \P \left(\exists w \in \wt{\cN}(\gD): \   \norm{B^Dw}_2 \le \f{\vep_0}{2}\|w_{\rm{small}}\|_2 \cdot \sqrt{p n} \right)\\
   \le |\wt{\cN}(\gD)| \cdot \left( \wt{C} (K+R) \frac{\wt{\alpha}}{\gD} \right)^{n-1}\\
   \le \left(C'(K+R) \frac{\wt{\alpha}}{\gD} \right)^{n-1}
      \left(\frac{n}{\rho M} \cdot  \frac{\gD}{\wt{\alpha}} \right)^{4 M} \left( \frac{\gD}{\sqrt{n}}+1 \right)^{n-M},  \end{multline*}
where $C'$ is some large constant. Next recalling the definitions of $\wt{\alpha}$ and $\gD_0$, using the facts that $\gD \ge \gD_0$, $L=(\delta_0 p)^{-1/2}$ and the function $f(x):= x \sqrt{\log_1(1/x)}$ is increasing for $x \in (0,e^{-1})$ we find that
  \beq\label{eq:wta/D-ubd}
  \f{\wt{\alpha}}{\gD} = \f{2^5 L}{\gD} \sqrt{{\log_1} \f{\gD}{2^5 L}} \le \f{2^5 L}{\gD_0} \sqrt{{\log_1} \f{\gD_0}{2^5 L}}=f\left(\f{2^5} {C_0\rho^{-1} \delta_0^{1/2}}\right).
  \eeq
Recalling the definition of $C_0$ and enlarging ${C}_{\ref{thm: large LCD for real part}}$ we therefore note from above that we can assume $\wt{C}(K+R)\wt{\alpha}/\gD <1$. This yields
\[
  \P \left(\exists w \in \wt{\cN}(\gD): \   \norm{B^Dw}_2 \le \f{\vep_0}{2}\|w_{\rm{small}}\|_2 \cdot \sqrt{p n} \right) \le \exp(-\wt{\Gamma}n),
\]
where
  \[
    \wt{\Gamma}
     :=
     - \left(1-\frac{M}{n} \right) \cdot \log  \left( \frac{C'(K+R) \wt{\alpha}}{\sqrt{n}}+\frac{C'(K+R) \wt{\alpha}}{\gD} \right)
     - \frac{4M}{n} \log   \left(\frac{n}{\rho M}  \cdot \frac{\gD}{\wt{\alpha}} \right).
  \]
We next show that $\wt{\Gamma} \ge 2$ which allows us to deduce that
\beq\label{eq:prob-bd-real-net}
  \P \left(\exists w \in \wt{\cN}(\gD): \   \norm{B^Dw}_2 \le \f{\vep_0}{2}\|w_{\rm{small}}\|_2 \cdot \sqrt{p n} \right) \le \exp(-2n).
\eeq
To prove that $\wt{\Gamma} \ge 2$, we recall that $ \frac{n}{M} \ge \frac{1}{\rho}$ and $L \le \wt{\a} \le \gD \le \exp(c' \frac{n}{M})$. Therefore
   \[
     \frac{4M}{n} \log   \left(\frac{n}{\rho M}  \cdot \frac{\gD}{\wt\a} \right) \le 10,
   \]
upon choosing $c'$ sufficiently small. Using the fact $M \le n/2$, this yields
   \[
    \wt{\Gamma}
     \ge
     - \frac{1}{2} \cdot \log  \left( \frac{C'(K+R) \wt{\a}}{\sqrt{n}}+\frac{C'(K+R) \wt\a}{\gD} \right)-10.
   \]
Recalling \eqref{eq:wta/D-ubd} we see that we may enlarge ${C}_{\ref{thm: large LCD for real part}}$ (and thus, the minimal value of $\gD$) further so that $\wt{C}(K+R)\wt{\alpha}/\gD <e^{-30}.$
Using the upper bound for $\gD$, we also note that
   \[
    \frac{\wt{\a}}{\sqrt{n}}
    \le \frac{2^5L \sqrt{\log_1 \frac{\gD}{2^5 L}}}{\sqrt{n}}
    \le \frac{2^5 L \sqrt{c' \frac{n}{M}}}{\sqrt{n}}
    =\frac{2^5 \sqrt{c'}}{\sqrt{\d_0 Mp}}
    \le \frac{2^5 \rho^2\sqrt{c'}}{\sqrt{{C}_{\ref{thm: large LCD for real part}} \d_0 }} \le e^{-30},
   \]
   where the second last inequality follows from our choice of $M$, and the last inequality results from enlarging ${C}_{\ref{thm: large LCD for real part}}$ once more. This completes the proof of the claim that $\wt{\Gamma} \ge 2$. Thus we have shown that \eqref{eq:prob-bd-real-net} holds.
The rest of the proof relies on the approximation of a general point of $\wt{\cZ}(\gD)$ by a point of the set $\wt{\cN}(\gD)$, and
is exactly the same as that of Proposition \ref{prop: kernel complex}. We leave the details to the reader. This completes the proof.
 \end{proof}

 \section{Proof of Theorem \ref{thm: smallest singular}} \label{sec: singular value}
 In this section our goal is to combine the results of previous sections and finish the proof of Theorem \ref{thm: smallest singular}. First let us state the following general result from which Theorem \ref{thm: smallest singular} follows.

 \begin{thm}  \label{thm: smallest singular + norm}
 Let ${A}_n$ be an $n \times n$ matrix with i.i.d.~entries $a_{i,j}= \delta_{i,j} \xi_{i,j}$, where $\{\delta_{i,j}\}$ are independent Bernoulli random variables taking value 1 with probability $p_n \in (0,1]$, and $\{\xi_{i,j}\}$ are i.i.d.~centered real-valued random variables with unit variance and finite fourth moment. Fix $K,R \ge 1$, and $r \in (0,1]$ and let $\Omega_K:= \{\|{{A}_n}\| \le K \sqrt{n p_n}\}$. Assume that $D_n$ is a diagonal matrix such that $ \norm{D_n} \le R \sqrt{np_n}$ and $ {\mathrm{Im}}(D_n)= r'\sqrt{np_n} I_n$ with $|r'| \in [r,1]$. Then there exists constants $0< c_{\ref{thm: smallest singular + norm}}, {\bar{c}_{\ref{thm: smallest singular + norm}}}, c'_{\ref{thm: smallest singular + norm}}, C_{\ref{thm: smallest singular + norm}}, C'_{\ref{thm: smallest singular + norm}}, \ol{C}_{\ref{thm: smallest singular + norm}} < \infty$, depending only on $K, R, r$, and the fourth moment of $\{\xi_{i,j}\}$, such that for any $\vep>0$ we have the following:

 \begin{enumerate}[(i)]

\item If
 \beq
p_n \ge  \frac{\ol{C}_{\ref{thm: smallest singular + norm}} \log n}{n}  ,\notag
 \eeq
 then
 \begin{multline*}
  \P \bigg( \Big\{s_{\min}({A}_n + D_n) \le c_{\ref{thm: smallest singular + norm}} \vep \exp \left(-C_{\ref{thm: smallest singular + norm}} \frac{\log (1/p_n)}{\log (np_n)} \right) \sqrt{\frac{p_n}{n}} \Big\} \bigcap   \Omega_K \bigg)\\
  \le \vep +  \f{C'_{\ref{thm: smallest singular + norm}}}{\sqrt{np_n}}. 
 \end{multline*}

 \item Additionally, if
 \beq\label{eq:p_n-assumption-ss+norm}
 \log(1/p_n) < {\bar{c}_{\ref{thm: smallest singular + norm}}} (\log np_n)^2,
 \eeq
 then
  \begin{multline*}
  \P \bigg( \Big\{s_{\min}({A}_n + D_n) \le c_{\ref{thm: smallest singular + norm}} \vep \exp \left(-C_{\ref{thm: smallest singular + norm}} \frac{\log (1/p_n)}{\log (np_n)} \right) \sqrt{\frac{p_n}{n}} \Big\} \bigcap   \Omega_K \bigg)\\
  \le \vep +  \exp(-c'_{\ref{thm: smallest singular + norm}}\sqrt{np_n}).
 \end{multline*}
  \end{enumerate}
\end{thm}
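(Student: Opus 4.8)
The plan is to run the Rudelson--Vershynin invertibility scheme of \cite{BR}, feeding in the structural information on $\mathrm{Ker}(B^D)$ from Sections \ref{sec:non-dom real part}--\ref{sec: net real} in order to get the strong probability bound of part (ii). Set $\rho=(\wt C_{\ref{thm: large LCD for real part}}(K+R))^{-\ell_0-6}$, $c_0=(C_{\ref{p: dominated and compressible}}(K+R))^{-4}$, and recall $M=C_{\ref{thm: large LCD for real part}}\rho^{-4}p^{-1}$; after the standard reduction we may assume $p\le cR^{-2}$. Observe that $\rho^2=\exp(-\Theta(\ell_0))=\exp(-\Theta(\log(1/p)/\log(np)))$, so that for $C_{\ref{thm: smallest singular + norm}}$ large the threshold $\vep_{\min}$ in the statement has the shape $\vep'\,\rho^2\sqrt{p/n}$ up to a multiplicative factor that I keep for the $\vep$-parameter below. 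Since $s_{\min}(A_n+D_n)=\inf_{z\in S_\C^{n-1}}\|(A_n+D_n)z\|_2$, I split $S_\C^{n-1}$ into $\mathrm{Comp}(M_0,\rho)\cup\mathrm{Dom}(M_0,c_0)$ and $\mathrm{Incomp}(M_0,\rho)$, with $M_0=M$ for part (ii) and $M_0$ a small fixed multiple of $n$ for part (i). Proposition \ref{p: dominated and compressible} then shows that on $\Omega_K$, off an event of probability $\exp(-\bar c_{\ref{p: dominated and compressible}}np)$, the infimum over the compressible and dominated vectors is at least $c'_{\ref{p: dominated and compressible}}(K+R)\rho\sqrt{np}$, which dwarfs $\vep_{\min}$; so that piece contributes nothing.

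For the incompressible part I apply Lemma \ref{l: via distance} (in its version intersected with $\Omega_K$), with parameters $\rho$ and $M_0$, reducing the estimate to bounding $\P(\{\dist(A_{n,j}^D,H_{n,j})\le\rho\sqrt p\,\vep'\}\cap\Omega_K)$ for every column index $j$. Conditioning on the columns $\{A_{n,i}^D:i\ne j\}$ fixes $H_{n,j}$ and a unit normal $z=z_j$ with $\dist(A_{n,j}^D,H_{n,j})\ge|\langle A_{n,j}^D,z_j\rangle|$, while $A_{n,j}^D=A_{n,j}+D_ne_j$ stays independent of this conditioning; Proposition \ref{p: dominated and compressible} applied to the conditioned matrix shows that, off probability $\exp(-\bar c np)$, the vector $z_j$ is incompressible and non-dominated. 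For part (i), $z_j$ non-dominated with $M_0$ of order $n$ makes it spread on the scale $1/\sqrt n$, so the Berry-Ess\'{e}en bound of \cite[Theorem 2.2.17]{St}, applied to the independent sum $\sum_k\xi_{k,j}\delta_{k,j}\overline{z_j(k)}$, yields $\cL(\langle A_{n,j}^D,z_j\rangle,\,\rho\sqrt p\,\vep')\le C\vep'+C'/\sqrt{np}$; dividing by $M_0$, summing over $j$, and renaming $\vep'$ gives the bound $\vep+C'_{\ref{thm: smallest singular + norm}}/\sqrt{np}$. For part (ii) I use the structural results instead: the extra hypothesis \eqref{eq:p_n-assumption-ss+norm} forces $\ell_0\le c\log(np)$, hence $\rho\ge(np)^{-c''}$ and $n/M\ge c'''\sqrt{np}$, so Theorem \ref{thm: kernel complex} (if $z_j\in\mathrm{Compl}(\cZ)$) and Proposition \ref{prop: kernel real} (if $z_j\in\mathrm{Real}(\cZ)$), together with Proposition \ref{thm: large LCD for real part}, guarantee that, off probability $\exp(-\bar c np)$, the vector $z_j$ has non-dominated real part and the relevant (two- or one-dimensional) \abbr{LCD} of $(z_j)_{\mathrm{small}}/\|(z_j)_{\mathrm{small}}\|_2$ exceeds $\exp(c'n/M)\ge\exp(c''\sqrt{np})$. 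Feeding this into Proposition \ref{prop: Levy scalar}---inequality \eqref{eq:ineq_D2} in the genuinely complex case, and \eqref{eq:ineq_D1} combined with Lemma \ref{lem:D_1-lbd} (to bound the real--imaginary de-correlation from below via the non-domination of the real part) in the essentially real case---gives $\cL(\langle A_{n,j}^D,z_j\rangle,\,\rho\sqrt p\,\vep')\le C\vep'+\exp(-c'\sqrt{np})$; choosing $\vep'$ of order $\vep M/n$ to absorb the $1/M$ prefactor of Lemma \ref{l: via distance} and summing over $j$ produces $\vep+\exp(-c'_{\ref{thm: smallest singular + norm}}\sqrt{np})$, which is the assertion. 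Passing from this theorem to Theorem \ref{thm: smallest singular} is then immediate, once one adds that $\P(\Omega_K^c)$ is negligible---for sub-Gaussian entries this is a result of \cite{BR}.

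The hard part will be the constant-chasing in part (ii): one must verify that the single factor $\exp(-C_{\ref{thm: smallest singular + norm}}\log(1/p)/\log(np))$ can simultaneously absorb the $\rho^2$ coming from Lemma \ref{l: via distance}, the factor $M/n\sim\rho^{-4}p^{-1}/n$ from the rescaling $\vep'\sim\vep M/n$, and the $\rho$ hidden in the threshold $\rho\sqrt p\,\vep'$ passed to Proposition \ref{prop: Levy scalar}, all while keeping condition \eqref{eq: min p-1} (needed for the structural results) consistent with \eqref{eq:p_n-assumption-ss+norm}. The only genuinely delicate point beyond bookkeeping is that a genuinely complex kernel vector may have a very small real--imaginary de-correlation $d$, so that the quadratic Lévy estimate \eqref{eq:ineq_D2} alone does not suffice; it is precisely here that one falls back on the one-dimensional bound \eqref{eq:ineq_D1}, made available by the non-domination of the real part proved in Proposition \ref{thm: large LCD for real part}.
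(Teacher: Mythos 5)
Your overall scheme (split off \(\mathrm{Comp}\cup\mathrm{Dom}\) via Proposition \ref{p: dominated and compressible}, reduce the incompressible part to a distance estimate via Lemma \ref{l: via distance}, Berry--Ess\'een for part (i), and the kernel-structure results of Sections \ref{sec:non-dom real part}--\ref{sec: net real} for part (ii)) is exactly the paper's, but your last step in part (ii) has a genuine gap. Once you condition on \(B^D\), the normal vector \(z_j\) is fixed and you know only that \(D_2((z_j)_{\mathrm{small}}/\|(z_j)_{\mathrm{small}}\|_2)\ge \exp(c'n/M)\); you have \emph{no} usable lower bound on its real--imaginary de-correlation \(d\) (even for \(z_j\in\mathrm{Compl}(\cZ)\) the defining inequality \eqref{eq:compl-dfn} only gives \(d\gtrsim L\sqrt{\cdot}/D_2\), which is exponentially small), so the quadratic bound \eqref{eq:ineq_D2}, whose prefactor is \(C/d\), gives nothing of the form \(C\vep\). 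Your fallback is also too weak: non-domination of the real part (Proposition \ref{thm: large LCD for real part}) combined with Lemma \ref{lem:D_1-lbd} yields only \(D_1(\Re)\gtrsim (\rho\sqrt p)^{-1}\), a \emph{polynomial} bound, so \eqref{eq:ineq_D1} returns \(\cL\lesssim \vep+\rho\), and under \eqref{eq:p_n-assumption-ss+norm} \(\rho\) is only polynomially small in \(np\) --- nowhere near \(\exp(-c'\sqrt{np})\). The missing idea is the modified one-dimensional \abbr{LCD} \(\wh D_1\) of Definition \ref{dfn:one-lcd-modify-2}, chosen with the same constants as \(D_2\), for which Lemma \ref{lem:D_1-wh-D_2} gives \(\wh D_1(\Re z)\ge D_2(z)\) by restricting \(\theta\) to the horizontal axis. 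This transfers the exponential lower bound on \(D_2\) (valid for \emph{all} kernel vectors, genuinely complex or essentially real, by Theorem \ref{thm: kernel complex} and Proposition \ref{prop: kernel real}) to the real part, and then \eqref{eq: Levy real} (with \(\wh D_1\) in place of \(D_1\), after adjusting the constant) yields \(\cL\le\bar C(\vep+p^{-1/2}\exp(-c''np\rho^4))\le\bar C\vep+\exp(-c''\sqrt{np})\). No case split between \eqref{eq:ineq_D2} and \eqref{eq:ineq_D1} is needed at this stage; the one-dimensional bound is used uniformly. (Note the ordinary \(D_1\) of \eqref{eq:D_1 define} does \emph{not} dominate \(D_2\), because its admissibility threshold is larger --- this is precisely why a new definition is required.)

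A secondary, but real, bookkeeping problem: you apply Lemma \ref{l: via distance} with \(M_0=M=C\rho^{-4}p^{-1}\) in part (ii) and then rescale \(\vep'\sim\vep M/n\) to absorb the \(n/M\) prefactor. That rescaling multiplies the \(s_{\min}\) threshold by \(M/n\sim\rho^{-4}/(pn)\), and this factor cannot be hidden inside \(\exp(-C_{\ref{thm: smallest singular + norm}}\log(1/p_n)/\log(np_n))\) (for instance, for constant \(p\) that exponential is of order one while \(M/n\sim 1/n\)), so you would prove a strictly weaker threshold than the one stated. The paper avoids this by keeping \(M\propto n\) in Lemma \ref{l: via distance} (so the prefactor is \(O(1)\)) and invoking the scale \(M_0=\rho^{-4}p^{-1}\) only inside the kernel-structure theorems that constrain the normal vector; you should do the same.
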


\vskip10pt
The proof of part (i) of Theorem \ref{thm: smallest singular + norm} follows from Berry-Ess\'{e}en theorem and Proposition \ref{p: dominated and compressible}. The proof of part (ii) uses results from Section \ref{sec: kernel complex} and Section \ref{sec: net real}. Recall that in Section \ref{sec: kernel complex} and Section \ref{sec: net real} we have shown that there does not exist any vector in $\text{Ker}(B^D)$ with a sub-exponential two-dimensional \abbr{LCD}, with high probability.
To prove the second part of Theorem \ref{thm: smallest singular + norm}, we  use \abbr{LCD} based bounds on L\'{e}vy concentration function. At this moment, we know that with high probability, any vector in  in $\text{Ker}(B^D)$ has an exponential two-dimensional \abbr{LCD}. However,  we do not have any control the real-imaginary de-correlation of this vector. This means that we cannot use the bound \eqref{eq: Levy complex}, and  have to rely on \eqref{eq: Levy real}.
 
 To apply \eqref{eq: Levy real}, we therefore need to show that any vector with a large two-dimensional \abbr{LCD} must also admit a large value of one-dimensional \abbr{LCD}. This calls for another modification to the definition of the one-dimensional \abbr{LCD}.

\begin{dfn}\label{dfn:one-lcd-modify-2}
For a non-zero vector $x \in \R^m$, we set
 \beq
    \wh{D}_1(x):= \inf \left\{ \theta>0: \  \text{dist}(\theta x, \Z^{m}) < L \sqrt{\log_1 \frac{\norm{\theta x}_2}{2^8 L}} \right\}. \notag
 \eeq
 \end{dfn}

 \vskip10pt
 The advantage of working with this one-dimensional \abbr{LCD} $\wh{D}_1(\cdot)$ can be seen from the following result.

 \begin{lem}\label{lem:D_1-wh-D_2}
 For $z :=x + {\rm{i}} y \in \C^m$ we have
 $
 \wh{D}_1(x) \ge D_2(z).
 $
 \end{lem}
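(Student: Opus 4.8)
The plan is to exploit the fact that the two-dimensional \abbr{LCD} $D_2(z)=D_2(V(z))$ is an infimum over \emph{all} vectors $\theta\in\R^2$, whereas $\wh{D}_1(x)$ is an infimum over the one-parameter subfamily corresponding to rescaling $x$ alone; embedding the latter into the former will give the inequality with no numerical loss.

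Concretely, recall that $V:=V(z)= \left(\begin{smallmatrix} x^{\sf T} \\ y^{\sf T} \end{smallmatrix} \right)$, so that for $\theta=(\theta_1,\theta_2)\in\R^2$ one has $V^{\sf T}\theta=\theta_1 x+\theta_2 y$; in particular, taking $\theta_2=0$ gives $V^{\sf T}(\theta_1,0)^{\sf T}=\theta_1 x$ and $\norm{(\theta_1,0)}_2=|\theta_1|$. If $\wh{D}_1(x)=+\infty$ there is nothing to prove, so assume the infimum in Definition \ref{dfn:one-lcd-modify-2} is taken over a nonempty set and let $\vartheta>0$ be any element of it, i.e.
\[
\mathrm{dist}(\vartheta x,\Z^m)<L\sqrt{\log_1 \frac{\norm{\vartheta x}_2}{2^8 L}}.
\]
Since $x\ne 0$, we have $\norm{\vartheta x}_2=\vartheta\norm{x}_2>0$, so $\log_1(\cdot)$ is well defined. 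Plugging $\theta:=(\vartheta,0)$ into the defining condition of $D_2(V)$ in \eqref{eq:D_2_define}, the previous display reads precisely $\mathrm{dist}(V^{\sf T}\theta,\Z^m)<L\sqrt{\log_1(\norm{V^{\sf T}\theta}_2/(2^8L))}$; here it is crucial that the power of $2$ inside the logarithm is the same, namely $2^8$, in Definition \ref{dfn:one-lcd-modify-2} and in \eqref{eq:D_2_define}. Hence $(\vartheta,0)$ is admissible in the infimum defining $D_2(V)$, so $D_2(z)=D_2(V)\le\norm{(\vartheta,0)}_2=\vartheta$. Taking the infimum over all admissible $\vartheta$ yields $D_2(z)\le\wh{D}_1(x)$, as claimed.

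There is essentially no genuine obstacle here. The only points requiring a word of care are: (a) the infima need not be attained, which is handled by first establishing the pointwise comparison $D_2(z)\le\vartheta$ and only afterwards passing to the infimum; and (b) one must work with the \emph{modified} one-dimensional \abbr{LCD} $\wh{D}_1$, rather than the $D_1$ of Definition \ref{dfn:two-lcd-dfn}, precisely so that the constant $2^8 L$ appears in both inequalities and the embedding $\theta\mapsto(\theta,0)$ incurs no loss (cf.\ Remark \ref{rmk:power-of-2}).
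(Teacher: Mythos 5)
Your proof is correct and is essentially the same as the paper's: both arguments observe that any admissible $\vartheta$ for $\wh{D}_1(x)$ gives an admissible $\theta=(\vartheta,0)$ in the infimum defining $D_2(V)$, using that the factor $2^8L$ matches in the two definitions. Your added care about non-attained infima and the role of the modified $\wh{D}_1$ is fine but not a departure from the paper's argument.
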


 \begin{proof}
 The proof follows by simply noting that if there exists a $\theta' >0$ such that
 \[
 \text{dist}(\theta' x, \Z^{m}) < L \sqrt{\log_1 \frac{\norm{\theta' x}_2}{2^8 L}},
 \]
 then for $\theta=(\theta', 0)$ we also have that
 \[
 \text{dist}( V^{\sf T} \theta, \Z^m) < L \sqrt{\log_1 \frac{\norm{V^{\sf T} \theta}_2}{{2^8} L}}.
 \]
 \end{proof}




Now we are ready to prove Theorem \ref{thm: smallest singular + norm}.

\vskip10pt

\begin{proof}[Proof of Theorem \ref{thm: smallest singular + norm}]
 The proof is similar to that of \cite[Theorem 1.1]{BR}. We include it for completeness. Note that for any $\vartheta>0$,
 \begin{align}\label{eq:inf_split}
 & \P\Big( \{s_{\min}({A}_n+D_n) \le \vartheta\}  \cap  \Omega_K \Big) \notag\\
  \le& \,  \P\Big( \Big\{\inf_{x \in V^c} \norm{({A}_n+D_n)x}_2 \le \vartheta \Big\}  \cap\Omega_K \Big)
  + \P\Big( \Big\{\inf_{x \in V} \norm{({A}_n+D_n)x}_2 \le \vartheta  \Big\} \cap  \Omega_K \Big),
 \end{align}
 where
  \[
  V:=S_\C^{n-1} \setminus \Big( \text{Comp}(c_{\ref{p: dominated and compressible}}n, \rho) \cup \text{Dom}(c_{\ref{p: dominated and compressible}}n, (C_{\ref{p: dominated and compressible}}(K+R))^{-4}) \Big),
 \]
 and $\rho$ as in Proposition \ref{p: dominated and compressible}. Using Proposition \ref{p: dominated and compressible} with $M=c_{\ref{p: dominated and compressible}}n$,  we obtain that
 \[
   \P\Big( \inf_{x \in V^c} \norm{({A}_n+D_n)x}_2 \le {c}'_{\ref{p: dominated and compressible}}(K+R) \rho \sqrt{np},  \,  \norm{{A}_n} \le K \sqrt{pn} \Big)
    \le \exp(-\ol{c}_{\ref{p: dominated and compressible}}np).
 \]
Therefore it only remains to find an upper bound on the second term in the \abbr{RHS} of \eqref{eq:inf_split}. Applying Lemma \ref{l: via distance} we see that to find an upper bound of
\[
 \P\Big( \Big\{\inf_{x \in V} \norm{({A}_n+D_n)x}_2 \le \vep \rho^2 \sqrt{\frac{p}{n}}\Big\} \cap \Omega_K\Big)
 \]
 is enough to find the same for
 \[\P \Big( \Big\{\dist({A}_{n,j},H_{n,j}) \le \rho \sqrt{p} \vep\Big\} \cap \Omega_K \Big) \text{ for a fixed } j,\]
 where ${A}_{n,j}$ are columns of $({A}_n+D_n)$. As these estimates are the same for different $j$'s we only need to consider the case  $j=1$. Recall that $B^D$ is the matrix whose rows are the columns ${A}_{n,2} \etc {A}_{n,n}$.
Therefore
 \[
    \dist({A}_{n,1},H_{n,1}) \ge |\pr{v}{{A}_{n,1}}|,
 \]
for any $v \in S_\C^{n-1} \cap \text{Ker}(B^D)$. Thus it is enough to find an upper bound on
 \beq\label{eq:normal-inner-product}
   \P\Big(\Big\{\exists v \in \cZ\cap {\rm{Ker}}(B^D):  |\pr{{A}_{n,1}}{v}| \le \rho\vep \sqrt{p}\Big\}\cap  \Omega_K \Big).
 \eeq
First we obtain a bound on \eqref{eq:normal-inner-product} under the assumption of part (i). This follows from a simple Berry-Ess\'{e}en bound.

Since $v \in S_\C^{n-1}\cap {\rm{Ker}}(B^D)$ using Proposition \ref{p: dominated and compressible} again, we may assume that $v \notin \text{Comp}(c_{\ref{p: dominated and compressible}}n, \rho) \cup \text{Dom}(c_{\ref{p: dominated and compressible}}n, (C_{\ref{p: dominated and compressible}}(K+R))^{-4})$. 
Let $J=\supp(  v_{[c_{\ref{p: dominated and compressible}}n+1,n]})$. Then
\[
 \P \left(|\pr{{A}_{n,1}}{v}| \le \rho\vep \sqrt{p} \right)
\le \cL \left( \sum_{i\in J}v_i \d_i \xi_i, \rho \sqrt{p}\vep \right).
\]
Since $v \notin \text{Comp}(c_{\ref{p: dominated and compressible}}n, \rho) \cup \text{Dom}(c_{\ref{p: dominated and compressible}}n, (C_{\ref{p: dominated and compressible}}(K+R))^{-4})$ we have
\[
\|v_{J}\|_\infty \le \f{C_{\ref{p: dominated and compressible}}(K+R))^{4}}{\sqrt{c_{\ref{p: dominated and compressible}}n}}\|v_{J}\|_2 \quad \text{ and } \quad \|v_{J}\|_2 \ge \rho.
\]
The Berry--Ess\'{e}en Theorem (see \cite[Theorem 2.2.17]{St}) then yields that
\begin{equation}\label{eq:B-E-bd}
 \cL \left( \sum_{i\in J} v_i \d_i \xi_i, \rho \sqrt{p}\vep \right)
 \le C \vep+ C' \frac{p \norm{v_J}_3^3}{p^{3/2} \norm{v_J}_2^3} \le  C \vep+ C' \frac{ \norm{v_J}_{\infty}}{p^{1/2} \norm{v_J}_2} \le  C \vep+ \frac{C''}{\sqrt{pn}},
\end{equation}
where $C$ is an absolute constant, the constant $C'$ depends only on the fourth moment of $\{\xi_{i,j}\}$, and $$C''= \f{C_{\ref{p: dominated and compressible}}(K+R))^{4}}{\sqrt{c_{\ref{p: dominated and compressible}}}}\cdot C'.$$
Replacing $\vep$ by $\vep/C$ finishes the proof of part (i) of the theorem.

It  remains to prove part (ii). As seen above, we only need to obtain a bound on \eqref{eq:normal-inner-product} under the stronger assumption of $p_n$ of part (ii). To this end, we apply Proposition \ref{p: dominated and compressible} again. Setting $M_0={{C}_{\ref{thm: large LCD for real part}}  \rho^{-4} p^{-1}}$ from Proposition \ref{p: dominated and compressible} we find that it is enough to bound
\beq\label{eq:normal-inner-product-1}
   \P\Big(\Big\{\exists v \in V_0\cap {\rm{Ker}}(B^D):  |\pr{{A}_{n,1}}{v}| \le \rho\vep \sqrt{p}\Big\}\cap  \Omega_K \Big),
 \eeq
 where
 \[
 V_0:=S_\C^{n-1} \setminus \Big( \text{Comp}(M_0, \rho) \cup \text{Dom}(M_0, (C_{\ref{p: dominated and compressible}}(K+R))^{-4}) \Big).
 \]
 Further denote
 \[
  V_1:=\Big\{w \in V_0: D_{2}\left({w_{\text{small}}}/{\|w_{\text{small}}\|_2}\right) \le \exp (c' n/M_0) \Big\} \quad \text{and} \quad V_2:=V_0\setminus V_1,
 \]
 where $c':=\min\{c'_{\ref{thm: kernel complex}}, c'_{\ref{prop: kernel real}}\}$. We will show that
 \begin{align}\label{eq:normal-vector-not-small-lcd}
   \P\Big(\Big\{\exists v \in V_1\cap {\rm{Ker}}(B^D)\Big\}\cap  \Omega_K \Big) \le  \exp(-\bar{c}np),
 \end{align}
 for some $\bar{c} >0$. Since ${\rm{Ker}}(B^D)$ is invariant under rotation, recalling the definition of the set $\cZ$ {(see \eqref{eq:cZ-dfn})}, we see that it is enough to show that
\begin{multline*}
\P\Big(\Big\{\exists v \in \cZ \cap {\rm{Ker}}(B^D): D_{2}\left({v_{\text{small}}}/{\norm{v_{\text{small}}}_2}\right) \le \exp (c' n/M_0)\Big\}\cap  \Omega_K \Big) \\
\le \exp(-\bar{c}np).
\end{multline*}
Note that, if $p$ satisfies \eqref{eq:p_n-assumption-ss+norm} with a sufficiently small $\bar{c}_{\ref{thm: smallest singular + norm}}$, then it also satisfies the assumption \eqref{eq:p-rho-gc}. So we can apply Theorem \ref{thm: kernel complex}. Applying Theorem \ref{thm: kernel complex} and Proposition \ref{prop: kernel real} we then immediately obtain our claim \eqref{eq:normal-vector-not-small-lcd}. Therefore it only remains to find an upper bound on
\begin{multline}\label{eq:normal-inner-product-2}
  \P\Big(\Big\{\exists v \in \cZ \cap {\rm{Ker}}(B^D): D_{2}\left({v_{\text{small}}}/{\norm{v_{\text{small}}}_2}\right) > \exp (c' n/M_0) \text{ and } \\
   |\pr{{A}_{n,1}}{v}| \le \rho\vep \sqrt{p}\Big\}\cap  \Omega_K \Big).
 \end{multline}
To obtain the desired bound we condition on $B^D$ which fixes the vector $v$ for which
\[
D_{2}\left({v_{\text{small}}}/{\norm{v_{\text{small}}}_2}\right) > \exp (c' n/M_0).
\]
Lemma \ref{lem:D_1-wh-D_2} implies that
\[
\wh{D}_{1}\left(\phi(v)\right) > \exp (c' n/M_0),
\]
where we recall that $v_{\rm{small}}/\|v_{\rm{small}}\|_2=\phi(v) + {\rm{i}} \psi(v)$.
Inequality \eqref{eq: Levy real} holds with $\wh{D}_1(\cdot)$ instead of $D_1(\cdot)$ if the constant $\ol{C}_{\ref{prop: Levy vector}}$ is appropriately adjusted.
Recalling the definition of $M_0$ we deduce that
 \begin{align*}
 \P ( |\pr{{A}_{n,1}}{v}| \le \vep \rho \sqrt{p})
  &\le\bar{C} \left( \vep+\frac{1}{\sqrt{p} \wh{D}_1(\phi(v))} \right)   \le  \bar{C} \left( \vep+\frac{1}{\sqrt{p} } \exp (-c''np \rho^4) \right),
 \end{align*}
 for some constants $\bar{C}$ and $c''$. Choosing ${\bar{c}_{\ref{thm: smallest singular + norm}}}$ sufficiently small and recalling the definition of $\rho$ we further deduce that
 \[
 \frac{1}{\sqrt{p} } \exp (-c''np \rho^4) \le \exp(-c''\sqrt{np}).
 \]
 Therefore replacing $\vep$ by $\vep/\bar{C}$ we conclude that \eqref{eq:normal-inner-product-2} is bounded by
 \[
 \vep+ \bar{C}\exp(-c''\sqrt{np}).
 \]
 This completes the proof of the theorem.
\end{proof}


\begin{proof}[Proof of Theorem \ref{thm: smallest singular}]
The proof immediately follows from Theorem \ref{thm: smallest singular + norm}, \cite[Theorem 1.7]{BR}, and the triangle inequality.
\end{proof}

\begin{rmk}\label{rmk:p-ass-explain}
{From the proof of Theorem \ref{thm: smallest singular + norm} we note that the assumption \eqref{p:assumption-as} (equivalently \eqref{eq:p_n-assumption-ss+norm}) was needed to show that the assumption \eqref{eq:p-rho-gc} holds. From \cite[Proposition 3.1]{BR} we have $\rho=\exp(-C\log (1/p)/\log(np))$, for some large $C$. If one can improve the conclusion of \cite[Proposition 3.1]{BR} to accommodate $\rho=\Omega(1)$ then it is obvious that \eqref{eq:p-rho-gc} holds without the assumption \eqref{p:assumption-as}, and therefore Theorem \ref{thm:sparse_general}(ii) can be extended without any extra assumption.}
\end{rmk}

\section{Intermediate Singular Values}\label{sec: intermediate singular values}
The goal of this short section is to prove Theorem \ref{thm:intermed-sing} which shows that there are not too many singular values of the matrix $\f{1}{\sqrt{np}} A_n - w I_n$ near zero. In proving Theorem \ref{thm:intermed-sing}, we employ the same strategy as in \cite{bcc-generator, tao_vu, wood}. Namely, we first show that the distance of any row of $A_n$ from any given subspace of not very large dimension cannot be too small with  large probability.


\begin{lem}\label{lem:dist-conc-bd}
Let ${\bm a}:= (\xi_{i}\delta_{i})_{i=1}^n$ be an $n$-dimensional vector where $\{\xi_i\}_{i=1}^n$ are i.i.d.~with zero mean and unit variance and $\{\delta\}_{i=1}^n$ are i.i.d.~$\dBer(p)$. Let $\psi: \N \mapsto \N$ be such that $\psi(n) \ra \infty$ and $\psi(n) < n$. Then there exists a positive finite constant $c_{\ref{lem:dist-conc-bd}}$\footnote{the constant $c_{\ref{lem:dist-conc-bd}}$ may depend on the tail of the distribution of $\{\xi_i\}_{i=1}^n$} such that  for every sub-space $H$ of $\C^n$ with $1 \le \dim(H) \le n- \psi(n)$, we have
\[
\P\left( \dist({\bm a}, H) \le c_{\ref{lem:dist-conc-bd}} \sqrt{p(n- \dim(H))}\right) \le \exp(-c_{\ref{lem:dist-conc-bd}}p \psi(n)) + \exp(-c_{\ref{lem:dist-conc-bd}} \psi^2(n)/n).
\]
\end{lem}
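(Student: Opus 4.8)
The plan is to rewrite the distance as the norm of an orthogonal projection and then to separate the two sources of randomness: the sparsity pattern $\delta=(\delta_i)_{i=1}^n$ and the magnitudes $\xi=(\xi_i)_{i=1}^n$. \emph{Reformulation.} Let $P:=P_{H^\perp}$ denote the orthogonal projection of $\C^n$ onto $H^\perp$; it is Hermitian, idempotent, of rank $r:=n-\dim H\ge\psi(n)$, so that $0\le P_{ii}\le 1$ for every $i$ and $\sum_{i=1}^n P_{ii}=r$. Then $\dist({\bm a},H)=\|P{\bm a}\|_2$, and it is enough to show that $\|P{\bm a}\|_2^2\ge c\,pr$ outside an event of the asserted probability. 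Writing $S:=\{i:\delta_i=1\}$, so that ${\bm a}=\sum_{i\in S}\xi_i e_i$, and letting $\iota_S\colon\C^S\hookrightarrow\C^n$ be the coordinate embedding, we get $P{\bm a}=B\xi_S$ where $B:=P\iota_S$ is a contraction, $\|B\|\le\|P\|=1$, with $\|B\|_{\mathrm{HS}}^2=\sum_{i\in S}\|Pe_i\|_2^2=\sum_{i\in S}P_{ii}$.

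\emph{The sparsity pattern.} The quantity $\sum_{i\in S}P_{ii}=\sum_{i=1}^n\delta_i P_{ii}$ is a sum of independent random variables in $[0,1]$ with mean $p\sum_i P_{ii}=pr$, so Bernstein's inequality yields $\P\!\big(\sum_{i\in S}P_{ii}<\tfrac12 pr\big)\le\exp(-c\,pr)\le\exp(-c\,p\,\psi(n))$, using $r\ge\psi(n)$. Intersecting also with $\{|S|\le 2pn\}$, whose complement has probability at most $\exp(-c\,pn)\le\exp(-c\,p\,\psi(n))$, we obtain a good event $\mathcal E$ for $\delta$ with $\P(\mathcal E^{c})\le\exp(-c\,p\,\psi(n))$, on which $\|B\|_{\mathrm{HS}}^2\ge\tfrac12 pr\ge\tfrac12 p\,\psi(n)$ and $\|B\|\le1$. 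This accounts for the first term in the bound.

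\emph{The magnitudes.} Condition on a realisation of $\delta$ lying in $\mathcal E$, so $B$ and $h:=\|B\|_{\mathrm{HS}}^2$ are fixed while $\xi_S$ is still a vector of $|S|$ i.i.d.\ coordinates of zero mean and unit variance, independent of $\delta$. One has $\E\|B\xi_S\|_2^2=h$, and $v\mapsto\|Bv\|_2$ is $\|B\|$-Lipschitz; a concentration estimate for the quadratic form $\xi_S^{*}(B^{*}B)\xi_S$ about its mean $h$ then gives $\|B\xi_S\|_2^2\ge\tfrac14 h$ off a conditional event of probability at most $\exp(-c\,\psi^2(n)/n)$. This is the step that invokes the tail of $\xi_i$ mentioned in the footnote: one first truncates the $\xi_i$ at a level comparable to $\psi(n)/n$, producing bounded (hence sub-Gaussian) coordinates, applies a Hanson--Wright bound to the truncated quadratic form, and absorbs the truncation error — whose squared $\ell_2$ norm is, on $\mathcal E$, of order $|S|$ times a small tail quantity — into $h$, which is of order $pr$. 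Undoing the reduction and using $h\ge\tfrac12 pr$, we conclude that $\dist({\bm a},H)^2\ge\tfrac18 pr=\tfrac18 p(n-\dim H)$ except on an event of probability at most $\exp(-c\,p\,\psi(n))+\exp(-c\,\psi^2(n)/n)$, and relabelling constants completes the proof.

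\emph{The main obstacle.} The hard regime is $\dim H$ comparable to $n$: there $|S|\approx pn$ can be far smaller than $\dim H$, so one cannot simply restrict ${\bm a}$ to its random support $S$ and invoke a distance-to-subspace lemma for genuinely i.i.d.\ vectors — the projection of $H$ onto $\C^S$ may fill all of $\C^S$, making the distance vanish. This is exactly why the contraction/quadratic-form description of the distance, together with the diagonal-mass estimate $\sum_{i\in S}P_{ii}\ge\tfrac12 pr$ of the middle step, becomes essential; and the genuinely delicate point is upgrading the merely polynomial (Chebyshev) control of $\|B\xi_S\|_2^2$ to an exponential bound under the minimal moment hypotheses, which is what dictates the truncation scale $\psi(n)/n$ and produces the second term $\exp(-c\,\psi^2(n)/n)$.
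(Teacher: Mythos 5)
Your overall skeleton is the right one, and in fact it is essentially the argument behind the result the paper defers to (the paper gives no proof of Lemma \ref{lem:dist-conc-bd}, citing \cite[Lemma 3.5]{bcc-generator}): write $\dist({\bm a},H)=\norm{P{\bm a}}_2=\norm{B\xi_S}_2$ with $B=P\iota_S$, use Bernstein/Chernoff over the sparsity pattern to get $\sum_{i\in S}P_{ii}\ge \tfrac12 pr$ and $|S|\le 2pn$ off an event of probability $\exp(-cp\psi(n))$, and then a conditional concentration step in the $\xi$'s. The gap is in that conditional step, which is exactly where the second error term $\exp(-c\psi^2(n)/n)$ and the tail dependence must be produced. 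First, the truncation level ``comparable to $\psi(n)/n$'' cannot be right: since $\psi(n)<n$ this level is below $1$, so $\E[\xi^2\mathbf{1}_{|\xi|\le \psi(n)/n}]\le(\psi(n)/n)^2$ and the truncated quadratic form no longer has conditional mean comparable to $h\ge \tfrac12 pr$; the whole comparison collapses. Whatever growing level $K_n$ you actually intend, you never compute the Hanson--Wright exponent for variables bounded by $K_n$ (it degrades like $h/K_n^4$, resp.\ $h/K_n^2$) and verify that it is at least $c\psi^2(n)/n$; this has to be checked against the choice of $K_n$ forced by the truncation error, and it is not automatic.

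Second, and more seriously, the truncation error is only controlled through its expectation: ``of order $|S|$ times a small tail quantity'' bounds $\E\sum_{i\in S}\xi_i^2\mathbf{1}_{|\xi_i|>K_n}$, and Markov then gives a polynomially small, not exponentially small, probability that this error is comparable to $h$. Under the lemma's hypotheses (mean zero, unit variance, constants allowed to depend on the tail) there is no exponential tail for this quantity to ``absorb into $h$''; handling it needs an extra idea, e.g.\ the standard device (used in Tao--Vu's Proposition 5.1 and in the sparse versions the paper cites) of adjoining to $H$ the coordinate vectors $e_i$ of the few indices $i\in S$ with $|\xi_i|>K_n$, observing $\dist({\bm a},H)\ge\dist({\bm a}',H')$ for the truncated vector ${\bm a}'$ and the enlarged subspace $H'$, and controlling the number of such indices by Chernoff so that $n-\dim H'$ stays of order $r$. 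That idea is absent from your write-up. (If instead you are implicitly invoking the sub-Gaussianity of the $\xi_{i,j}$ from Theorem \ref{thm:sparse_general}, then no truncation is needed at all: Hanson--Wright applied directly to $\xi_S^*B^*B\xi_S$ gives conditional failure probability $\exp(-ch)\le\exp(-cpr)$, and the lemma holds with only the first term --- but then your discussion of truncation at scale $\psi(n)/n$ and of the origin of $\exp(-c\psi^2(n)/n)$ is beside the point, and the proof no longer covers the generality in which the lemma is stated.)
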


A result similar to Lemma \ref{lem:dist-conc-bd} was obtained in \cite{tao_vu} (see Proposition 5.1 there) for the dense case. Later in \cite{bcc-generator} (and \cite{wood}) it was improved for the sparse case. Our Lemma \ref{lem:dist-conc-bd} follows from \cite[Lemma 3.5]{bcc-generator} when applied to the set-up of this paper. So we omit the proof and refer the reader to the proof of \cite[Lemma 3.5]{bcc-generator}.

We now complete the proof of Theorem \ref{thm:intermed-sing} using Lemma \ref{lem:dist-conc-bd}. We use same approach as in \cite[pp.~2055-2056]{tao_vu} (see also the proof of  \cite[Lemma 3.14]{bcc-generator}).

\begin{proof}[Proof of Theorem \ref{thm:intermed-sing}]
 To lighten the notation, let us denote by $s_1 \ge s_2\ge \cdots \ge s_n$ the singular values of $(A_n - \sqrt{np} w I_n)$. Fix $i$ such that $3\psi(n) \le i \le n-1$ and denote by $A_n^{m,w}$  the sub-matrix formed by first $m$ rows of the matrix $(A_n - \sqrt{np} w I_n)$, where $m= n- \lceil i/2\rceil$. Further denote by $s_1' \ge s_2'\ge \cdots \ge s_m'$  the singular values of $A_n^{m,w}$. Using Cauchy's interlacing inequality we see that
 \beq\label{eq:sing-val-deleted}
 s'_{n-i} \le s_{n-i}.
 \eeq
Next from \cite[Lemma A.4]{tao_vu} it follows that
\beq\label{eq:sing-val-and-dist}
s_1'^{-2} + s_2'^{-2} + \cdots + s_m'^{-2} = \dist_1'^{-2}+  \dist_2'^{-2} + \cdots +  \dist_m'^{-2},
\eeq
where $\dist_j':=\dist({\bm a}_j - w\sqrt{np}e_j, H_{j,n}^{m,w})$, ${\bm a}_{j}^{\sf T}$ is the $j$-th row of the matrix $A_n$, $H_{j,n}^{m,w}$ is the subspace spanned by all the rows of $A_n^{m,w}$ except the $j$-th row, and $e_j$ is the $j$-th canonical basis. We also note that $\dist_j \le \dist_j'$, where $\dist_j:= \dist({\bm a}_j, \text{span}(H_{j,n}^{m,w}, e_j))$. Thus from \eqref{eq:sing-val-deleted}-\eqref{eq:sing-val-and-dist} we deduce
\beq\label{eq:sing-val-and-dist-1}
\f{i}{2n} s_{n-i}^{-2} \le \f{1}{n}\sum_{j=n-i}^{m} s_j'^{-2}  \le\f{1}{n} \sum_{j=1}^{m} \dist_j^{-2}.
\eeq
It is easy to note that $\dim(\text{span}(H_{j,n}^{m,w}, e_j)) \le m+1 \le n- \psi(n)$ for all $j=1,2,\ldots, m$. Therefore from Lemma \ref{lem:dist-conc-bd} we further obtain
\[
\P\left( \dist_j \le c_{\ref{lem:dist-conc-bd}} \sqrt{p \cdot i/3}\right) \le 2n^{-4}, \qquad j=1,2,\ldots,m,
\]
where we used the fact that $n- \dim(\text{span}(H_{j,n}^{m,w}, e_j)) \ge n-(m+1) \ge i/3$ and chose $C_{\ref{thm:intermed-sing}} \ge 4 c_{\ref{lem:dist-conc-bd}}^{-1}$. Hence, from \eqref{eq:sing-val-and-dist-1} we see that
\[
\P\left(s_{n-i} \le \f{c_{\ref{lem:dist-conc-bd}}}{\sqrt{3}} \cdot \sqrt{np} \cdot \f{i}{n}\right) \le 2n^{-3},
\]
for all $i$ such that $3\psi(n) \le i \le n-1$. After taking the union over $i$, the proof of the theorem completes.
\end{proof}

\section{Weak Convergence}\label{sec:weak-conv}
Here our goal is to prove Theorem \ref{thm:weak-conv}. As mentioned in Section \ref{subsec:weak-conv}, using a truncation argument, we first show that it is enough to restrict to the case of bounded $\{\xi_{i,j}\}_{i,j=1}^n$. To this end, we have the following lemma.

\begin{lem}\label{lem:truncate_lsd}
If the conclusion of Theorem \ref{thm:weak-conv} holds for $\{\xi_{i,j}\}_{i,j=1}^n$ bounded then it continues to hold without the boundedness assumption.
\end{lem}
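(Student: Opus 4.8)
The plan is a two-step truncation of the entries; the only genuinely delicate point is controlling the Hilbert--Schmidt mass discarded in the first truncation when (as in part~(i)) only a second moment is available. \emph{Truncation and reduction.} Fix $w\in B_\C(0,1)$. For $C>0$ set $\bar\xi_{i,j}^C:=\xi_{i,j}\mathbf 1_{|\xi_{i,j}|\le C}-\E[\xi_{i,j}\mathbf 1_{|\xi_{i,j}|\le C}]$, $\sigma_C^2:=\mathrm{Var}(\bar\xi_{i,j}^C)$, $\tilde\xi_{i,j}^C:=\bar\xi_{i,j}^C/\sigma_C$, and $\check\xi_{i,j}^C:=\xi_{i,j}-\bar\xi_{i,j}^C$. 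Then $\{\tilde\xi_{i,j}^C\}$ are i.i.d., centered, of unit variance and bounded (by $2C/\sigma_C$, which is bounded for $C$ large since $\sigma_C\to1$), while $\tau_C:=\E[(\check\xi_{i,j}^C)^2]\to0$ and $\sigma_C\to1$ as $C\to\infty$; under the hypotheses of part~(ii) also $\E[(\bar\xi_{i,j}^C)^4],\E[(\check\xi_{i,j}^C)^4]<\infty$. Let $\bar A_n^C,\tilde A_n^C,\check A_n^C$ have entries $\delta_{i,j}\bar\xi_{i,j}^C$, $\delta_{i,j}\tilde\xi_{i,j}^C$, $\delta_{i,j}\check\xi_{i,j}^C$, so $A_n=\bar A_n^C+\check A_n^C$ and $\bar A_n^C=\sigma_C\tilde A_n^C$, and write $\nu_n^w,\bar\nu_n^{w,C},\tilde\nu_n^{w,C}$ for the \abbr{ESD}s of the Hermitian dilations (as in \eqref{eq:bmA_n}) of $\frac1{\sqrt{np}}A_n-wI_n$, $\frac1{\sqrt{np}}\bar A_n^C-wI_n$, $\frac1{\sqrt{np}}\tilde A_n^C-wI_n$. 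By the hypothesis applied to $\{\tilde\xi_{i,j}^C\}$, the measure $\tilde\nu_n^{w,C}$ converges weakly to $\nu_\infty^w$, in probability (resp.~almost surely, since $\{\tilde\xi_{i,j}^C\}$ additionally has finite fourth moment and $\sum_n(n^2p_n)^{-1}<\infty$); the limit is the same $\nu_\infty^w$ for every $C$, by the Lindeberg comparison recalled in Section~\ref{subsec:weak-conv}. It therefore remains to estimate $\nu_n^w-\bar\nu_n^{w,C}$ and $\bar\nu_n^{w,C}-\tilde\nu_n^{w,C}$.

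\emph{Hoffman--Wielandt bounds.} For $2n\times2n$ Hermitian $X,Y$ with \abbr{ESD}s $\mu_X,\mu_Y$ and any $1$-Lipschitz $g$, the Hoffman--Wielandt inequality (see e.g.~\cite{bai2010spectral}) gives $|\int g\,d\mu_X-\int g\,d\mu_Y|\le W_1(\mu_X,\mu_Y)\le W_2(\mu_X,\mu_Y)\le(\tfrac1{2n}\|X-Y\|_2^2)^{1/2}$, where $\|\cdot\|_2$ is the Frobenius norm. Applying this to the dilations at hand, using $\big\|\left(\begin{smallmatrix}0&B\\B^*&0\end{smallmatrix}\right)\big\|_2^2=2\|B\|_2^2$, $A_n-\bar A_n^C=\check A_n^C$, $\bar A_n^C-\tilde A_n^C=(1-\sigma_C^{-1})\bar A_n^C$, and $\delta_{i,j}^2=\delta_{i,j}$, one obtains for every fixed $1$-Lipschitz $f$ with $\|f\|_\infty\le1$
\[
\Big|\int f\,d\nu_n^w-\int f\,d\bar\nu_n^{w,C}\Big|\le\Big(\tfrac1{n^2p}\sum_{i,j}\delta_{i,j}(\check\xi_{i,j}^C)^2\Big)^{1/2},\qquad
\Big|\int f\,d\bar\nu_n^{w,C}-\int f\,d\tilde\nu_n^{w,C}\Big|\le|1-\sigma_C^{-1}|\Big(\tfrac1{n^2p}\sum_{i,j}\delta_{i,j}(\bar\xi_{i,j}^C)^2\Big)^{1/2}.
\]

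\emph{Controlling the two averages, and the main obstacle.} Since $\bar\xi_{i,j}^C$ is bounded, a Chebyshev estimate (the average has variance $O((n^2p)^{-1})$ and mean $\sigma_C^2$, and $n^2p_n\to\infty$) gives $\tfrac1{n^2p}\sum_{i,j}\delta_{i,j}(\bar\xi_{i,j}^C)^2\to\sigma_C^2$ in probability, and, under $\sum_n(n^2p_n)^{-1}<\infty$, almost surely by Borel--Cantelli; hence the second right-hand side above is $|1-\sigma_C^{-1}|(\sigma_C^2+o(1))^{1/2}\to0$ as $C\to\infty$. The delicate quantity is $R_n^C:=\tfrac1{n^2p}\sum_{i,j}\delta_{i,j}(\check\xi_{i,j}^C)^2$, which has mean $\tau_C$. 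In part~(ii) this is routine: $\mathrm{Var}(R_n^C)\le\E[(\check\xi_{i,j}^C)^4]\,(n^2p_n)^{-1}$, so $R_n^C\to\tau_C$ almost surely (and in probability) by Borel--Cantelli, and $\tau_C\to0$. In part~(i) only a second moment is available, $(\check\xi_{i,j}^C)^2$ may have infinite variance, and this is the main obstacle; I would handle it by a \emph{secondary} truncation $(\check\xi_{i,j}^C)^2=(\check\xi_{i,j}^C)^2\mathbf 1_{(\check\xi_{i,j}^C)^2\le T}+(\check\xi_{i,j}^C)^2\mathbf 1_{(\check\xi_{i,j}^C)^2>T}$: the bounded part has variance $O(T(n^2p)^{-1})$ and mean $\le\tau_C$, so it concentrates near a limit $\le\tau_C$ (using $n^2p_n\to\infty$), while the tail part has mean $\eta(C,T):=\E[(\check\xi_{i,j}^C)^2\mathbf 1_{(\check\xi_{i,j}^C)^2>T}]$, which tends to $0$ as $T\to\infty$, so by Markov's inequality it exceeds $\sqrt{\eta(C,T)}$ with probability at most $\sqrt{\eta(C,T)}$.

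\emph{Assembly.} For part~(i): given $\kappa,\eta>0$, choose $C$ large (so $\tau_C$ and $|1-\sigma_C^{-1}|$ are small), then $T$ large (so $\eta(C,T)$ is small), then $n$ large; a union bound over these $O(1)$ events together with $\int f\,d\tilde\nu_n^{w,C}\to\int f\,d\nu_\infty^w$ yields $\P(|\int f\,d\nu_n^w-\int f\,d\nu_\infty^w|>\kappa)<\eta$ for all large $n$, which, $\kappa,\eta,f$ being arbitrary, is the desired convergence in probability. For part~(ii): fix a sequence $C_k\to\infty$, work on the probability-one event on which all the above convergences hold for every $k$, and use the displayed bounds to get $\limsup_n|\int f\,d\nu_n^w-\int f\,d\nu_\infty^w|\le\sqrt{\tau_{C_k}}+|1-\sigma_{C_k}^{-1}|(\sigma_{C_k}^2+1)^{1/2}$ for each $k$; letting $k\to\infty$ gives the almost sure convergence. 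Everything except the second-moment control of $R_n^C$ in part~(i) is routine.
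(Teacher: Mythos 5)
Your argument is correct and follows essentially the same route the paper indicates and leaves to the reader: truncate the entries, renormalize, and compare the \abbr{ESD}s of the Hermitian dilations via the Hoffman--Wielandt inequality tested against bounded Lipschitz functions, controlling the discarded Frobenius mass by law-of-large-numbers estimates, exactly as in \cite[Proposition 4.1]{bdj}. The only remark worth making is that your ``main obstacle'' in part (i) dissolves: since $R_n^C\ge 0$ has mean $\tau_C$, Markov's inequality alone gives $\P\left(R_n^C>\kappa^2\right)\le \tau_C/\kappa^2$, which already suffices for the in-probability statement, so the secondary truncation, while valid, is not needed.
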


The proof of the truncation argument has now become standard in the random matrix literature which follows from an application of Hoffman-Wielandt inequality (see \cite[Lemma 2.1.19]{agz}) upon using the fact that the bounded Lipschitz metric on the space of all probability measures on $\R$ metrizes the weak convergence of probability measures (see \cite[Theorem C.8]{agz}). We refer the reader to \cite[Appendix C.2]{agz} for a definition of the bounded Lipschitz metric. Using the above two ingredients and proceeding similarly as in the the proof of \cite[Proposition 4.1]{bdj} one can complete the proof of Lemma \ref{lem:truncate_lsd}. Further details are omitted.


Equipped with Lemma \ref{lem:truncate_lsd}, hereafter we may and will assume that $\{\xi_{i,j}\}_{i,j=1}^n$ are bounded by some constant $\gK_0$. It is well known that the conclusion of  Theorem \ref{thm:weak-conv} holds for $\nu_{{G}_n}^w$, the symmetrized version of the empirical law of the singular values of $\left(n^{-1/2}{G_n}- wI_n\right)$, where $G_n$ is a real Ginibre matrix. Thus to prove Theorem \ref{thm:weak-conv} it is enough to show that
\beq\label{eq:weak-compare}
\int f(x) d\nu_{G_n}^w(x) - \int f(x) d\nu_n^w(x) \to 0, \quad \text{ as } n \to \infty, \quad \text{ almost surely},
\eeq
for every $f \in C_c(\R)$, the set of all continuous and compactly supported functions on $\R$, and $w \in\C$. To prove \eqref{eq:weak-compare} we first show that both the random probability measures $\nu_n^w$ and $\nu_{G_n}^w$ are close to their expectations, $\E \nu_n^w$ and $\E \nu_{G_n}^w$, respectively, and then we establish that $\E \nu_n^w$ and $\E \nu_{G_n}^w$ are close to each other as well. To carry out the first step we use the following concentration inequality.

\begin{lem}\label{lem:concentration-bd}
Let $H_n:=(h_{i,j})_{i,j=1}^n$ be a Hermitian random matrix with entries on and above the diagonal are jointly independent. Let $f : \R \mapsto \R$ be an $L$-Lipschitz function supported on a compact interval $I \subset \R$, with $\| f\|_\infty:=  \sup_{x \in I} |f(x)| \le 1$. Further let $H_0$ be an arbitrary $n \times n$  deterministic matrix and denote ${\sf H}_n:= H_n+H_0$. Fix an $\vep >0$.

\begin{enumeratei}
\item If the entries of $H_n$ are uniformly bounded by $\gK/\sqrt{n}$ for some $\gK < \infty$ then
\begin{align*}
\hspace{-\leftmargin}
\P\left(\left| \int f(x) dL_{{\sf H}_n}(x)- \E \int f(x) dL_{{\sf H}_n}(x)  \right| \ge \vep \right) \le \f{C_{\ref{lem:concentration-bd}}L |I|}{\vep} \exp \left( - \f{c_{\ref{lem:concentration-bd}} n^2 \vep^4}{\gK^2 L^4|I|^2}\right),
\end{align*}
for some absolute constants $c_{\ref{lem:concentration-bd}}, C_{\ref{lem:concentration-bd}} >0$.

\item If the entries of $H_n$ satisfy the logarithmic Sobolev inequality with uniform constant $\gL$ then
\begin{align*}
\hspace{-\leftmargin}
\P\left(\left| \int f(x) dL_{{\sf H}_n}(x)- \E \int f(x) dL_{{\sf H}_n}(x)  \right| \ge \vep \right) \le 2 \exp \left( - \f{c_{\ref{lem:concentration-bd}} n^2 \vep^2}{8 \gL L^2}\right).
\end{align*}
\end{enumeratei}

\end{lem}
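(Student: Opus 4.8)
The statement to prove is Lemma~\ref{lem:concentration-bd}, a concentration inequality for linear eigenvalue statistics of a Hermitian random matrix plus a deterministic shift.

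\textbf{Overall strategy.} Both parts follow from viewing the map $H_n \mapsto \int f(x)\,dL_{{\sf H}_n}(x)$ as a function of the independent entries (resp. of the Gaussian-like vector of entries) and applying the appropriate concentration-of-measure tool: a bounded-difference / Azuma-type martingale argument for part (i), and the Herbst argument from the logarithmic Sobolev inequality for part (ii). The key deterministic input in both cases is a Lipschitz-type estimate: perturbing one entry of $H_n$, or moving the whole matrix in Frobenius norm, changes $\int f\,dL_{{\sf H}_n}$ by a controlled amount. The deterministic shift $H_0$ plays no role beyond entering the (shift-invariant) bound, since $L_{{\sf H}_n}$ depends on $H_0$ only through ${\sf H}_n = H_n + H_0$ and all our stability estimates are stated for an arbitrary Hermitian matrix.

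\textbf{Part (ii), the LSI case.} First I would record the deterministic fact that $M \mapsto \int f\,dL_M$, as a function of the $\binom{n+1}{2}$ real parameters describing a Hermitian matrix $M$ (diagonal entries and real/imaginary parts above the diagonal), is Lipschitz with constant $O(L/\sqrt n)$ with respect to the Euclidean norm on parameters. This is the standard consequence of the Hoffman--Wielandt inequality together with the fact that $f$ is $L$-Lipschitz: if $M,M'$ differ by $\Delta$ then $\big|\int f\,dL_M - \int f\,dL_{M'}\big| \le \frac{L}{n}\sum_i |\lambda_i(M)-\lambda_i(M')| \le \frac{L}{\sqrt n}\,\|M-M'\|_2 = \frac{L}{\sqrt n}\cdot O(1)\cdot\|\Delta\|_{\mathrm{Euclidean}}$, where one has to be careful with the factor relating the entrywise Euclidean norm of the parameter vector and the Frobenius norm (off-diagonal entries are counted twice). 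Since the law of the independent entries of $H_n$ satisfies an LSI with constant $\gL$, the product measure on the parameter space satisfies an LSI with the same constant $\gL$; the Herbst argument then gives, for any $1$-Lipschitz $g$, $\P(|g - \E g| \ge t) \le 2\exp(-t^2/(2\gL))$. Applying this to $g = \int f\,dL_{{\sf H}_n}$, which is $O(L/\sqrt n)$-Lipschitz, yields the subgaussian bound $2\exp(-c n^2 \vep^2/(\gL L^2))$, which is the claimed inequality after absorbing absolute constants.

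\textbf{Part (i), the bounded case.} Here the entries need not satisfy an LSI, so I would use a martingale difference / McDiarmid-type argument instead. Expose the entries of $H_n$ in some order (say row by row on and above the diagonal, updating the conjugate-symmetric entry simultaneously), and let $X_k = \E[g \mid \mathcal F_k]$ be the Doob martingale of $g = \int f\,dL_{{\sf H}_n}$. Changing a single entry (and its conjugate) of a Hermitian matrix is a rank-$2$ (indeed rank-$\le 2$) perturbation, so by the rank inequality for the Kolmogorov distance of ESDs and the support bound $\|f\|_\infty \le 1$ on the compact interval $I$, one gets $|X_k - X_{k-1}| \le C/n$ pointwise; more precisely the interlacing/rank bound gives a change of at most $\frac{2\|f\|_\infty}{n} \le \frac{2}{n}$ in $\int f\,dL$, but to get the $\vep^4$ rate and the $\gK$-dependence one instead uses the finer Lipschitz estimate: resampling one entry of magnitude $O(\gK/\sqrt n)$ moves the matrix by $O(\gK/\sqrt n)$ in Frobenius norm, hence moves $g$ by $O(L\gK/n)$. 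With $O(n^2)$ such bounded differences of size $O(L\gK/n)$, Azuma--Hoeffding gives $\P(|g-\E g|\ge \vep) \le 2\exp(-c n^2 \vep^2/(L^2\gK^2))$, which is already stronger in $\vep$ than the stated bound; the stated form with the $\vep^4$ exponent and the prefactor $C L|I|/\vep$ is what one gets from the cruder truncation-plus-net argument of Guionnet--Zeitouni, so I would instead follow that route: split $f$ into a part handled by the rank bound and combine with a concentration step, or simply invoke the bounded-differences estimate and note it implies the weaker claimed inequality for all $\vep$ in the relevant range. Either way the conclusion follows.

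\textbf{Main obstacle.} The only genuinely delicate point is bookkeeping the constants so that the two stability estimates (Hoffman--Wielandt for LSI, rank-perturbation bound for bounded entries) are calibrated to produce exactly the stated exponents $n^2\vep^2/(\gL L^2)$ and $n^2\vep^4/(\gK^2 L^4 |I|^2)$ — in particular tracking the interval length $|I|$, which enters because on a compact interval the Kolmogorov distance controls $\int f\,dL$ up to a factor $L|I|$. None of this is conceptually hard; it is the standard Guionnet--Zeitouni concentration package (see \cite[Section 2.3, Appendix]{agz}) specialized to our setting, and the proof amounts to citing those results with the perturbation bounds recorded above.
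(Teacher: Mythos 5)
Your part (ii) is essentially the paper's argument: the paper invokes Guionnet--Zeitouni (Theorem 1.1(b)), whose proof is exactly the Hoffman--Wielandt Lipschitz estimate plus the Herbst argument, and the only new remark needed is that the Lipschitz property survives the deterministic shift $H_0$ — which you make. One caveat there: with the map $H_n \mapsto \int f\,dL_{{\sf H}_n}$ being $O(L/\sqrt n)$-Lipschitz in the entry vector and the entries satisfying LSI with constant $\gL$, Herbst gives an exponent of order $n\vep^2/(\gL L^2)$, not $n^2\vep^2/(\gL L^2)$; the $n^2$ in Guionnet--Zeitouni comes from the $1/\sqrt n$ normalization of the entries relative to the variables carrying the LSI constant, so you need to track that normalization explicitly rather than just "absorb constants."

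Part (i), however, has a genuine gap. Your primary route — a Doob martingale over the $O(n^2)$ independent entries with per-entry increments $O(L\gK/n)$ (via Hoffman--Wielandt and the bound $|h_{ij}|\le \gK/\sqrt n$) — does not give the claimed rate: Azuma--Hoeffding yields $2\exp\bigl(-c\vep^2/\sum_k c_k^2\bigr)$, and with $\sim n^2$ increments of size $\sim L\gK/n$ the sum of squares is of order $L^2\gK^2$, so the exponent is $-c\vep^2/(L^2\gK^2)$ with no factor of $n^2$ at all. The same failure occurs with the rank-two/Kolmogorov-distance bound (increments $\lesssim L|I|/n$, again $\sum c_k^2 = O(L^2|I|^2)$), and exposing whole rows only recovers rate $n$, not $n^2$. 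So the statement "Azuma--Hoeffding gives $2\exp(-cn^2\vep^2/(L^2\gK^2))$, which is already stronger than the stated bound" is a miscalculation, and the fallback you gesture at ("truncation-plus-net argument") is precisely the missing content: the $n^2$ rate for bounded entries requires Talagrand's concentration inequality for convex Lipschitz functions of bounded independent variables, applied after reducing the non-convex statistic $\int f\,dL$ to convex ones (e.g.\ a decomposition of $f$ together with a net over the interval $I$, which is where the prefactor $C L|I|/\vep$ and the $\vep^4$ in the exponent come from). The paper sidesteps all of this by citing Cook's Lemma 3.2, which packages exactly that Talagrand-based argument; a bounded-differences proof cannot replace it.
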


\begin{proof}
Part (i) is a consequence of \cite[Lemma 3.2]{cook-a}. For $H_0=0$, the proof part (ii) is immediate from \cite[Theorem 1.1(b)]{GZ}. A key step in the proof \cite[Theorem 1.1(b)]{GZ} is to show that for any Lipschitz function $f : \R \mapsto \R$ the map $H \mapsto \int f(x) d_{L_H}(x)$ is also a Lipschitz function (see \cite[Lemma 1.2(b)]{GZ}). The same proof shows that for any deterministic $H_0$ the map $H \mapsto \int f(x) d_{L_{H+H_0}}(x)$ is again Lipschitz. Therefore the general case follows.
\end{proof}

Next we need to show that $\E \nu_n^w$ and $\E \nu_{G_n}^w$ are close to each other. This will be obtained upon showing that the corresponding Stieltjes transforms asymptotically are the same. Before stating the relevant result let us define the Stieltjes transform of a probability measure on $\R$.
\begin{dfn}
Let $\mu$ be a probability measure on $\R$. Its Stieltjes transform is given by
\beq
G_\mu(\zeta):=\int_\R \f{1}{x-\zeta} d\mu(x), \, \qquad \zeta\in \C\setminus \R. \notag
\eeq
\end{dfn}
We write $m_n(\zeta):=m_n(\zeta,w)$ and $m_{G_n}(\zeta):=m_{G_n}(\zeta, w)$ to denote the Stieltjes transform of $\nu_n^w$ and $\nu_{G_n}^w$, respectively. We now have the following lemma.
\begin{lem}\label{lem:stielt-compare}
Let ${A}_n$ be an $n \times n$ matrix with entries $a_{i,j}=\delta_{i,j} \cdot \xi_{i,j}$, where $\delta_{i,j}$ are i.i.d. Bernoulli random variables with $\P(\delta_{i,j}=1)=p$, and $\xi_{i,j}$ are centered i.i.d.~random variables with unit variance bounded by $\gK_0$ for some $\gK_0$. Fix any $w \in \C$ and $\zeta \in \C^+$. Then
\[
\left| \E m_n(\zeta) - \E m_{G_n}(\zeta)\right| \le \frac{C_{\ref{lem:stielt-compare}}}{\sqrt{np}(\Im \zeta)^4} \left(1+ \frac{1}{(n \Im \zeta)^2} \right),
\]
for some constant $C_{\ref{lem:stielt-compare}}$ depending only on $\gK_0$.
\end{lem}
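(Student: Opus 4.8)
The plan is to compare $\E m_n(\zeta)$ and $\E m_{G_n}(\zeta)$ via the Lindeberg replacement strategy, replacing the entries of $A_n/\sqrt{np}$ one at a time by independent real Gaussians of variance $1/n$. Recall that $m_n(\zeta)$ is the Stieltjes transform of $\nu_n^w$, the symmetrized empirical singular value distribution of $A_n/\sqrt{np}-wI_n$, so by the standard linearization it can be written as $m_n(\zeta)=\tfrac{1}{2n}\operatorname{Tr}(\bm{A}_n^w-\zeta I_{2n})^{-1}$ where $\bm{A}_n^w$ is the $2n\times 2n$ Hermitian dilation defined in \eqref{eq:bmA_n} (with $p$ in place of $np$ in the normalization). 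First I would introduce an enumeration $e_1,\dots,e_{n^2}$ of the index pairs $(i,j)$ and define interpolating matrices $M^{(k)}$ whose first $k$ entries (in this order) are the Gaussian ones and the rest are the sparse entries $a_{i,j}/\sqrt{np}$; set $\bm{M}^{(k),w}$ to be the corresponding Hermitian dilation and $F(\bm M):=\tfrac{1}{2n}\operatorname{Tr}(\bm M-\zeta I_{2n})^{-1}$. Then $\E m_n(\zeta)-\E m_{G_n}(\zeta)=\sum_{k=1}^{n^2}\bigl(\E F(\bm M^{(k-1),w})-\E F(\bm M^{(k),w})\bigr)$, and the problem reduces to bounding each telescoping term.

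For a fixed index pair, the two matrices $\bm M^{(k-1),w}$ and $\bm M^{(k),w}$ differ only in two symmetric off-diagonal positions (coming from a single entry of $A_n$ and its transpose appearing in the dilation), and in both cases the differing entry has mean zero and variance $1/n$ after normalization (since $\E a_{i,j}^2=p$). I would Taylor-expand $F$ in that rank-two perturbation to third order. The zeroth order terms cancel; the first and second order terms have matching expectations because both the sparse and the Gaussian entries have the same first two moments, and the contributions from the entry and its mean-zero transpose factor out of the relevant expectations; so only the third-order remainder survives. The key analytic input is a bound on the third derivative of $F$ in the direction of a single matrix entry: since $F$ is $\tfrac{1}{2n}\operatorname{Tr}$ of the resolvent and each derivative of the resolvent in an entry direction brings down a factor bounded in operator norm by $(\Im\zeta)^{-1}$, one gets $|\partial^3 F|\le C n^{-1}(\Im\zeta)^{-4}$, uniformly. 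The third moment of the sparse entry $a_{i,j}/\sqrt{np}$ is $\E|a_{i,j}|^3/(np)^{3/2}=\gK_0\,p/(np)^{3/2}=\gK_0/(n^{3/2}\sqrt p)$ (using boundedness by $\gK_0$), and the Gaussian third absolute moment is $O(n^{-3/2})$, which is even smaller; so each telescoping term is at most $C_{\gK_0}\, n^{-1}(\Im\zeta)^{-4}\cdot n^{-3/2}\sqrt p^{-1}$ — wait, more carefully, the perturbation affects at most two entries so the bound is $C_{\gK_0}\,n^{-1}(\Im\zeta)^{-4}\cdot n^{-3/2}p^{-1/2}$ per term, and summing over the $n^2$ terms gives $C_{\gK_0}\,n^{-1/2}p^{-1/2}(\Im\zeta)^{-4}$, which is the leading term in the claimed bound.

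The secondary term $C/(n\Im\zeta)^2$ in the statement would come from the fact that the dilation normalization uses $1/\sqrt{np}$ whereas the target Ginibre comparison uses $1/\sqrt n$, together with the diagonal shift by $w$: one must also account for a discrepancy between $\operatorname{Var}(a_{i,j}/\sqrt{np})=1/n$ being exactly right but the entries of $A_n$ having a possibly nonzero (in fact zero) mean — since $\E\xi_{i,j}=0$ the mean contributes nothing, but a careful bookkeeping of the truncation/normalization and of the fact that the resolvent's trace is being divided by $2n$ (so boundary effects of size $O(1/n)$ enter) produces the additional $(n\Im\zeta)^{-2}$ factor. Concretely I would absorb this by noting $|F(\bm M)|\le (\Im\zeta)^{-1}$ trivially and that any $O(1)$-rank correction to $\bm M$ changes $F$ by at most $O(n^{-1}(\Im\zeta)^{-2})$; combined with at most $O(1)$ such corrections one gets the stated error.

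The main obstacle I anticipate is the careful moment-matching bookkeeping in the Lindeberg swap: the Hermitian dilation $\bm{A}_n^w$ places each entry $a_{i,j}$ in two positions (an $(i,n+j)$ slot and an $(n+j,i)$ slot), so a single swap is a genuinely rank-two, not rank-one, perturbation, and one must verify that the first- and second-order terms in the Taylor expansion really do have equal expectations for the sparse and Gaussian versions — this requires checking that the expectation of each monomial in the perturbation entry (up to order two) depends only on $\E a_{i,j}=0$ and $\E a_{i,j}^2=p$, which is true but needs to be written out. The derivative bounds themselves, and the final summation, are routine once the combinatorics of which positions change is pinned down; the constant $C_{\ref{lem:stielt-compare}}$ then depends on $\gK_0$ only through the third moment bound $\E|\xi_{i,j}|^3\le\gK_0$.
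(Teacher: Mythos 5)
Your proposal takes essentially the same route as the paper, whose proof consists of invoking the Lindeberg replacement strategy of Chatterjee (via Cook's Lemma 8.2) for the trace of the resolvent of the Hermitian dilation, using that the sparse and Gaussian entries match in their first two moments and that the third-order remainder is controlled by the third moments and a derivative bound of order $n^{-1}(\Im \zeta)^{-4}$; your bookkeeping correctly yields the leading term $C/(\sqrt{np}\,(\Im\zeta)^4)$. Since that term alone already implies the stated inequality, your somewhat speculative explanation of the extra factor $1+(n\Im\zeta)^{-2}$ is not needed — it is merely slack in the bound as stated.
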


A version of Lemma \ref{lem:stielt-compare} was proved in \cite{cook-rrd} when $A_n$ is replaced by $B_n$, a matrix of i.i.d.~centered $\dBer(p)$ random variables. A crucial step in the proof of \cite[Lemma 8.2]{cook-rrd} is the Lindeberg replacement strategy of \cite{cha05}. The latter depends only the bounds on the first three moments of the entries of $B_n$. Since $\{\xi_{i,j}\}_{i,j=1}^n$ are uniformly bounded by $\gK_0$ one can obtain the bounds necessary to apply \cite[Theorem 1.1, Corollary 1.2]{cha05}. Hence, following the same lines of arguments as in \cite[Lemma 8.2]{cook-rrd} one completes the proof of Lemma \ref{lem:stielt-compare}.

Equipped with all the required ingredients we now finish the proof of Theorem \ref{thm:weak-conv}.

\begin{proof}[Proof of Theorem \ref{thm:weak-conv}]
It is known that for any $w \in \C$ the random probability measure $\nu_{G_n}^w$ converges weakly to $\nu_\infty^w$, for some probability measure, almost surely. For example, see \cite{bai}. Therefore, it is enough show that
\beq\label{eq:as-diff}
\int f(x) d\nu_n^w(x) -\int f(x) d\nu_{G_n}^w(x) \to 0, \quad \text{almost surely,} \quad \text{ as } n \to \infty,
\eeq
for every bounded continuous function $f: \R \mapsto \R$. Since $\{\xi_{i,j}\}_{i,j=1}^n$ are uniformly bounded, by Chernoff's bound and Borel-Cantelli lemma it follows that the sequence of probability measures $\{\nu_n^w\}_{n \in \N}$ are almost surely compactly supported, whenever $np \ge C_0 \log n$ for some large constant $C_0$. The strong law of large number further shows that $\{\nu_{G_n}^w\}_{n \in \N}$ are almost surely compactly supported. Hence, we only need to show that \eqref{eq:as-diff} holds for continuous and compactly supported functions $f$.

Now, given any $\delta >0$, and a continuous and compactly supported function $f$, one can construct a Lipschitz function $f_\delta$, such that $\| f - f_\delta\|_\infty \le \delta$ with the Lipschitz constant of $f_\delta$ depending only on $f$ and $\delta$. Hence, it suffices to establish \eqref{eq:as-diff} only for compactly supported Lipschitz functions.

To this end,  recalling that Gaussian random variables satisfy the logarithmic Sobolev inequality (see \cite{gross}), from Lemma \ref{lem:concentration-bd} we deduce that
\beq\label{eq:as-diff-1}
\int f(x) d\gu_n(x) - \E \int f(x) d\gu_n(x) \to 0 \quad \text{ as } n \to \infty,
\eeq
almost surely, for $\gu_n=\nu_n^w$ and $\nu_{G_n}^w$. Since $np \to \infty$ as $n \to \infty$, from Lemma \ref{lem:stielt-compare} it also follows that (one can argue similarly as in the proof of \cite[Theorem 2.4.3]{agz})
\beq\label{eq:as-diff-2}
\E \int f(x) d\nu_n^w(x) - \E \int f(x) d\nu_{G_n}^w(x) \to 0, \quad \text{ as } n \to \infty.
\eeq
Combining \eqref{eq:as-diff-1}-\eqref{eq:as-diff-2}  we establish that \eqref{eq:as-diff} holds for any Lipschitz and compactly supported function $f$. This completes the proof.
\end{proof}


\section{Proof of Theorem \ref{thm:sparse_general}}\label{sec:proof-main-thm}
In this section we combine Theorem \ref{thm: smallest singular}, Theorem \ref{thm:intermed-sing}, and Theorem \ref{thm:weak-conv} to prove Theorem \ref{thm:sparse_general}. As already mentioned in Section \ref{sec:proof_outline}, to prove Theorem \ref{thm:sparse_general} we need to invoke the replacement principle. We fix $r \in (0,1)$ and define $\D_{r}:= \{ w \in B_\C(0,1-r): |\Im w| \ge r\}$. Then applying Lemma \ref{lem:replacement}, we show that for every $f\in C_c^2(\C)$ supported on $\D_{r}$, we have $\int f(w) dL_n(w) \to \int f(w) d \gm(w)$ in probability or almost surely, depending on the choice of the sparsity parameter $p$, where $L_n$ is the \abbr{ESD} of $\f{1}{\sqrt{np}}A_n$. Afterwards letting $r \to 0$ we establish the circular law limit. Below we make this idea precise.

Before we prove Theorem \ref{thm:sparse_general}, we need some properties of the probability measure $\nu_\infty^w$. Recall that $\nu_\infty^w$ is the limit of the \abbr{ESD} of ${\bm A}_n^w$ where ${\bm A}_n^w$ was defined in \eqref{eq:bmA_n}.

\begin{lem}\label{lem:prop-nu-inf}
\begin{enumerate}[(i)]
\item For any $w \in B_\C(0,1)$ the probability measure $\nu_\infty^w$ is supported on $[-\sqrt{\lambda_+},\sqrt{\lambda_+}]$, where
\[
\lambda_+:=\lambda_+(w):=\f{\left(\sqrt{1+8|w|^2}+3\right)^3}{8\left(\sqrt{1+8|w|^2}+1\right)}.
\]
\item {There exists some absolute constant $r_0 \in (0,1)$ such that for all $r \in (0,r_0)$, $\tau \in (0,1)$, and $w \in B_\C(0,1-r)$ we have}
\[
\int_{-\tau}^\tau |\log |x|| d\nu_\infty^w(x) \le C_{\ref{lem:prop-nu-inf}} \tau |\log \tau|,
\]
for some positive constant $C_{\ref{lem:prop-nu-inf}}$ which depends only on $r$.
\end{enumerate}
\end{lem}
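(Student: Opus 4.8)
\textbf{Proof plan for Lemma~\ref{lem:prop-nu-inf}.}
The plan is to identify $\nu_\infty^w$ explicitly through its Stieltjes transform and then read off both claims from that description. The measure $\nu_\infty^w$ is the symmetrization of the limiting singular value distribution of $\frac{1}{\sqrt n}G_n - wI_n$, and it is classical (see \cite{bai}, or \cite{bordenave_chafai}) that its Stieltjes transform $m_\infty^w(\zeta)$ satisfies a cubic self-consistent equation: writing $m=m_\infty^w(\zeta)$, one has the relation
\[
m = \frac{-\zeta(1+m^2) }{ \,\zeta^2(1+m^2) - m^2 |w|^2 + \text{(lower order)}\,},
\]
or more precisely the fixed-point equation that appears in the Girko-type analysis, whose only relevant parameter is $t:=|w|^2$. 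For part~(i) I would locate the edge of the support of $\nu_\infty^w$ by the standard recipe: the edge of the support corresponds to the values $\zeta=x$ on the real line where the solution branch of the cubic that is the Stieltjes transform ceases to be real-analytic, i.e.\ where two roots of the cubic coalesce. This is the condition that the discriminant of the cubic in $m$ vanishes. Carrying out this discriminant computation (which I would not grind through here, but which is an elementary elimination) yields that the spectral edge $\lambda_+$ of the squared singular values is exactly
\[
\lambda_+ = \frac{\bigl(\sqrt{1+8|w|^2}+3\bigr)^3}{8\bigl(\sqrt{1+8|w|^2}+1\bigr)},
\]
so $\nu_\infty^w$ is supported on $[-\sqrt{\lambda_+},\sqrt{\lambda_+}]$. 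An independent sanity check: at $w=0$ one gets $\lambda_+ = 4$, which is the right edge of the Marchenko--Pastur law with ratio $1$ (the square of a standard Ginibre singular value), as it must be.

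For part~(ii) the issue is the local behavior of $\nu_\infty^w$ near the origin, since that is where $\log|x|$ is singular. The key structural fact is that for $w$ bounded away from the unit circle, $0$ is an interior point of the support and $\nu_\infty^w$ has a bounded density there. This follows from the same cubic equation: $\operatorname{Im} m_\infty^w(x+i0)$ is the density (up to the factor $\pi$ and the Jacobian relating squared singular values to their symmetrized square roots), and one checks that at $x=0$ the relevant root of the cubic is simple — the discriminant does not vanish at $0$ when $|w|<1$, indeed $|w|=1$ is precisely the threshold where the density at $0$ becomes unbounded (this is the well-known phenomenon that $\log(\cdot)$ integrability degenerates on the circle). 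So there is $r_0\in(0,1)$ and a constant $\kappa=\kappa(r)$ such that $\nu_\infty^w$ has density bounded by $\kappa$ on $[-1,1]$, uniformly over $w\in B_\C(0,1-r)$; here I would use a compactness argument in $w$ together with continuity of the density in $(x,w)$ on the region where the cubic has a simple root. Given that, for $\tau\in(0,1)$,
\[
\int_{-\tau}^{\tau} \bigl|\log|x|\bigr|\, d\nu_\infty^w(x)
\le \kappa \int_{-\tau}^{\tau} \bigl|\log|x|\bigr|\, dx
= 2\kappa\bigl(\tau|\log\tau| + \tau\bigr)
\le C_{\ref{lem:prop-nu-inf}}\,\tau|\log\tau|
\]
for $\tau$ bounded away from $1$, which is the claimed bound; absorbing the additive $\tau$ into $\tau|\log\tau|$ costs only a constant since $\tau<1$.

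I expect the main obstacle to be establishing the uniform boundedness of the density of $\nu_\infty^w$ near $0$ over the whole range $w\in B_\C(0,1-r)$, rather than the edge computation in part~(i), which is purely algebraic. The delicate point is that as $|w|\to 1$ the density at the origin blows up, so the constant genuinely must depend on $r$; I would make this quantitative by extracting from the cubic self-consistent equation an explicit lower bound on the modulus of the relevant simple root (equivalently, a lower bound on the distance from the coalescence locus) that is uniform for $|w|\le 1-r$, and then invoke the implicit function theorem / continuity of roots to get a bound on $\operatorname{Im} m_\infty^w(x+i0)$ for $x\in[-1,1]$. An alternative, and possibly cleaner, route that avoids solving the cubic is to recall the known formula for $\nu_\infty^w$ in the literature (the limiting measure in Girko's circular law proof, see e.g.\ \cite{bordenave_chafai}), whose density near $0$ is given in closed form, and simply read off the bound; I would present whichever of the two is shorter in the write-up.
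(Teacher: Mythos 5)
Your proposal is correct in substance but takes a somewhat different (and more self-contained) route than the paper, whose proof of this lemma is essentially by citation. For part (i) the paper simply invokes Bai's result (\cite[Lemma 4.2]{bai}), which states that the limiting squared-singular-value law $\wt\nu_\infty^w$ is supported on $[0,\lambda_+(w)]$, and transfers this to $\nu_\infty^w$ by symmetrized square roots; your plan to recover $\lambda_+$ from the discriminant of the cubic self-consistent equation would in effect be re-proving that cited lemma (your sanity check $\lambda_+(0)=4$ is right), which is fine but is deferred rather than carried out. For part (ii) the paper does not go through boundedness of the density at all: it uses the integration-by-parts inequality $\int_0^\tau|\log x|\,d\mu(x)\le|\log\tau|\,\mu((0,\tau))+\int_0^\tau \mu((0,t))/t\,dt$ together with the bound $\nu_\infty^w((-t,t))\le 2t\,\Im m_\infty(\mathrm{i}t)\le 2Ct$, the last inequality being quoted from \cite[Lemmas 7.8 and 8.3]{bcz} with $C=C(r)$. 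That interval-mass bound is exactly the Stieltjes-transform form of your "uniformly bounded density near $0$" claim, so the two arguments are mathematically equivalent; the paper's version is slightly more economical because a Lipschitz bound on the distribution function suffices and comes directly from $\Im m_\infty(\mathrm{i}t)$, with no need to establish existence, continuity, or uniform boundedness of a density via the implicit function theorem or a compactness argument in $w$. The one soft spot in your write-up is that this uniform control (your constant $\kappa(r)$, or equivalently the bound on $\Im m_\infty$ near $0$ for $|w|\le 1-r$) is the real content of part (ii) and is only sketched; your structural claims — $0$ interior to the support for $|w|<1$, blow-up of the density at the origin exactly at $|w|=1$, hence genuine $r$-dependence of the constant — are all correct, and your final estimate $2\kappa(\tau|\log\tau|+\tau)\le C\tau|\log\tau|$ matches the paper's conclusion.
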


\begin{proof}
In \cite[Lemma 4.2]{bai} it was shown that for any $w \in B_\C(0,1)$ the probability measure $\wt\nu^w_\infty$ is supported on $[0,\lambda_+(w)]$ where for any $t >0$, $\wt\nu_\infty^w\left((0,t^2)\right) = \nu_\infty^w\left((-t,t)\right)$. From this part (i) of the lemma follows.

Turning to prove (ii), using integration by parts we note that for any probability measure $\mu$ on $\R$ and $0<  \tau <1$,
\beq
\int_{0}^{\tau} |\log (x)| d\mu(x) \le |\log(\tau)| \mu((0,\tau))+ \int_{0}^{\tau} \f{\mu((0,t))}{t} dt.\label{eq:log_byparts}
\eeq
Using \cite[Lemma 7.8]{bcz} and \cite[Lemma 8.3]{bcz} we see that for any $t \in (0,1)$,
\[
\nu_\infty^w\left((0,t)\right) \le \nu_\infty^w\left((-t,t)\right) \le 2t \cdot (\Im m_\infty(\mathrm{i}t)) \le 2Ct,
\]
for some large constant $C$ depending on $r$. The rest follows from \eqref{eq:log_byparts}.
\end{proof}

We are now ready to prove Theorem \ref{thm:sparse_general}.

\begin{proof}[Proof of Theorem \ref{thm:sparse_general}]
We first prove part (i). That is we show that the \abbr{ESD} of $A_n/\sqrt{np}$ converges weakly to the circular law, in probability.
Fix $r \in (0,1/2)$ and denote $\D_{r}:= \{ w \in B_\C(0,1-r): |\Im w| \ge r\}$. Let us also fix $w \in \D_{r}$. Define
\[
\Omega_n':=\left\{s_{\min}\left(\f{A_n}{\sqrt{np}} - w I_n\right) \ge c_n\right\},
\]
 where
 \[
c_n:= c_{\ref{thm: smallest singular}} \exp \left(-C_{\ref{thm: smallest singular}} \frac{\log (1/p)}{\log (np)} \right) {\frac{1}{n\sqrt{np}}}.
\]
Setting $D_n:=- w \sqrt{np}I_n$ and applying Theorem \ref{thm: smallest singular} we deduce that
\beq\label{eq:smallest-sing-prob}
\P(\Omega_n') \ge 1 - \f{1+C'_{\ref{thm: smallest singular}}}{\sqrt{np}}.
\eeq
Fix any $\delta \in (0,1)$ and let $\tau:=\tau(\delta):= c_{\ref{thm:intermed-sing}} \delta$. Further denote $\psi(n):= \max\{\sqrt{{n}/{p}}, n/(\log n)^2\}$. Since $np =\omega(\log^2 n)$ we note that $\psi(n) =o(n/\log n)$. Equipped with these notations we recall the definition of $\nu^w_n$ to see that
\begin{align}\label{eq:log_int_split}
& \int_{-\tau}^\tau |\log (|x|)|d\nu_n^w(x) \notag\\
 = \, & \f{1}{n}\sum_{i=1}^{n-3\psi(n)} |\log (s_i)| \bI(s_i \le \tau) +\f{1}{n}\sum_{i=n-3\psi(n)+1}^n |\log (s_i)| \bI(s_i \le \tau).
\end{align}
We evaluate each term of the \abbr{RHS} of \eqref{eq:log_int_split} separately. Focusing on the second term we see that on the event $\Omega_n'$
\beq\label{eq:split-T2}
\f{1}{n}\sum_{i=n-3\psi(n)+1}^n |\log (s_i)| \bI(s_i \le \tau) \le |\log (s_n)| \cdot \f{3\psi(n)}{n} \le \log c_n^{-1} \cdot \f{3\psi(n)}{n} = o(1).
\eeq
We next consider the first term of \eqref{eq:log_int_split}. 
Denote the event described in Theorem \ref{thm:intermed-sing} by $\Omega_n''$.
Since $\min\{p \psi(n), \psi^2(n)/n\} \ge C_{\ref{thm:intermed-sing}} \log n$,  on the event $\Omega_n''$ we have
\begin{multline}\label{eq:split-T1-1}
 \f{1}{n}\sum_{i=1}^{n-3\psi(n)} |\log (s_i)| \bI(s_i \le \tau) =  \f{1}{n}\sum_{i=3\psi(n)}^{n-1} |\log (s_{n-i})| \bI(s_{n-i} \le \tau) \\
 \le \f{\log \left({1}/{c_{\ref{thm:intermed-sing}}}\right)}{n}\sum_{i=3\psi(n)}^{n-1} \bI(s_{n-i} \le \tau)  + \f{1}{n}\sum_{i=3\psi(n)}^{n-1} \log \left(\f{n}{i}\right) \bI(s_{n-i} \le \tau),
\end{multline}
and by Theorem \ref{thm:intermed-sing}, $\P((\Omega_n'')^c) \le 2/n^2$. Recalling the definition of $\tau$, from Theorem \ref{thm:intermed-sing} it also follows that
\[
s_{n-i} \le \tau \Rightarrow i \le  \delta n
\]
on the event $\Omega_n''$. So from \eqref{eq:split-T1-1} we deduce that
\begin{multline}\label{eq:split-T1-2}
\f{1}{n}\sum_{i=1}^{n-3\psi(n)} |\log (s_i)| \bI(s_i \le \tau)  \, \le \delta \cdot \log \left({1}/{c_{\ref{thm:intermed-sing}}}\right) + \f{1}{n}\sum_{i=3 \psi(n)}^{\delta n} \log \left( \f{n}{i}\right) \\
\le \delta \cdot \log \left({1}/{c_{\ref{thm:intermed-sing}}}\right)  - 2 \int_0^\delta \log x \, dx,
\end{multline}
for all large $n$. Hence, denoting $\Omega_n:= \Omega_n' \cup \Omega_n''$, from \eqref{eq:log_int_split}-\eqref{eq:split-T2} and \eqref{eq:split-T1-2} we obtain that
\beq
\int_{-\tau(\delta)}^{\tau(\delta)} |\log (|x|)|d\nu_n^w(x) \le \kappa(\delta), \notag
\eeq
for all large $n$, on the event $\Omega_n$, where $\kappa(\delta):=2 \delta \cdot \log \left({1}/{c_{\ref{thm:intermed-sing}}}\right)  - 2 \int_0^\delta \log x \, dx$. Note that  $\lim_{\delta \to 0} \kappa(\delta)=0$. Therefore given any $\kappa >0$ there exists $\tau_\kappa:=\tau(\kappa)$, with the property $\lim_{\kappa \to 0} \tau_k =0$, such that
\begin{multline}\label{eq:log_near0_prb_bd}
\limsup_{n \ra \infty} \P\left(\int_{-\tau_\kappa}^{\tau_\kappa} |\log |x|| d\nu_n^w(x) \ge \kappa\right) \\
\le \limsup_{n \ra \infty} \P\left(\left\{\int_{-\tau_\kappa}^{\tau_\kappa} |\log |x|| d\nu_n^w(x) \ge \kappa\right\} \cap \Omega_n \right)=0.
\end{multline}
Next noting that $\log(\cdot)$ is a bounded function on a compact interval that is bounded away from zero, we apply Theorem \ref{thm:weak-conv} to deduce that
\beq\label{eq:weak_conv}
\int_{(-R, - \tau_\kappa) \cup (\tau_\kappa, R)} |\log |x|| d\nu_n^w(x) \ra \int_{(-R, - \tau_\kappa) \cup (\tau_\kappa, R)} |\log |x| | d\nu_\infty^w(x)
\eeq
in probability, for any $R \ge 1$. Recall that for $w \in \D_{r}$ the support of $\nu_\infty^w$ is contained in $[-6,6]$ (see Lemma \ref{lem:prop-nu-inf}(i)).
  On the other hand, from \cite[Theorem 1.7]{BR} it follows that there exists a $K_0 < \infty$ such that
  \[
  \P(\|A_n\| \ge K_0 \sqrt{n p}) \le \exp(-c np),
  \]
  for some constant $c>0$. Therefore, using Borel-Cantelli lemma we deduce that 
  %
  %
%
  \beq\label{eq:log_near_infty}
\int_{(-R_0,R_0)^c} |\log |x|| d\nu_n^w(x) \to \int_{(-R_0,R_0)^c} |\log |x||d\nu_\infty^w(x) , \quad \text{ almost surely},
\eeq
for some $R_0 >0$. From Lemma \ref{lem:prop-nu-inf}(ii) we also have that
\beq\label{eq:log_near_0_limit}
\int_{-\tau_\kappa}^{\tau_\kappa} |\log |x|| d\nu_\infty^w(x) \le C\tau_\kappa |\log \tau_\kappa|,
\eeq
for some constant $C$.
As $\kappa >0$ is arbitrary and $\tau_\kappa \to 0$ as $\kappa \to 0$, combining \eqref{eq:log_near0_prb_bd}-\eqref{eq:log_near_0_limit} we deduce that
\beq\label{eq:log_integrates}
\f{1}{n}\log |\det(A_n/\sqrt{np} -w I_n)| = \int_{-\infty}^\infty \log|x| d\nu_n^w(x) \ra  \int_{-\infty}^\infty \log|x| d\nu_\infty^w(x),
\eeq
in probability. Now the rest of the proof is completed using Lemma \ref{lem:replacement}. Indeed, consider ${\mathfrak{G}}_n$ the $n \times n $ matrix with i.i.d.~centered Gaussian entries with variance one. It is well-known that, for Lebesgue almost all $w$,
\beq\label{eq:log_integrates_sub_gaussian}
\f{1}{n} \log |\det({\mathfrak{G}}_n/\sqrt{n} - w I_n)| \ra  \int_{-\infty}^\infty \log|x| d\nu_\infty^w(x), \text{ almost surely}.
\eeq
For example, one can obtain a proof of \eqref{eq:log_integrates_sub_gaussian} using \cite[Lemma 4.11, Lemma 4.12]{bordenave_chafai}, \cite[Theorem 3.4]{bourgade_yau_yin}, and \cite[Lemma 3.3]{R}.

Thus setting $\D=\D_{r}$, $B_n^{(1)}=A_n/\sqrt{np}$, and $B_n^{(2)}={\mathfrak{G}}_n/\sqrt{n}$ in Lemma \ref{lem:replacement}(a) we see that assumption (ii) there is satisfied. The assumption (i) of Lemma \ref{lem:replacement}(a) follows from weak laws of large numbers for triangular arrays.
Hence, using Lemma \ref{lem:replacement}(i) and the Circular law for i.i.d.~Gaussian matrix of unit variance (e.g.~\cite[Theorem 1.13]{tao_vu}), we obtain that for every $r >0$ and every $f_{r} \in C_c^2(\C)$, supported on $\D_{r}$,
\beq\label{eq:f_tau}
\int f_{r}(w) dL_n(w) \ra \f{1}{\pi}\int f_{r}(w) d\gm(w), \text{ in probability}.
\eeq
To finish the proof it now remains to show that one can extend the convergence of \eqref{eq:f_tau} to all $f \in C_c^2(\C)$. It follows from a standard argument.

Indeed, for any $r >0$ define $i_r \in C_c^2(\C)$ such that $i_r$ is supported on $\D_r$ and $i_r \equiv 1$ on $\D_{2r}$, and $i_r(\D_{r}\setminus \D_{2r}) \subset [0,1]$. Fixing any $\delta >0$ choose an $r>0$ such that $r \le \delta/64 \gK$ where $\gK:= \|f\|_\infty:=\sup_{w \in \C}|f(w)|$.
Denote $f_r:= f i_r$ and $\bar f_r:= f - f_r$. Applying \eqref{eq:f_tau} for the function $i_r$ and the triangle inequality we find that
\begin{multline}\label{eq:f_tau-1}
\P\left(\left|\int  \bar f_r(w) dL_n(w) \right| \ge \delta/4\right)    \le \P\left(\left|\int (1- i_r(w)) dL_n(w) \right| \ge \f{\delta}{4\gK}\right) \\ \le  \P\left(\left|\int  i_r(w) dL_n(w)  -\f{1}{\pi} \int i_r(w) d\gm(w)\right| \ge \f{\delta}{8\gK}\right) \to 0,
\end{multline}
as $n \to \infty$, where the last step follows from the fact that
\beq\label{eq:f_tau-2}
\left|\f1\pi \int_{B_\C(0,1)}(1-i_r(w)) d\gm(w)  \right| \le  \f1\pi \int_{B_\C(0,1)\setminus \D_{2r}} d\gm(w) \le \f{\delta}{8\gK},
\eeq
by our choice of $r$. Thus combining \eqref{eq:f_tau}-\eqref{eq:f_tau-2} and the triangle inequality we find that for any $f \in C_c^2(\C)$
\begin{multline*}
 \P\left( \left| f(w) dL_n(w) - \frac{1}{\pi}\int f(w) d\gm(w) \right|  \ge \delta \right) \\
\le  \P\left( \left| f_r(w) dL_n(w) - \frac{1}{\pi}\int f_r(w) d\gm(w) \right|  \ge \delta/2 \right) \\
 + \P\left(\left|\int  \bar f_r(w) dL_n(w) \right| \ge \delta/4\right) \to 0,
\end{multline*}
as $n \to \infty$.
This completes the proof of the first part of the theorem. To prove the second part of the theorem we note that under the assumption \eqref{p:assumption-as}, using Theorem \ref{thm: smallest singular} it follows
\[
\P(\tilde{\Omega}_n') \ge 1- O\left(\f{1}{n^2}\right),
\]
where
\[
\tilde{\Omega}_n':=\left\{s_{\min}\left(\f{A_n}{\sqrt{np}} - w I_n\right) \ge \tilde{c}_n\right\},
 \text{ and }
\tilde{c}_n:= c_{\ref{thm: smallest singular}} \exp \left(-C_{\ref{thm: smallest singular}} \frac{\log (1/p_n)}{\log (np_n)} \right)\cdot n^{-3}.
\]
Therefore, proceeding similarly as above, applying Borel-Cantelli lemma, and using Theorem \ref{thm:weak-conv}(ii) we see that the conclusions of \eqref{eq:log_near0_prb_bd}-\eqref{eq:weak_conv} hold almost surely. 

Thus under the assumption \eqref{p:assumption-as} we have shown that \eqref{eq:log_integrates} holds almost surely. Therefore proceeding same as above and using Lemma \ref{lem:replacement}(ii) we obtain that for every $r >0$ and every $f \in C_c^2(\C)$,
\beq\label{eq:f_tau-as}
\int f_{r}(w) dL_n(w) \ra \f{1}{\pi}\int f_{r}(w) d\gm(w), \text{ almost surely},
\eeq
where we recall that $f_r= f \cdot i_r$. The same proof shows that
\[
\limsup_{n \to \infty} \int (1-i_r(w)) dL_n(w) \le \frac{1}{\pi}\int_{B_\C(0,1)} (1-i_r(w)) d\gm(w) \le 8 r,
\]
almost surely. Therefore,
\[
\limsup_{n \ra \infty} \Big| \int f(w) dL_{n}(w) - \f{1}{\pi}\int_{B_\C(0,1)}f(w)d\gm(w) \Big| \le {16 \gK r}, \text{ almost surely},
\]
where we recall $\gK= \|f\|_\infty$. Since $r$ is arbitrary the result follows. 
\end{proof}


\subsection*{Acknowledgements}We thank the anonymous referee for her/his comments that lead to an improvement of the presentation and shortening of the paper. A.B.~thanks Amir Dembo and Ofer Zeitouni for suggesting the problem and helpful conversations.

\end{document}